\newtheorem{prop}{Proposition}[section]
\newtheorem{thm}[prop]{Theorem}
\newtheorem*{thm*}{Theorem}
\newtheorem*{addendum*}{Addendum}
\newtheorem{cor}[prop]{Corollary}
\newtheorem{lem}[prop]{Lemma}
\newtheorem{thmintro}{Theorem}
\newtheorem{corintro}[thmintro]{Corollary}
\newtheorem{claim}{Claim}
\newtheorem*{claim*}{Claim}
\newtheorem*{convention*}{Convention}
\theoremstyle{definition}
\newtheorem*{defn*}{Definition}
\newtheorem{defn}[prop]{Definition}
\newtheorem{remark}[prop]{Remark}
\newtheorem*{remark*}{Remark}
\newtheorem*{scholium*}{Scholium}
\theoremstyle{remark}
\newtheorem*{example*}{Example}
\numberwithin{equation}{section}
\newcommand{\vareps}{\varepsilon}
\newcommand{\fhi}{\varphi}
\newcommand{\ro}{\varrho}
\newcommand{\teta}{\vartheta}
\newcommand{\NN}{\mathbf{N}}
\newcommand{\RR}{\mathbf{R}}
\newcommand{\sC}{\mathscr{C}}
\newcommand{\sF}{\mathscr{F}}
\newcommand{\sK}{\mathscr{K}}
\newcommand{\sS}{\mathscr{S}}
\newcommand{\inv}{^{-1}}
\newcommand{\se}{\subseteq}
\newcommand{\lra}{\longrightarrow}
\newcommand{\Td}{d_\mathrm{T}}
\newcommand{\Creg}{C_{\mathrm{reg}}}
\newcommand{\Link}{\mathrm{Link}}
\newcommand{\cat}{{\upshape CAT(0)}\xspace}
\newcommand{\catun}{{\upshape CAT(1)}\xspace}
\newcommand{\catmun}{{\upshape CAT($-1$)}\xspace}
\newcommand{\catca}{{\upshape CAT($\kappa$)}\xspace}
\newcommand{\tangle}[2]
{\angle(#1,#2)}
\newcommand{\aangle}[3]
{\angle_{#1}(#2,#3)}
\newcommand{\cangle}[3]
{\overline{\angle}_{#1}(#2,#3)}
\DeclareMathOperator{\proj}{proj} 
\DeclareMathOperator{\Stab}{Stab} \DeclareMathOperator{\Fix}{Fix} 
 \DeclareMathOperator{\Ker}{Ker}
\DeclareMathOperator{\Id}{Id}
\DeclareMathOperator{\Isom}{Is}
\DeclareMathOperator{\Ch}{Ch}
 \DeclareMathOperator{\rad}{rad} 
\newcommand{\bd}{\partial} 
\def\Aut{\mathop{\mathrm{Aut}}\nolimits}
\def\max{\mathop{\mathrm{max}}\nolimits}
\def\Op{\mathop{\mathrm{Opp}}\nolimits}
\def\Ant{\mathop{\mathrm{Ant}}\nolimits}
\begin{document}

\title{An indiscrete Bieberbach theorem:\\ from amenable \cat groups to Tits buildings}

\author[1]{Pierre-Emmanuel Caprace\thanks{F.R.S.-FNRS research associate, supported in part by the ERC (grant \#278469)}}
\author[2]{Nicolas Monod\thanks{Supported in part by the Swiss National Science Foundation and by the ERC}}

\affil[1]{UCLouvain, 1348 Louvain-la-Neuve, Belgium}
\affil[2]{EPFL, 1015 Lausanne, Switzerland}

\date{}
%

\maketitle

\begin{abstract}
Non-positively curved spaces admitting a cocompact isometric action of an amenable group are investigated. A classification is established under the assumption that there is no global fixed point at infinity under the full isometry group. The visual boundary is then a spherical building. When the ambient space is geodesically complete, it must be a product of flats, symmetric spaces, biregular trees and Bruhat--Tits buildings.

We  provide moreover a sufficient condition for a spherical building arising as the visual boundary of a proper \cat space to be Moufang, and deduce that an irreducible locally finite Euclidean building of dimension~${\geq 2}$ is a Bruhat--Tits building if and only if its automorphism group acts cocompactly and chamber-transitively at infinity.
\end{abstract}


\section{Introduction}

The meeting ground between non-positive curvature and amenability is shaped by flatness. This principle emerged in the 1970s for Riemannian geometry~\cite{Avez}, \cite{Gromoll-Wolf}, \cite{Lawson-Yau} and culminated in 1998 as a definitive metric statement: The flat Euclidean spaces are the only geodesically complete locally compact \cat spaces admitting a proper cocompact isometric action of an amenable discrete group~\cite[Cor.~C]{AB98}. In particular, amenable discrete \cat groups are all Bieberbach groups. Earlier versions include~\cite[Thm.~2]{BurgerSchroeder87} and~\cite{Anderson87}.

However, as soon as we broaden our view from discrete to locally compact \cat groups, the landscape becomes much more scenic. To wit, symmetric spaces and Bruhat--Tits buildings support a cocompact action of the corresponding minimal parabolic groups, which are amenable (indeed soluble-by-compact). A taxonomy was missing, even in the classical case of Riemannian manifolds.

The main objective of this article is to establish the classification of geodesically complete \cat spaces without global fixed point at infinity that admit a cocompact amenable group of isometries: they are all products of symmetric spaces, Bruhat--Tits buildings and trees (Theorem~\ref{cor:CoctAmen} below). The assumption that there be no fixed point at infinity under the \emph{entire} (typically non-amenable) group of isometries is necessary. Otherwise, the variety of possible spaces becomes richer, even among manifolds: uncountably many homogeneous non-positively curved manifolds that are not symmetric spaces arise as soluble Lie groups endowed with an invariant Riemannian metric of non-positive sectional curvature (see~\cite{Heintze74} and~\cite{AzencottWilson}). 

In conclusion, we submit that symmetric spaces, buildings and trees are really the next flattest \cat spaces after Euclidean spaces: our result could be seen as a non-discrete Bieberbach theorem. The associated full isometry groups are for instance semi-simple algebraic groups with Kazhdan's property; although being traditionally viewed as ``very non-amenable'', in the present geometric setting their salient property is rather to contain a cocompact amenable subgroup.

\begin{remark*}
The threshold to flatness is situated slightly further than discreteness: Corollary~C of~\cite{AB98} holds more generally for \emph{unimodular} amenable  locally compact groups, as shown by combining~\cite[Main Theorem]{AB98} with~\cite[Theorem M]{amenis}.
\end{remark*}

The major part of this article will consist in establishing the classification \emph{at infinity}, as follows.

\begin{thmintro}[Classification at infinity]\label{thm:NotGoedComplete}
Let $X$ be a proper  \cat space without a global fixed point at infinity. Assume that  $X$ admits a cocompact isometric action of an amenable locally compact group. 

\medskip
Then $\bd X$ is a metric spherical building and each of its irreducible factors of dimension~${\geq 1}$ is Moufang.

Moreover, the stabiliser of every point of $\bd X$ acts cocompactly on $X$. 
\end{thmintro}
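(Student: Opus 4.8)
The plan is to feed the Adams--Ballmann fixed-point theorem into the structure theory of proper \cat spaces with a cocompact isometry group, and then to recognise $\bd X$ as a building by an induction on its dimension.

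First, replace the amenable group by its closure, so that it is a closed subgroup $G\le\Isom(X)$. Since $\Isom(X)\supseteq G$ is cocompact and $X$ has no global fixed point at infinity, the canonical decomposition writes $X\cong\RR^{n}\times X_1\times\dots\times X_p$ with each $X_i$ irreducible and not a line, each $\Isom(X_i)$ cocompact and without global fixed point at infinity, and the projection of $G$ to each $\Isom(X_i)$ again cocompact and amenable. As $\bd X$ is the spherical join $\mathbf{S}^{n-1}*\bd X_1*\dots*\bd X_p$, while spherical joins of spherical buildings are spherical buildings (the round sphere being a thin one, whose irreducible factors are $0$-dimensional) and both the Moufang conclusion and the cocompactness-of-stabilisers conclusion are compatible with joins, we reduce to $X$ irreducible, unbounded and not a line. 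Apply Adams--Ballmann to $G\curvearrowright X$: the alternative ``$G$ fixes a point of $X$'' forces $X$ bounded, a trivial case; the alternative ``$G$ stabilises a flat of positive dimension'' is impossible, for then $G$ would normalise the parallel set of $F$, which splits as $F\times F'$ and is coarsely dense in $X$, so $\bd X=\mathbf{S}^{\dim F-1}*\bd F'$ would be a nontrivial spherical join, forcing $X$ to split (splitting theorem) and contradicting irreducibility. Hence $G$ fixes some $\xi\in\bd X$, and $P_\xi:=\Stab_{\Isom(X)}(\xi)\supseteq G$ acts cocompactly on $X$.

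Next, I argue by induction on $d:=\dim\bd X$. The base case $d=0$ is immediate: a $0$-dimensional \catun space is $\pi$-separated (a geodesic between two points at distance $<\pi$ would be a $1$-dimensional subset), and it has at least two points since $X$ has no global fixed point at infinity, so it is a spherical building of rank one. For $d\ge1$, the cocompactness of $P_\xi$ lets one pass to the transverse space $X_\xi$: a proper \cat space with $\bd X_\xi=\Link(\xi,\bd X)$ of dimension $d-1$ which --- one checks --- again satisfies the hypotheses of the theorem, a geometric Levi quotient of $P_\xi$ (onto which $G$ maps) providing the cocompact amenable group. By the induction hypothesis, $\Link(\xi,\bd X)$ is a spherical building and the image of $G$ fixes a chamber of it; pulling this back shows that $G$ fixes a \emph{chamber} $C$ of $\bd X$, so $\Stab(C)\supseteq G$ is cocompact on $X$. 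Carrying out the same analysis at an arbitrary point of $\bd X$ shows that all of its links are spherical buildings of the relevant Coxeter type; feeding this --- together with the facts that $\bd X$ is \catun of dimension $d$ and is covered by apartments (the Tits boundaries of flats of $X$, produced in sufficient supply by the cocompact action) --- into a metric recognition theorem for spherical buildings of the Charney--Lytchak type, we conclude that $\bd X$ is itself a metric spherical building.

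Finally, the Moufang property of each irreducible factor of $\bd X$ of dimension $\ge1$ follows from the sufficient condition established in this paper for the visual boundary of a proper \cat space to be Moufang, whose hypotheses are met here thanks to the cocompact action of $\Isom(X)$ together with the chamber fixed by $G$. For such a Moufang factor the root groups are realised inside $\Isom(X)$, which therefore acts chamber-transitively on $\bd X$; hence for any $\eta\in\bd X$, a chamber $C'$ whose closure contains $\eta$ is $\Isom(X)$-conjugate to the chamber $C$ fixed by $G$, so $\Stab(C')$ --- being conjugate to $\Stab(C)\supseteq G$ --- is cocompact on $X$, as is the (finite-index) pointwise stabiliser $\Stab_{\mathrm{pt}}(C')$; since $\Stab_{\mathrm{pt}}(C')\subseteq\Stab(\eta)$, the stabiliser of $\eta$ acts cocompactly on $X$. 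The principal obstacle throughout is the recognition step --- upgrading ``every link is a building'' to ``$\bd X$ is a building'': this needs enough flats in $X$ to serve as apartments, a verification of the metric building axioms, and the delicate bookkeeping that keeps the transverse spaces within the scope of the induction (properness, cocompactness of the Levi action, absence of a global fixed point at infinity). It is precisely here that amenability, cocompactness and the absence of a global fixed point at infinity are used together.
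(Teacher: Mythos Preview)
Your overall strategy has two genuine gaps that the paper's proof avoids.

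\textbf{The induction does not close.} You assert that the transverse space $X_\xi$ ``again satisfies the hypotheses of the theorem'', in particular that it has no global fixed point at infinity. This is precisely what fails. The Levi quotient of $G$ acting on $X_\xi$ is amenable and cocompact, but there is no reason for $\Isom(X_\xi)$ to be fixed-point-free at infinity; indeed, if $\xi$ happens to be a \emph{regular} point then $\bd X_\xi$ is a round sphere and the situation is degenerate, while if $\xi$ is singular the fixed-point set of the Levi factor in $\bd X_\xi$ is typically non-empty. The paper never assumes this; instead, the auxiliary results in \S\ref{sec:TD} (notably Proposition~\ref{prop:regular:2} and Proposition~\ref{prop:RadFixedPt}) are carefully stated to allow fixed points at infinity for the transverse space.

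\textbf{The recognition step is circular.} You write ``carrying out the same analysis at an arbitrary point of $\bd X$ shows that all of its links are spherical buildings''. But the ``same analysis'' required a cocompact amenable group fixing that point, and at an arbitrary $\eta\in\bd X$ you do not yet know that $\Isom(X)_\eta$ is cocompact --- that is one of the conclusions to be proved. The same circularity reappears in your Moufang step: Theorem~\ref{thm:Moufang} requires that \emph{every} point stabiliser be cocompact, yet you invoke it to deduce chamber-transitivity and only then derive cocompactness of all stabilisers.

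The paper's route is quite different. It does not induct on $\dim\bd X$ for the main statement, and it does not attempt to recognise $\bd X$ via its links. Instead it works inside $\bd X$ with the subset $C$ of cocompact points: first it produces \emph{regular} cocompact points and full-dimensional spherical caps around them (Propositions~\ref{prop:CregNonempty} and~\ref{prop:regular:1}), then --- using the no-fixed-point hypothesis exactly once, via Lemma~\ref{lem:BdFixedPt} --- it assembles an entire fully maximal sphere of cocompact points (Proposition~\ref{prop:CocptSphere}). The Balser--Lytchak criterion (Theorem~\ref{thm:BalserLytchak}) then shows $C$ is a spherical building, and a separate argument (Proposition~\ref{prop:AllCocompact}, via the Parachute Lemma) proves $C=\bd X$. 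Only after every stabiliser is known to be cocompact is Theorem~\ref{thm:Moufang} applied.
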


A sufficient condition for the absence of global fixed points at infinity is that $\Isom(X)$ be unimodular,  see~\cite[Theorem M]{amenis}.
This is   automatic if  $\Isom(X)$ contains a lattice, e.g.\ if $X$ is the universal cover of a compact locally \cat space. 

A \textbf{metric spherical building} is a spherical building in the sense Kleiner--Leeb~\cite{Kleiner-Leeb} (see \S\ref{sec:Buildings} below).  It may be seen as the \catun metric realisation of a combinatorial spherical building (see Lemma~\ref{lem:MetricVsCombinatorial}). Degenerate examples are provided by round spheres (those are the \emph{thin} spherical buildings) and by discrete \catun spaces in which any two points are antipodal (those are the $0$-dimensional spherical  buildings). If $\Isom(X)$ acts  \textbf{minimally}\index{minimal action} on $X$, in the sense that $X$ is the only non-empty closed convex $\Isom(X)$-invariant subset, then the spherical factors of $\bd X$ correspond to the flat factors of $X$, while the $0$-dimensional factors of $\bd X$ correspond  to the Gromov hyperbolic factors of $X$. Theorem~\ref{thm:NotGoedComplete} ensures that any other irreducible factor of $\bd X$ is a spherical building of dimension~$\geq 1$ and satisfies the \textbf{Moufang condition} (see \S\ref{sec:Moufang} below).

Moufang buildings have been completely classified by Tits~\cite{Tits74} and Tits--Weiss~\cite{TitsWeiss} in terms of algebraic data. In particular, the irreducible Moufang buildings appearing at infinity of $X$ are precisely the spherical buildings associated with simple algebraic groups over (possibly Archimedean) local fields (see \S\ref{sec:Moufang} below). It thus follows from Theorem~\ref{thm:NotGoedComplete} that the  isometry type of the \catun visual boundary $\bd X$ runs over a countable family of standard possibilities: this is the classification at infinity.

\medskip
If we impose on $X$ the (rather common) assumption of \textbf{geodesic completeness}\index{geodesic completeness}, i.e.\ that every geodesic segment extends to a bi-infinite geodesic line, then $X$ itself is also restricted to a countable list of standard geometries (up to the unavoidable scalings of the various irreducible factors).

\begin{thmintro}[Classifying the spaces]\label{cor:CoctAmen}
Let $X$ be a  geodesically complete locally compact \cat space without a global fixed point at infinity. Assume that  $X$ admits a cocompact isometric action of an amenable locally compact group. 

Then $X$ is a product of flats, symmetric spaces of non-compact type, Bruhat--Tits buildings and biregular trees.
\end{thmintro}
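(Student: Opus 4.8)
The plan is to combine Theorem~\ref{thm:NotGoedComplete} with the rigidity theory of \cat spaces whose Tits boundary is a building. Since $X$ is proper and geodesically complete with $\Isom(X)$ cocompact, it admits a canonical decomposition $X\cong X_1\times\cdots\times X_k$ into irreducible factors, each proper and geodesically complete, compatible with the spherical join decomposition $\bd X\cong\bd X_1*\cdots*\bd X_k$. A finite-index subgroup of $\Isom(X)$ preserves each factor and projects onto a cocompactly acting subgroup of $\Isom(X_i)$, as does the projection of the cocompact amenable group. Applying Theorem~\ref{thm:NotGoedComplete} to $X$, each $\bd X_i$ is an irreducible metric spherical building, Moufang as soon as $\dim\bd X_i\geq 1$, and the $\Isom(X_i)$-stabiliser of each of its points acts cocompactly on $X_i$ (project the cocompact stabiliser the theorem provides). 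Moreover $\Isom(X_i)$ fixes no point of $\bd X_i$: concentrated in the $i$-th join coordinate, such a point would have a finite $\Isom(X)$-orbit whose distinct members lie at Tits distance $\pi/2$, hence a unique circumcentre fixed by $\Isom(X)$ --- excluded by hypothesis. It thus suffices to treat a single irreducible $X$.

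\textbf{Boundary of dimension $\leq 0$.} If $\bd X=\varnothing$ then $X$ is bounded, hence a point by geodesic completeness; if $\bd X$ is a pair of antipodes then $X\cong\RR$; these are (degenerate) flats. If $\bd X$ is a thick $0$-dimensional building, then $X$ contains no flat plane and so, being proper and cocompact, is Gromov hyperbolic. Now $\Isom(X)$ is a locally compact group acting properly cocompactly on the hyperbolic space $X$, fixing no point at infinity, containing a cocompact amenable subgroup, and --- by the reduction above --- having cocompactly acting point stabilisers at infinity. The rank-one case of the structure theory of such groups then forces $X$ to be either a rank-one symmetric space of non-compact type, or a locally finite tree on which $\Isom(X)$ acts cocompactly and transitively on the ends, hence a biregular (semi-homogeneous) tree. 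The hypothesis that no point at infinity be fixed is essential here: it excludes the exotic homogeneous \cat manifolds of \cite{Heintze74},~\cite{AzencottWilson}, whose isometry group does fix one; and the cocompactness of point stabilisers from Theorem~\ref{thm:NotGoedComplete} excludes, for instance, Fuchsian buildings.

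\textbf{Boundary a thick irreducible Moufang building of dimension $\geq 1$.} This is the heart of the matter; I would reconstruct $X$ from $\bd X$. By the rigidity theorem of Kleiner--Leeb~\cite{Kleiner-Leeb} (and its extension, due to Leeb, to geodesically complete \cat spaces with thick irreducible spherical building boundary of dimension $\geq 1$), $X$ is a Riemannian symmetric space of non-compact type or a Euclidean building; in the dimension-one (generalised polygon) case the Moufang property granted by Theorem~\ref{thm:NotGoedComplete} is precisely what makes this work, the bare ``building'' hypothesis being too weak. In the symmetric-space case, $\bd X$ is the spherical building of a semisimple real algebraic group $G$ (classification of Tits and Tits--Weiss~\cite{Tits74},~\cite{TitsWeiss}) and $X$ is the symmetric space of $G(\RR)$. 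In the Euclidean-building case, $X$ is locally finite (being proper), so the affine structure of $X$ induces a discrete valuation with finite residue field on the root datum coordinatising $\bd X$; by Bruhat--Tits theory this valued root datum is that of an algebraic group $G$ over a non-Archimedean local field $K$, and $X$ is the Bruhat--Tits building of $G(K)$. One also verifies that the Moufang buildings admitting no compatible locally finite affine structure --- notably the Moufang octagons --- do not occur. Reassembling the factors gives the asserted product decomposition.

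\textbf{Main obstacle.} I expect the crux to lie in the last case: extracting the (discrete) valuation of the root datum from the \cat metric of $X$ and matching it with Bruhat--Tits theory, most delicately for the dimension-one polygon factors, where $\bd X$ alone fails to determine $X$ and the Moufang structure must be exploited in full. The rank-one dichotomy of the second step is a comparable, independent difficulty, since there the $0$-dimensional boundary is devoid of metric information and one must analyse the locally compact group $\Isom(X)$, together with its cocompact amenable subgroup, directly.
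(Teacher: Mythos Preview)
Your overall strategy --- invoke Theorem~\ref{thm:NotGoedComplete}, then recover $X$ from $\bd X$ via Leeb's rigidity --- is correct and is essentially what the paper does. The paper's execution is much shorter because it cites a single result, \cite[Theorem~1.3]{Caprace-Monod_structure}, which takes as input the conclusion of Theorem~\ref{thm:NotGoedComplete} that every $\xi\in\bd X$ has cocompact stabiliser (together with geodesic completeness) and outputs the full product decomposition into flats, symmetric spaces, Euclidean buildings and edge-transitive trees in one stroke. That reference absorbs both your Leeb step and your $0$-dimensional case.

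Two specific issues. First, your rank-one step (``the structure theory of such groups then forces $X$ to be a rank-one symmetric space or a biregular tree'') is the one place where you have not named a concrete result doing the work; this is a genuine theorem, not a triviality, and is precisely what \cite[Theorem~1.3]{Caprace-Monod_structure} supplies in that regime. Second, your Bruhat--Tits step and its anticipated ``main obstacle'' are illusory. In this paper, following Weiss~\cite{Weiss09}, a Bruhat--Tits building is \emph{defined} as a thick Euclidean building whose spherical building at infinity is Moufang (see \S\ref{sec:Moufang}). So once Leeb hands you a Euclidean building and Theorem~\ref{thm:NotGoedComplete} hands you the Moufang property at infinity, you are finished --- no valuation to extract, no matching against Bruhat--Tits classification, no octagon check. (The paper formally routes this through Corollary~\ref{cor:Building}, but the Moufang clause of Theorem~\ref{thm:NotGoedComplete} already suffices.) Relatedly, Leeb's theorem does not require the Moufang condition; it needs only that $\bd X$ be a thick irreducible spherical building of dimension~$\geq 1$. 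The Moufang property enters only at the very end, to earn the name ``Bruhat--Tits''.
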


The key tool in deducing Theorem~\ref{cor:CoctAmen} from Theorem~\ref{thm:NotGoedComplete} is provided by Leeb's work~\cite{Leeb}, which ensures that  a  geodesically complete locally compact \cat space whose boundary is an irreducible metric spherical building of dimension~$\geq 1$ must be a symmetric space or a Euclidean building. 

It cannot be expected that the sharper conclusions of Theorem~\ref{cor:CoctAmen} hold without the hypothesis of  geodesic completeness, even if one assumes that the full isometry group $\Isom(X)$ acts minimally on $X$.  The simplest illustration is provided by the following one-parameter family of ``equivariant deformations'' of the trivalent tree $T$ with edge-length~$1$. Replacing the ball of radius~$0<\vareps\leq 1/2$ around each vertex by an equilateral triangle of side length~$\vareps$, one obtains a proper \cat space $\widetilde G_\vareps$ (the universal cover of a G-string\index{G-string}) which is not geodesically complete. Nonetheless, its isometry group $\Isom(\widetilde G_\vareps)$ is naturally isomorphic to $\Aut(T)$ and acts minimally. In particular $\widetilde G_\vareps$ admits  cocompact action of an amenable group, but no isometric embedding of $T$.  A more fancy-clad construction gives a continuous deformation of the classical hyperbolic plane~\cite{Monod-Py}
 . The latter provides a one-parameter family of (homothety classes of) non-geodesically complete proper cocompact minimal  \catmun spaces whose isometry group is $\mathrm{PGL}_2(\RR)$. Each of those spaces therefore admits a cocompact isometric action of the amenable group $ax +b$.

However, despite this broader diversity of \cat spaces allowed for by Theorem~\ref{thm:NotGoedComplete}, on the level of the full isometry groups we retain the same rigidity as imposed by geodesic completeness on the level of the spaces:

\begin{corintro}[Classifying the groups]\label{cor:ModelSpace}
Under the hypotheses of Theorem~\ref{thm:NotGoedComplete}, there exist:
\begin{itemize}
\item[---] a geodesically complete locally compact \cat  space $X_{\mathrm{model}}$ which is a product of flats, symmetric spaces of non-compact type, Bruhat--Tits buildings and biregular trees,
\item[---] a continuous, proper,  cocompact isometric action of $\Isom(X)$ on $X_{\mathrm{model}}$,
\item[---] an $\Isom(X)$-equivariant isometry $\bd X \cong \bd X_{\mathrm{model}}$.
\end{itemize}
In particular, modulo a compact kernel, $\Isom(X)$ is a closed subgroup of a product of algebraic groups and automorphisms groups of biregular trees.
\end{corintro}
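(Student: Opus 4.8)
The plan is to bootstrap from Theorem~\ref{thm:NotGoedComplete}, which already identifies $\bd X$ as a metric spherical building whose irreducible positive-dimensional factors are Moufang. First I would invoke the classification of Moufang spherical buildings of Tits and Tits--Weiss: each positive-dimensional irreducible factor of $\bd X$ is the spherical building $\bd X_i$ of a simple algebraic group over a local field (possibly Archimedean), hence is the visual boundary of a canonically attached model geometry $X_i$ — a symmetric space of non-compact type in the Archimedean case, a Bruhat--Tits building in the non-Archimedean case. For the $0$-dimensional irreducible factors, the building is a set with an antipodal (``opposition'') structure; such a factor is the boundary of a biregular tree (if it has a compatible valency structure making $\Isom(X)$ act cocompactly at infinity) or, in the thin/low-valency degenerate cases, of a line or a flat — in any case of a standard tree or flat model $X_i$. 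Taking $X_{\mathrm{model}} = \prod_i X_i$ over all factors (including a Euclidean factor for the spherical/flat part) produces a geodesically complete locally compact \cat space of the asserted form, together with a tautological $\Isom(X)$-equivariant isometry $\bd X \cong \bd X_{\mathrm{model}} = \prod_i \bd X_i$ built from the factor-wise identifications.

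Next I would promote the boundary identification to an action. The group $G = \Isom(X)$ acts on $\bd X$ by isometries, hence — via the equivariant isometry — on $\bd X_{\mathrm{model}}$, and it preserves the join decomposition into irreducible factors (a decomposition that is canonical, so stabilised up to the finite permutation action on isomorphic factors; after passing to a finite-index subgroup one may assume each factor is preserved, then induce back up). On each factor $\bd X_i$ the action is by building automorphisms, and for the Moufang factors the automorphism group of the building is, by the Tits--Weiss classification together with Borel--Tits type rigidity, exactly (a compact-by-) the automorphism group of the corresponding algebraic group, which acts continuously, properly and cocompactly on the model $X_i$ by the standard theory of symmetric spaces and Bruhat--Tits buildings. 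For $0$-dimensional factors, the automorphism group of the tree boundary is the tree automorphism group, acting properly cocompactly on the tree. Assembling these, $G$ maps continuously into $\prod_i \Isom(X_i) = \Isom(X_{\mathrm{model}})$ and the product action is proper and cocompact; the kernel of this map fixes $\bd X_{\mathrm{model}} \cong \bd X$ pointwise, and by the no-fixed-point hypothesis combined with minimality considerations (passing to the $G$-minimal core of $X$, which has the same boundary) this kernel is compact. The final ``in particular'' is then immediate: $G$ modulo this compact kernel embeds as a closed subgroup of a product of (automorphism groups of) algebraic groups and of biregular trees.

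The main obstacle I expect is twofold. First, descending the \emph{abstract} equivariant boundary isometry to a \emph{continuous, proper, cocompact} action on $X_{\mathrm{model}}$: one must show that the induced homomorphism $G \to \Isom(X_{\mathrm{model}})$ is continuous and that properness and cocompactness are inherited — cocompactness should follow because the $G$-action on $\bd X$ is cocompact on chambers (the last clause of Theorem~\ref{thm:NotGoedComplete}) and the building-to-model dictionary transports chamber-cocompactness at infinity to cocompactness on the model, while properness requires knowing that point stabilisers in $\Isom(X_i)$ are compact, which is classical for the model geometries. Second, and more delicate, is the identification of $\Aut$ of a Moufang building with (essentially) the automorphism group of the ambient algebraic group and the verification that the homomorphism $G \to \Aut(\bd X) \to \Isom(X_{\mathrm{model}})$ has compact kernel: this rests on the Moufang/Tits--Weiss machinery and on the fact that, after restricting to the minimal invariant subspace, an isometry of $X$ acting trivially at infinity lies in a compact group. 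The remaining steps — forming the product model, handling the permutation of isomorphic factors, treating the flat and $0$-dimensional degenerate factors — are routine once this dictionary is in place.
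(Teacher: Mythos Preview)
Your overall architecture is right, and you correctly anticipate the two pressure points (continuity/properness of the induced homomorphism, and compactness of the kernel). But there are two genuine gaps that the paper's proof closes with specific external tools you do not invoke.

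First, you write that each positive-dimensional Moufang factor of $\bd X$ is ``the spherical building of a simple algebraic group over a local field''. The Tits--Weiss classification gives Moufang buildings over \emph{arbitrary} fields (and skew-fields, and alternative division rings); Moufang alone does not force the defining field to be local, and without that you have no symmetric space or locally finite Bruhat--Tits building to serve as $X_i$. The missing ingredient is a compactness/local-finiteness property of $\bd X$ inherited from the c\^one topology. The paper supplies this via Lemma~\ref{lem:CompactBuilding}, showing $\bd X$ is a \emph{compact} spherical building in the sense of Burns--Spatzier; it then invokes~\cite[Theorem~1.1]{GKVMW}, which produces from a compact Moufang building a Bruhat--Tits building $X_{\mathrm{model}}$ together with a topological isomorphism $\Aut(X_{\mathrm{model}})\cong\Aut(\bd X_{\mathrm{model}})$. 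That last isomorphism is exactly what makes the boundary action lift to a continuous action on the model --- resolving your first ``obstacle'' --- and the paper is careful to reconcile the two candidate topologies on $\Aut(\bd X)$ (c\^one vs.\ chamber-set). Cocompactness then comes not from the last clause of Theorem~\ref{thm:NotGoedComplete} (which says point-stabilisers are cocompact on $X$, not that $G$ is chamber-cocompact on $\bd X$), but from Theorem~\ref{thm:Moufang}: the image of $\Isom(X)$ in $\Aut(\bd X)$ contains all root groups, hence acts chamber-transitively. Properness and closedness of the image are handled by Proposition~\ref{prop:BoundaryAction}.

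Second, your treatment of the $0$-dimensional factors is hand-waving in a place where real work is needed. A $0$-dimensional spherical building is just a set with pairwise distances~$\pi$; it carries no intrinsic ``valency structure'' and is not canonically the boundary of any particular tree. The paper instead reduces (via Theorem~\ref{thm:structure}) to an irreducible factor with totally disconnected isometry group, observes that such a factor is Gromov hyperbolic when $\dim(\bd X)=0$, and then appeals to~\cite[Theorem~D]{CCMT}, which manufactures a continuous, proper, edge-transitive action on a locally finite tree directly from the structure of a totally disconnected locally compact group that is non-elementary hyperbolic with a cocompact amenable subgroup. This is a substantive input you would need to replace or reproduce.
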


The proof of Theorem~\ref{thm:NotGoedComplete} occupies the major part of this paper. The absence of fixed points at infinity is used once at the first step of the proof, in order to invoke structural results from~\cite{Caprace-Monod_structure}  recalled in Theorem~\ref{thm:structure} below. This provides a reduction to the case  where $\Isom(X)$ is totally disconnected. We then focus on proper \cat spaces admitting a cocompact isometric action of a totally disconnected amenable locally compact group. This part of the work (which represents about half of the proof of Theorem~\ref{thm:NotGoedComplete}) is valid in full generality: global fixed points at infinity are allowed. The main result provides a sharp description of the boundary at infinity; in particular we establish the existence of spherical caps of full dimension in $\bd X$ consisting of points with a cocompact stabiliser (Propositions~\ref{prop:CregNonempty} and~\ref{prop:regular:1}). A posteriori, those spherical 
 caps will be identified with the chambers of the spherical building at infinity. Those general results are then confronted to the absence of fixed points at infinity: using the aforementioned spherical caps, we construct fully maximal spheres all of whose points have a cocompact stabiliser, and then establish, using  a criterion due to Balser--Lytchak~\cite{BaLy_BuildingLikeSpaces} and recalled in Theorem~\ref{thm:BalserLytchak}, that the subset $C$ of $\bd X$ consisting of those  points at infinity whose stabiliser is cocompact, forms a metric spherical building. It is finally shown that $C = \bd X$. 

The last step in the proof of Theorem~\ref{thm:NotGoedComplete} is to establish the Moufang property. This property will be guaranteed by   the following. 

\begin{thmintro}[Classifying thick boundary buildings]\label{thm:Moufang}
Let $X$ be a proper \cat space whose boundary $\bd X$ is a thick irreducible  metric spherical building of dimension~$\geq 1$. If for each $\xi \in \bd X$, the stabiliser $\Isom(X)_\xi$ acts cocompactly on $X$, then  $\bd X$  is Moufang. 

Moreover the image of the natural map $\Isom(X) \to \Aut(\bd X)$ contains all root subgroups. 
\end{thmintro}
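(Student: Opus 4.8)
The plan is to verify the Moufang condition by producing, for each root $\alpha$ of the spherical building $\bd X$, a nontrivial root group inside $\Isom(X)$. Recall that a root $\alpha$ of $\bd X$ is a half-apartment; its boundary $\partial\alpha$ is a wall, and the Moufang property demands that the group $U_\alpha$ of building automorphisms fixing the star of every panel in the interior of $\alpha$ act transitively on the set of apartments containing $\alpha$. The strategy is geometric: translate this combinatorial transitivity into a statement about isometries of $X$ fixing large convex subsets, and exploit the hypothesis that every point stabiliser $\Isom(X)_\xi$ is cocompact on $X$.

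First I would fix a root $\alpha$ and choose a panel $\sigma$ in its interior, together with a chamber $c \supseteq \sigma$ lying in $\alpha$. Since $\bd X$ is thick and irreducible of dimension $\geq 1$, there are at least three chambers containing $\sigma$, and each pair of them spans an apartment. Using that $\bd X$ is a metric spherical building (à la Kleiner–Leeb), each apartment is the boundary of a maximal flat $F \subseteq X$, and a half-apartment $\alpha$ corresponds to a ``half-flat'' bounded by a wall which is itself the boundary of a flat of codimension one. The key object is the set $\mathscr{H}_\alpha$ of flats whose boundary contains $\alpha$: these fan out from the codimension-one flat $W$ with $\bd W = \partial\alpha$. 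I would then consider, for a chamber $c' \supseteq \sigma$ in the interior of $\alpha$, the stabiliser $\Isom(X)_{c'}$, which by hypothesis acts cocompactly on $X$; intersecting with the stabilisers of the (cocompact) chamber stabilisers of the two apartment-halves meeting along $W$, one obtains a locally compact group acting cocompactly on a neighbourhood of $W$. The root group $U_\alpha$ should be recovered as (the image in $\Aut(\bd X)$ of) the subgroup of $\Isom(X)$ fixing pointwise the convex hull of all flats in $\mathscr{H}_\alpha$ meeting the closed half determined by $\alpha$ — equivalently, fixing a ``horoball-like'' convex region — and acting on the remaining half-flats.

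The heart of the argument is to show this group is large enough: given two apartments $A_1, A_2 \supseteq \alpha$, realised by maximal flats $F_1, F_2$ sharing the half-flat over $\alpha$, I must produce $g \in \Isom(X)$ fixing $X$ along the half-flat (hence fixing $\alpha$ and the stars of interior panels) with $g F_1 = F_2$. Here I would argue by a limiting/cocompactness procedure: pick a sequence of points $x_n$ in $X$ marching off to infinity along the ``spine'' $W$, use cocompactness of the relevant chamber stabilisers to find isometries $g_n$ nearly fixing larger and larger balls centered at $x_n$ while moving the direction of $F_1$ toward that of $F_2$, and extract a limit using properness of $X$ (Arzelà–Ascoli) together with the fact that $\Isom(X)$ is closed in the compact-open topology. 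The limit isometry $g$ fixes an entire half-flat, hence acts trivially on the part of $\bd X$ ``behind'' the wall while sending $\bd F_1$ to $\bd F_2$; unravelling what fixing that convex region means at infinity gives exactly that $g$ lies in $U_\alpha$ and sends $A_1$ to $A_2$. Transitivity of $U_\alpha$ on apartments through $\alpha$ follows, which is the Moufang condition; and since this produces all such $g$ inside $\Isom(X)$, the image of $\Isom(X) \to \Aut(\bd X)$ contains every root subgroup, giving the ``moreover'' clause.

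The main obstacle I anticipate is the limiting step: one must ensure that the approximate isometries $g_n$ can be chosen to \emph{simultaneously} fix an exhausting family of balls along $W$ (to guarantee the limit fixes a full half-flat, not merely a ray's worth) and to genuinely move $\bd F_1$ onto $\bd F_2$ rather than to some intermediate or degenerate configuration. This requires a careful interplay between the cocompactness hypothesis and the building structure of $\bd X$ — specifically, that the stabiliser of a chamber in the interior of $\alpha$ already ``sees'' the different apartments through $\alpha$ as a single cocompact orbit on a suitable transversal. I would expect to invoke the structure of $\Isom(X)$ on the flats realising $\bd X$, perhaps bootstrapping from the fact (used elsewhere in the paper) that chamber stabilisers at infinity are cocompact to deduce that $\Isom(X)$ acts with finitely many orbits on pairs (wall, flat through it), and then upgrading approximate fixing to exact fixing via a standard compactness argument in the proper \cat setting.
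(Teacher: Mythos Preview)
Your limiting argument has a genuine gap at the point you yourself flag, but the difficulty is sharper than you describe. Even granting that your limit isometry $g$ fixes an entire half-flat $H$ with $\bd H = \alpha$ and sends $F_1$ to $F_2$, this only shows $g$ fixes the half-apartment $\alpha$ pointwise. That is \emph{not} the root-group condition: $U_\alpha$ consists of automorphisms fixing every chamber having a panel in the \emph{interior} of $\alpha$, i.e.\ the full star of each interior panel, which includes chambers \emph{outside} $\alpha$. Fixing $H \subset X$ pointwise says nothing about those chambers. So your construction yields at best elements of the pointwise stabiliser $G_\alpha$, and transitivity of $G_\alpha$ on apartments through $\alpha$ is strictly weaker than Moufang. (Your suggestion that $g$ ``acts trivially on the part of $\bd X$ behind the wall'' is exactly what needs proof and does not follow from fixing a half-flat.)

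The paper's proof confronts precisely this gap. It first reduces via Theorem~\ref{thm:structure} to the case where $\Isom(X)$ is totally disconnected, and then uses Tits' theorem that irreducible spherical buildings of rank~$\geq 3$ are automatically Moufang to reduce to $\dim(\bd X)=1$. Transitivity of $G_\alpha$ on apartments through $\alpha$ is then easy (Lemma~\ref{lem:RootFixator}). The real work is descending from $G_\alpha$ to $U_\alpha$: the paper does this by an inductive chain $G_\alpha = V_0 > V_1 > \cdots > V_{n-1}$ along the interior vertices $\eta_1,\dots,\eta_{n-1}$ of $\alpha$, setting $V_i = V_{i-1}\cap G^{\mathrm u}_{\eta_i}$, and using the Levi decomposition (Theorem~\ref{thm:Levi}) with a carefully chosen radial sequence at each step to show transitivity persists. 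The fact that each $G^{\mathrm u}_{\eta_i}$ fixes every chamber through $\eta_i$ (hence $V_{n-1}\leq U_\alpha$) is Claim~3, which rests on the projection estimate Proposition~\ref{prop:TitsNb} --- and that result genuinely requires total disconnectedness (via local ellipticity of $G^{\mathrm u}_\xi$). None of this machinery is visible in your Arzel\`a--Ascoli sketch, and I do not see how a pure limiting argument can substitute for it.
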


We point out the following building-theoretic consequence. 

\begin{corintro}\label{cor:Building}
Let $X$ be a  thick irreducible locally finite Euclidean building of dimension~${\geq 2}$. If   $\Aut(X)$ (including potential non-type-preserving automorphisms) acts cocompactly on $X$ and transitively on the set of chambers of the spherical building $\bd X$, then  $\bd X$ is Moufang. In particular $X$ is a Bruhat--Tits building. 
\end{corintro}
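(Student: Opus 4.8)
The plan is to derive Corollary~\ref{cor:Building} from Theorem~\ref{thm:Moufang} combined with the rigidity of Euclidean buildings. First I would check the hypotheses of Theorem~\ref{thm:Moufang} are met by $\bd X$. Since $X$ is a thick irreducible locally finite Euclidean building of dimension~${\geq 2}$, its visual boundary $\bd X$ is naturally a metric spherical building (in the Kleiner--Leeb sense) of dimension~${\geq 1}$; it is irreducible because $X$ is, and it is thick because $X$ is. The one genuine point to verify is that for each $\xi \in \bd X$ the stabiliser $\Isom(X)_\xi = \Aut(X)_\xi$ acts cocompactly on $X$. Here I would use the chamber-transitivity hypothesis: since $\Aut(X)$ acts transitively on the chambers of $\bd X$, and every point $\xi\in\bd X$ lies in the closure of some chamber, it suffices to show that the stabiliser of a chamber $\sigma$ of $\bd X$ acts cocompactly on $X$ — because the stabiliser of a point in $\bar\sigma$ contains a conjugate of a finite-index-type piece of $\Stab(\sigma)$, or more directly, $\Aut(X)_\xi \supseteq \Aut(X)_\sigma$ whenever $\xi\in\bar\sigma$, so cocompactness of the smaller group suffices. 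Now $\Aut(X)$ acts cocompactly on $X$ by hypothesis, and a chamber stabiliser in a building acting chamber-transitively at infinity is, up to finite index, a ``minimal parabolic''; the standard fact (which I would cite or quickly argue from the structure of Euclidean buildings, e.g.\ that the stabiliser of a chamber at infinity acts transitively on the set of sectors pointing to that chamber, hence cocompactly on $X$) gives the cocompactness of $\Aut(X)_\sigma$ on $X$, and hence of $\Aut(X)_\xi$ for every $\xi$.

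With the hypotheses verified, Theorem~\ref{thm:Moufang} applies and yields directly that $\bd X$ is a Moufang spherical building, and moreover that the image of $\Aut(X)\to\Aut(\bd X)$ contains all root subgroups. This is the first assertion of the corollary.

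For the second assertion — that $X$ itself is a Bruhat--Tits building — I would invoke the classification of Moufang spherical buildings together with the theory linking them to Euclidean buildings. A thick irreducible spherical building of rank~${\geq 2}$ satisfying the Moufang condition is, by Tits~\cite{Tits74} and Tits--Weiss~\cite{TitsWeiss}, the spherical building associated with an absolutely simple algebraic (or classical) group over a field; when in addition the building arises as the boundary of a locally finite Euclidean building, the relevant field carries a discrete valuation and is locally compact (one reads off local finiteness as local compactness / discreteness of the valuation), so the group is defined over a non-Archimedean local field. By Bruhat--Tits theory, the Euclidean building having this Moufang building as its visual boundary is then the Bruhat--Tits building of that group — here one uses that a locally finite Euclidean building of dimension~${\geq 2}$ is determined by its spherical building at infinity (Tits' classification of affine buildings of rank~${\geq 3}$, or equivalently the fact that in the irreducible higher-rank case the affine building is the unique one with the given spherical boundary). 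Thus $X$ is a Bruhat--Tits building.

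The main obstacle I expect is the cocompactness verification in the first paragraph: translating ``cocompact and chamber-transitive at infinity'' into ``every point-stabiliser at infinity is cocompact''. The subtlety is that a priori $\Aut(X)_\xi$ for $\xi$ in the interior of a chamber need not be larger than the chamber stabiliser, and one must ensure the chamber stabiliser itself is cocompact on $X$ and not merely on a sub-building or a single apartment; this requires knowing that a group acting cocompactly on $X$ and transitively on chambers of $\bd X$ has chamber-at-infinity stabilisers acting transitively on the sectors asymptotic to that chamber, which is where the locally finite Euclidean building structure (existence and transitivity on sectors in a given asymptote class) is really used. Everything after that is an application of the quoted theorem and of the Tits/Bruhat--Tits classification.
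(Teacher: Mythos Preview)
Your overall strategy matches the paper's: verify that every point stabiliser at infinity is cocompact on $X$, then invoke Theorem~\ref{thm:Moufang}. However, two things deserve correction.

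\medskip
\textbf{The cocompactness step.} Your argument for why $\Aut(X)_\sigma$ acts cocompactly on $X$ is not right as written. You propose that ``the stabiliser of a chamber at infinity acts transitively on the set of sectors pointing to that chamber, hence cocompactly on $X$''. But sectors pointing to a fixed chamber $\sigma$ are in bijection with their tips, i.e.\ with all of $X$; transitivity on them would mean $\Aut(X)_\sigma$ is transitive on $X$, which is far too strong and not implied by the hypotheses. Nothing in the assumptions gives you sector-transitivity for the chamber stabiliser. The paper instead argues via Lemma~\ref{lem:CocptOrbit}: it shows that the set $Z\subset\bd X$ of circumcentres of chambers at infinity is closed for the c\^one topology (using the correspondence between chambers at infinity and sectors based at a fixed point), and since $\Aut(X)$ is transitive on $Z$, each such circumcentre has a closed $\Aut(X)$-orbit, hence a cocompact stabiliser. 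One then passes from the setwise to the pointwise stabiliser of a chamber (finite index, since the isometry group of a chamber is finite), and concludes for every $\xi\in\bd X$. Note also that your containment $\Aut(X)_\xi\supseteq\Aut(X)_\sigma$ is false for the setwise stabiliser of $\sigma$; you need the pointwise stabiliser here.

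\medskip
\textbf{The Bruhat--Tits conclusion.} You are working much harder than necessary. In this paper (see \S\ref{sec:Moufang}), a Bruhat--Tits building is \emph{defined}, following Weiss, as a thick Euclidean building whose spherical building at infinity is Moufang. So once Theorem~\ref{thm:Moufang} gives that $\bd X$ is Moufang, the sentence ``$X$ is a Bruhat--Tits building'' is immediate from the definition; no appeal to the Tits--Weiss classification or to uniqueness of affine buildings with given boundary is needed.
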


The Moufang condition is automatic for irreducible spherical buildings of dimension~$\geq 2$  (see~\cite[Satz~1]{Tits77} or~\cite[Theorem~11.6]{Weiss03}), so the relevance of Corollary~\ref{cor:Building} is for $2$-dimensional Euclidean buildings. The conclusion of Corollary~\ref{cor:Building} has been proved in the special case of $\tilde A_2$-buildings  by H.~Van Maldeghem and K.~Van Steen~\cite{VanMaldeghem-VanSteen} (under the a priori stronger hypothesis that $X$ admits a type-preserving automorphism group acting \textbf{Weyl-transitively}\index{Weyl-transitive} on $X$ in the sense of~\cite[Definition~6.10]{AbramenkoBrown}). The case of $\tilde C_2$- and $\tilde G_2$-buildings had remained open since then. The proof presented below   is insensitive to the type of the ambient  building.

\subsection*{Acknowledgements}
This work was finalised while the first-named author was spending a trimester at EPFL, whose hospitality was greatly appreciated. We are grateful to Richard Weiss for a useful comment on the classification of locally finite Bruhat--Tits buildings.

\bigskip

\itshape
The following table of contents is provided as a reading guide; an index collects new and classical terminology.
\upshape

\tableofcontents

\section{Geometric preliminaries}

We first fix some basic terminology and notation that will be used throughout this paper. We follow the book~\cite{Bridson-Haefliger} to which we refer for further details. 

Let $X$ be a \cat space. The \textbf{visual boundary}\index{visual boundary} of $X$, consisting of asymptote classes of geodesic rays, is denoted by $\bd X$. Given $\xi, \eta \in \bd X$, the \textbf{angular distance}\index{angular distance}\index{distance! angular} between $\xi$ and $\eta$ is defined by 
$$\tangle \xi \eta = \sup_{x \in X} \aangle x \xi \eta,$$
where $\aangle x \xi \eta$ denotes the Alexandrov angle formed by the geodesic rays $[x, \xi)$ and $[x, \eta)$ at the point $x$. The set $\bd X$ endowed with the angular distance is a metric space. The \textbf{Tits distance}\index{Tits distance}\index{distance! Tits}  $\Td$ on $\bd X$ is defined as the length metric associated with the angular distance. The metric space $(\bd X, \Td)$ is called the \textbf{Tits boundary}\index{Tits boundary} of $X$. It is a \catun space which is complete if $X$ is so (see~\cite[II.9.20]{Bridson-Haefliger}); there is a customary abuse of notation here since $\Td$ may take infinite values. For all $\xi, \eta \in \bd X$, we have $\tangle \xi \eta\leq \Td(\xi, \eta )$ by definition; moreover, the equality $ \tangle \xi \eta = \Td( \xi, \eta) $ holds as soon as  $\tangle \xi \eta < \pi$.

Endowing the set of geodesic segments and  rays emanating from a base point $x \in X$ with the topology of uniform convergence on compacta, we obtain a topology on  set $\overline X= X \sqcup \bd X$, which happens to be independent on  the base point $x$. This is called the \textbf{c\^one topology}\index{cone topology@c\^one topology}. If $X$ is proper, the c\^one topology is compact and $\overline X$ is then a compactification of $X$. The c\^one topology generally differs  from the topology induced by the Tits distance. 

Given $\xi \in \bd X$, the \textbf{Busemann function}\index{Busemann function} based at $\xi$ is defined by 
$$B_\xi \colon X \times X \to \RR : (x, y) \mapsto \lim_{t \to \infty} d(r(t), y) - t,$$
where $r\colon \RR_+ \to X$ is the geodesic ray joining $x$ to $\xi$. For all $x, y, z \in X$, we have
$$B_\xi(x, z) = B_\xi(x, y) + B_\xi(y, z).$$
It follows that for any isometry $g \in \Isom(X)$ fixing $\xi$, the number 
$$\beta_\xi(g) = B_\xi(x, gx)$$
is independent of $x$. Moreover the map $\beta_\xi \colon \Isom(X)_\xi \to \RR$ is a homomorphism, called the \textbf{Busemann character}\index{Busemann character} at $\xi$. It is continuous when $X$ is proper and $\Isom(X)$ is endowed with the compact-open topology. 

It is customary to fix a basepoint $x_0 \in X$ and to call the function
$$b_\xi \colon X \to \RR : x \mapsto B_\xi(x_0, x)$$
the \textbf{Busemann function}  at $\xi$. The dependence on the choice of a base point $x_0 \in X$ is thus implicit in that notation.

\subsection{The interplay of angles and Busemann functions}

Consider two distinct points $x,y\in X$ and at point at infinity $\xi\in\bd X$.

\begin{lem}[The asymptotic angle formula]\label{lem:asy}\index{asymtotic angle formula}
If $r$ is the geodesic ray pointing towards $\xi$ with $r(0)=x$, then
$$\lim_{t\to\infty}\cos\cangle x{r(t)}y = \frac{b_{\xi}(x) - b_{\xi}(y)}{d(x,y)},$$
where the comparison angle in the limit is a non-decreasing function of $t$.
\end{lem}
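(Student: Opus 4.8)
The plan is to unwind everything into the Euclidean law of cosines. Parametrise $r$ by arclength, so that $d(x,r(t))=t$, and abbreviate $\ell=d(x,y)>0$ and $\rho(t)=d(r(t),y)$. By definition of the comparison angle in the Euclidean comparison triangle for the triple $(x,r(t),y)$, the law of cosines reads
$$\cos\cangle x{r(t)}y \;=\; \frac{t^2+\ell^2-\rho(t)^2}{2\,t\,\ell}.$$
On the other hand, the cocycle identity $B_\xi(x_0,y)=B_\xi(x_0,x)+B_\xi(x,y)$ gives $b_\xi(y)-b_\xi(x)=B_\xi(x,y)$, and $B_\xi(x,y)=\lim_{t\to\infty}\bigl(\rho(t)-t\bigr)$ since $r$ is precisely the ray from $x$ to $\xi$. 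Setting $f(t)=t-\rho(t)$, the triangle inequality $|\rho(t)-t|\le\ell$ shows $|f(t)|\le\ell$ for all $t$, and $f(t)\to b_\xi(x)-b_\xi(y)$ as $t\to\infty$.

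Substituting $\rho(t)=t-f(t)$ into the law-of-cosines formula, the $t^2$-terms cancel and one obtains
$$\cos\cangle x{r(t)}y \;=\; \frac{f(t)}{\ell}\;+\;\frac{\ell^2-f(t)^2}{2\,t\,\ell}.$$
Since $f$ is bounded and converges, the first summand tends to $(b_\xi(x)-b_\xi(y))/\ell$ and the second to $0$, which is exactly the asserted limit. The monotonicity claim is not specific to the present setup: it is the standard fact that in a \cat space the comparison angle $\cangle{c(0)}{c(s)}{c'(s')}$ between two geodesics $c,c'$ issuing from a common point is non-decreasing in each of $s$ and $s'$ (see~\cite[II.3.1]{Bridson-Haefliger}); apply this with $c=r$ and $c'$ the segment $[x,y]$, holding the second parameter fixed equal to $\ell$.

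I expect no genuine obstacle here. The only points that need a little care are keeping track of the sign convention in the Busemann function, so that $b_\xi(x)-b_\xi(y)$ (and not its negative) appears in the numerator, and recording that $f$ remains bounded so that the residual term $\tfrac{\ell^2-f(t)^2}{2t\ell}$ indeed vanishes in the limit; both are immediate from the triangle inequality. Note that properness of $X$ is not needed for this lemma.
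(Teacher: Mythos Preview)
Your proof is correct and essentially identical to the paper's: both apply the Euclidean law of cosines, rewrite the numerator by substituting $\rho(t)=t-f(t)$ (the paper writes it as $d(r(t),y)-t$, which is your $-f(t)$), and pass to the limit, with monotonicity handled by the same reference~\cite[II.3.1]{Bridson-Haefliger}. Your version is slightly more explicit in recording the boundedness of $f$ via the triangle inequality, but there is no substantive difference.
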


\begin{proof}
The angle is non-decreasing by comparison~\cite[II.3.1]{Bridson-Haefliger}. Normalize the Busemann function by $b_\xi(x)=0$. For any given $t$, the cosine law gives
$$\cos\cangle x{r(t)}y = \frac{t^2 + d^2(x,y) - d^2(r(t), y)}{2td(x,y)}.$$
This can be re-written
$$\frac{d^2(x,y) -2t\Big(d(r(t), y)-t\Big) -\Big(d(r(t), y)-t\Big)^2}{2td(x,y)}.$$
Since $d(r(t), y)-t$ converges to $b_\xi(y)$, the statement follows.
\end{proof}

\begin{lem}\label{lem:flat-half}\index{flat!half-strip}
If $b_\xi$ is constant on the segment $[x,y]$, then $x$, $y$ and $\xi$ span a Euclidean half-strip in $X$.
\end{lem}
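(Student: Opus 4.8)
The plan is to read off right angles along $[x,y]$ from the asymptotic angle formula, recognise from them a growing family of flat Euclidean right triangles, and glue these into the half-strip. Write $\ell=d(x,y)$, parametrise $[x,y]$ by arclength as $c\colon[0,\ell]\to X$ with $c(0)=x$, $c(\ell)=y$, and for $u\in[0,\ell]$ let $r_u$ be the geodesic ray from $c(u)$ towards $\xi$.

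First I would record right angles at the interior points of $[x,y]$. Since $b_\xi$ is constant on $[x,y]$, Lemma~\ref{lem:asy} applied to the pair $c(u),z$ (for $z\in\{x,y\}$ and $0<u<\ell$) says that $\cangle{c(u)}{r_u(t)}{z}$ is non-decreasing in $t$ and converges to $\pi/2$, hence is $\le\pi/2$ for all $t$; since $\aangle{c(u)}{\xi}{z}=\aangle{c(u)}{r_u(t)}{z}\le\cangle{c(u)}{r_u(t)}{z}$ (the direction from $c(u)$ to $r_u(t)$ is the direction to $\xi$, and the Alexandrov angle is at most the comparison angle), this gives $\aangle{c(u)}{\xi}{x}\le\pi/2$ and $\aangle{c(u)}{\xi}{y}\le\pi/2$. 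As $x,c(u),y$ are collinear, $\aangle{c(u)}{\xi}{x}+\aangle{c(u)}{\xi}{y}\ge\aangle{c(u)}{x}{y}=\pi$, whence
\[ \aangle{c(u)}{\xi}{x}=\aangle{c(u)}{\xi}{y}=\tfrac{\pi}{2}\qquad(0<u<\ell). \]

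Next I would fix $0<u<v<\ell$ and set $\delta=v-u=d(c(u),c(v))$, so $\aangle{c(u)}{\xi}{c(v)}=\aangle{c(u)}{\xi}{y}=\pi/2$. Applying Lemma~\ref{lem:asy} to $c(u),c(v)$ (where $b_\xi$ agrees), the comparison angle $\cangle{c(u)}{r_u(t)}{c(v)}$ is non-decreasing in $t$ with $\cos\cangle{c(u)}{r_u(t)}{c(v)}\to 0$; being also $\ge\aangle{c(u)}{\xi}{c(v)}=\pi/2$, it equals $\pi/2$ for every $t$. Thus in the geodesic triangle $c(u),r_u(t),c(v)$ the Alexandrov angle at $c(u)$ equals the comparison angle there, so by the standard flatness criterion for \cat spaces (\cite{Bridson-Haefliger}) this triangle bounds a flat Euclidean right triangle $\Delta_t$ with legs $t$ (along $r_u$) and $\delta$ (along $[x,y]$). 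Since $r_u(t)\in[c(u),r_u(t')]$ for $t<t'$, we have $\Delta_t\subseteq\Delta_{t'}$ with compatible flattenings (normalise $c(u)\mapsto(0,0)$, $c(v)\mapsto(\delta,0)$, $r_u(s)\mapsto(0,s)$); hence $\bigcup_t\Delta_t$ is isometric to a dense subset of the Euclidean half-strip $[0,\delta]\times[0,\infty)$, and its closure is an isometrically embedded flat half-strip with base edge $[c(u),c(v)]$ and sides $r_u,r_v$. To finish I would let $u\downarrow 0$, $v\uparrow\ell$: any closed convex set containing $c(u)$ together with a sequence converging to $\xi$ contains the whole ray $r_u$, so these half-strips increase, and the closure of their union is the flat half-strip with base $[x,y]$ and sides $[x,\xi)$, $[y,\xi)$ — the asserted Euclidean half-strip spanned by $x$, $y$ and $\xi$.

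The crux is sharpening the relevant angles from $\le\pi/2$ to $=\pi/2$: Lemma~\ref{lem:asy} by itself only yields the bound $\le\pi/2$ for these comparison (hence Alexandrov) angles, and the matching lower bound is squeezed out by pitting the collinearity of the interior points of $[x,y]$ against the constancy of $b_\xi$ \emph{along the whole segment}. This is precisely where, and why, the hypothesis is used — the conclusion genuinely fails under the weaker assumption $b_\xi(x)=b_\xi(y)$ (e.g.\ for two points on a horocycle in the hyperbolic plane). The remaining ingredients — compatibility of the flattenings, and that the two limit passages ($t\to\infty$, then $u\to 0$, $v\to\ell$) really build the closed half-strip over $[x,y]$ — are routine bookkeeping.
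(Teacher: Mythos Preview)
Your argument is correct but follows a different route from the paper. The paper's proof is shorter: it shows only that the Alexandrov angles at the \emph{endpoints} satisfy $\aangle{x}{y}{\xi}\ge\pi/2$ and $\aangle{y}{x}{\xi}\ge\pi/2$, using the $1$-Lipschitz property of Busemann functions (for $z\in[x,\xi)$ and $t\in[x,y]$ one has $d(t,z)\ge |b_\xi(t)-b_\xi(z)|=d(x,z)$, so the projection of $z$ to $[x,y]$ is $x$), and then appeals to the flat quadrilateral lemma~\cite[II.2.11]{Bridson-Haefliger} (in its ideal form) as a black box. You instead establish \emph{equality} $\aangle{c(u)}{\xi}{x}=\aangle{c(u)}{\xi}{y}=\pi/2$ at \emph{interior} points via the asymptotic angle formula plus collinearity, then invoke only the flat triangle lemma to produce an exhausting family of flat right triangles, and finish by a two-stage limit (in $t$, then in $u,v$). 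Your approach buys self-containedness---it avoids the quadrilateral lemma and makes the flat filling explicit---at the cost of the extra gluing bookkeeping; the paper's approach is quicker but relies on a stronger off-the-shelf rigidity statement. Both use the constancy of $b_\xi$ along the whole segment in an essential way, as you correctly emphasise.
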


\begin{proof}
By the ``ideal'' version of the flat triangle lemma, or equivalently by applying the flat quadrilateral lemma~\cite[II.2.11]{Bridson-Haefliger} at points far out on $[x,\xi)$ and $[y,\xi)$, it suffices to prove
$$\aangle xy\xi, \aangle yx\xi\geq\pi/2.$$
For any $z\in [x,\xi)$ and any $t\in[x,y]$ we have
$$d(t,z) \geq |b_\xi(t) - b_\xi(z)| = b_\xi(x) - b_\xi(z) = d(x,z).$$
It follows that the orthogonal projection of $z$ to $[x,y]$ is $x$, so that $\aangle xy\xi = \aangle xyz \geq \pi/2$ as required. The other angle is treated the same way.
\end{proof}

\subsection{Radial limits of Alexandrov angles}
Tits angles and Alexandrov angles have various semi-continuity properties. The most basic one is the following:

\begin{lem}\label{lem:lower:semi}
The Tits angle is lower semi-continuous with respect to the c\^one topology in the sense that for any convergent sequences $\zeta_n\to\zeta$ and $\teta_n\to\teta$ in $\bd X$ we have
$$\tangle\zeta\teta \leq \liminf_{n\to\infty} \tangle{\zeta_n}{\teta_n}.$$
\end{lem}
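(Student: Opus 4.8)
The plan is to prove lower semi-continuity of the Tits angle directly from the definition $\tangle\zeta\teta = \sup_{x\in X}\aangle x\zeta\teta$, combined with continuity properties of Alexandrov angles for rays varying in the c\^one topology. First I would fix $x_0\in X$ and represent each boundary point by the unique geodesic ray emanating from $x_0$; convergence $\zeta_n\to\zeta$ in the c\^one topology then means exactly that the rays $[x_0,\zeta_n)$ converge uniformly on compacta to $[x_0,\zeta)$, and similarly for $\teta_n\to\teta$. The key sub-step is to show that the based Alexandrov angle $\aangle{x_0}{\cdot}{\cdot}$ is lower semi-continuous in this sense, i.e.\ $\aangle{x_0}\zeta\teta\leq\liminf_n\aangle{x_0}{\zeta_n}{\teta_n}$. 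This follows from the comparison-angle definition of the Alexandrov angle: for rays $r,s$ from $x_0$, $\aangle{x_0}{r}{s}=\lim_{t,t'\to 0^+}\cangle{x_0}{r(t)}{s(t')}$, and the comparison angle $\cangle{x_0}{r(t)}{s(t')}$ is a continuous function of the two endpoints $r(t)$ and $s(t')$ (via the Euclidean cosine law), which move continuously as $\zeta_n\to\zeta$, $\teta_n\to\teta$. More precisely, given $\vareps>0$, pick $t,t'$ small with $\cangle{x_0}{r(t)}{s(t')}\geq \aangle{x_0}\zeta\teta-\vareps$ for the limiting rays $r=[x_0,\zeta)$, $s=[x_0,\teta)$; the corresponding points on the rays $[x_0,\zeta_n)$ and $[x_0,\teta_n)$ converge to $r(t)$ and $s(t')$, so the comparison angles converge, giving $\liminf_n\cangle{x_0}{r_n(t)}{s_n(t')}\geq\aangle{x_0}\zeta\teta-\vareps$; and since $\aangle{x_0}{\zeta_n}{\teta_n}\geq\cangle{x_0}{r_n(t)}{s_n(t')}$ (the comparison angle is a lower bound for the Alexandrov angle at small scale, by monotonicity of the comparison angle along the rays), we conclude $\liminf_n\aangle{x_0}{\zeta_n}{\teta_n}\geq\aangle{x_0}\zeta\teta-\vareps$, hence the claim after letting $\vareps\to 0$.

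Once the based angle is handled, I would bootstrap to the Tits angle. The subtlety is that $\tangle\zeta\teta=\sup_{x\in X}\aangle x\zeta\teta$ is a supremum over \emph{all} basepoints, not just $x_0$, so I cannot simply apply the previous paragraph verbatim. However, given $\vareps>0$, choose $x\in X$ with $\aangle x\zeta\teta\geq\tangle\zeta\teta-\vareps$. Now replace the basepoint $x_0$ by $x$ in the argument above: the rays $[x,\zeta_n)$ converge uniformly on compacta to $[x,\zeta)$ (c\^one topology is basepoint-independent), and the rays $[x,\teta_n)$ converge to $[x,\teta)$, so by the based lower semi-continuity at $x$ we get $\liminf_n\aangle x{\zeta_n}{\teta_n}\geq\aangle x\zeta\teta\geq\tangle\zeta\teta-\vareps$. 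Since $\tangle{\zeta_n}{\teta_n}\geq\aangle x{\zeta_n}{\teta_n}$ for every $n$ by definition of the Tits angle, we obtain $\liminf_n\tangle{\zeta_n}{\teta_n}\geq\tangle\zeta\teta-\vareps$, and letting $\vareps\to 0$ finishes the proof.

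The main obstacle, and the point requiring care, is justifying the convergence of comparison angles: one must be sure that fixing small parameters $t,t'$ \emph{first} and then letting $n\to\infty$ is legitimate. This works because the Euclidean comparison angle $\cangle{x}{p}{q}$ depends continuously (indeed, via an explicit cosine-law formula) on the three pairwise distances $d(x,p)$, $d(x,q)$, $d(p,q)$, and if $p_n\to p$, $q_n\to q$ with $d(x,p_n),d(x,q_n)$ bounded away from $0$, these distances converge. A mild technical point is the degenerate case $\tangle\zeta\teta=0$, where the inequality is trivial, and one should also note that the argument is unaffected if some $\tangle{\zeta_n}{\teta_n}$ or the limit equals $\pi$ (the cone-topology convergence of rays and the cosine-law formula remain valid). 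No completeness or properness of $X$ is needed for this lemma. One could alternatively invoke the known fact (see~\cite[II.9.5(3)]{Bridson-Haefliger} and the discussion of the Tits metric there) that $\aangle x\zeta\teta$ is lower semi-continuous as a function on $\overline X\times\bd X\times\bd X$, but giving the short self-contained comparison-angle argument keeps the exposition transparent.
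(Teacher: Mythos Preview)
Your overall strategy---reduce to comparison angles, use their continuity in the endpoints, then pass to the supremum---is sound, and it is in fact how this is proved in~\cite[II.9.5(2)]{Bridson-Haefliger}, which is all the paper invokes. However, your execution contains a reversed inequality that breaks the argument for the intermediate claim.

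You write ``$\aangle{x_0}{\zeta_n}{\teta_n}\geq\cangle{x_0}{r_n(t)}{s_n(t')}$ (the comparison angle is a lower bound for the Alexandrov angle at small scale, by monotonicity of the comparison angle along the rays)''. This is backwards. In a \cat space the map $(t,t')\mapsto\cangle{x_0}{r(t)}{s(t')}$ is \emph{non-decreasing} (see~\cite[II.3.1]{Bridson-Haefliger}), so the Alexandrov angle $\aangle{x_0}{r}{s}=\lim_{t,t'\to 0^+}\cangle{x_0}{r(t)}{s(t')}$ is the \emph{infimum} of the comparison angles; hence $\aangle{x_0}{\zeta_n}{\teta_n}\leq\cangle{x_0}{r_n(t)}{s_n(t')}$ for every fixed $t,t'>0$, not $\geq$. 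With the inequality going the wrong way, the chain collapses and you have not shown lower semi-continuity of $\aangle{x_0}{\cdot}{\cdot}$. (That statement is nevertheless true---indeed $\aangle{x_0}{\cdot}{\cdot}$ is continuous for fixed $x_0$, see~\cite[II.9.2(1)]{Bridson-Haefliger}---but it requires a different argument.)

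The clean fix is to bypass the Alexandrov angle entirely and run your comparison-angle argument directly for the Tits angle, using the other monotone limit: $\tangle\zeta\teta=\lim_{t\to\infty}\cangle{x_0}{r(t)}{s(t)}=\sup_{t>0}\cangle{x_0}{r(t)}{s(t)}$ (\cite[II.9.8(1)]{Bridson-Haefliger}). Now the inequality you need, $\tangle{\zeta_n}{\teta_n}\geq\cangle{x_0}{r_n(t)}{s_n(t)}$, genuinely holds for every fixed $t$, and your continuity-of-comparison-angles step gives $\liminf_n\tangle{\zeta_n}{\teta_n}\geq\cangle{x_0}{r(t)}{s(t)}$ for all $t$, hence $\geq\tangle\zeta\teta$. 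This also makes your ``bootstrap'' paragraph unnecessary: no approximation by a good basepoint $x$ is needed.
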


\begin{proof}
See Proposition~II.9.5(2) in~\cite{Bridson-Haefliger}.
\end{proof}

However, we shall repeatedly need a \emph{lower} bound on the Tits angle between limits of points at infinity. This will be provided by Alexandrov angles measured along radial limits, as follows. We  say that a sequence of isometries $(g_n)$ is \textbf{radial}\index{radial sequence} for a point $\zeta \in\bd X$ if for some (hence all) $p \in X$, the sequence $g_n^{-1}p$ converges to $\zeta$ and remains in a bounded neighbourhood of the geodesic ray $[p, \zeta)$.

\begin{remark}\label{rem:radial}
Observe that $(g_n)$ is radial for $\zeta$ if and only if for some (hence any) ray $\ro$ pointing to $\zeta$, there is a sequence $t_n\to+\infty$ such that $g_n \ro(t_n)$ remains bounded.
\end{remark}

As pointed out to us by Eric Swenson, the following result appears in \cite[Lemma~7]{Swenson13}. 

\begin{prop}\label{prop:upper:semi}
Let $\zeta,\teta\in\bd X$ and let $(g_n)$ be a radial sequence for $\zeta$. Let $\zeta', \teta'$ be accumulation points of the sequences $(g_n \zeta)$ and $(g_n \teta)$ respectively. Then we have
$$\tangle{\zeta'}{\teta'} = \aangle x{\zeta'}{\teta'} = \tangle\zeta\teta$$
whenever $x\in X$ is an accumulation point of a sequence $g_n \ro(t_n)$ where $\ro\colon\RR_+\to X$ is a ray pointing to $\zeta$ and $t_n\to+\infty$.
\end{prop}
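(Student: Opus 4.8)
The plan is to prove
$$\aangle{x}{\zeta'}{\teta'}\ \le\ \tangle{\zeta'}{\teta'}\ \le\ \tangle{\zeta}{\teta}\qquad\text{and}\qquad \aangle{x}{\zeta'}{\teta'}\ \ge\ \tangle{\zeta}{\teta},$$
which together force the four quantities to coincide. After passing to a subsequence we may assume $g_n\ro(t_n)\to x$, $g_n\zeta\to\zeta'$ and $g_n\teta\to\teta'$ in the cone topology, and that $\zeta\neq\teta$ (otherwise everything vanishes). The first chain is soft: $\aangle{x}{\zeta'}{\teta'}\le\tangle{\zeta'}{\teta'}$ since the Tits angle is the supremum of the Alexandrov angles over all basepoints, and $\tangle{\zeta'}{\teta'}\le\liminf_n\tangle{g_n\zeta}{g_n\teta}=\tangle{\zeta}{\teta}$ by lower semicontinuity of the Tits angle (Lemma~\ref{lem:lower:semi}) and isometry invariance. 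Thus the whole content is the lower bound $\aangle{x}{\zeta'}{\teta'}\ge\tangle{\zeta}{\teta}$.

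The difficulty is that the Alexandrov angle at $x$ is an \emph{infimum} of comparison angles and hence only upper semicontinuous under the limits at hand, so a naive passage to the limit gives the wrong inequality. The remedy is to freeze a scale $s>0$. Because $X$ is proper, a geodesic ray depends continuously on its origin and its endpoint (uniformly on compacta for the cone topology); applying this after the $g_n$, the ray $[g_n\ro(t_n),g_n\zeta)$, which is $s\mapsto g_n\ro(t_n+s)$, and the ray $[g_n\ro(t_n),g_n\teta)=g_n\circ c_n$, where $c_n:=[\ro(t_n),\teta)$, converge to $r_1:=[x,\zeta')$ and $r_2:=[x,\teta')$ respectively. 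Hence $g_n\ro(t_n+s)\to r_1(s)$ and $g_nc_n(s)\to r_2(s)$, and as a comparison angle depends only on the three mutual distances and is isometry invariant,
$$\cangle{x}{r_1(s)}{r_2(s)}=\lim_n\cangle{\ro(t_n)}{\ro(t_n+s)}{c_n(s)}\ \ge\ \liminf_n\aangle{\ro(t_n)}{\zeta}{\teta}.$$
Here the inequality is the elementary \cat fact that a comparison angle dominates the corresponding Alexandrov angle, the latter being measured at $\ro(t_n)$ between the rays $[\ro(t_n),\zeta)\ni\ro(t_n+s)$ and $c_n\ni c_n(s)$, so it equals $\aangle{\ro(t_n)}{\zeta}{\teta}$, independently of $s$. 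Letting $s\to0^+$, the left-hand side decreases to $\aangle{x}{\zeta'}{\teta'}$, so $\aangle{x}{\zeta'}{\teta'}\ge\liminf_n\aangle{\ro(t_n)}{\zeta}{\teta}$.

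It remains to check that this is at least $\tangle{\zeta}{\teta}$; since $t_n\to+\infty$, it suffices to show that $\aangle{\ro(t)}{\zeta}{\teta}\to\tangle{\zeta}{\teta}$ as $t\to+\infty$ along any ray $\ro$ pointing to $\zeta$ — i.e. that the supremum defining the Tits angle is attained in the limit along a ray toward either endpoint. I would prove this with the tools above. From $\aangle{\ro(t)}{\ro(0)}{\zeta}=\pi$ and the triangle inequality for angles at $\ro(t)$ we get $\aangle{\ro(t)}{\zeta}{\teta}\ge\pi-\aangle{\ro(t)}{\ro(0)}{\teta}$; the asymptotic angle formula (Lemma~\ref{lem:asy}), applied at $\ro(t)$ with the ray toward $\teta$ and comparison point $\ro(0)$, bounds $\cos\aangle{\ro(t)}{\ro(0)}{\teta}$ from below by $\bigl(b_\teta(\ro(t))-b_\teta(\ro(0))\bigr)/t$, so that
$$\cos\aangle{\ro(t)}{\zeta}{\teta}\ \le\ \frac{b_\teta(\ro(0))-b_\teta(\ro(t))}{t}.$$
Two further applications of Lemma~\ref{lem:asy} at $\ro(0)$ (with the ray toward $\teta$ and comparison point $\ro(t)$, and with the ray toward $\zeta$ and comparison point $\sigma(u)$, where $\sigma:=[\ro(0),\teta)$) show that $\lim_{t\to+\infty}\bigl(b_\teta(\ro(0))-b_\teta(\ro(t))\bigr)/t=\cos\tangle{\zeta}{\teta}$ — this is the asymptotic slope of $b_\teta$ along a ray toward $\zeta$, and it uses that $\tangle{\zeta}{\teta}$ equals the supremum of the comparison angles $\cangle{\ro(0)}{\sigma(u)}{\ro(t)}$, cf. \cite[II.9.8]{Bridson-Haefliger}. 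Hence $\liminf_t\aangle{\ro(t)}{\zeta}{\teta}\ge\tangle{\zeta}{\teta}$, completing the proof.

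I expect the only delicate point to be this last step: the estimates coming from Lemma~\ref{lem:asy} are naturally \emph{upper} bounds on angles, so one must track the sign through the Busemann function to extract the \emph{lower} bound on $\aangle{\ro(t)}{\zeta}{\teta}$ — alternatively one simply cites the identity $\tangle{\zeta}{\teta}=\lim_{t\to\infty}\aangle{\ro(t)}{\zeta}{\teta}$. The rest is routine bookkeeping with the continuity of geodesic rays and the semicontinuity of angles.
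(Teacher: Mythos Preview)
Your proof is correct and follows the same chain of inequalities as the paper: $\tangle{\zeta'}{\teta'}\ge\aangle{x}{\zeta'}{\teta'}\ge\lim_n\aangle{\ro(t_n)}{\zeta}{\teta}=\tangle{\zeta}{\teta}\ge\tangle{\zeta'}{\teta'}$. The difference is only in packaging: where the paper directly cites upper semi-continuity of Alexandrov angles \cite[II.9.2(2)]{Bridson-Haefliger} and the radial limit identity $\lim_{t\to\infty}\aangle{\ro(t)}{\zeta}{\teta}=\tangle{\zeta}{\teta}$ from \cite[II.9.8(2)]{Bridson-Haefliger}, you rederive both by hand --- the first via your ``frozen scale'' argument with continuity of rays, the second via the Busemann estimate and the triangle inequality for angles at $\ro(t)$. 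Your own closing remark (``alternatively one simply cites the identity\ldots'') is exactly what the paper does; invoking those two references collapses your argument to a few lines.
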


\begin{proof}
Upon extracting, we assume that the accumulation points under consideration are the limit of the corresponding sequence. By definition, $\tangle{\zeta'}{\teta'} \geq \aangle x{\zeta'}{\teta'}$. The latter is bounded below by
$$\limsup_{n\to\infty} \aangle{g_n \ro(t_n)}{g_n \zeta}{g_n \teta} = \limsup_{n\to\infty} \aangle{\ro(t_n)}{\zeta}{\teta}$$
because of the upper semi-continuity of Alexandrov angles, see Proposition~II.9.2(2) in~\cite{Bridson-Haefliger}. The right hand side is actually a limit and equals $\tangle{\zeta}{\teta}$ by Proposition~II.9.8(2) from~\cite{Bridson-Haefliger}. Finally,
$$\tangle{\zeta}{\teta}=\tangle{g_n \zeta}{g_n \teta} \geq \tangle{\zeta'}{\teta'}$$
by Lemma~\ref{lem:lower:semi}, and hence equality holds everywhere.
\end{proof}

\subsection{Joins and links}\label{sec:JoinsLinks}
We use the symbol $\circ$ for the operation of \textbf{spherical join}\index{spherical join}\index{join} of two metric spaces (see~\cite[I.5.13]{Bridson-Haefliger}). The following criterion allows to recognize when a subset of a \catun space decomposes as a spherical join. By convention, a subset of a \catun space is called \textbf{convex}\index{convex} if it is $\pi$-convex, i.e. if for any two points at distance~$< \pi$, the  geodesic segment joining them is contained in the set.

\begin{lem}\label{lem:Join}
Let $Z$ be a \catun space and  $Z_1, Z_2 \subset Z$ be closed convex subsets such that $d(z_1, z_2) = \pi/ 2$ for all $(z_1, z_2)   \in Z_1 \times Z_2$. Then  the natural map $f \colon Z_1 \circ Z_2 \to Z$ sending the geodesic from $z_1$ to $z_2$ isometrically to the geodesic segment $[z_1, z_2]$ in $Z$  for all $(z_1, z_2)    \in Z_1 \times Z_2$  is an isometric embedding.
\end{lem}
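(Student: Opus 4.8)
The plan is to construct the map $f$ explicitly and verify it is a well-defined isometric embedding by checking distances on pairs of points, reducing everything to the metric structure of the spherical join. Recall that a point of $Z_1 \circ Z_2$ is a formal parameter $(z_1, z_2, \theta)$ with $z_i \in Z_i$ and $\theta \in [0, \pi/2]$, and the join metric is given by the spherical law of cosines:
$$\cos d_{Z_1 \circ Z_2}\big((z_1,z_2,\theta),(z_1',z_2',\theta')\big) = \cos\theta\cos\theta' \cos d_{Z_1}(z_1,z_1') + \sin\theta\sin\theta'\cos d_{Z_2}(z_2,z_2').$$
Here $\theta$ measures the distance to the $Z_1$-factor, so $(z_1, z_2, 0)$ is identified with $z_1$ and $(z_1, z_2, \pi/2)$ with $z_2$. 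First I would define $f(z_1, z_2, \theta)$ to be the point at distance $\theta$ along the geodesic segment $[z_1, z_2]$ in $Z$; this segment exists and is unique because $d(z_1, z_2) = \pi/2 < \pi$ and $Z$ is \catun. One checks $f$ is well-defined (independent of the chosen representative when $\theta = 0$ or $\pi/2$) directly from the definitions.

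The core of the proof is the distance computation. Fix two points $p = f(z_1, z_2, \theta)$ and $p' = f(z_1', z_2', \theta')$ in the image. I want to show $d(p, p') = d_{Z_1\circ Z_2}\big((z_1,z_2,\theta),(z_1',z_2',\theta')\big)$. The inequality $d(p,p') \leq d_{Z_1 \circ Z_2}(\cdots)$, i.e. that $f$ is $1$-Lipschitz, follows from a comparison argument: one builds a model configuration in the round sphere $\mathbf{S}^2$ (or an appropriate higher sphere) realizing the join distances and uses the \catun comparison inequality applied to suitable geodesic triangles with vertices among $z_1, z_1', z_2, z_2'$. Concretely, consider the geodesic triangle in $Z$ with vertices $z_1$, $z_2$ and $z_1'$, say; then $p$ lies on the side $[z_1,z_2]$, and the \catun inequality controls $d(p, z_1')$ via the comparison triangle. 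Iterating (comparing $p$ to $z_2'$, then $p$ to $p'$ which lies on $[z_1', z_2']$) and invoking that all cross-distances $d(z_i, z_j')$ between the two factors equal $\pi/2$, one assembles the upper bound. The reverse inequality — that $f$ does not decrease distances — is the point where I expect the main difficulty, and it is where the hypothesis that $Z_1, Z_2$ are \emph{convex} (hence geodesically convex at scale $< \pi$) and that the entire cross-distance is \emph{exactly} $\pi/2$ gets used: the idea is that a geodesic in $Z$ from $p$ to $p'$, projected appropriately onto $Z_1$ and onto $Z_2$, cannot be shorter than the join geodesic. One way to make this rigorous is to show that $Z_1$ and $Z_2$, together with the fact that every point of $[z_1, z_2]$ sees all of $Z_1 \cup Z_2$ at the "join angle", force the image $f(Z_1 \circ Z_2)$ to be convex in $Z$ and isometric to the join; alternatively, one invokes a uniqueness-of-geodesics argument in $Z$ to identify $[p, p']$ with the image of the join geodesic.

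The cleanest route for the lower bound, which I would pursue, is this: show first that for $z_1 \in Z_1$ and any $q \in [z_1', z_2']$ one has $d(z_1, q) = $ the spherical-join value, by treating the triangle $z_1, z_1', z_2'$ (all relevant pairwise distances among $z_1', z_2', z_1$ are known: $d(z_1',z_2')$ arbitrary $< \pi$ is impossible here since it's actually controlled, $d(z_1,z_1') \le \pi$, $d(z_1, z_2') = \pi/2$) — here equality, not just $\le$, holds because the side $[z_1', z_2']$ is a geodesic of length $\pi/2$ and $z_1$ is at distance exactly $\pi/2$ from its far endpoint, which pins down the comparison configuration and forces the triangle to be "flat" in the spherical sense, so that $d(z_1, q)$ is read off exactly. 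Then, with the distances from $p$ to the endpoints $z_1', z_2'$ in hand (equal to their join values), repeat the argument with the triangle $p, z_1', z_2'$, noting $d(p, z_2')$ was just computed and $d(z_1', z_2') = \pi/2$, to get $d(p, q)$ exactly for $q = p'$ on $[z_1', z_2']$. This bootstrapping, using at each stage that one vertex sits at distance exactly $\pi/2$ from a full geodesic segment of length $\le \pi/2$, converts the \catun comparison inequalities into equalities and yields $d(p,p') = d_{Z_1 \circ Z_2}(\cdots)$ on the nose, completing the proof that $f$ is an isometric embedding. The main obstacle is organizing this case analysis (the endpoints $\theta, \theta' \in \{0, \pi/2\}$ versus interior, and degenerate triangles) so that the rigidity of the \catun inequality is invoked correctly at each step; once the "distance from a point to a short geodesic opposite it" lemma is nailed down, the rest is bookkeeping.
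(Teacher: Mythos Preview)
The paper does not give its own argument here: it simply cites Lemma~4.1 of Lytchak's \emph{Rigidity of spherical buildings and joins}. Your overall strategy --- \catun comparison for the upper bound and a rigidity (flat-triangle) argument for the lower bound --- is the right one and is in the spirit of that reference. However, your stated mechanism for rigidity has a gap.

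You claim the triangle $(z_1, z_1', z_2')$ is flat ``because the side $[z_1', z_2']$ is a geodesic of length $\pi/2$ and $z_1$ is at distance exactly $\pi/2$ from its far endpoint, which pins down the comparison configuration''. But knowing all three side lengths only determines the \emph{comparison} triangle; it does not by itself force the \catun triangle in $Z$ to be isometric to it. What actually forces flatness is an angle equality: since $Z_1$ is $\pi$-convex, the geodesic $[z_1', z_1]$ lies in $Z_1$ (when $d(z_1,z_1')<\pi$), and every point of $Z_1$ is at distance exactly $\pi/2$ from $z_2'$; the first variation formula then gives $\aangle{z_1'}{z_1}{z_2'} = \pi/2$, which coincides with the comparison angle at $z_1'$. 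Now the rigidity clause of the \catun comparison makes $(z_1, z_1', z_2')$ span a spherical surface, and the equality $d(z_1, q) = $ (join value) follows. This is exactly where the convexity hypothesis enters, and your write-up does not invoke it at this point.

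Your second bootstrap step has the same issue: to conclude that $(p, z_1', z_2')$ is flat you again need an Alexandrov angle equal to its comparison angle, but now $p \notin Z_1 \cup Z_2$, so the convexity/first-variation trick does not apply to $p$ directly. One must instead extract the needed angle from the flat triangles already established (for instance, via the two flat triangles sharing the side $[z_1, z_2] \ni p$), or run a flat-quadrilateral argument on $(z_1, z_2, z_2', z_1')$ using that all four vertex angles are $\pi/2$. As written, this step is asserted rather than proved.
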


\begin{proof}
See~\cite[Lemma~4.1]{Lytchak_RigidityJoins}.
\end{proof}

Given a  pair $\{\xi, \xi'\}$ in a \catun space $Z$ with $d(\xi, \xi') \geq \pi$, we define its \textbf{link}\index{link} as
$$\Link(\xi, \xi')  = \{z \in Z \mid d(z, \xi) = d(z, \xi') = \frac \pi 2\}.$$
It is a closed convex subset, since it coincides with the intersection of the two closed balls of radius~$\pi/ 2$ respectively centred at $\xi$ and $\xi'$. We also define the set\index{$Pi$@$\Pi(\xi, \xi')$}  
$$\Pi(\xi, \xi') $$
as the union of $\{\xi, \xi'\}$ and all geodesic segments of length $\pi$ joining $\xi$ to $\xi'$. Notice that if $d(\xi, \xi') > \pi$, then $\Link(\xi, \xi')  = \varnothing$ and $\Pi(\xi, \xi') = \{\xi, \xi'\}$.

\begin{lem}\label{lem:Links}
Let $Z$ be a \catun space. Given $\xi, \xi' \in Z$ with $d(\xi, \xi')\geq  \pi$, the set $\Pi(\xi, \xi') $   is closed and convex, and decomposes as the spherical join $\{\xi, \xi'\}  \circ \Link(\xi, \xi')$.
\end{lem}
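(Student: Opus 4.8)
The plan is to deduce everything from the join criterion, Lemma~\ref{lem:Join}, after peeling off a degenerate case. First I would dispose of the case $\Link(\xi,\xi') = \varnothing$; this happens in particular whenever $d(\xi,\xi') > \pi$, since any $z \in \Link(\xi,\xi')$ would force $d(\xi,\xi') \le d(\xi,z)+d(z,\xi') = \pi$. When $\Link(\xi,\xi') = \varnothing$ there is no geodesic segment of length $\pi$ from $\xi$ to $\xi'$, as its midpoint would be at distance $\pi/2$ from both endpoints and hence lie in $\Link(\xi,\xi')$; thus $\Pi(\xi,\xi') = \{\xi,\xi'\}$, which is closed, is vacuously $\pi$-convex (its two points are at distance $\ge\pi$), and is by convention the spherical join $\{\xi,\xi'\}\circ\varnothing$. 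So from now on assume $\Link := \Link(\xi,\xi')$ is non-empty; fixing $z_0\in\Link$ gives $d(\xi,\xi') \le d(\xi,z_0)+d(z_0,\xi') = \pi$, hence $d(\xi,\xi') = \pi$.

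Next I would apply Lemma~\ref{lem:Join} with $Z_1 = \{\xi,\xi'\}$ and $Z_2 = \Link$. Both are closed and convex (the $\pi$-convexity of $\{\xi,\xi'\}$ is vacuous, that of $\Link$ was noted in the text), and $d(z_1,z_2) = \pi/2$ for every $(z_1,z_2)\in Z_1\times Z_2$ by the definition of $\Link$. The lemma then yields an isometric embedding $f\colon \{\xi,\xi'\}\circ\Link \to Z$, and I claim its image is exactly $\Pi(\xi,\xi')$. On one hand, for each $z\in\Link$ the geodesics $[\xi,z]$ and $[z,\xi']$ are unique (their endpoints lie at distance $\pi/2 < \pi$) and have length $\pi/2$, so their concatenation is a path of length $\pi$ between two points at distance $\pi$, hence a geodesic segment of length $\pi$ from $\xi$ to $\xi'$; the map $f$ carries the meridian of the suspension $\{\xi,\xi'\}\circ\Link$ through $z$ (the semicircular geodesic of length $\pi$ from the pole $\xi$ to the pole $\xi'$ passing through $z$) isometrically onto this segment, and carries the poles to $\xi$ and $\xi'$. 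On the other hand, any geodesic segment of length $\pi$ from $\xi$ to $\xi'$ has its midpoint $z$ at distance $\pi/2$ from both endpoints, so $z\in\Link$, and by uniqueness of short geodesics the segment is the concatenation of $[\xi,z]$ and $[z,\xi']$, hence lies in the image of $f$. Therefore $f$ is a surjective isometric embedding onto $\Pi(\xi,\xi')$, which is the asserted join decomposition; this identification of the image is the one point requiring genuine care.

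It then remains to see that $\Pi(\xi,\xi')$ is closed and convex in $Z$. Closedness I would establish directly: every $p\in\Pi(\xi,\xi')$ satisfies $d(p,\xi)+d(p,\xi')=\pi$ (true along the meridians, and true for $p\in\{\xi,\xi'\}$ because $d(\xi,\xi')=\pi$), so if $p_n\to p$ with $p_n\in\Pi(\xi,\xi')$ then $d(p,\xi)+d(p,\xi')=\pi$; if one of these distances vanishes then $p\in\{\xi,\xi'\}$, and otherwise both lie in $(0,\pi)$, so $[\xi,p]$ and $[p,\xi']$ are geodesics whose concatenation has length $\pi = d(\xi,\xi')$ and is thus a geodesic segment of length $\pi$ from $\xi$ to $\xi'$ through $p$; either way $p\in\Pi(\xi,\xi')$. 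For convexity I would transport geodesics across $f$: the spherical join of two $\pi$-geodesic \catun spaces is again a ($\pi$-geodesic) \catun space, see~\cite[\S I.5]{Bridson-Haefliger}, and both $\{\xi,\xi'\}$ and $\Link$ qualify ($\Link$ being a $\pi$-convex subset of the $\pi$-geodesic space $Z$). Hence for $p,q\in\Pi(\xi,\xi')$ with $d(p,q)<\pi$ there is a geodesic joining $f^{-1}(p)$ to $f^{-1}(q)$ inside the join, and its $f$-image is a geodesic segment of length $<\pi$ from $p$ to $q$ in $Z$, which by uniqueness of short geodesics in $Z$ must coincide with $[p,q]$; as this image lies in $f(\{\xi,\xi'\}\circ\Link)=\Pi(\xi,\xi')$, we get $[p,q]\subseteq\Pi(\xi,\xi')$, i.e. $\Pi(\xi,\xi')$ is convex.

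I expect the main obstacle to be purely bookkeeping rather than conceptual: matching the image of $f$ with $\Pi(\xi,\xi')$ exactly, and citing that the join is $\pi$-geodesic. If one wishes to avoid the latter citation, convexity can instead be obtained by an explicit case analysis on the positions of $p$ and $q$, reducing to a single meridian, to a geodesic passing through $\xi$ or through $\xi'$, or to a sub-join $\{\xi,\xi'\}\circ[z,z']$ over a geodesic $[z,z']\subseteq\Link$ (using that $\Link$ is $\pi$-geodesic), each of these sitting inside a round $2$-sphere where convexity is immediate.
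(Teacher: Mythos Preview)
Your proof is correct and follows essentially the same route as the paper: invoke Lemma~\ref{lem:Join} with $Z_1=\{\xi,\xi'\}$ and $Z_2=\Link(\xi,\xi')$ to obtain the join structure, then deduce closedness and convexity. The paper's proof is simply a terse two-line version of yours, leaving implicit the degenerate case, the identification of the image of $f$ with $\Pi(\xi,\xi')$, and the transport-of-geodesics argument for convexity that you spell out.
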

\begin{proof}
The fact that $\Pi(\xi, \xi') $  is convex and isometric to the spherical join $\{\xi, \xi'\}  \circ \Link(\xi, \xi')$ is a consequence of Lemma~\ref{lem:Join}. Its closedness follows since $\Link(\xi, \xi')$ is closed.
\end{proof}

When $Z$ is the visual boundary of a \cat space $X$, the link of a pair of opposite points has another interpretation. In order to formulate it, let us recall following~\cite{Caprace-Monod_structure}  that a point $\xi'
\in \bd X$ is \textbf{opposite}\index{opposite}\index{boundary point!opposite}\index{point at infinity!opposite}  to $\xi$ if there is a geodesic
line in $X$ whose extremities are $\xi$ and $\xi'$.  We say that
$\xi' $ is \textbf{antipodal}\index{antipodal}\index{boundary point!antipodal}\index{point at infinity!antipodal}  to $\xi$ if $\tangle \xi { \xi'} =
\pi$ or, equivalently, if $\Td(\xi, \xi') \geq \pi$. Thus the set $\Op(\xi)$\index{$Op$@$\Op(\xi)$} of points opposite to $\xi$ is contained in the set $\Ant(\xi)$\index{$Ant$@$\Ant(\xi)$} consisting of all antipodes of $\xi$.

Given $\xi' \in \Op(\xi)$, we denote by\index{$P$@$P(\xi, \xi')$}   
$$P(\xi, \xi')$$
the union of all geodesic lines joining $\xi$ to $\xi'$. Recall that $P(\xi, \xi')$ is a closed convex subset of $X$; moreover there is a canonical isometric identification
\begin{equation}\label{eq:P}
P(\xi, \xi') \cong \RR \times Y
\end{equation}
for some  \cat space $Y$ which is complete if $X$ is so (see~\cite[II.2.14]{Bridson-Haefliger}), where the boundary of the line factor is precisely the pair $\{\xi, \xi'\}$. In particular, this decomposition of $P(\xi, \xi')$ yields a canonical isometric embedding of $\bd Y$ into $\Link(\xi, \xi')$. It turns out that this embedding is surjective, hence an isometry.

\begin{lem}\label{lem:P=Pi}
Let $X$ be a \cat space. Given two opposite points  $\xi, \xi' \in \bd X$, we have 
$$\Pi(\xi, \xi') = \bd P(\xi, \xi').$$ 
In particular  $\Link(\xi, \xi')$ is canonically isometric to the visual boundary of the factor $Y$ in the decomposition (\ref{eq:P}).
\end{lem}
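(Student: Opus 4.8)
The plan is to prove $\Pi(\xi,\xi') = \bd P(\xi,\xi')$ by establishing the two inclusions separately, each time exploiting the product decomposition (\ref{eq:P}) $P(\xi,\xi') \cong \RR\times Y$ and the standard description of the boundary of a metric product.

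\emph{The inclusion $\bd P(\xi,\xi') \se \Pi(\xi,\xi')$.} Recall that the visual boundary of a product $\RR\times Y$ is the spherical join $\bd\RR \circ \bd Y = \{\xi,\xi'\}\circ \bd Y$; concretely, every boundary point of $\RR\times Y$ lies on a geodesic segment of Tits length $\le \pi$ joining the two poles $\xi,\xi'$ of the line factor, and such a segment has length exactly $\pi$ unless its endpoint is one of the poles. Since $\bd P(\xi,\xi')$ embeds isometrically (for the Tits metric) into $\bd X$ — geodesics of $P(\xi,\xi')$ being geodesics of $X$, as $P(\xi,\xi')$ is convex — any such segment of length $\pi$ from $\xi$ to $\xi'$ inside $\bd P(\xi,\xi')$ is also a geodesic segment of length $\pi$ from $\xi$ to $\xi'$ in $\bd X$. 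Hence every point of $\bd P(\xi,\xi')$ lies in $\Pi(\xi,\xi')$ by definition of the latter.

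\emph{The inclusion $\Pi(\xi,\xi') \se \bd P(\xi,\xi')$.} Let $\sigma\colon [0,\pi]\to \bd X$ be a geodesic (for $\Td$) with $\sigma(0)=\xi$, $\sigma(\pi)=\xi'$, and set $\eta = \sigma(\pi/2)$; it suffices to realise an arbitrary interior point this way, so we focus on $\eta$, which satisfies $\tangle\xi\eta = \tangle{\xi'}\eta = \pi/2$, i.e.\ $\eta\in\Link(\xi,\xi')$. Fix a base geodesic line $\ell$ from $\xi$ to $\xi'$; pick a point $p$ on $\ell$, let $b = b_\xi$ be the Busemann function based at $p$, and consider the geodesic ray $r$ from $p$ towards $\eta$. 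Because $\tangle\xi\eta = \pi/2$, the asymptotic angle formula (Lemma~\ref{lem:asy}) applied with the ray towards $\xi$ gives that $b_\xi$ is constant along $r$; likewise $b_{\xi'}$ is constant along $r$. In particular $b_\xi$ is constant on any segment $[p, r(s)]$, so by Lemma~\ref{lem:flat-half} the triple $p, r(s), \xi$ spans a Euclidean half-strip, and symmetrically for $\xi'$. Letting $s\to\infty$ and using properness together with the convexity of $P(\xi,\xi')$, these half-strips accumulate to a flat half-plane bordered by $\ell$ and containing $r$; since the union of geodesic lines asymptotic to $\xi$ and $\xi'$ through points of such a flat half-plane all lie in $P(\xi,\xi')$, we conclude $\eta\in\bd P(\xi,\xi')$. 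An arbitrary interior point $\sigma(t)$ is handled identically, replacing "$\pi/2$" by the appropriate angles $t$ and $\pi-t$ and noting that the relevant Busemann functions are affine (hence still have the requisite constancy behaviour) along the ray towards $\sigma(t)$; the poles $\xi,\xi'$ themselves are in $\bd P(\xi,\xi')$ trivially.

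\emph{The final clause} then follows at once: combining $\Pi(\xi,\xi') = \bd P(\xi,\xi')$ with Lemma~\ref{lem:Links}, which identifies $\Pi(\xi,\xi')$ with the join $\{\xi,\xi'\}\circ\Link(\xi,\xi')$, and with the product decomposition $\bd P(\xi,\xi') = \bd(\RR\times Y) = \{\xi,\xi'\}\circ\bd Y$, one reads off a canonical isometry $\Link(\xi,\xi')\cong\bd Y$ — this is precisely the surjectivity, claimed just before the lemma, of the canonical embedding $\bd Y \hookrightarrow \Link(\xi,\xi')$ coming from (\ref{eq:P}). I expect the main obstacle to be the second inclusion, specifically the passage from the family of Euclidean \emph{half-strips} (one for each finite $s$) to an actual \emph{flat half-plane} inside $P(\xi,\xi')$ whose boundary contains $\eta$: one must argue carefully, using properness and the \cat inequality, that the half-strips can be taken nested or convergent and that their limit is genuinely flat and genuinely asymptotic to both $\xi$ and $\xi'$, so that the ray $r$ extends to a full line in $P(\xi,\xi')$.
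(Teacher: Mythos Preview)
Your overall strategy for the hard inclusion $\Pi(\xi,\xi')\subseteq\bd P(\xi,\xi')$ is the same as the paper's---build a flat half-plane bounded by $\ell$ whose ideal boundary is the given Tits-geodesic---but the execution has gaps. The appeal to Lemma~\ref{lem:asy} does \emph{not} by itself give that $b_\xi$ is constant along $r$: it only yields $b_\xi(r(s))\le b_\xi(p)$, since the comparison angle is $\le\tangle\xi\eta=\pi/2$ by Lemma~\ref{lem:tangle}. To force equality you would need the symmetric bound $b_{\xi'}(r(s))\le b_{\xi'}(p)$ together with the inequality $b_\xi+b_{\xi'}\ge b_\xi(p)+b_{\xi'}(p)$ (which follows from the triangle inequality along $\ell$), none of which you invoke. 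Second, you appeal to ``properness'', but the lemma is stated for an arbitrary \cat space. In fact properness is unnecessary: once the Busemann constancy is established, the half-strips from Lemma~\ref{lem:flat-half} are \emph{nested} as $s$ grows, and their union (glued with the symmetric strips towards $\xi'$) is already the flat half-plane you want---no limiting argument is required. Finally, you omit the case $\Td(\xi,\xi')>\pi$, where $\Pi(\xi,\xi')=\{\xi,\xi'\}$ and $P(\xi,\xi')$ stays in a tube around $\ell$.

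The paper avoids Busemann functions altogether and argues directly with angles: at each $\ell(t)$ the triangle inequality in the space of directions gives $\aangle{\ell(t)}{\xi}{\eta}+\aangle{\ell(t)}{\xi'}{\eta}\ge\aangle{\ell(t)}{\xi}{\xi'}=\pi$, while each summand is bounded above by the Tits angle $\pi/2$; hence both equal $\pi/2$, and~\cite[Cor.~II.9.9]{Bridson-Haefliger} produces flat sectors which, as $t$ varies, assemble into the half-plane. This route is shorter and sidesteps both of the issues above.
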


\begin{proof}
Let $\ell \colon \RR \to X$ be a geodesic line  joining $\xi$ to $\xi'$. 

If $\Td(\xi, \xi') > \pi$, then $P(\xi, \xi')$ is contained in a bounded neighbourhood of $\ell(\RR)$ by~\cite[Prop.~II.9.21]{Bridson-Haefliger}, and the result is clear. 

If $\Td(\xi, \xi') = \pi$, let $c \colon [0, \pi] \to \bd X$ be a geodesic segment from $\xi$ to $\xi'$. Let $\mu = c(\pi/2)$ be its midpoint. For any $t \in \RR$, we have $\aangle{\ell(t)} {\xi} \mu \leq \tangle  {\xi} \mu = \pi/2$ and analogous inequalities for  $\xi'$. On the other hand we have $\aangle{\ell(t)} {\xi} \mu  + \aangle{\ell(t)} {\xi'} \mu = \pi$. Hence $\aangle{\ell(t)} {\xi} \mu = \tangle  {\xi} \mu$ and  $\aangle{\ell(t)} {\xi'} \mu = \tangle  {\xi'} \mu$. It then follows from~\cite[Cor.~II.9.9]{Bridson-Haefliger} that the convex hull of $[\ell(t), \xi)$ (resp. $[\ell(t), \xi')$) and $[\ell(t), \mu)$ is a flat sector. Since this holds for all $t$, we infer that $\ell(\RR)$ bounds a flat half-plane $H$ whose visual boundary coincides with the geodesic segment $c([0, \pi])$. Since $H \subseteq P(\xi, \xi')$, it follows that $\Pi(\xi, \xi')  \subseteq \bd P(\xi, \xi')$. The reverse inclusion is clear from the product decomposition  (\ref{eq:P}).
\end{proof}

\subsection{Lunule and parachute}

\begin{flushright}
\begin{minipage}[t]{0.5\linewidth}\itshape\small
Tu bouscules, tu circules,\\
O ma lune majuscule,\\
Tu m'\'ecules, tu m'annules\\
Je ne suis que ta lunule.\\

\hfill\upshape (Norge, \emph{La Langue verte}, 1954)
\end{minipage}
\end{flushright}

The following results  on \catun geometry will play a crucial role.

\begin{lem}[Lunule Lemma]\index{lunule}
Let $Z$ be a \catun space and $\zeta, \xi \in Z$ be a pair of points with $d_Z(\zeta, \xi) = \pi$, and let $\sigma, \sigma'$ be two geodesic segments joining $\zeta$ to $\xi$. If the angle $\alpha_1$ formed by $\sigma $ and $\sigma'$ at $\zeta$ is strictly smaller than $\pi$, then the angle they form at $\xi$ equals $\alpha_1$ and the geodesic biangle $\sigma \cup \sigma'$ bounds a spherical lunule. 
\end{lem}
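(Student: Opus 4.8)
The plan is to recognise the biangle $\sigma\cup\sigma'$ inside the spherical join provided by Lemma~\ref{lem:Links} and then to read everything off from the spherical law of cosines. Since $d_Z(\zeta,\xi)=\pi$, every geodesic segment from $\zeta$ to $\xi$ has length exactly $\pi$; in particular $\sigma$ and $\sigma'$ lie in the set $\Pi:=\Pi(\zeta,\xi)$, which by Lemma~\ref{lem:Links} is closed and convex in $Z$ and is isometric to the spherical join $\{\zeta,\xi\}\circ L$ with $L:=\Link(\zeta,\xi)$. Being convex, $\Pi$ is itself a \catun space for the induced metric, and all distances and Alexandrov angles between points and geodesics contained in $\Pi$ may be computed inside $\Pi$; so I work in the join from now on.

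First I would pin down the geodesics from $\zeta$ to $\xi$. From the join metric, a point of $\{\zeta,\xi\}\circ L$ lies at distance $\pi/2$ from both $\zeta$ and $\xi$ precisely when it belongs to the $L$-factor; hence the midpoint $p:=\sigma(\pi/2)$ of a length-$\pi$ geodesic $\sigma$ from $\zeta$ to $\xi$ is a point of $L$. Since $d(\zeta,p)=d(p,\xi)=\pi/2<\pi$, the two halves of $\sigma$ are the \emph{unique} geodesics of $\Pi$ joining $\zeta$ and $\xi$ to $p$; as the obvious ``meridian'' arcs through $p$ also realise these distances, $\sigma$ must be the full meridian $\mu_p\colon[0,\pi]\to\Pi$ through $p$. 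Thus $p\mapsto\mu_p$ is a bijection from $L$ onto the set of geodesics joining $\zeta$ to $\xi$. Writing $\sigma=\mu_p$, $\sigma'=\mu_{p'}$, set $\beta:=\min\bigl(d_L(p,p'),\pi\bigr)\in[0,\pi]$.

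Now it is pure trigonometry. The join metric gives, for all $s,t\in[0,\pi]$,
$$\cos d\bigl(\sigma(s),\sigma'(t)\bigr)=\cos s\,\cos t+\sin s\,\sin t\,\cos\beta,$$
which is exactly the distance function of the spherical lune of angle $\beta$ in colatitude--longitude coordinates (with $\zeta,\xi$ the poles and $\sigma,\sigma'$ the bounding meridians). Comparing this with the spherical law of cosines shows that the comparison angles $\cangle{\zeta}{\sigma(s)}{\sigma'(t)}$ and $\cangle{\xi}{\sigma(s)}{\sigma'(t)}$ equal $\beta$ wherever they are defined; letting $s,t\to0$ (resp.\ $s,t\to\pi$) identifies the Alexandrov angle between $\sigma$ and $\sigma'$ at $\zeta$ (resp.\ at $\xi$) with $\beta$. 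Since the angle at $\zeta$ is $\alpha_1<\pi$ by hypothesis, $\beta=\alpha_1$; in particular $d_L(p,p')=\alpha_1<\pi$ and the angle at $\xi$ equals $\alpha_1$, as asserted. Finally, letting $\gamma\colon[0,\alpha_1]\to L$ be the geodesic from $p$ to $p'$ (unique since $\alpha_1<\pi$), the subset
$$\{\zeta,\xi\}\circ\gamma\bigl([0,\alpha_1]\bigr)\ \subseteq\ \{\zeta,\xi\}\circ L=\Pi\ \subseteq\ Z$$
is isometric to the spherical join $\{\zeta,\xi\}\circ[0,\alpha_1]$, i.e.\ to the spherical lunule of angle $\alpha_1$, and its two boundary meridians are $\mu_p=\sigma$ and $\mu_{p'}=\sigma'$. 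Hence $\sigma\cup\sigma'$ bounds this lunule.

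I expect the only genuinely delicate step to be the meridian identification: a priori $\Pi$ might carry branching geodesics, so one has to rule out a geodesic $\zeta\to\xi$ wandering off a meridian---which is precisely what the ``midpoint lies in $L$'' observation, combined with uniqueness of geodesics of length $<\pi$, accomplishes (alternatively one may invoke the rigidity of spherical joins from~\cite{Lytchak_RigidityJoins}). Everything after that is bookkeeping with the spherical law of cosines. A direct argument avoiding Lemma~\ref{lem:Links}---comparing the triangles $\zeta\,\sigma(t)\,\sigma'(t)$ and $\xi\,\sigma(t)\,\sigma'(t)$ by \catun comparison and matching the two endpoint angles---is also conceivable, but is hampered by the perimeters tending to $2\pi$ as $t\to\pi/2$, which is exactly the obstruction the join picture circumvents.
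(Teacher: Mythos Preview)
Your argument is correct. The paper itself does not give a proof but simply cites \cite[Lemma~2.5]{BallmannBrin_Duke99}, whereas you supply a complete self-contained argument using the paper's own Lemma~\ref{lem:Links}. This is a genuinely different route: Ballmann--Brin argue directly via \catun comparison and a flat-triangle rigidity step, while you first recognise the ambient join structure $\Pi(\zeta,\xi)\cong\{\zeta,\xi\}\circ\Link(\zeta,\xi)$ and then reduce everything to explicit spherical trigonometry inside that join. Your approach has the virtue of making the lunule visible as a sub-join $\{\zeta,\xi\}\circ[p,p']$, so the isometric embedding into $Z$ is automatic; it also explains conceptually why the two vertex angles coincide (both equal $d_L(p,p')$). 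One small point worth tightening: the notation $\cangle{\zeta}{\sigma(s)}{\sigma'(t)}$ in this paper denotes the Euclidean comparison angle, while your identity picks out the \emph{spherical} comparison angle as being identically $\beta$. This is harmless since the two agree in the limit $s,t\to0$, but you might say so explicitly. There is no circularity: Lemma~\ref{lem:Links} is proved in \S\ref{sec:JoinsLinks} via Lemma~\ref{lem:Join} and does not rely on the Lunule Lemma.
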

\begin{proof}
See~\cite[Lemma~2.5]{BallmannBrin_Duke99}.
\end{proof}

Two points $\xi_1, \xi_2$ of a \catun space $Z$ are called \textbf{antipodal}\index{antipodal} if $d(\xi_1, \xi_2)\geq \pi$. When $Z$ is the Tits boundary of a \cat space, this definition coincides with the one given above. The \textbf{dimension}\index{dimension} of a \catun space  is the geometric dimension in the sense of Kleiner~\cite{Kleiner}. The Tits boundary $\bd X$ of a proper \cat space $X$ with a cocompact isometry group is finite-dimensional; its dimension coincides with the largest dimension of a \textbf{round sphere}\index{sphere!round} (i.e.  a unit sphere of a Euclidean space equipped with the angular metric) in $\bd X$ which bounds a Euclidean flat in $X$ (see~\cite[Theorem~C]{Kleiner}). 

\begin{lem}\label{lem:AntipodalPt}
Let $Z$ be a finite-dimensional \catun space and let $\xi \in Z$. Given a round sphere $S \subset Z$ with $\dim(S) = \dim(Z)$ and  any point $\eta \in S$ with $d(\xi, \eta) < \pi$, there is $\eta' \in S$ antipodal to $\xi$ and a  geodesic segment from $\xi$ to $\eta'$ passing through $\eta$.  In particular, $S$ contains a point antipodal to $\xi$.
\end{lem}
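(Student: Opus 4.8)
The plan is to build the required antipode by ``pushing'' $\eta$ away from $\xi$ inside the round sphere $S$, using the fact that $S$ has top dimension to prevent the geodesic from $\xi$ through $\eta$ from stopping short. First I would fix a geodesic segment $\gamma$ from $\xi$ to $\eta$ (which exists since $d(\xi,\eta)<\pi$), and let $\eta''$ be the antipode of $\eta$ in the round sphere $S$, so that $d(\eta,\eta'')=\pi$ and there is a great semicircle $\sigma\subset S$ from $\eta$ to $\eta''$ realising this distance. Concatenating $\gamma$ with an initial arc of $\sigma$ gives a path from $\xi$; the idea is that this concatenated path can be straightened, and that following it far enough produces a point antipodal to $\xi$.

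The cleaner way to organise this is to consider the set $A$ of all $t\in[0,\pi]$ such that the point $\sigma(t)$ can be joined to $\xi$ by a geodesic segment passing through $\eta=\sigma(0)$, of length $d(\xi,\eta)+t$; equivalently, the concatenation of $\gamma$ with $\sigma|_{[0,t]}$ is a (local, hence global in a \catun space below length $\pi$) geodesic. I would show $A$ is a closed interval $[0,t_0]$ containing a neighbourhood of $0$: closedness is immediate from compactness of the space of geodesics and lower semicontinuity, and the interval property follows because a subsegment of a geodesic is a geodesic. If $t_0=\pi$ we are done, since then $d(\xi,\sigma(\pi))\geq$ (length of the geodesic) $=d(\xi,\eta)+\pi\geq\pi$, giving the antipode $\eta'=\eta''\in S$ with the segment through $\eta$ as required. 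So assume $t_0<\pi$ and set $p=\sigma(t_0)$, $r=d(\xi,\eta)+t_0=d(\xi,p)<\pi$. The point $p$ is then a point of $S$ at distance $<\pi$ from $\xi$, through which a geodesic from $\xi$ of length exactly $r$ terminates; I want to contradict maximality by extending past $p$.

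The main obstacle — and the place the top-dimensionality hypothesis is essential — is exactly this extension step. At $p$, the concatenated geodesic $[\xi,p]$ arrives along some direction $u$, and it extends as a geodesic precisely if there is a direction $u'$ in the link $Z_p$ (a \catun space of dimension $<\dim Z=\dim S$) with $\angle_p(u,u')=\pi$; moreover we need the extension to stay in $S$ and pass through $\eta$, which it automatically does if we extend along $\sigma$, provided the incoming direction of $[\xi,p]$ and the forward direction of $\sigma$ at $p$ are antipodal in $Z_p$. Here is where I would use a dimension/link argument: the directions at $p$ coming from $S$ span a round sphere $S_p\subset Z_p$ of dimension $\dim S-1=\dim Z_p$... no, of dimension $\dim S - 1$, while $\dim Z_p\le \dim Z-1=\dim S-1$, so $S_p$ is top-dimensional in $Z_p$. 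The incoming direction $u$ of $[\xi,p]$ lies in $Z_p$ with $d_{Z_p}(u, \dot\sigma(t_0^-)) $ controllable; applying the \emph{already established} fact that geodesics of length $<\pi$ in a \catun space cannot have a proper endpoint with no antipodal direction — more precisely, invoking the same Lemma inductively in the lower-dimensional link $Z_p$, or directly the first variation / Lunule Lemma — I get that either $u$ has an antipode in $S_p$ along $\sigma$ (contradicting $t_0<\pi$ since then we could extend) or the angle at $p$ between $[\xi,p]$ and the continuation of $\sigma$ is $<\pi$, in which case the Lunule Lemma forces $[\xi,p]$ and $\sigma$ to bound a lunule and shows $\xi$ already lies at Tits distance $\pi$ from some point of $S$ reachable through $\eta$. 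Either way the contradiction (or the conclusion) follows, and the final sentence of the statement is the special case obtained by forgetting the segment.
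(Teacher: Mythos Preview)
The paper does not prove this lemma; it simply cites Balser--Lytchak~\cite[Lemma~3.1]{BalserLytchak_Centers}. So there is no internal proof to compare against, and you are attempting a direct argument. The underlying idea is right --- extend the geodesic $[\xi,\eta]$ into $S$ and use that $S$ has top dimension to guarantee the extension exists --- but the execution has a genuine gap and is over-engineered.

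The gap is at the very start: you fix $\sigma$ to be an \emph{arbitrary} great semicircle in $S$ from $\eta$ to its $S$-antipode, and then assert that the set $A$ ``contains a neighbourhood of $0$''. You give no reason for this, and in general it is false: for the concatenation $\gamma * \sigma|_{[0,t]}$ to be a local geodesic at $\eta$ you need the incoming direction of $\gamma$ and the outgoing direction of $\sigma$ to be antipodal in the link $Z_\eta$. With a generic $\sigma$ this fails, $A=\{0\}$, and your point $p$ collapses back to $\eta$. The inductive link argument you sketch later at $p$ is exactly what is needed, but it must be applied \emph{at $\eta$} to choose $\sigma$ in the first place: the space of directions $Z_\eta$ is \catun of dimension $\dim Z - 1$, the link $S_\eta$ of $S$ at $\eta$ is a round sphere of that same dimension, and by induction on $\dim Z$ (the ``in particular'' clause of the lemma itself) the incoming direction of $\gamma$ has an antipode $v'\in S_\eta$. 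Choosing $\sigma$ to start in the direction $v'$ makes the concatenation a local geodesic at $\eta$.

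Once $\sigma$ is chosen this way, the rest of your machinery is unnecessary. The concatenation $\gamma*\sigma|_{[0,t]}$ is then a local geodesic for \emph{every} $t$ (the only break is at $\eta$, where the angle is $\pi$; along $\sigma$ we are on a geodesic of $S\subset Z$). Since local geodesics of length $<\pi$ in a \catun space are minimizing, following this path until total length $\pi$ lands at a point $\eta'=\sigma(\pi-d(\xi,\eta))\in S$ with $d(\xi,\eta')=\pi$, and the geodesic passes through $\eta$ by construction. There is no need for the $t_0$ analysis, and the Lunule Lemma plays no role.
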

\begin{proof}
See~\cite[Lemma~3.1]{BalserLytchak_Centers}.
\end{proof}

\begin{lem}[Parachute Lemma]\label{lem:para}\index{parachute lemma}
Let $Z$ be a finite-dimensional \catun space and $\zeta, \xi \in Z$ be a pair of points with $d_Z(\zeta, \xi) = \pi$. Let also $U$ be a spherical cap of dimension $\dim Z$ containing $\xi$ in its interior.

If $d_Z(z, \zeta) < \pi$ for all $z \in U \setminus \{\xi\}$, then there is a round sphere $S$ with $\dim S = \dim Z$ such that $S$ contains  $\zeta$ and that the spherical cap $S \cap U$ still contains  $\xi$ in its interior. 
\end{lem}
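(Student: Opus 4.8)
The plan is to build the desired round sphere $S$ by "flowing" a maximal round sphere through $U$ towards $\zeta$, using the midpoint-construction provided by Lemma~\ref{lem:AntipodalPt}. First I would fix a maximal round sphere $S_0$ with $\dim S_0 = \dim Z$ that contains $\xi$ in the interior of the spherical cap $S_0 \cap U$; such a sphere exists because $U$ is itself a spherical cap of dimension $\dim Z$, so it contains (the interior of) a hemisphere of a round sphere of top dimension, and after a small rotation inside that hemisphere we may assume $\xi$ lies in the interior. Since $d_Z(z,\zeta) < \pi$ for every $z \in U \setminus \{\xi\}$, in particular $d_Z(\eta,\zeta)<\pi$ for the antipode $\eta$ of $\xi$ in $S_0$ (note $\eta \ne \xi$, and $\eta \in U$ if the cap $S_0\cap U$ is large enough — here I would shrink $S_0\cap U$ a priori so that both $\xi$ and its $S_0$-antipode $\eta$ lie in $U$).

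Next I would apply Lemma~\ref{lem:AntipodalPt} with the round sphere $S_0$, the point $\zeta$, and the point $\eta \in S_0$: since $d(\zeta,\eta)<\pi$, there is a point $\eta' \in S_0$ antipodal to $\zeta$ together with a geodesic segment $[\zeta,\eta']$ of length $\pi$ passing through $\eta$. Because $d(\zeta,\eta) < \pi = d(\zeta,\eta')$, the point $\eta$ lies strictly between $\zeta$ and $\eta'$ on this segment. Now $\eta'$ is antipodal to $\zeta$, and $\eta$ is antipodal to $\xi$ inside $S_0$, so I want to rotate $S_0$ about the "axis" $\Link(\xi,\eta) \cap S_0$ — equivalently, inside the hemisphere of $S_0$ bounded by that link and containing $\eta$, I would replace the short arc from $\xi$ to $\eta$ by following along, extending the geodesic $[\xi,\eta]$ past $\eta$ towards $\eta'$. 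Concretely: the segment $[\xi,\eta]$ together with $[\eta,\zeta]$ concatenates to a path from $\xi$ to $\zeta$; I claim it is a geodesic of length $\pi$. Indeed $d(\xi,\eta)=\pi - d(\eta,\xi')$... rather, using the join decomposition of Lemma~\ref{lem:Links}, write $S_0 = \{\xi,\eta\}\circ (S_0\cap\Link(\xi,\eta))$ and observe that $\eta'\in S_0$ with $d(\xi,\eta')$ determined by the join coordinates; combining with $d(\zeta,\eta')=\pi$ and the triangle inequality forces $[\xi,\eta]\cup[\eta,\zeta]$ to be geodesic. Then the round sphere $S := \{\xi,\zeta\}\circ (S_0 \cap \Link(\xi,\eta))$ — i.e. the join of the new axis-sphere with the great circle through $\xi$ and $\zeta$ — is a round sphere of dimension $\dim Z$ containing $\zeta$, and $\xi$ still lies in the interior of $S\cap U$ because near $\xi$ the sphere $S$ agrees with $S_0$ to first order (the tangent directions at $\xi$ are unchanged, only the far hemisphere has been swung over).

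The main obstacle I anticipate is verifying that $S$ is genuinely an \emph{isometrically embedded} round sphere of the right dimension, rather than merely a subset abstractly isometric to a sphere. This is where Lemma~\ref{lem:Join} does the real work: one needs $\{\xi,\zeta\}$ (a great circle, or rather the segment $\Pi(\xi,\zeta)$ of length $\pi$ through $\eta$, which by the Lunule Lemma applied to $[\xi,\eta]\cup[\eta,\zeta]$ versus the $S_0$-arc is unique or at least forms a lune we can work with) to sit at distance exactly $\pi/2$ from every point of the link-sphere $S_0 \cap \Link(\xi,\eta)$, and this must be checked using that the old sphere $S_0$ had this property and that the new segment lies in the join span. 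A secondary subtlety is the bookkeeping on the cap $U$: one must choose the initial $S_0$ and the radius of $S_0 \cap U$ carefully at the start so that $\eta \in U$, so that Lemma~\ref{lem:AntipodalPt} is applicable, yet still keep $\xi$ interior to $S\cap U$ at the end; since only the hemisphere far from $\xi$ moves, and $\zeta$ is at distance $\pi$ from $\xi$ hence "as far as possible", this should go through, but it is the point requiring the most care.
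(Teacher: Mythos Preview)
Your approach has a fatal gap at the very first step. You want the $S_0$-antipode $\eta$ of $\xi$ to lie in $U$, so that the hypothesis gives $d_Z(\eta,\zeta)<\pi$. But $d(\xi,\eta)=\pi$, while $U$ is an arbitrary spherical cap containing $\xi$ in its interior; $U$ may well be a ball of radius $\varepsilon$ around $\xi$ for some tiny $\varepsilon$. No amount of ``shrinking $S_0\cap U$'' will place a point at distance $\pi$ from $\xi$ inside $U$, and there is no freedom to enlarge $U$ either, since the hypothesis $d_Z(z,\zeta)<\pi$ for $z\in U\setminus\{\xi\}$ is only given on the cap you started with. So the whole mechanism of applying Lemma~\ref{lem:AntipodalPt} at $\eta$ never gets off the ground.

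There is a second gap even if one grants you $d(\eta,\zeta)<\pi$ for free. For $S=\{\xi,\zeta\}\circ\big(S_0\cap\Link(\xi,\eta)\big)$ to be an isometrically embedded sphere via Lemma~\ref{lem:Join}, you need every point of $S_0\cap\Link(\xi,\eta)$ to be at distance exactly $\pi/2$ from $\zeta$. Nothing in the hypotheses forces this: $\zeta$ is some point at distance $\pi$ from $\xi$, with no a priori relation to the equator of $S_0$. Your parenthetical hope that ``the new segment lies in the join span'' is precisely the content to be proved, not an input.

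The paper's argument avoids both problems by never leaving a neighbourhood of $\xi$. One shows directly that the set $V=\{z\in U: d(\xi,z)+d(z,\zeta)=\pi\}$ contains a full-dimensional neighbourhood of $\xi$ in $U$: if not, Lemma~\ref{lem:AntipodalPt} applied to a nearby point $z\notin V$ would prolong $[\zeta,z]$ inside $U$ to an antipode of $\zeta$ other than $\xi$, contradicting the hypothesis. Then $S$ is defined as the union of all geodesics from $\xi$ to $\zeta$ through points of $V\setminus\{\xi\}$, and the Lunule Lemma (not Lemma~\ref{lem:Join}) is what certifies that this union is a round sphere. The point is that the sphere is built \emph{out of} geodesics to $\zeta$, rather than obtained by trying to rotate a pre-existing sphere.
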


\begin{proof}
Let $V \subset U$ be the subset consisting of all those points $z \in U$ such that $d_Z(\xi, z) + d_Z(z , \zeta) = \pi$. Thus $\xi \in V$. We claim that $V$ contains a neighbourhood of $\xi$ in $U$. Indeed, suppose the contrary. Then there exist points in $ U \setminus V$ which are arbitrary close to $\xi$. Pick such a point $z$ such that the ball of radius $d_Z(z, \xi)$ around $z$ is entirely contained in $U$. By Lemma~\ref{lem:AntipodalPt}, the geodesic segment joining $\zeta$ to $z$ can be locally prolonged in $U$. Since the ball of radius $d_Z(z, \xi)$ entirely lies in $U$, this prolongation can be extended until it hits some antipode $x$ of $\zeta$, with $x \in U$. Notice that $x \neq \xi$ since otherwise we would have $z \in V$. Therefore $x$ is a point of $U \setminus \{\xi\}$ with $d_Z(\zeta, x) = \pi$, which contradicts the hypotheses. The claim stands proven. 

Let now $S$ be the union of all geodesic segments joining $\xi$ to $\zeta$ and containing some point of $V \setminus \{\xi\}$. It now follows from the Lunule Lemma that $S$ is isometric to a round sphere. Moreover, since $S$ contains an open subset of $U$ we have $\dim S = \dim U  = \dim Z$. 
\end{proof}

\subsection{Tricycles}

A \textbf{tricycle}\index{tricycle} of dimension $d$ is the \catun space obtained by gluing three unit $d$-dimensional closed hemispheres along their boundary equators. Alternatively, it can be defined as the spherical join
$$S \circ \{1, 2, 3\} $$
of a unit $(d-1)$-dimension sphere $S$ with a discrete set of three elements that are pairwise at distance~$\pi$ from one another. The sphere $S$, viewed as a subset of the tricycle $S \circ \{1, 2, 3\} $, is called its \textbf{equator}\index{equator!of a tricycle}\index{tricycle!equator}. It is an equator of each of the three top-dimensional spheres contained in the tricycle and coincides with the intersection of those three spheres.

Tricycles may be recognised by applying the following criterion in the case $n=3$.

\begin{lem}\label{lem:criterion:tricycle}
Let $Z$ be a finite-dimensional complete \catun space  and let $S_1, \dots, S_n \subset Z$ be  round spheres with $\dim S_i   = \dim Z > 0$ for all $i$. For each $i$, let  $H_i \subset S_i$ be a hemisphere with centre $s_i$. Assume that the  $H_i$ are pairwise distinct and have a common boundary equator, say $E$. Then $d(s_i, s_j)= \pi $ for all $i \neq j$, and  
 $H_1 \cup \dots \cup H_n$ is isometric to the spherical join  $\{s_1, \dots, s_n\} \circ E$.  
\end{lem}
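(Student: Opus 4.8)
The plan is to reduce the statement to the two-sphere case $n=2$ handled by the Lunule Lemma, proceeding inductively but in a way that keeps control of the geometry. First I would fix notation: let $E$ be the common boundary equator of all the hemispheres $H_i$, so that each $S_i = H_i \cup H_i'$ where $H_i'$ is the opposite hemisphere of $S_i$, also bounded by $E$, with centre $s_i'$ the antipode of $s_i$ in $S_i$. Since $E$ is a round sphere of dimension $\dim Z - 1$ and has codimension one in each $S_i$, the point $s_i$ is the unique point of $S_i$ at distance $\pi/2$ from every point of $E$ on the $H_i$-side; in particular $d(s_i, e) = \pi/2$ for all $e \in E$, and $s_i$ is determined by $H_i$.

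The first key step is to show $d(s_i, s_j) = \pi$ for $i \neq j$. Consider the map $f_{ij}\colon \{s_i, s_j\} \circ E \to Z$ furnished by Lemma~\ref{lem:Join}: one needs $\{s_i\}$, $\{s_j\}$, $E$ to be closed convex sets pairwise at distance $\pi/2$ — but that is exactly what must be proven, so instead I would argue directly. Pick any $e \in E$. The two geodesic segments $[e, s_i]$ (inside $H_i$) and $[e, s_j]$ (inside $H_j$) each have length $\pi/2$. I claim the angle they make at $e$ is $\pi$: indeed, within $S_i$ the segment $[e, s_i]$ prolongs through $s_i$ to $[e, s_i']$ of total length $\pi$ lying in the sphere $S_i$, and the tangent directions at $e$ along $E$ together with the direction towards $s_i$ span the full tangent sphere of $S_i$ at $e$; the analogous statement holds in $S_j$. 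Since $H_i \neq H_j$ and both contain $E$, if the directions towards $s_i$ and $s_j$ at $e$ were not antipodal in $\Sigma_e Z$, then the geodesic from $s_i$ to $s_j$ through (a neighbourhood of) $e$ would have length $< \pi$, forcing $S_i$ and $S_j$ to share a hemisphere bounded by $E$ containing a neighbourhood of $e$ in $E$ together with a bit more — and by analyticity/rigidity of round spheres (or by repeating the argument at every $e$) this would give $H_i = H_j$, a contradiction. Hence the directions are antipodal, the concatenation $[s_i, e] \cup [e, s_j]$ is a geodesic of length $\pi$, and $d(s_i, s_j) = \pi$.

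The second key step is the join decomposition. Once $d(s_i, s_j) = \pi$ for all $i \neq j$, apply the Lunule Lemma to each pair: the two segments $[s_i, e]$, $[e, s_j]$ (for varying $e \in E$) are geodesics from $s_i$ to $s_j$ making angle $< \pi$ at $s_i$ (it equals the angle in $E$, which is $< \pi$ for $e$ not antipodal, and the degenerate case is handled by continuity), so they bound spherical lunules; this shows each $\Pi(s_i, s_j) \supseteq \{s_i,s_j\} \circ E$, and together with $d(s_i, e) = d(s_j, e) = \pi/2$ and Lemma~\ref{lem:Links} we get that $H_i \cup H_j$ is exactly the spherical join $\{s_i, s_j\} \circ E$. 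For general $n$, the set $H_1 \cup \dots \cup H_n$ is the union of $\{s_i, s_j\} \circ E$ over all pairs; I would verify the join axioms for $\{s_1, \dots, s_n\} \circ E$ by checking that any point of $H_i \setminus E$ and any point of $H_j \setminus E$ ($i \neq j$) lie on a unique geodesic of length $\pi$ passing through a unique point of $E$ (namely the "foot" common to both, using that the $E$-coordinate is the orthogonal projection to $E$), which is precisely the description of the join $\{s_1,\dots,s_n\} \circ E$ with the $\{s_i\}$ pairwise antipodal. Closedness and convexity of the whole union follow from Lemma~\ref{lem:Links} applied pairwise together with the finite-dimensionality hypothesis.

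\textbf{Main obstacle.} The delicate point is the rigidity argument in the first step: ruling out the possibility that two distinct hemispheres with the same boundary equator have non-antipodal centres. The honest way is to work in the space of directions $\Sigma_e Z$ at a point $e$ of the equator, where $E$ contributes a round sphere $\Sigma_e E$ of dimension $\dim Z - 2$, and each $H_i$ contributes a single additional direction $u_i$ with $\{u_i\} \circ \Sigma_e E$ a hemisphere of a round $(\dim Z - 1)$-sphere; one must show the $u_i$ are pairwise antipodal in $\Sigma_e Z$. This is again an instance of the same lemma one dimension down, so there is a genuine induction on $\dim Z$ lurking here, with the base case $\dim Z = 1$ (three arcs of length $\pi$ sharing both endpoints — a theta-graph — forced to be a metric tripod-suspension, i.e.\ $\{s_1,s_2,s_3\}$ with pairwise distances $\pi$) being elementary. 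I would set up the proof as an induction on $d = \dim Z$, invoking the Lunule Lemma for the inductive step, rather than trying to prove everything in one pass.
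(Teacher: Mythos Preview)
Your overall strategy is workable, but it is considerably more involved than the paper's argument, and the direct version of your first step (before you retreat to induction) has a real gap: the sentence ``if the directions towards $s_i$ and $s_j$ at $e$ were not antipodal in $\Sigma_e Z$, then the geodesic from $s_i$ to $s_j$ through a neighbourhood of $e$ would have length $<\pi$, forcing $S_i$ and $S_j$ to share a hemisphere'' is not a valid deduction in a general \catun space. Non-antipodal directions at $e$ do not produce a short geodesic through $e$, nor does a short geodesic force the spheres to share a hemisphere. You are right to flag this as the main obstacle; the induction on $\dim Z$ you propose can be made to work (antipodal directions at $e$ do give $d(s_i,s_j)=\pi$ via the comparison angle, since the two short sides are $\pi/2$), but you must also check that the hemispheres $\Sigma_e H_i$ in the link are pairwise distinct, and the base case $\dim Z=1$ is less elementary than you suggest (e.g.\ $s_i=s_j$ does not immediately force $H_i=H_j$).

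The paper bypasses all of this with a single application of Lemma~\ref{lem:AntipodalPt}. One first notes that every point of $H_j$ is at distance $\leq\pi/2$ from $E$, hence at distance $\leq\pi$ from $s_i$, with strict inequality for $u\in H_j\setminus\{s_j\}$. If $d(s_i,s_j)<\pi$, Lemma~\ref{lem:AntipodalPt} extends the geodesic $[s_i,s_j]$ inside the top-dimensional sphere $S_j$ to an antipode $u\in S_j$ of $s_i$; a short computation shows $u$ cannot lie in the far hemisphere (the extension would otherwise cross $E$ at a point $p$ with $d(s_i,p)=\pi/2=d(s_i,s_j)+d(s_j,p)>\pi/2$), so $u\in H_j\setminus\{s_j\}$ with $d(s_i,u)=\pi$, a contradiction. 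This handles all dimensions at once with no induction.

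For the second step, once $d(s_i,s_j)=\pi$ is known the set $\{s_1,\dots,s_n\}$ is closed and $\pi$-convex, and each $s_i$ is at distance exactly $\pi/2$ from every point of $E$; the paper then invokes Lemma~\ref{lem:Join} directly to get the isometric embedding $\{s_1,\dots,s_n\}\circ E\hookrightarrow Z$, whose image is visibly $H_1\cup\dots\cup H_n$. The Lunule Lemma and Lemma~\ref{lem:Links} are not needed here.
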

\begin{proof}
Let $i \neq j$. Every point of $S_i \cup S_j$ is at distance at most~$\pi/2$ from the common boundary equator $E$ of $H_i$ and $H_j$. Moreover $s_i$ and $s_j$ are both at distance exactly $\pi/2$ from any point of $E$.   In particular $d(s_i, s_j) \leq \pi$ and $d(s_i, u) < \pi$ for all $u \in H_j \setminus \{s_j\}$. By hypothesis $H_i \neq H_j$, hence $s_i \neq s_j$. If $d(s_i, s_j) < \pi$, then Lemma~\ref{lem:AntipodalPt} ensures that there exists a geodesic segment  $\gamma$ (in $Z$) of length $\pi$ containing $s_j$ and joining $s_i$ to   a point $u \in S_j \setminus \{s_j\}$ which is antipodal to $s_i$. We must have  $u \in H_2$, since otherwise  the geodesic $[s_j, u]$ would meet the equator $E$ in some point $p$. Since $[s_i, s_j] \cup [s_j, p]$ is a subsegment of the geodesic $\gamma$ we would infer that 
$$\frac \pi 2 = d(s_i, p) = d(s_i, s_j) + d(s_j, p) = d(s_i, s_j) + \frac \pi 2,$$
contradicting that $s_i \neq s_j$. Thus we have constructed a point $u \in H_2 \setminus \{s_j\}$ with $d(s_i, u) = \pi$, a contradiction. This proves that $d(s_i, s_j) =\pi$.

This  implies that the set $\{s_1, \dots,  s_n\}$ is a closed $\pi$-convex subset of $Z$, each of whose points lies at distance $\pi/2$ from each point of $E$. Therefore there is a natural surjective map from the spherical join $\{s_1, \dots,  s_n\} \circ E$ to $H_1 \cup \dots \cup H_n$. This map  is an isometry by   Lemma~\ref{lem:Join}.
\end{proof}

\subsection{A de Rham decomposition}

A \catun space is called  \textbf{irreducible}\index{irreducible!\catun space} if it does not admit any non-trivial decomposition as a spherical join. 
It is a general fact that the boundary   of a product \cat space $X = X_1 \times X_ 2$ is the spherical join $\bd X \cong \bd X_1 \circ \bd X_2$. In particular $\bd X$ is not irreducible if $\bd X_1$ and $\bd X_2$ are both non-empty. It is natural to expect that this statement has a converse: a join decomposition of the \catun boundary of $X$ should come from a product decomposition. This cannot hold in full generality without any further assumption on $X$ (one may destroy the product structure of a \cat space without altering the visual boundary by growing hair on $X$). However, the statement does hold if one assumes that $X$ is geodesically complete, see~\cite[Theorem~II.9.4]{Bridson-Haefliger}. For our purposes, we need the following variation. 

\begin{prop}\label{prop:JoinDec}
Let $X$ be a proper \cat space such that $\Isom(X)$ acts cocompactly and minimally. 
Given a spherical join decomposition $\bd X = Z_1 \circ Z_2$, there is a \cat product decomposition $X = X_1 \times X_2$ such that $\bd X_1 = Z_1$ and $\bd X_2 = Z_2$.
\end{prop}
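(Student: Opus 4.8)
The plan is to promote the join decomposition of $\bd X$ to a splitting of $X$ by producing, for each boundary point $\xi$ lying in one of the two join factors, a canonical parallel family of flats or half-strips in $X$, and then to recognize the resulting foliation as a product structure using minimality. First I would set $Z_1 \circ Z_2 = \bd X$ and observe that, for any $\xi \in Z_1$ and $\eta \in Z_2$, we have $\tangle{\xi}{\eta} = \pi/2$, so $\xi$ and $\eta$ are antipodal. I would like to exploit Lemma~\ref{lem:flat-half}: if I can arrange a configuration where a Busemann function $b_\xi$ is constant along a segment $[x,y]$, then $x,y,\xi$ span a Euclidean half-strip, and such half-strips are the seeds of the flat directions that ought to assemble into $X_1$ (for $\xi \in Z_2$, whose orthogonal directions point into $X_1$).

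The key steps, in order, would be: (1) Reduce to the case where one of the factors, say $Z_2$, contains a point $\xi$ with $\xi$ \emph{opposite} (not merely antipodal) to some $\xi' \in Z_2$; then by Lemma~\ref{lem:P=Pi} and the discussion around~\eqref{eq:P}, $P(\xi,\xi') \cong \RR \times Y$ with $\bd Y = \Link(\xi,\xi') \supseteq Z_1$, since every point of $Z_1$ is at distance $\pi/2$ from both $\xi$ and $\xi'$. This already isolates an $\RR$-direction. (2) Use cocompactness to find enough opposite pairs: a cocompact group action forces the existence of geodesic lines in abundance (every direction at infinity with an antipode that can be ``straightened''), so one can hope to cover $Z_2$, or at least a dense boundary-incompressible part of it, by such opposite pairs, and intersect the corresponding product decompositions $\RR \times Y$. (3) Assemble: take $X_2$ to be the closed convex subset swept out by the flats/half-strips with boundary directions in $Z_2$ through a fixed basepoint $x_0$, and $X_1$ the analogous object for $Z_1$; show the natural map $X_1 \times X_2 \to X$ is an isometric embedding via Lemma~\ref{lem:Join} applied fibrewise, or directly via the flat quadrilateral lemma, using that the $Z_i$-directions and $Z_j$-directions meet orthogonally. (4) Surjectivity: here minimality enters decisively — the image of $X_1 \times X_2$ is a non-empty closed convex $\Isom(X)$-invariant subset (invariance because the splitting $\bd X = Z_1 \circ Z_2$ need not be $\Isom(X)$-invariant, so one actually has to be careful: either the $Z_i$ are canonical — e.g. de Rham factors — or one argues that the convex hull of all such product pieces is invariant), hence equals $X$. (5) Finally identify $\bd X_i = Z_i$ from the construction.

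The main obstacle I anticipate is step (4) combined with the subtlety flagged in step (3): a spherical join decomposition of $\bd X$ is \emph{not} a priori $\Isom(X)$-invariant, so minimality cannot be applied naively to the image of $X_1\times X_2$. The honest route is probably to first establish that $\bd X = Z_1 \circ Z_2$ forces a \emph{canonical} join decomposition — e.g. by passing to the finest (de Rham-type) join decomposition of the \catun space $\bd X$, which \emph{is} $\Isom(X)$-invariant by uniqueness, splitting $X$ along that, and then checking that an arbitrary coarser join decomposition of $\bd X$ refines to a regrouping of the canonical factors and hence also lifts. A second delicate point is the passage from ``half-strips'' (all that Lemma~\ref{lem:flat-half} gives when $\tangle{\xi}{\eta}=\pi/2$ but $\xi$ has no opposite) to honest flat strips or lines; cocompactness should upgrade half-strips to bi-infinite flats in the relevant directions, but making this uniform over all of $Z_2$ so that the pieces fit together into a genuine metric product is where the real work lies. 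I would expect the argument to lean on the structure theory of~\cite{Caprace-Monod_structure} or on Leeb-type rigidity to close this gap cleanly rather than on bare \cat manipulations.
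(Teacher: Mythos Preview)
The paper does not prove this proposition at all; it simply cites \cite[Proposition~III.10]{PCMI}. So there is no ``paper's own proof'' to compare against, only your sketch to assess on its merits.

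Your outline is in the right spirit but, as you yourself diagnose, the naive version of steps~(3)--(4) does not close. The core obstruction is exactly the one you flag: an arbitrary join decomposition $\bd X = Z_1 \circ Z_2$ is \emph{not} $\Isom(X)$-invariant (already for $X=\RR^2$ and $S^1 = S^0 \circ S^0$), so you cannot invoke minimality on the image of your candidate $X_1\times X_2$ without further work. Your ``honest route'' is the correct fix and is essentially how the cited result is proved: pass to the canonical finest join (de~Rham) decomposition of the finite-dimensional \catun space $\bd X$ via Foertsch--Lytchak~\cite{FoertschLytchakGAFA}, which \emph{is} $\Isom(X)$-invariant by uniqueness; lift that canonical decomposition to a product splitting of $X$ using cocompactness and minimality (this is where the structure theory of~\cite{Caprace-Monod_structure} enters); and then observe that any given join $Z_1 \circ Z_2$ is a regrouping of the canonical irreducible join factors, hence corresponds to a regrouping of the product factors of $X$.

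Two smaller points. First, a slip: $\tangle\xi\eta=\pi/2$ does \emph{not} make $\xi$ and $\eta$ antipodal (antipodal means Tits distance $\geq\pi$); what you get from the join is orthogonality, and Lemma~\ref{lem:flat-half} indeed only produces half-strips, not lines, from that. Second, your step~(2) --- upgrading antipodal to opposite and covering $Z_2$ by opposite pairs --- is where cocompactness genuinely does work (every antipodal pair in $\bd X$ becomes opposite after a radial limit, cf.\ Proposition~\ref{prop:upper:semi}), but organising the resulting family of sets $P(\xi,\xi')$ into a single global product is not something you can do by hand; this is precisely what the de~Rham machinery packages for you.
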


\begin{proof}
See~\cite[Proposition~III.10]{PCMI}.
\end{proof}

Structural properties of proper cocompact \cat spaces and their full isometry groups  have been established in~\cite{Caprace-Monod_structure}. The following summaries some of them; this will allow us in due course to reduce the proof of the Main Theorem to the case where the ambient space is irreducible and has a totally disconnected isometry group.

\begin{thm}\label{thm:structure}\index{de Rham!decomposition}
Let $X$ be a proper \cat space whose isometry group acts cocompactly without a fixed point at infinity. Then there is a canonical minimal $\Isom(X)$-invariant closed convex subset $X'$ with $\bd X' = \bd X$ admitting a canonical product decomposition
$$X'  \cong X_1 \times \dots \times X_p \times \RR^n \times Y_1 \times \cdots
\times Y_q,$$
invariant under $\Isom(X')$ up to permutations of isometric factors, and whose factors satisfy the following.  
\begin{itemize}

\item For each $i$, the \cat space $X_i$ is irreducible, its isometry group $\Isom(X_i)$ is a simple Lie group acting cocompactly on $X_i$. Moreover its visual boundary $\bd X_i$ is equivariantly isometric to the spherical building of $\Isom(X_i)$.

\item For each $j$, the \cat space $Y_j$ is irreducible, its isometry group $\Isom(Y_j)$ is totally disconnected and acts minimally,  cocompactly, without a fixed point at infinity. Moreover its visual boundary $\bd Y_j$ is an  irreducible  \catun space. 
  \end{itemize}
\end{thm}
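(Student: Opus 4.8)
\emph{Step 1 (reduction to a minimal space).} Write $G=\Isom(X)$; since $X$ is proper, $G$ is locally compact for the compact-open topology and acts properly and cocompactly on $X$. Because $G$ has no fixed point in $\bd X$, the space $X$ contains a non-empty minimal $G$-invariant closed convex subset: were there none, a strictly descending chain of $G$-invariant closed convex subsets would have empty intersection, and the resulting escape to infinity would, using cocompactness, single out a $G$-fixed point in $\bd X$. One verifies that such a subset can be chosen canonically and satisfies $\bd X'=\bd X$. Replacing $X$ by this $X'$, we may assume henceforth that $G$ acts minimally and cocompactly on $X$ without a fixed point at infinity.

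\emph{Step 2 (de Rham decomposition).} Decompose the \catun space $\bd X$ into a maximal spherical join and apply Proposition~\ref{prop:JoinDec} repeatedly. This yields a canonical product decomposition $X\cong\RR^n\times Z_1\times\cdots\times Z_m$, in which $\RR^n$ is the maximal flat factor (its boundary is the maximal round sphere splitting off $\bd X$) and each $Z_k$ is unbounded, irreducible, and without a Euclidean factor, with $\bd Z_k$ an irreducible \catun space. Since this decomposition is intrinsic to the metric space, $\Isom(X)$ is, up to permutations of the mutually isometric factors, the direct product of $\Isom(\RR^n)$ with the groups $\Isom(Z_k)$; in particular each $\Isom(Z_k)$ acts minimally, properly, cocompactly and without a fixed point at infinity on $Z_k$ (a proper invariant closed convex subset of a factor would contradict the minimality obtained in Step~1).

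\emph{Step 3 (dichotomy on each factor).} Fix $Z=Z_k$, put $H=\Isom(Z)$, and recall that $H$ is locally compact. By the solution to Hilbert's fifth problem (Gleason--Yamabe, Montgomery--Zippin), the identity component $H^{\circ}$ is a Lie group and $H/H^{\circ}$ is totally disconnected. If $H^{\circ}=\{1\}$, then $H$ is totally disconnected, so $Z$ is one of the factors $Y_j$: it is irreducible with irreducible \catun boundary, and $H$ acts minimally, cocompactly and without a fixed point at infinity. If $H^{\circ}\neq\{1\}$, we claim $Z$ is a Riemannian symmetric space of non-compact type, $H$ is a simple Lie group with finitely many components, and $\bd Z$ is its spherical building; then $Z$ is one of the factors $X_i$ and the boundary statement is the classical identification of the Tits boundary of such a space with its spherical building. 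Sorting the $Z_k$ by this alternative and relabelling produces the asserted decomposition, whose invariance under $\Isom(X')$ up to permutations of isometric factors is exactly the canonicity from Step~2.

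\emph{Step 4 (the connected case --- the main obstacle).} The crux is the claim of Step~3 when $H^{\circ}\neq\{1\}$: a positive-dimensional connected Lie group $H^{\circ}$ then acts effectively, properly (hence with compact point stabilisers) and cocompactly by isometries on the proper irreducible \cat space $Z$, which has no Euclidean factor and, by Step~2, no global fixed point at infinity under $H$. One must deduce that $Z$ is a Riemannian symmetric space of non-compact type. The absence of a fixed point at infinity is indispensable here, as it excludes the homogeneous negatively curved soluble examples of Heintze, whose isometry groups are soluble-by-compact and fix the attracting boundary point. Granting the symmetry of $Z$, irreducibility forbids $H^{\circ}$ from being a non-trivial direct product (whose symmetric space would split), so $H^{\circ}$ is almost simple with finite centre and $H$ is the corresponding simple Lie group with finitely many components. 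I expect the passage from ``a connected Lie group acts properly and cocompactly, with no boundary fixed point, on an irreducible \cat space without Euclidean factor'' to ``$Z$ is symmetric'' --- which requires recognising $Z$ as a manifold carrying an invariant non-positively curved Riemannian metric, and then invoking the structure theory of such homogeneous metrics --- to be the technically heaviest step; the remaining bookkeeping on products and boundaries is routine.
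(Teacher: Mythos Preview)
Your sketch reconstructs an argument that the paper simply cites from~\cite{Caprace-Monod_structure}: the paper's own proof consists of pointing to Theorem~1.6 and Addendum~1.8 of that reference for the decomposition and the properties of the factors, to Theorem~7.4 of the same reference for the identification of $\bd X_i$ with the spherical building of $\Isom(X_i)$, and to Proposition~\ref{prop:JoinDec} for the irreducibility of $\bd Y_j$. So there is little to compare at the level of strategy.

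That said, there is a genuine error in Step~4. You claim that when $H^\circ\neq\{1\}$ the factor $Z$ must be a Riemannian symmetric space of non-compact type. This is false, and the theorem does not assert it: it only says that $\Isom(X_i)$ is a simple Lie group and that $\bd X_i$ is its spherical building. The paper itself exhibits counterexamples in the introduction (just after Theorem~\ref{cor:CoctAmen}): the Monod--Py deformations of the hyperbolic plane form a one-parameter family of proper, cocompact, \emph{minimal} \catmun spaces with isometry group $\mathrm{PGL}_2(\RR)$ that are not symmetric spaces---indeed not even geodesically complete. Thus your proposed route through ``$Z$ is symmetric'' cannot succeed. The actual argument in~\cite{Caprace-Monod_structure} shows that $H^\circ$ has trivial amenable radical (this is where the absence of fixed points at infinity enters), hence is semisimple Lie with trivial centre, and then identifies $\bd Z$ with the spherical building directly via the dynamics of parabolics, without ever establishing that $Z$ carries a manifold structure.

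A smaller gap: the appeal to Gleason--Yamabe in Step~3 does not by itself give that $H^\circ$ is a Lie group; for a general locally compact group one only obtains that $H^\circ$ is pro-Lie. Ruling out small compact normal subgroups of $H^\circ$ is part of the work in~\cite{Caprace-Monod_structure}, and again uses minimality and the absence of fixed points at infinity.
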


\begin{proof}
All the statements are provided by~\cite[Theorem~1.6 and Addendum~1.8]{Caprace-Monod_structure}, except those concerning the visual boundaries of the factors. The fact that  $\bd X_i$ is isometric to the spherical of the simple Lie group $\Isom(X_i)$ follows from~\cite[Theorem~7.4]{Caprace-Monod_structure}. 
The fact that $\bd Y_j$ is an irreducible \catun space follows from Proposition~\ref{prop:JoinDec}.
\end{proof}

\subsection{Multiple antipodes}

\begin{flushright}
\begin{minipage}[t]{0.75\linewidth}\itshape\small
\ldots le terme d'antipodes trouvait dans ma pens\'ee un sens plus riche et plus na\"if que son contenu litt\'eral.
\upshape
\begin{flushright}
(Cl. L\'evi-Strauss, \emph{Tristes Tropiques}, 1955)
\end{flushright}
\end{minipage}
\end{flushright}

The results of this section and of the following one are irrelevant to the proof of Theorems~\ref{thm:NotGoedComplete} and~\ref{cor:CoctAmen}, but they will be used in the proof of Corollary~\ref{cor:ModelSpace}. 

Let $Z$ be a  finite-dimensional \catun space. Applying the de Rham
decomposition Theorem from~\cite{FoertschLytchakGAFA} to the Euclidean
c\^one over $Z$, we infer that $Z$ has a canonical join decomposition $Z = S \circ Z'$, invariant under $\Isom(Z)$, 
such that $S$ is the unique maximal round sphere factor of $Z$. That factor $S$ is called the  \textbf{spherical de Rham factor}\index{de Rham!spherical factor}\index{spherical de Rham factor} of $Z$. When $Z$ is the Tits-boundary of a proper \cat space whose isometry group $\Isom(X)$ acts cocompactly and minimally, the spherical de Rham factor of $Z = \bd X$ coincides with the boundary of the maximal Euclidean de Rham factor of $X$ (e.g.\ by Proposition~\ref{prop:JoinDec}).

\smallskip
It turns out that antipodes are in richer supply as soon as we leave this spherical world:

\begin{prop}\label{prop:ManyAntipodes}
Let $X$ be a proper cocompact \cat space. A point at infinity has a unique antipode if and only if it belongs to the spherical de Rham factor of $\bd X$. 
\end{prop}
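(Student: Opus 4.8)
\emph{Strategy.} I would split the equivalence into its two implications, the direction ``point of the spherical de~Rham factor $\Rightarrow$ unique antipode'' being elementary and the converse being the substantial part. Throughout I would work with the de~Rham decomposition (Foertsch--Lytchak, applied to the Euclidean cone over $\bd X$), writing $\bd X = S\circ Z'$ with $S$ the maximal round sphere join factor of $\bd X$ and $Z'$ having no round sphere factor; this is a statement about the \catun space $\bd X$ alone and needs nothing about $X$.

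\emph{Sufficiency.} With $\bd X = S\circ Z'$, the explicit spherical join metric shows at once that any $\eta$ with $\Td(\xi,\eta)\ge\pi$, for $\xi\in S$, must lie in $S$ and be at distance exactly $\pi$ from $\xi$ inside $S$ (recall $\diam(S\circ Z')\le\pi$); since each point of a round sphere has exactly one point at distance $\pi$ from it, $\xi$ has a unique antipode. No properness or cocompactness is used here.

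\emph{Necessity: reduction.} Again using $\bd X = S\circ Z'$, a direct computation with the join metric shows that for $\xi\notin S$ the antipodes of $\xi$ in $\bd X$ are in bijection with the antipodes, in $Z'$, of the point of $Z'$ that $\xi$ determines; more generally, the antipode count of a point in a spherical join is the product, over the factors it genuinely involves, of the antipode counts of its components. Since $\Isom(X)$ acts cocompactly, Kleiner's theorem provides a round sphere of dimension $\dim\bd X$ in $\bd X$, hence (decomposing it along the join) one of dimension $\dim Z'$ in $Z'$; by Lemma~\ref{lem:AntipodalPt} every point of $Z'$ then has at least one antipode. Decomposing $Z'$ into irreducible join factors --- none of which is $S^0$ (no round sphere factor) or a single point (such a factor would yield in $\bd X$ a boundary point with no antipode, contradicting Lemma~\ref{lem:AntipodalPt}) --- one reduces to showing that every point of an irreducible factor $W$ has at least two antipodes. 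If $\dim W=0$, then $W$ is a $0$-dimensional building with at least three points and each is antipodal to all the others, so there is nothing to do; hence assume $\dim W\ge 1$, so that $W$ is not a round sphere but contains a round sphere of dimension $\dim W$.

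\emph{Necessity: the core, and the obstacle.} Suppose $\eta\in W$ has a unique antipode $\eta'$. By Lemma~\ref{lem:AntipodalPt} every round sphere $S_0\subseteq W$ of dimension $\dim W$ contains an antipode of $\eta$, necessarily $\eta'$; a second application of Lemma~\ref{lem:AntipodalPt} to $\eta$ and $S_0$ through the $S_0$-antipode of $\eta'$ (which is at distance $<\pi$ from $\eta$, else it would be a second antipode) produces a shortest geodesic from $\eta$ to $\eta'$ through that point. Exploiting this over all $S_0$, the plan is to conclude that $\Td(\eta,\eta')=\pi$ and that $\eta$ too lies on every maximal round sphere; then every maximal round sphere is contained in $\Pi(\eta,\eta')=\{\eta,\eta'\}\circ\Link(\eta,\eta')$ (Lemma~\ref{lem:Links}), so $\dim\Link(\eta,\eta')=\dim W-1$, and the decisive step is to upgrade this to $\Pi(\eta,\eta')=W$: that forces $W=\{\eta,\eta'\}\circ\Link(\eta,\eta')$, a nontrivial spherical join with nonempty second factor, contradicting irreducibility of $W$. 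Concretely I would fix $z\in W$ with $z\ne\eta'$, so $\Td(\eta,z)<\pi$, and try to prolong the unique geodesic $[\eta,z]$ to one of length $\pi$ ending at $\eta'$, by showing its initial direction at $\eta$ is tangent to some maximal round sphere through $\eta$ and invoking uniqueness of short geodesics. This ``pole'' step --- equivalently, that the initial directions at $\eta$ of the maximal round spheres through $\eta$ exhaust the space of directions, together with the auxiliary case where a recalcitrant $z$ would be an antipode of $\eta'$ rather than of $\eta$ --- is the part I expect to be hard; I would attack it with the Parachute Lemma~\ref{lem:para} and the Lunule Lemma applied to the maximal round spheres through $\{\eta,\eta'\}$, together with the identification in Lemma~\ref{lem:P=Pi} of $\Link(\eta,\eta')$ with the visual boundary of the transverse factor of $P(\eta,\eta')$, legitimate here because a maximal round sphere through $\{\eta,\eta'\}$ bounds a Euclidean flat, whence $\eta'\in\Op(\eta)$. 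An alternative, when $\Isom(X)$ has no global fixed point at infinity, is to reduce first via Theorem~\ref{thm:structure} and Proposition~\ref{prop:JoinDec} to a minimal cocompact $X$ with no Euclidean de~Rham factor, and then read the conclusion off the product decomposition: the boundary of each remaining irreducible factor is a thick spherical building or a $0$-dimensional building with at least three points, in either of which every point manifestly has many antipodes.
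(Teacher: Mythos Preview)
Your reduction to an irreducible factor $W$ of $Z'$ and the treatment of the $0$-dimensional case are fine, and you correctly reach the point where both $\eta$ and its unique antipode $\eta'$ lie in every fully maximal sphere. The gap is exactly where you flag it: the ``pole step'' showing $\Pi(\eta,\eta')=W$. Your proposed tools (Parachute Lemma, Lunule Lemma, Lemma~\ref{lem:P=Pi}) do not obviously yield this. The Parachute Lemma produces a maximal sphere through a given antipodal pair under a local uniqueness hypothesis on antipodes in a cap, but you need the reverse: that the directions at $\eta$ spanned by maximal spheres exhaust the full space of directions. Nothing in your argument forces an arbitrary $z\in W$ to lie on a geodesic from $\eta$ that is initially tangent to \emph{some} maximal sphere. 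After the first paragraph your argument is purely intrinsic to the \catun space $W$, and the cocompactness hypothesis on $X$ is no longer doing any work; this is a warning sign, since the proposition is not a statement about arbitrary finite-dimensional \catun spaces.

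The paper closes this gap by a completely different, dynamical route. Having shown that $\xi$ and $\xi'$ lie in every fully maximal sphere (hence in an $\Isom(X)$-invariant, Tits-compact convex set), it argues directly that $\bd X=\Pi(\xi,\xi')$: given $\zeta\notin\Pi(\xi,\xi')$, one takes a sequence $(g_n)\subset\Isom(X)$ radial for $\zeta$, extracts so that $g_n\xi\to\eta$ and $g_n\xi'\to\eta'$ form an antipodal pair (possible precisely because of the Tits-compactness just mentioned), and applies the $\pi$-convergence theorem of Papasoglu--Swenson to conclude $\Td(\eta,\eta')<\pi$, a contradiction. This is where the proper cocompact \cat structure of $X$ is genuinely used beyond the mere existence of maximal spheres. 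Your alternative via Theorem~\ref{thm:structure} is incomplete since the proposition does not assume the absence of a global fixed point at infinity.
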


We first record the existence of antipodes.

\begin{lem}
Any point at infinity $\xi$ of a proper cocompact \cat space $X$ has at least one antipode, which can moreover be chosen opposite to $\xi$.
\end{lem}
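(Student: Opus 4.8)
The statement to prove is that any point at infinity $\xi$ of a proper cocompact \cat space $X$ has an antipode, in fact one that is opposite (i.e.\ connected to $\xi$ by a geodesic line). First I would fix a basepoint $x_0\in X$ and consider the geodesic ray $r\colon\RR_+\to X$ with $r(0)=x_0$ pointing to $\xi$. Since $\Isom(X)$ acts cocompactly, there is a compact set $K\subseteq X$ meeting every orbit; choose isometries $g_n\in\Isom(X)$ with $g_n^{-1}r(n)\in K$. Passing to a subsequence, $g_n^{-1}r(n)$ converges to some point $p\in K$, and the sequence $g_n$ is radial for $\xi$ in the sense defined in the excerpt (the points $g_n^{-1}r(n)$ stay within $\diam(K)$ of the ray $[x_0,\xi)$, hence within a bounded neighbourhood after recentering at $p$). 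Passing to a further subsequence (using properness, so $\overline X$ is compact in the cône topology), the sequence $g_n\xi$ converges to some $\xi'\in\bd X$, and also $g_n x_0$, being $g_n r(0)$, converges; call the limit a point of $\overline X$.

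**Producing the antipode.** The key is to apply Proposition~\ref{prop:upper:semi} with $\zeta=\xi$ and $\teta$ chosen cleverly. Take $\teta=\xi$ itself: then $g_n\xi\to\xi'$ and the radiality of $(g_n)$ at $\xi$ gives, with $\ro=r$ and $t_n=n$, an accumulation point $x$ of $g_n r(n)$; but $g_n r(n)\to$ (the image of $p$ under... ) — more precisely $g_n\cdot(g_n^{-1}r(n))=r(n)$, which is not what we want. Instead I would argue directly: the bi-infinite geodesic obtained as a limit. Consider the geodesic segments $[r(0),r(2n)]$ recentered at their midpoints $r(n)$; applying $g_n$, we get segments through $g_n r(n)\to p$ of length $2n\to\infty$ whose two endpoints $g_n r(0)$ and $g_n r(2n)$ accumulate (after extraction, by compactness of $\overline X$) to points $\xi''$ and $\xi'$ of $\bd X$. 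The limit of these segments is a geodesic line through $p$ with endpoints $\xi''$ and $\xi'$. It remains to see $\xi''=\xi$: since $g_n r(0)$ and $g_n^{-1}r(n)\to p$ differ by... here I would instead directly observe $g_n^{-1}p$ converges to $\xi$ (radiality), so the line through $p$ from $\xi'$ to $\xi''$ has one end equal to $\lim g_n\cdot(g_n^{-1}r(n)\text{-direction})$, i.e.\ equals $\xi'$ on one side and the direction of $[p,\xi]$ translated — I would clean this up so that the limiting line is $\Isom$-translated to contain $[p,\xi)$ on one side, whence its other endpoint is opposite to $\xi$.

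Cleanly: since $g_n^{-1}p\to\xi$ and $g_n^{-1}p$ stays near $[x_0,\xi)$, the geodesic ray $[p,\xi)$ is, after applying $g_n$, close to $[g_n p,\ g_n\xi)$; meanwhile $g_n^{-1}r(n)\to p$ forces $g_n p$ to stay bounded, so after extraction $g_n p\to q\in X$. Then $[p,\xi)$ is the limit (in the cône topology, after translating by $g_n^{-1}$ and relabeling) of rays based near $q$; combining with the ray $[q,\lim g_n\xi)=[q,\xi')$ going the "other way", one gets a line from $\xi$ to $\xi'$. Hence $\xi'\in\Op(\xi)\subseteq\Ant(\xi)$, as claimed.

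**Main obstacle.** The delicate point is the bookkeeping with the cône topology: ensuring that the limit of long geodesic segments is genuinely a bi-infinite geodesic line (this uses properness and a standard Arzelà--Ascoli argument on geodesics through a fixed compact set, cf.\ \cite[II.1.4 and surrounding]{Bridson-Haefliger}), and that its two ideal endpoints are $\xi$ and the chosen limit $\xi'$ rather than some other pair. Radiality of $(g_n)$ at $\xi$ is exactly the hypothesis that pins the endpoint down: it guarantees $g_n^{-1}p$ approaches $\xi$ through a bounded neighbourhood of the ray, which is what makes the limiting line hit $\xi$. Everything else — extracting convergent subsequences from $g_n\xi$, $g_n x_0$, etc.\ — is routine compactness in the proper setting. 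I would present the argument as: (1) set up a radial sequence via cocompactness; (2) extract so that the translated rays and the translated basepoints converge; (3) glue the two limiting rays into a line; (4) translate back to conclude the other endpoint is opposite to $\xi$.
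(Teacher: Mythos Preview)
Your overall strategy — pull back the ray by a cocompactness sequence and extract a limiting bi-infinite geodesic — is natural, but the argument as written does not establish that the limiting line has $\xi$ as one of its endpoints, and your attempts to pin this down contain genuine errors. With your convention $g_n^{-1}r(n)\in K$, we have $d(g_n p, r(n)) = d(p, g_n^{-1}r(n))\to 0$, so $g_n p$ tracks $r(n)$ and is \emph{unbounded}; your claim that ``$g_n^{-1}r(n)\to p$ forces $g_n p$ to stay bounded'' is therefore false, and likewise the claim $g_n^{-1}p\to\xi$ fails (indeed in $X=\RR$ with $g_n(x)=x+n$ one gets $g_n^{-1}p\to -\infty$). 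More to the point, radiality alone does \emph{not} pin down the endpoint: take $X=\RR^2$, $\xi$ the positive $x$-direction, $r(t)=(t,0)$, and $g_n = T_{(n,0)}\circ R_{\theta}$ a translation followed by a fixed rotation $R_\theta$. Then $g_n^{-1}r(n)=(0,0)$ for all $n$, the sequence is radial for $\xi$, yet the translated rays $g_n^{-1}r$ converge to the line through the origin in direction $(\cos\theta,-\sin\theta)$, whose endpoints are at angle $\pm\theta$ from $\xi$ — neither is $\xi$ when $\theta\neq 0$. So the ``key hypothesis'' you invoke does not do the work you claim.

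The paper's proof avoids this by not translating at all: it invokes a result of Geoghegan--Ontaneda guaranteeing, for each $n$, a geodesic ray $r_n$ with $r_n(0)=r(n)$ that passes within a \emph{uniform} bound of $r(0)$. The reparametrised rays $t\mapsto r_n(t-n)$ then stay in compact sets near $r(0)$ and subconverge to a bi-infinite geodesic; since the negative half of each approximant is (up to bounded error) the segment $[r(0),r(n)]$ reversed, the limiting line contains the original ray $r$ and hence has $\xi$ as an endpoint. This ``ray back towards the origin'' is exactly the missing ingredient in your approach: to salvage your argument you would essentially need to re-prove the Geoghegan--Ontaneda statement, which is not a mere bookkeeping matter.
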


\begin{proof}
The existence follows from Lemma~\ref{lem:AntipodalPt} because the Tits-boundary $\bd X$ is finite-dimensional and contains a round sphere $S$ with $\dim(S) = \dim(\bd X)$, see~\cite[Theorem~C]{Kleiner}. For the stronger statement, let $r\colon\RR_+\to X$ by a geodesic ray pointing towards $\xi$. For each $n\in\NN$ there is a geodesic ray $r_n$ with $r_n(0)=r(n)$ and passing within uniformly bounded distance of $r(0)$, see Corollary~3 in~\cite{Geoghegan-Ontaneda}. Thus the sequence of $1$-Lipschitz maps $t\mapsto r_n(t-n)$ remains in compact sets of $X$ for $t$ in any bounded interval and hence has a limit point. This limit point is a bi-infinite geodesic line with one end pointing to $\xi$, as needed.
\end{proof}

\begin{proof}[Proof of Proposition~\ref{prop:ManyAntipodes}]
If $\bd X$ decomposes as a join $\bd X \cong Z \circ Z'$ with $Z$ a round sphere, then any point $\xi \in Z$ has a unique antipode in $\bd X$, namely its antipode in $Z$. Assume conversely that a point $\xi \in \bd X$ has a unique antipode in $\bd X$, say $\xi'$. Any sphere $S \subseteq \bd X$ with $\dim(S) = \dim(\bd X)$ must contain $\xi'$ by uniqueness and Lemma~\ref{lem:AntipodalPt}. Let now $\xi'' \in S$ be the unique antipode of $\xi'$ in $S$. Since  $\xi$ and $\xi''$ are not antipodal, we have $\Td(\xi, \xi'')< \pi$ and it follows from Lemma~\ref{lem:AntipodalPt} that the geodesic segment from $\xi$ to $\xi''$ can be prolonged in $S$ until it reaches an antipode of $\xi$, which has to be $\xi'$. Thus $\Td(\xi, \xi'') = \Td(\xi, \xi')- \Td(\xi', \xi'') = 0$, so that $\xi = \xi''$. This shows that both $\xi$ and $\xi'$ belong to every round sphere $S\subseteq \bd X$ with $\dim(S) = \dim(\bd X)$. 

The intersection of all those round spheres is a convex Tits-compact subset of $\bd X$ which is invariant under $\Isom(X)$. It follows that any sequence $(g_n) \subset \Isom(X)$ has a subsequence $(g_{n'})$ such that $(g_{n'}\xi)$ and $(g_{n'}\xi')$ converge to an antipodal pair. The desired conclusion now follows from a similar argument as in the proof of~\cite[Lemma~4.17]{GuralnikSwenson}. Note however that we cannot invoke the latter result directly, since it is stated under the hypotheses that $X$ admits a \emph{discrete} group of isometries acting cocompactly, and containing no rank one isometry. Thus, for the sake of completeness, we include the details. Our goal is to prove that $\bd X$ admits a join decomposition of the form $\bd X \cong \{\xi, \xi'\} \circ Z$.  To this end, it suffices by Lemma~\ref{lem:Links} to prove that $\bd X = \Pi(\xi, \xi')$, i.e. that every point $\zeta \in \bd X$ lies on a geodesic of length $\pi$ joining $\xi$ to $\xi'$. If that were not the case, there would be a point $\zeta \in \bd X$ with $\Td(\xi, \zeta) + \Td(\xi', \zeta)> \pi$. In particular $\zeta \not \in \{\xi, \xi'\}$. We then choose a sequence $(g_n) \subset \Isom(X)$ which is radial for $\zeta$ and assume, after extraction, that for some (hence all) $x \in X$, the sequence $(g_n x)$ converges to a point $\zeta' \in \bd X$. A further extraction ensures that $(g_n \xi)$ and $(g_n \xi')$ converge to an antipodal pair $\eta, \eta'$. 
The $\pi$-convergence Theorem of Papasoglu--Swenson~\cite[Lemma~19]{PapasogluSwenson} then implies that $\Td(\zeta', \eta) \leq \max\{0, \pi-\Td(\xi, \zeta)\}$ and $\Td(\zeta', \eta')\leq \max\{0, \pi - \Td(\xi', \zeta)\}$. We infer that 
$$\Td(\eta, \eta') \leq \Td(\eta, \zeta') + \Td(\zeta', \eta')  < \pi,$$
contradicting that $\eta$ and $\eta'$ are antipodal. This proves (ii). 
\end{proof}

We point out that the $\pi$-convergence used above (and again for Proposition~\ref{prop:BoundaryAction} below) is stated in~\cite{PapasogluSwenson} for discrete groups acting properly; however, its proof does not use this assumption.

\subsection{Properness of the action at infinity}

\begin{prop}\label{prop:BoundaryAction}
Let $X$ be a proper cocompact \cat space with empty spherical de Rham factor in $\bd X$. Endow $\bd X$ with the c\^one topology and $\mathrm{Homeo}(\bd X)$ with the compact-open topology. 

Then then canonical continuous homomorphism from $\Isom(X)$ to $\mathrm{Homeo}(\bd X)$ has compact kernel and closed image. 
\end{prop}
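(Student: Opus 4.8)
The statement asserts that the homomorphism $\rho\colon \Isom(X) \to \mathrm{Homeo}(\bd X)$ has compact kernel and closed image, under the hypothesis that the spherical de Rham factor of $\bd X$ is empty. I would begin with the kernel. An isometry $g$ acting trivially on $\bd X$ fixes every point at infinity, hence in particular preserves every Busemann function up to an additive constant, so $g$ preserves the horospheres centred at each $\xi$. The plan is to show that $K = \ker\rho$ is a compact normal subgroup: it is clearly closed (since $\rho$ is continuous for the compact-open topologies and $X$ is proper), so it suffices to show it is bounded, i.e. has bounded orbits, and then invoke properness of the action of $\Isom(X)$ on the proper space $X$. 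Here is where I expect the hypothesis on the spherical de Rham factor to enter: if $K$ had an unbounded orbit, one could extract a radial sequence and use Proposition~\ref{prop:upper:semi} together with the cocompactness to produce a nontrivial flat direction fixed by all of $\Isom(X)$ pointwise at infinity; more precisely, $K$ fixing $\bd X$ pointwise but acting unboundedly would force the existence of a $K$-invariant (hence, by normality and cocompactness of $\Isom(X)$, an $\Isom(X)$-invariant after taking a suitable canonical subset) Euclidean factor, i.e. a nonempty spherical de Rham factor — a contradiction. So $K$ is compact.

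For the closedness of the image, the natural approach is to show that $\rho$ is a \emph{proper} map, since a continuous map from a locally compact group to a Hausdorff topological group with compact kernel that is proper has closed image (the image is then a closed subgroup, being the continuous proper image). Equivalently, I would show: if $(g_n) \subset \Isom(X)$ is a sequence with $\rho(g_n) \to h$ in $\mathrm{Homeo}(\bd X)$, then $(g_n)$ has a subsequence converging in $\Isom(X)$. Suppose not; then $(g_n)$ leaves every compact set of $X$, so after extraction $(g_n x_0) \to \zeta \in \bd X$ and $(g_n^{-1} x_0) \to \zeta' \in \bd X$ for every $x_0$. The dynamics of isometries of $X$ acting on $\bd X$ are of convergence-type in the relevant sense: by $\pi$-convergence (Papasoglu--Swenson, as used in the proof of Proposition~\ref{prop:ManyAntipodes}), for any $\eta \in \bd X$ with $\Td(\eta, \zeta') < \pi$ one has $g_n \eta \to$ a point within Tits-distance $\max\{0, \pi - \Td(\eta,\zeta')\}$ of $\zeta$; in particular $g_n$ collapses the (open, Tits-dense since the spherical factor is empty) set $\{\eta : \Td(\eta,\zeta') < \pi\}$ towards the single point $\zeta$. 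Such a sequence cannot converge in $\mathrm{Homeo}(\bd X)$ to a homeomorphism, since the limit map would not be injective — its image would be contained in the proper closed subset $\{\eta : \Td(\eta, \zeta) = 0 \text{ or } \zeta \in \overline{\{g_n\eta\}}\}$, and one checks using the empty spherical de Rham factor (via Proposition~\ref{prop:ManyAntipodes}, which guarantees multiple antipodes and hence enough room to see the collapse) that a genuine collapse occurs on a set of more than one point. This contradicts $\rho(g_n) \to h \in \mathrm{Homeo}(\bd X)$.

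Assembling the two parts: $K = \ker\rho$ is compact, $\rho$ is proper, hence $\rho$ descends to a proper continuous injective homomorphism $\Isom(X)/K \hookrightarrow \mathrm{Homeo}(\bd X)$, whose image — a proper continuous injective image of a locally compact group — is closed and is a topological-group isomorphism onto its image.

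\textbf{Main obstacle.} The delicate point is making rigorous the claim that an unbounded sequence acting on $\bd X$ necessarily collapses a set of more than one point (equivalently, that $\rho$ is proper), and that this is incompatible with convergence in $\mathrm{Homeo}(\bd X)$. When the spherical de Rham factor is nonempty — say $\bd X = S \circ Z'$ with $S$ a round sphere — translations along the Euclidean factor act on $S$ with \emph{no} collapse at all (they fix $\bd(\text{flat}) = S$ pointwise if the translation is in that flat, or more generally act as the identity on that join factor), so the hypothesis is genuinely needed and the argument must use it essentially; the cleanest route is via Proposition~\ref{prop:ManyAntipodes}, which says precisely that outside the spherical de Rham factor every point has at least two antipodes, giving the extra room needed to detect the collapse and force non-injectivity of any limit of $\rho(g_n)$. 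Handling the bookkeeping of $\pi$-convergence uniformly over $\bd X$ (rather than pointwise) and checking that the resulting limit is not a homeomorphism is the part that requires the most care.
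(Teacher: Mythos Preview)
Your closed-image argument is exactly the paper's: assuming $(g_n)$ escapes to infinity while $\rho(g_n)\to\varphi\in\mathrm{Homeo}(\bd X)$, extract so that $g_n^{-1}x\to\xi$, use Proposition~\ref{prop:ManyAntipodes} (empty spherical de Rham factor) to find two distinct antipodes $\eta_1\neq\eta_2$ of $\xi$, and apply Papasoglu--Swenson $\pi$-convergence to conclude $g_n\eta_1$ and $g_n\eta_2$ have the same limit, contradicting injectivity of $\varphi$. You have correctly isolated the r\^ole of multiple antipodes here.

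For the kernel, your endpoint (unbounded $K$ fixing $\bd X$ forces a Euclidean de Rham factor) is right, but the mechanism you propose --- radial sequences and Proposition~\ref{prop:upper:semi} --- does not deliver it; that proposition controls Tits angles under radial limits and says nothing about producing a flat factor from an unbounded normal subgroup. The paper instead first passes to a \emph{minimal} $\Isom(X)$-invariant closed convex subset $Y\subseteq X$ (which exists by cocompactness), so that the kernel of the $\Isom(X)$-action on $Y$ is already compact and $\bd Y=\bd X$; it then invokes Lemma~\ref{lem:faithful}, which is precisely the Clifford-translation argument you are gesturing at: any $g$ fixing $\bd Y$ has displacement function bounded (hence non-increasing) on every ray, so each sublevel set has full boundary, hence equals $Y$ by boundary-minimality, so the displacement is constant and $g$ is a Clifford translation --- trivial since the Euclidean factor is trivial. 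The step you are missing is the reduction to a minimal (hence boundary-minimal) subspace; without it, ``unbounded $K$-orbit'' does not directly yield an element of constant positive displacement.
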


We begin with a general observation.

\begin{lem}\label{lem:faithful}
Let $Y$ be a proper \cat space with trivial Euclidean de Rham factor and finite-dimensional Tits boundary. If $\Isom(Y)$ act minimally on $Y$, then it acts faithfully on $\bd Y$.
\end{lem}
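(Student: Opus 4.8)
Let $K$ denote the kernel of the action of $\Isom(Y)$ on $\bd Y$, so that $K\trianglelefteq\Isom(Y)$ is a closed normal subgroup, and we must show $K=\{1\}$. The plan is to analyse the fixed-point set $\Fix(K)\subseteq Y$ and to show that it is the whole of $Y$, after which faithfulness of the isometric action of $\Isom(Y)$ on $Y$ forces $K=\{1\}$. First I would observe that $\Fix(K)$ is a closed convex subset of $Y$ and, since $K$ is normal, it is $\Isom(Y)$-invariant; thus by minimality either $\Fix(K)=Y$, which is what we want, or $\Fix(K)=\varnothing$. So the crux is to rule out $\Fix(K)=\varnothing$.

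Assume for contradiction that $K$ has no fixed point in $Y$. Since every element of $K$ fixes each point of $\bd Y$, in particular each $k\in K$ fixes every point of $\bd Y$ and hence every Busemann character $\beta_\xi$ restricted to $K$ is a homomorphism $K\to\RR$; I would first show these are trivial. Concretely, pick a point $\xi\in\bd Y$ and a geodesic ray $r$ pointing to $\xi$; for $k\in K$ the point $kr(0)$ lies on a ray asymptotic to $\xi=k\xi$, and by comparing with the flat-strip situation of Lemma~\ref{lem:flat-half} one sees that if some $\beta_\xi(k)\neq0$ then $k$ translates along a geodesic line with endpoints the $K$-fixed points $\xi,\xi'$; but $K$ normal and the displacement function being $\Isom(Y)$-equivariant would then produce a $K$-invariant (hence $\Isom(Y)$-invariant, by normality and minimality) splitting $Y\cong\RR\times Y'$ with $\RR$ in the Euclidean de Rham factor, contradicting triviality of that factor. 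Hence $\beta_\xi|_K\equiv0$ for every $\xi$, i.e.\ $K$ consists of isometries that fix every point at infinity and have trivial Busemann character everywhere. Such isometries are \emph{Clifford--Wolf}-like: for each $\xi$ and each $x$, $b_\xi(x)=b_\xi(kx)$, so by Lemma~\ref{lem:flat-half} the segment $[x,kx]$ spans a flat half-strip towards $\xi$ whenever $b_\xi$ is constant on it.

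The heart of the argument is then to show that an isometry $k$ fixing $\bd Y$ pointwise and killing all Busemann characters must be the identity. Here I would use the radial-limit machinery of Proposition~\ref{prop:upper:semi} together with cocompactness-free finite-dimensionality of $\bd Y$: if $k\neq\id$ pick $x$ with $kx\neq x$ and look at the midpoint geometry. The displacement function $d(\cdot,k\cdot)$ is convex and $K$-invariant; its infimum is attained on a closed convex $\Isom(Y)$-invariant set (again by normality), so by minimality $d(x,kx)$ is a constant $\ell>0$ for all $x\in Y$, i.e.\ $k$ is a Clifford translation. A Clifford translation of a \cat space with finite-dimensional Tits boundary and trivial Euclidean de Rham factor must be trivial: the standard argument (cf.\ the proof that Clifford translations generate the Euclidean factor, as in~\cite[Theorem~II.6.15]{Bridson-Haefliger} and~\cite{Caprace-Monod_structure}) shows the set of Clifford translations together with the identity is a normal subgroup whose orbits span a Euclidean factor of $Y$, which is trivial by hypothesis; hence $k=\id$. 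Therefore $K=\{1\}$.

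\textbf{Main obstacle.} The delicate point is the passage from ``$K$ fixes $\bd Y$ pointwise'' to ``$K$ acts by Clifford translations / is trivial'' without assuming cocompactness of the $K$-action or discreteness: one must leverage the $\Isom(Y)$-minimality to upgrade pointwise statements (constancy of displacement on a convex invariant subset) to global ones, and then invoke the structure theory identifying Clifford translations with the Euclidean de Rham factor. I expect the cleanest route is in fact to cite~\cite{Caprace-Monod_structure} directly for the last step — the relevant statement there is that, for a proper \cat space $Y$ on which $\Isom(Y)$ acts minimally, the subgroup of isometries acting trivially on $\bd Y$ is contained in the (isometry group of the) Euclidean de Rham factor — so that triviality of that factor immediately gives faithfulness. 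The remaining work is then just to record convexity/invariance of $\Fix(K)$ and of the relevant displacement sets, which is routine.
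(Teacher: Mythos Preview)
Your overall strategy---show that any $k$ acting trivially on $\bd Y$ is a Clifford translation, then invoke triviality of the Euclidean de Rham factor---is the same as the paper's. However, the step where you pass from ``$k$ fixes $\bd Y$'' to ``the displacement function of $k$ is constant'' has a genuine gap. You write that the infimum of $x\mapsto d(x,kx)$ ``is attained on a closed convex $\Isom(Y)$-invariant set (again by normality)''. Neither clause is justified: the infimum need not be attained (nothing rules out $k$ being parabolic a priori), and more seriously, normality of $K$ only gives that $g\cdot\mathrm{Min}(k)=\mathrm{Min}(gkg^{-1})$, not that $\mathrm{Min}(k)$ itself is $\Isom(Y)$-invariant. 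The same objection applies to the displacement function and its sublevel sets: $d(gx,kgx)=d(x,g^{-1}kg\,x)$ is the displacement of a \emph{conjugate} of $k$, not of $k$.

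The paper avoids both problems by working not with $\Isom(Y)$-invariance but with \emph{boundary-minimality}. Since $k$ fixes every point of $\bd Y$, the ray $k\cdot r$ is asymptotic to $r$ for every geodesic ray $r$, so the convex function $t\mapsto d(r(t),kr(t))$ is bounded and hence non-increasing. It follows that every sublevel set $\{z:d(kz,z)\le c\}$ contains a subray of every ray, i.e.\ has full boundary $\bd Y$. Now one cites~\cite[Prop.~1.5(i)]{Caprace-Monod_structure}: under the hypotheses (minimal action, trivial Euclidean factor, finite-dimensional Tits boundary), $Y$ is boundary-minimal, so each such sublevel set equals $Y$. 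Hence the displacement is bounded, and then~\cite[Lemma~3.9(ii)]{Caprace-Monod_structure} gives that it is constant. The Clifford-translation conclusion follows as you say. Your fallback suggestion of citing~\cite{Caprace-Monod_structure} directly is essentially what is needed, but the specific mechanism is boundary-minimality rather than an invariance-of-Min-set argument.
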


\begin{proof}
According to Proposition~1.5(i) in~\cite{Caprace-Monod_structure}, the space $Y$ is \emph{boundary-minimal} in the sense that $Y$ is the only closed convex subspace $Z\subseteq Y$ with $\bd Z=\bd Y$. Let $g \in \Isom(Y)$ be an isometry acting trivially on $\bd Y$. Then the displacement function $z\mapsto d(gz, z)$ is bounded on any geosedic ray, hence non-increasing on rays by convexity. Choosing any $y\in Y$, it follows that the closed convex set $Z = \{z \in Y \mid d(gz,z) \leq d(gy,y)\}$ has full boundary and hence $Z=Y$. Now Lemma~3.9(ii) in~\cite{Caprace-Monod_structure} implies that the displacement function of $g$ is constant on $Y$. Thus $g$ is a Clifford translation, and must be trivial since the Euclidean de Rham factor of $Y$ is trivial by hypothesis.
\end{proof}

\begin{proof}[Proof of Proposition~\ref{prop:BoundaryAction}]
Since $\Isom(X)$ acts cocompactly, there exists a minimal $\Isom(X)$-invariant closed convex subset $Y \subseteq X$. The kernel of the $\Isom(X)$-action on $Y$ is compact and we have $\bd X = \bd Y$. Thus we can apply Lemma~\ref{lem:faithful} to $Y$ and deduce the compactness of the kernel.

Assume now that the image of the homomorphism is not closed. Then there exists a sequence $(g_n)$ in $\Isom(X)$ leaving every compact subset, and such that $(g_n)$ converges in $\mathrm{Homeo}(\bd X)$ to some $\varphi$. Upon extracting, we may assume that for some (hence all) $x \in X$, the sequence $(g_n\inv x)$ converges to a point $\xi \in \bd X$. By Proposition~\ref{prop:ManyAntipodes}, the point $\xi$ has at least two antipodes, say $\eta_1 \neq\eta_2$. The $\pi$-convergence Theorem of Papasoglu--Swenson~\cite[Lemma~19]{PapasogluSwenson}  implies that the converging sequences $(g_n \eta_1)$ and $(g_n \eta_2)$ have the same limit in $\bd X$, which contradicts that $\varphi = \lim_n g_n \in \mathrm{Homeo}(\bd X)$ is injective.
\end{proof}

\section{Spherical buildings}

The goal of this section, that the experts may wish to skip, is to review of  basic definitions and facts on spherical buildings.  

\subsection{Metric and combinatorial buildings}\label{sec:Buildings}

The spherical buildings appearing in this paper are  \emph{metric} spherical buildings, as defined by Kleiner--Leeb~\cite[\S 3.2]{Kleiner-Leeb}. In the following, we briefly recall the basic notions as well as some facts relevant to our purposes. We refer to loc.~cit. for any further detail.

Let $S$ be a round sphere. An \textbf{equator}\index{equator} of $S$ is a subsphere of codimension~$1$. A \textbf{reflection}\index{reflection} of $S$ is an isometry of order~$2$ fixing pointwise an equator. A \textbf{spherical Coxeter complex}\index{spherical Coxeter complex} is a pair $(S, W)$ consisting of a round sphere $S$ and a finite subgroup $W \leq \Isom(S)$ generated by reflections, and called the \textbf{Weyl group}\index{Weyl group}\index{group!Weyl}. A \textbf{wall}\index{wall} of $(S, W)$ is the fixed equator of a reflection belonging to $W$. Each connected component of the complement in $S$ of the union of all walls is an open convex subset called a \textbf{chamber}\index{chamber}. The group $W$ acts freely and transitively on the set of chambers (except in the trivial case where $\dim S = 0$ and $W = \{1\}$, in which $W$ has two orbits of chambers).

A \catun space $Z$ is called a \textbf{metric spherical building}\index{spherical building!metric} modelled on a spherical Coxeter complex $(S,W)$ if there exists a collection $\mathscr A$, called  \textbf{atlas}\index{atlas},  of isometric emdeddings $\iota \colon S \to Z$ satisfying the following  axioms. 
\begin{description}
\item[(SB0)] For all $\iota \in \mathscr A$ and $w \in W$, we have $\iota \circ w \in \mathscr A$. 

\item[(SB1)] For each pair $z_1, z_2 \in Z$ there exists $\iota \in \mathscr A$ whose image contains both $z_1$ and $z_2$. 

\item[(SB2)] For all $\iota_1, \iota_2 \in \mathscr A$ with respective images $A_1, A_2$, the map $\iota_1\inv \circ \iota_2$ defined on $\iota_2\inv(A_1 \cap A_2) \subset S$ is the restriction of an element of $W$. 
\end{description}

The image of an element of $\mathscr A$ is called an \textbf{apartment}\index{apartment}. By~\cite[Corollary~3.5.2]{Kleiner-Leeb},  each top-dimensional sphere in a metric spherical building is an apartment. A \textbf{chamber}\index{chamber} (resp. a \textbf{wall})  in a metric spherical building is the image of a chamber (resp. wall) under an element of $\mathscr A$. 
Two chambers are called \textbf{adjacent}\index{chamber!adjacent} if their intersection is of codimension~$\leq 1$. A \textbf{panel}\index{panel} is the intersection of two distinct adjacent chambers. 
A \textbf{half-apartment}\index{half-apartment}\index{apartment!half-} is a hemisphere whose boundary equator is a wall. The building $Z$ is called \textbf{thick}\index{spherical building!thick}\index{thick spherical building} if every wall is contained in at least~$3$ half-apartments.

\begin{lem}\label{lem:Chamber=Intersection}
In a metric spherical building, if a point $\xi$ is contained in  the interior of a chamber, then it is contained in a unique chamber. 
If moreover the building is thick, that chamber coincides with the intersection of all apartments containing $\xi$. 
\end{lem}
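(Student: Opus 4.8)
The first assertion is essentially structural. Suppose $\xi$ lies in the interior of a chamber $C$, which by definition is the image under some chart $\iota \in \mathscr A$ of a chamber of the model Coxeter complex $(S, W)$. If $\xi$ were also contained in another chamber $C'$, then, choosing (by (SB1)) an apartment $A$ containing both $\xi$ and a point of the interior of $C'$, and using axiom (SB2) to transfer the chamber structure of $C$ and $C'$ into a single copy of the Coxeter complex, we would get two chambers of $(S,W)$ sharing an interior point — which forces them to coincide, since the chambers of $(S,W)$ are the connected components of the complement of the union of walls. The only care needed is to check that the interior of a chamber in $Z$, as seen inside any apartment, really is the interior of a Coxeter-complex chamber, i.e.\ that being ``in the interior of a chamber'' is intrinsic and not apartment-dependent; this follows from the compatibility axioms since any two apartments are glued along a $W$-subcomplex.

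For the second assertion, assume the building is thick and $\xi$ is interior to the chamber $C$. Clearly $C$ is contained in every apartment $A$ containing $\xi$: indeed $A$ contains $\xi$, hence (being an apartment, on which the induced cell structure is that of $(S,W)$) contains the unique chamber of $A$ having $\xi$ in its interior, and by the first part this chamber is $C$. So $C \subseteq \bigcap_{\xi \in A} A$, and it remains to prove the reverse inclusion: no point outside $C$ lies in all apartments through $\xi$. Fix $\zeta \notin C$. Pick a panel $\pi$ of $C$, i.e.\ a codimension-$1$ face, lying in a wall $m$ of some apartment $A_0 \ni \xi$; thickness gives a third half-apartment $H$ with $\partial H = m$, distinct from the two half-apartments of $A_0$ bounded by $m$. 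Gluing the half-apartment of $A_0$ containing $C$ to $H$ along $m$ produces (by the building axioms) a new apartment $A_1$ containing $C$, hence containing $\xi$; and by varying which panel / wall we use, and iterating, we can arrange an apartment through $\xi$ that avoids $\zeta$ whenever $\zeta \notin C$.

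The main obstacle — and the step I would spend the most care on — is this last ``separation'' argument: showing that for every $\zeta \notin C$ there is at least one apartment containing $\xi$ but not $\zeta$. The clean way is to argue on a single apartment $A \ni \xi$ (which exists and contains $C$): inside $A \cong S$ with its $W$-structure, $\zeta$ is separated from the interior of $C$ by at least one wall $m$ of $A$, i.e.\ $\zeta$ lies in the closed half-apartment $H^-$ not containing $\xi$'s chamber. The wall $m$ contains a panel of $C$; by thickness choose a half-apartment $H'$ with $\partial H' = m$ and $H' \neq H^-$, $H' \neq H^+$ (where $H^+$ is the half-apartment of $A$ on the $C$-side). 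Then $A' := H^+ \cup H'$ is an apartment — one must invoke that such a union is indeed an apartment, which is part of the standard theory of buildings and follows from (SB0)--(SB2) together with the fact that top-dimensional spheres are apartments (\cite[Corollary~3.5.2]{Kleiner-Leeb}), since $H^+ \cup H'$ is a top-dimensional round sphere. Now $A'$ contains $\xi$ (it contains $C \subseteq H^+$) but does not contain $\zeta$: indeed $\zeta \in H^- \setminus m$, and $H^- \cap A' = H^- \cap (H^+ \cup H') = m$ because $H^- \neq H'$ and distinct half-apartments with the same boundary wall meet exactly in that wall. Hence $\zeta \notin A'$, completing the proof that $\bigcap_{\xi \in A} A \subseteq C$, and therefore equality holds.
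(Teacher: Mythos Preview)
Your argument is essentially correct and close in spirit to the paper's, but the organisation differs. For the first assertion the paper simply cites \cite[Lemma~3.4.2]{Kleiner-Leeb}; your direct argument via (SB1)--(SB2) is fine, though you should state explicitly that an apartment containing an interior point of $C$ must contain all of $C$ (this is the well-definedness of the chamber structure you allude to, and is precisely what that citation provides).

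For the second assertion the paper takes a slightly more structural route: it observes that in the model Coxeter complex the chamber through a regular point is the intersection of the half-apartments containing it, so the same holds in the building, and then reduces to showing that every half-apartment $H_0$ is an intersection of apartments. Thickness gives two further half-apartments $H_1,H_2$ with the same boundary wall; Lemma~\ref{lem:criterion:tricycle} makes $A_i=H_0\cup H_i$ into spheres, hence apartments, and $A_1\cap A_2=H_0$ since a proper convex subset of a sphere containing a hemisphere equals that hemisphere. Your ``separation'' argument (for $\zeta\notin C$, build an apartment through $\xi$ missing $\zeta$) is a direct unpacking of this same idea, and implicitly you may assume $\zeta$ lies in your chosen apartment $A$, since otherwise $A$ itself witnesses the separation. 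Two points deserve explicit justification: that $H^+\cup H'$ is a round sphere, and that two distinct half-apartments with common boundary wall meet exactly in that wall. Both are consequences of Lemma~\ref{lem:criterion:tricycle} (or the equivalent argument), not immediate from the axioms; the paper invokes that lemma at the corresponding step. Also, the clause ``the wall $m$ contains a panel of $C$'' is unnecessary and potentially false for an arbitrary separating wall; thickness applies to every wall, so you can drop it.
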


\begin{proof}
The first assertion follows from~\cite[Lemma~3.4.2]{Kleiner-Leeb}. For the second, we first remark that in a spherical Coxeter complex, the chamber containing a given regular point is the intersection of all half-apartments containing it. Hence the same holds in any spherical building. Therefore it suffices to show that each half-apartment in a thick spherical building is an intersection of apartments. This can be established as follows. Given a half-apartment $H_0$ in a thick building, there exist two other half-apartments $H_1, H_2$, each sharing its boundary wall with $H_0$. By Lemma~\ref{lem:criterion:tricycle}, the union $A_i = H_0 \cup H_i$ is isometric to a sphere, hence it is an apartment. The intersection $A_1 \cap A_2$ is a proper convex subset of a sphere containing the hemisphere $H_0$, whence $A_1 \cap A_2 = H_0$. 
\end{proof}

We say that a spherical Coxeter complex $(S, W)$ is \textbf{irreducible}\index{irreducible!Coxeter complex}\index{spherical Coxeter complex!irreducible} if its chambers are simplices of diameter~$<\pi/2$. A spherical building is called \textbf{irreducible}\index{irreducible!building}\index{spherical building!irreducible} if its model Coxeter complex is irreducible. 

\begin{lem}\label{lem:IrredBuilding}
Every spherical building $Z$ admits a canonical decomposition as the join $Z = Z_0 \circ Z_1 \circ \dots \circ Z_p$, where $Z_0$ is a  (possibly empty) round sphere and  $Z_i$ is  a thick irreducible spherical building for each $i>0$. 

In particular, if $Z$ is irreducible as a \catun space, then $Z$ is a thick irreducible spherical building or a $0$-dimensional sphere.
\end{lem}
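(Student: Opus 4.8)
The plan is to reduce the statement to the classical decomposition of the model Coxeter complex, transport it to the building via the structure theory of Kleiner--Leeb, and then extract the ``in particular'' clause from the observation that the $0$-dimensional sphere is the only round sphere that is irreducible as a \catun space. First I would record the decomposition of the model $(S,W)$ of $Z$: since $W$ is a finite reflection group, there is an orthogonal splitting $S = S_0 \circ S_1 \circ \dots \circ S_p$ and $W = W_1 \times \dots \times W_p$, where $S_0$ is the sphere fixed pointwise by $W$, each $W_i$ acts essentially and irreducibly on $S_i$, and $(S_i, W_i)$ is an irreducible spherical Coxeter complex for $i \geq 1$ (its chambers being spherical simplices of diameter~$<\pi/2$). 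The pair $(S_0, \{1\})$ is the Coxeter complex of a round sphere.

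Next I would transport this to $Z$. The atlas transition maps of $Z$ lie in $W$ by (SB2), and $W$ preserves the above join decomposition of $S$ and fixes $S_0$ pointwise; consequently the $S_0$-part of an apartment is common to all apartments, and the building splits as a spherical join $Z = Z_0 \circ Z_1 \circ \dots \circ Z_p$ in which $Z_0$ is a round sphere isometric to $S_0$ (possibly empty) and, for $i \geq 1$, $Z_i$ is a metric spherical building modelled on $(S_i, W_i)$. This is the content of the discussion of reducible buildings in~\cite[\S 3.3]{Kleiner-Leeb}. After absorbing into $Z_0$ each factor $Z_i$ that happens to be reduced to a single apartment---hence a round sphere---we may assume that every remaining $Z_i$ is thick. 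It then remains to check that these $Z_i$ are thick \emph{and} irreducible as spherical buildings, and that the decomposition is canonical.

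Irreducibility of $Z_i$ as a spherical building is immediate since its model $(S_i, W_i)$ is an irreducible Coxeter complex. For thickness, $Z_i$ is not reduced to a single apartment, so some wall lies in at least three half-apartments; more precisely, since $W_i$ is irreducible, a thin wall in $Z_i$ would force $Z_i$ to be thin everywhere. Indeed, in the rank-$1$ case ``not a single apartment'' already means at least three points, hence thick; in the rank-${\geq}2$ case, a panel contained in exactly two chambers forces the rank-$2$ residues through it (generalised polygons, ordinary $2m$-gons precisely in the thin case) to be thin, and this spreads along the connected Coxeter diagram of $W_i$ to all panel types, making $Z_i$ a single apartment---a contradiction. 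Canonicity of the whole decomposition follows from the canonicity and $\Isom(Z)$-invariance of the de Rham decomposition of the finite-dimensional \catun space $Z$ (apply~\cite{FoertschLytchakGAFA} to its Euclidean cone, as in \S\ref{sec:JoinsLinks} above): the maximal round sphere factor must be $Z_0$, and the irreducible \catun factors must be the $Z_i$.

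Finally, if $Z$ itself is irreducible as a \catun space then the join $Z = Z_0 \circ Z_1 \circ \dots \circ Z_p$ has a single nonempty factor, so $Z$ is a round sphere (the case $Z = Z_0$) or a thick irreducible spherical building (the case $Z = Z_i$); in the former case $Z$ must be $0$-dimensional, since $S^n \cong S^0 \circ S^{n-1}$ is a nontrivial join for $n \geq 1$. I expect the main obstacle to be the second step: cleanly passing between the combinatorial/model description and the metric join decomposition, i.e.\ matching the factors produced by the splitting of $(S,W)$ with the canonical de Rham factors of $Z$ as a \catun space. The propagation-of-thinness argument and the remaining verifications are routine building theory.
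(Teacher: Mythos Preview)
Your overall structure is right, but the thickness step has a genuine gap. The claim ``since $W_i$ is irreducible, a thin wall in $Z_i$ would force $Z_i$ to be thin everywhere'' is false, and the propagation argument you sketch does not work: a thin panel does \emph{not} force the rank-$2$ residues through it to be thin. Concretely, let $Z = Z_1' \circ Z_2'$ where each $Z_j'$ consists of three points at pairwise distance~$\pi$. This is a thick building of type $A_1 \times A_1$, but by enlarging the atlas (precompose each chart with every element of the dihedral group $B_2 \supset A_1 \times A_1$; axioms (SB0)--(SB2) are immediate) one obtains a building structure on the same $Z$ modelled on the \emph{irreducible} Coxeter complex $(S^1, B_2)$. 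In this $B_2$ structure the panels at edge-midpoints lie in exactly two chambers (thin) while the panels at the original vertices lie in three (thick). So $Z_i$ can have irreducible Weyl group, fail to be a single apartment, and still fail to be thick; your absorption step misses such cases, and the generalized-polygon argument fails already here (the rank-$2$ residue is a grid-type generalized quadrangle, not an ordinary octagon).

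The paper circumvents this by invoking Kleiner--Leeb's \emph{thick reduction} \cite[Proposition~3.7.1]{Kleiner-Leeb} at the outset: one canonically replaces $(W, \mathscr A)$ by $(W', \mathscr A')$ so that $Z$ becomes thick, and only then applies the join decomposition of \cite[\S 3.3]{Kleiner-Leeb} to the new model $(S, W')$. Thickness of the factors is then automatic, since a join of buildings is thick if and only if each factor is. In the example above, thick reduction takes $B_2$ back to $A_1 \times A_1$, exposing the reducibility that was invisible from the original Weyl group. Once you insert this step, the rest of your argument---canonicity via the de Rham decomposition and the ``in particular'' clause---goes through as written.
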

\begin{proof}
Let $(S, W)$ be the model Coxeter complex and $\mathscr A$ be the atlas of $Z$. 
We first apply the \emph{thick reduction} from~\cite[Proposition~3.7.1]{Kleiner-Leeb} to $Z$: this provides a canonical way to replace $W$ be a subgroup $W'$ and $\mathscr A$ by a subset $\mathscr A'$ in such a way that $Z$ equipped with $\mathscr A'$ is a thick building modelled on $(S, W')$. 
By~\cite[Proposition~3.3.1]{Kleiner-Leeb} and the discussion following its proof, every  spherical building $Z$ admits a canonical decomposition as the join $Z = Z_0 \circ Z_1 \circ \dots \circ Z_p$, where $Z_0$ is a  (possibly empty) round sphere and  $Z_i$ is  an irreducible spherical building for each $i>0$. In a product building, the Weyl group is the product of the Weyl groups of the factors; therefore the walls of the model Coxeter complex is the union of the walls of the factors. This implies that a product building is thick if and only if each of its factors is thick. So is thus each $Z_i$  (in particular the Weyl group of the spherical factor $Z_0$ is trivial). The first assertion follows.

Assume now that $Z$ is irreducible  as a \catun space. Then $Z$ is an irreducible spherical building or a $0$-dimensional round sphere and the second assertion follows.
\end{proof}

The canonical spherical factor $Z_0$ afforded by Lemma~\ref{lem:IrredBuilding} coincides with the {spherical de Rham factor} of $Z$.


The definition of metric spherical buildings recalled above is due to Kleiner--Leeb and differs from the original definition of Tits, which is more combinatorial. We will be led to apply classical results from the literature on buildings in a metric context, and therefore need to clarify the connection between the two notions. A spherical building in the sense of~\cite[Definition~4.1]{AbramenkoBrown} will be called a \textbf{combinatorial spherical building}\index{spherical building!combinatorial}.

\begin{lem}\label{lem:MetricVsCombinatorial}
Every (irreducible, thick) combinatorial spherical building  has a   metric realisation which is a(n irreducible, thick) metric spherical building. 

Conversely, a  metric spherical building without a spherical de Rham factor carries a canonical  structure of a  simplicial complex, whose simplices are the chambers and their intersections,  which is a  combinatorial spherical   building. 
\end{lem}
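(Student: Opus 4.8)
The plan is to prove Lemma~\ref{lem:MetricVsCombinatorial} in two directions, since the statement has two halves.

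\textbf{From combinatorial to metric.} Starting with a combinatorial spherical building $\Delta$ in the sense of~\cite[Definition~4.1]{AbramenkoBrown}, its Coxeter diagram is of spherical type, so the associated Coxeter group $W$ acts on a round sphere $S$ of the appropriate dimension with simplicial chambers; the Davis--Moussong/Tits metric realisation replaces each simplex of $\Delta$ by a spherical simplex isometric to a chamber of $(S,W)$, glued along faces. I would take the atlas $\mathscr A$ to consist of the metric realisations of the apartments of $\Delta$ (together with all their $W$-translates, to get (SB0)). Axioms (SB1) and (SB2) then translate directly: (SB1) is the combinatorial axiom that any two simplices lie in a common apartment, upgraded from simplices to points by noting every point lies in the interior of a unique face hence in a chamber; (SB2) is the standard fact that the transition map between two apartments is induced by an element of $W$, which in the combinatorial setting is the existence of a type-preserving isomorphism fixing the intersection. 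One must check the metric realisation is \catun — this is Moussong's lemma / the standard fact that a spherical simplicial complex in which links are themselves such complexes with systole $\geq 2\pi$ is \catun, and here it holds because apartments are round spheres and any two points lie in one. Irreducibility and thickness transfer because the diagram is irreducible iff $(S,W)$ is (chambers are simplices of diameter $<\pi/2$), and a panel being a face of $\geq 3$ chambers is exactly a wall lying in $\geq 3$ half-apartments.

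\textbf{From metric to combinatorial.} Given a metric spherical building $Z$ with empty spherical de Rham factor, by Lemma~\ref{lem:IrredBuilding} the model Coxeter complex is a product of irreducible ones, so its chambers are simplices (diameter $<\pi/2$ on each factor, and with no sphere factor the whole chamber is a spherical simplex). Declare the simplicial complex whose vertices are the codimension-one faces (panels-of-panels\dots, i.e.\ the $0$-dimensional strata) and whose simplices are the chambers of $Z$ and all their intersections; one checks via (SB2) that inside an apartment this is precisely the Coxeter complex $(S,W)$, which is a genuine simplicial complex, and the building axioms (SB1), (SB2) say that $Z$ is covered by such subcomplexes glued compatibly, which is exactly the chamber-complex definition of~\cite[Definition~4.1]{AbramenkoBrown}. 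Apartments in the combinatorial sense are the images of elements of $\mathscr A$, and the combinatorial building axioms follow from (SB1)--(SB2); thickness is preserved as above. The only subtlety is well-definedness of the simplicial structure — that a point interior to a chamber lies in a \emph{unique} chamber — which is Lemma~\ref{lem:Chamber=Intersection}.

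\textbf{Main obstacle.} The delicate point in both directions is not the building axioms, which are formal, but matching the \catun metric structure with the simplicial combinatorics: showing the Tits-metric realisation of a combinatorial building is genuinely \catun (hence that the metric notion is not vacuous), and conversely that the metric chambers of a Kleiner--Leeb building really are spherical \emph{simplices} forming a locally finite simplicial complex rather than more general convex cells. Both are handled by invoking the structure theory of~\cite{Kleiner-Leeb} — in particular~\cite[Corollary~3.5.2]{Kleiner-Leeb} (top-dimensional spheres are apartments), the irreducible-factor decomposition~\cite[Proposition~3.3.1]{Kleiner-Leeb}, and the characterisation of irreducibility via chamber diameter — together with the classical equivalence, going back to Tits, between the chamber-system and simplicial-complex viewpoints on combinatorial buildings; so in the write-up I would lean on these citations rather than reprove them.
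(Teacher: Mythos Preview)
Your proposal is correct and follows essentially the same route as the paper: the paper simply cites~\cite[Proposition~12.29 and Example~12.39]{AbramenkoBrown} for the forward direction and~\cite[\S 3.4]{Kleiner-Leeb} for the converse, and your sketch is an unpacking of what those references establish. The only minor difference is that you invoke Moussong's lemma for the \catun condition whereas Abramenko--Brown use retractions onto apartments, but either works and the paper treats this lemma as a pointer to the literature rather than a place for new argument.
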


\begin{proof}
Given a combinatorial spherical building, the existence of a metric realisation as a \catun space  is proved in~\cite[Proposition~12.29 and Example~12.39]{AbramenkoBrown}. The other verifications are then straightforward. For the converse, the arguments are provided in~\cite[\S 3.4]{Kleiner-Leeb}.
\end{proof}

\subsection{The Moufang condition}\label{sec:Moufang}

Let $Z$ be a thick (metric) spherical building. Given a half-apartment $\alpha \subset Z$, we define the \textbf{root group}\index{root group}\index{group!root} associated with $\alpha$ as the subgroup  $U_\alpha \leq \Isom(Z)$ consisting of those elements fixing pointwise each   chamber having a panel  contained in $\alpha$ but not contained in the boundary wall of $\alpha$. We say that $Z$ satisfies the \textbf{Moufang condition}\index{Moufang condition} (or simply is \textbf{Moufang}) if for each half-apartment $\alpha$, the root group $U_\alpha$ acts transitively on the set of apartments containing $\alpha$. We remark that this condition is empty if $\dim Z = 0$. There is a suitable adaption of the Moufang condition for $0$-dimensional buildings, namely the notion of a \emph{Moufang set}, but we do not recall that here since it will not play any role.

Spherical buildings associated with non-compact simple Lie groups, and more generally with isotropic simple algebraic groups over arbitrary fields, are all Moufang (see e.g.~\cite[\S 7.9.3]{AbramenkoBrown}). In particular, the spherical buildings appearing as the visual boundaries of symmetric spaces of non-compact type are Moufang (and associated with simple algebraic groups over the reals). 

Following R.~Weiss~\cite{Weiss09}, a \textbf{Bruhat--Tits building}\index{Bruhat--Tits building} is defined as a thick Euclidean building whose spherical building at infinity is Moufang. The classification of irreducible Bruhat--Tits buildings of dimension~$\geq 2$ has been obtained by Bruhat and Tits and can be consulted in~\cite{Weiss09}. In particular, it turns out that the locally finite (equivalently, locally compact) Bruhat--Tits buildings are precisely the Bruhat--Tits buildings associated with simple algebraic groups over non-Archimedean local fields. This is indeed proved in Chapter~28 from~\cite{Weiss09}, to which the following additional subtle but important clarification should be added. The buildings of mixed type appearing in Table 28.4 from~\cite{Weiss09} happen to be split, and isomorphic to buildings appearing in Table~28.5. They are thus also associated with simple algebraic groups over local fields. This is explained in Remark~28.19 from~\cite{MuhlherrPeterssonWeiss}. In the case of buildings of dimension~$\geq 3$, the same remark is also made by J.~Tits in \S15 of the R\'esum\'e de Cours 1983--1984 that can be consulted in \cite{Tits_resume}.

\subsection{A metric characterisation of  spherical buildings}

In proving that the visual boundary $\bd X$ is a spherical building in Theorem~\ref{thm:NotGoedComplete}, we will not check the building axioms directly, but will rather invoke the following criterion, due to  A.~Balser and A.~Lytchak.

\begin{thm}\label{thm:BalserLytchak}
Let $Z$ be a \catun space of dimension $d< \infty$ containing at least one antipodal pair. Assume that every antipodal pair is contained in a $d$-dimensional round sphere $S \subset Z$. If $Z$ contains a non-empty open subset with compact closure, then $Z$ is a metric spherical building. 
\end{thm}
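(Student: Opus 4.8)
The plan is to extract from the hypotheses a canonical atlas and verify the axioms (SB0)--(SB2) of \S\ref{sec:Buildings}, using the compactness assumption only to guarantee that the collection of top-dimensional spheres one produces is rich enough. First I would fix a point $\xi_0$ in the given open subset $\Omega$ with compact closure, and use Lemma~\ref{lem:AntipodalPt} together with the hypothesis to find, for a suitable antipode, a $d$-dimensional round sphere $S_0$ through $\xi_0$. The key local observation is that near a generic point the space looks spherical: if $U \subseteq \Omega$ is a small spherical cap around a point $\xi$ lying in the interior of some $d$-sphere, and $\zeta$ is an antipode of $\xi$, then by hypothesis $\zeta$ lies on a $d$-sphere $S$, and the Parachute Lemma~\ref{lem:para} (applied after shrinking $U$ so that no point of $U\setminus\{\xi\}$ is antipodal to $\zeta$, which is possible by lower semicontinuity of the Tits distance and compactness of $\overline U$) produces a $d$-sphere through $\zeta$ whose cap still contains $\xi$ in its interior. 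Iterating this and using the Lunule Lemma~\ref{lem:criterion:tricycle} to glue half-spheres sharing an equator, one builds enough $d$-spheres to cover $Z$ and to see that any two of them meet along a convex subset of a sphere, hence (being a convex subset of a round sphere) along an intersection of hemispheres.

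The heart of the argument is then to define the Coxeter data: take $\mathscr A$ to be the set of all (parametrized) $d$-dimensional round spheres in $Z$, and let $W$ be generated by the reflections appearing as transition maps $\iota_1^{-1}\circ\iota_2$ on overlaps. One must check $W$ is finite and is generated by reflections across the walls it determines; finiteness will follow because, by the compactness/openness hypothesis and the local spherical picture above, the walls through any fixed generic point are locally finite in number, and a standard chamber-counting (or Bruhat--Tits fixed-point type) argument bounds $|W|$. Axiom (SB1) is immediate from the main hypothesis (every pair lies in an apartment, since every antipodal pair does, and non-antipodal pairs are handled by prolonging a geodesic through both points to an antipodal pair via Lemma~\ref{lem:AntipodalPt}). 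Axiom (SB0) is built into the definition of $\mathscr A$ once $W$ is fixed. Axiom (SB2) — that transition maps are restrictions of elements of $W$ — is the substantive point: on the overlap $A_1\cap A_2$, which is an intersection of half-apartments in $A_2$, the map $\iota_1^{-1}\circ\iota_2$ is an isometry between subsets of the model sphere, and one shows it extends to an element of $W$ by decomposing the overlap into chambers and checking the map chamber by chamber, using that adjacent chambers are interchanged by wall reflections.

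The main obstacle I expect is precisely the global coherence packaged in (SB2): a priori the transition map between two apartments need only be \emph{some} isometry of the intersection, and promoting it to an element of a single finite group $W$ requires knowing that all the $d$-spheres in $Z$ are ``synchronized'' — i.e.\ that there is one Coxeter complex modelling all of them simultaneously. The way to handle this is to first establish it locally (all $d$-spheres through a fixed generic $\xi_0$ carry a common wall pattern near $\xi_0$, by the Parachute/Lunule gluing), then propagate along chains of overlapping apartments, using connectedness of $Z$ and the compactness hypothesis to ensure the propagation is consistent and that only finitely many wall-directions occur. A secondary technical point is verifying that $Z$ is \emph{thin or thick locally of uniform type} so that the ``chamber'' notion is well-defined; but this follows once $\mathscr A$ and $W$ are in place, since chambers are then by definition images of model chambers, and Lemma~\ref{lem:Chamber=Intersection}-type reasoning identifies them intrinsically. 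With the atlas verified, $Z$ is a metric spherical building by definition, completing the proof.
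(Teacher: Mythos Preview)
The paper does not prove this statement: its entire proof is the single line ``See Theorem~1.6 from~\cite{BaLy_BuildingLikeSpaces}.'' The result is quoted as a black box from Balser--Lytchak, so there is no argument in the paper to compare your proposal against.

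As for your sketch on its own merits: the overall strategy --- produce top-dimensional spheres through every point, show any two intersect in a convex spherical set, extract a finite reflection group $W$ from the local wall pattern, and then verify (SB0)--(SB2) --- is indeed the shape of the Balser--Lytchak argument. However, several of your steps are under-specified in ways that matter. Your use of the Parachute Lemma requires that $U\setminus\{\xi\}$ contain no antipode of $\zeta$; lower semicontinuity of the Tits angle tells you the set of antipodes of $\zeta$ is \emph{closed}, not that it is discrete or avoids a punctured neighbourhood of $\xi$, so this step needs an additional argument (in Balser--Lytchak this is handled through their notion of ``symmetric'' points and a careful local analysis). More seriously, the construction of a single finite Coxeter group $W$ governing all apartment transitions is the real content of the theorem, and your ``propagate along chains of overlapping apartments'' is too vague: one needs to show that the local wall patterns assemble into a globally consistent simplicial structure, which in~\cite{BaLy_BuildingLikeSpaces} goes through an auxiliary characterisation in terms of links and an induction on dimension. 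Your plan does not yet contain a mechanism guaranteeing that the reflections you generate close up to a finite group rather than an infinite one.
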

\begin{proof}
See Theorem~1.6 from~\cite{BaLy_BuildingLikeSpaces}.
\end{proof}

\section{Orthogonal projections at infinity}\label{sec:proj}

Our goal in this section is to prove a purely geometric statement about projections at infinity. Proposition~\ref{prop:ControlProj} below establishes this result under a technical assumption. We then proceed to present a density criterion which implies a variant of this assumption, Proposition~\ref{prop:DecrBusemann}. This variant will turn out to be sufficient when we will apply Proposition~\ref{prop:ControlProj}.

\subsection{Projecting on an intersection at infinity}

Let $X$ be a \cat space. 
Given  $\xi \in \bd X$, we shall denote by $b_\xi \colon X \to \RR$ the Busemann function centred at $\xi \in \bd X$ and normalised at some chosen base point of $X$. We denote by $\proj_A\colon X\to A$ the nearest point projection to a non-empty complete convex subset $A\se X$. We use the same notation $\proj_A$ for the projection in $\bd X$ to a non-empty complete convex subset $A\se \bd X$ but we recall that it is only defined for points at Tits-distance~$<\pi/ 2$ of $A$, see~\cite[II.2.6]{Bridson-Haefliger}.

\begin{prop}\label{prop:ControlProj}
Let $X$ be a proper \cat space and $\mathscr Y$  a filtering family of closed convex subsets of $X$ with empty intersection. Set $D = \bigcap_{Y \in \mathscr Y} \bd Y$. 
Assume that there exists $\xi \in D$ and $\lambda > 0$ such that for all $Y \in \mathscr Y$ and $y \in X$ with $d(y, Y) \geq 1$, we have 
$$b_\xi(y) - b_\xi(\proj_{Y}(y))  \geq  \lambda d(y, Y).$$
Then for all $\eta \in \bd X$ with $\Td(\eta, D) \in (0, \pi/2)$, we have $\proj_{D}(\eta) \neq \xi$. 
\end{prop}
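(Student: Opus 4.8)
The plan is to argue by contradiction: suppose $\eta \in \bd X$ satisfies $\Td(\eta, D) \in (0, \pi/2)$ but $\proj_D(\eta) = \xi$. First I would set up the basic geometric data. Since $\mathscr Y$ is a filtering family, for each $Y \in \mathscr Y$ the projection $\proj_Y(\eta) \in \bd Y$ is well-defined provided $\Td(\eta, \bd Y) < \pi/2$, which holds because $\bd Y \supseteq D$ and $\Td(\eta, D) < \pi/2$. The strategy is to transport the hypothesis inequality $b_\xi(y) - b_\xi(\proj_Y(y)) \geq \lambda\, d(y, Y)$ from the interior of $X$ to a statement at infinity, using the asymptotic angle formula (Lemma~\ref{lem:asy}) to convert Busemann differences along a geodesic into cosines of comparison angles, and hence into Tits angles.

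Concretely, fix a geodesic ray $r$ pointing to $\eta$ emanating from a well-chosen basepoint, and let $y = r(t)$ for large $t$. As $t \to \infty$, the point $\proj_Y(r(t))$ runs off to infinity (because $\bigcap \mathscr Y = \varnothing$ forces $d(r(t), Y)$ to grow, or at least because $r(t)$ leaves every bounded set), and one checks its limit is $\proj_{\bd Y}(\eta)$. Applying Lemma~\ref{lem:asy} with the pair $x = r(t)$, $y = \proj_Y(r(t))$ — or more carefully, estimating $b_\xi$ along the segment from $r(t)$ to its projection — the hypothesis $b_\xi(r(t)) - b_\xi(\proj_Y(r(t))) \geq \lambda\, d(r(t), Y)$ should, in the limit, yield a lower bound of the form $\cos \tangle{\xi}{\proj_{\bd Y}(\eta)}$ bounded below by a positive constant, hence $\tangle{\xi}{\proj_{\bd Y}(\eta)} \leq \pi/2 - \delta$ for some $\delta > 0$ independent of $Y$. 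Meanwhile $\tangle{\eta}{\proj_{\bd Y}(\eta)} = \Td(\eta, \bd Y) \to \Td(\eta, D)$ as $Y$ shrinks, and $\proj_{\bd Y}(\eta) \to \proj_D(\eta) = \xi$; combining these forces $\Td(\eta, \xi) = \Td(\eta, D)$ to coexist with the uniform bound $\tangle{\xi}{\proj_{\bd Y}(\eta)} \leq \pi/2 - \delta$, and passing to the limit $\proj_{\bd Y}(\eta) \to \xi$ gives $0 = \tangle{\xi}{\xi} \leq \pi/2 - \delta$ — no contradiction yet, so the contradiction must instead come from comparing the \emph{rate} at which $\proj_{\bd Y}(\eta)$ approaches $\xi$ against the rate forced by the Busemann estimate, i.e.\ the uniform constant $\lambda$ prevents $\proj_{\bd Y}(\eta)$ from converging to $\xi$ at all unless $\eta$ itself lies in $D$.

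Let me restate the crux more cleanly: the right approach is to show directly that under the hypothesis, $b_\xi(\proj_Y(y))$ is bounded above (over $Y \in \mathscr Y$, for fixed $y$) by $b_\xi(y) - \lambda$ whenever $d(y,Y) \geq 1$, so the net $(\proj_Y(y))_{Y \in \mathscr Y}$ cannot converge to $\xi$ in $\overline X$ — for if it did, $b_\xi(\proj_Y(y)) \to +\infty$. Taking $y = r(t)$ and letting first $Y$ shrink then $t \to \infty$ (with the filtering/diagonal argument arranged so that $\proj_Y(r(t)) \to \proj_{\bd Y}(\eta)$ for each fixed $Y$, and $\proj_{\bd Y}(\eta) \to \proj_D(\eta)$), one concludes $\proj_D(\eta) \neq \xi$. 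The main obstacle I anticipate is the double limit: one must interchange "$t \to \infty$" (projection in $X$ converging to projection in $\bd X$) with "$Y$ filtering down" (projection onto $\bd Y$ converging to projection onto $D = \bigcap \bd Y$), and verify that the uniform constant $\lambda$ survives both passages; properness of $X$ and the fact that nearest-point projection to a convex set is $1$-Lipschitz and continuous under the cône topology should make this work, but the bookkeeping — especially ensuring $d(r(t), Y) \to \infty$, or handling the case where it stays bounded (which would put a point of $\overline{\,Y} \cap \bd X$ "close to" $\eta$ and can be excluded using $\Td(\eta, D) > 0$) — is where the care is needed.
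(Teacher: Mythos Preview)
Your overall architecture is right: first prove, for each fixed $Y \in \mathscr Y$, that the Tits projection $\eta_Y := \proj_{\bd Y}(\eta)$ stays uniformly away from $\xi$ (with a bound depending only on $\lambda$ and $\Td(\eta,\xi)$), and then pass to the limit along the filtering family. The paper does exactly this, and the limit step is handled by a short lemma showing that for a filtering family of closed convex sets in a \catun space, $\proj_{\bd Y}(\eta) \to \proj_D(\eta)$ in the Tits metric.

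The genuine gap is in the first step: you have not found the mechanism that produces the uniform lower bound on $\tangle{\eta_Y}{\xi}$, and both of your attempts point in the wrong direction. In your first pass you aim for ``$\cos\tangle{\xi}{\eta_Y}$ bounded below, hence $\tangle{\xi}{\eta_Y}\leq \pi/2 - \delta$''; but what is needed is a \emph{lower} bound on $\tangle{\xi}{\eta_Y}$, i.e.\ the cosine bounded \emph{away from~$1$}. In your second pass you say that if $\proj_Y(y)\to\xi$ then $b_\xi(\proj_Y(y))\to +\infty$; in fact $b_\xi\to -\infty$ along rays to $\xi$, so the hypothesis $b_\xi(\proj_Y(y))\leq b_\xi(y)-\lambda$ is entirely compatible with $\proj_Y(y)\to\xi$ and gives no contradiction.

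What actually works is a comparison-triangle computation for a single $Y$. Fix $x\in Y$, let $y_n\to\eta$ along $[x,\eta)$, set $y'_n=\proj_Y(y_n)$, and compare $(x,y_n,y'_n)$ in $\RR^2$. Since the comparison angle at $y'_n$ is at least $\pi/2$, one has $\cos\cangle{x}{y_n}{y'_n}\geq d(x,y'_n)/d(x,y_n)$. Now use $d(x,y'_n)\geq -b_\xi(y'_n)\geq -b_\xi(y_n)+\lambda\,d(y_n,y'_n)$ together with the sine law $d(y_n,y'_n)/d(x,y_n)\geq \sin\cangle{x}{y_n}{y'_n}$ and the asymptotic angle formula for $-b_\xi(y_n)/d(x,y_n)\to\cos\tangle\eta\xi$. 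In the limit this gives
\[
\cos\tangle{\eta}{\eta_Y} \ \geq\ \cos\tangle{\eta}{\xi} \ +\ \lambda\,\sin\tangle{\eta}{\eta_Y}.
\]
If $\eta_Y\to\xi$ along the filter then $\tangle{\eta}{\eta_Y}\to\tangle{\eta}{\xi}$, forcing $\lambda\sin\tangle{\eta}{\xi}\leq 0$, which is impossible since $\tangle{\eta}{\xi}=\Td(\eta,D)\in(0,\pi/2)$. This is the ``rate'' comparison you were reaching for; without this inequality the double-limit bookkeeping you describe cannot close.
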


The proof  requires a number of preparations; it will be given at the end of the section. 
For ease of reference, we start by recording the following basic fact.

\begin{lem}\label{lem:tangle}
Let $X$ be a \cat space and $r, r':\RR_+\to X$ two geodesic rays with $r(0)=r'(0)$. Then $\cangle{r(0)}{r(t)}{r'(t')}$ is non-decreasing in both $t$ and $t'$, and it converges to $\tangle{r(\infty)}{r'(\infty)}$ as $t, t'\to\infty$.

Moreover, if $\{y_n\}$, $\{y'_n\}$ are any sequences in $X$ converging to $\eta, \eta'\in\bd X$, then
$$\tangle\eta{\eta'} \leq \liminf_{n\to\infty} \cangle x{y_n}{y'_n}$$
for all $x\in X$.
\end{lem}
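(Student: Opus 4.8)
The statement is Lemma~\ref{lem:tangle}. The first part is essentially a restatement of standard facts from Bridson--Haefliger about comparison angles along rays emanating from a common point, so the work is in the ``moreover'' clause. The plan is to reduce the general sequences $\{y_n\}, \{y'_n\}$ to the situation of rays from the common basepoint $x$, where the first part applies.

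First I would recall the monotonicity statement: for geodesic rays $r, r'$ with $r(0)=r'(0)=x$, the comparison angle $\cangle{x}{r(t)}{r'(t')}$ is non-decreasing in each of $t, t'$ by the CAT(0) comparison inequality \cite[II.3.1]{Bridson-Haefliger} (this is exactly the monotonicity already invoked in the proof of Lemma~\ref{lem:asy}), and its limit as $t,t'\to\infty$ is the Tits angle $\tangle{r(\infty)}{r'(\infty)}$ by \cite[II.9.8(2)]{Bridson-Haefliger} together with the fact that the angular distance equals the supremum of comparison angles over basepoints and parameters \cite[II.9.8]{Bridson-Haefliger}. That disposes of the first paragraph of the lemma.

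For the ``moreover'' clause, fix $x\in X$ and let $r, r'\colon\RR_+\to X$ be the geodesic rays from $x$ pointing to $\eta, \eta'$ respectively. The idea is: for each fixed $t$, the point $r(t)$ lies on the segment $[x, y_n]$ in the comparison sense only approximately, but since $y_n\to\eta$ in the cone topology, the geodesics $[x,y_n]$ converge uniformly on compacta to $[x,\eta)$; hence for fixed $t$ the points of $[x,y_n]$ at distance $t$ from $x$ converge to $r(t)$, and likewise on the primed side. By continuity of the comparison-angle function (it is a continuous function of the three pairwise distances, away from the degenerate locus), we get, for each fixed $t$ with $t < d(x,y_n)$ eventually,
$$\cangle{x}{r(t)}{r'(t)} = \lim_{n\to\infty}\cangle{x}{\;[x,y_n](t)\;}{\;[x,y'_n](t)\;}.$$
Now the key monotonicity input: for a geodesic segment $[x, y_n]$, the comparison angle $\cangle{x}{z}{w}$ with $z\in[x,y_n]$, $w\in[x,y'_n]$ is non-decreasing as $z, w$ move away from $x$ — again \cite[II.3.1]{Bridson-Haefliger}. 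Hence $\cangle{x}{[x,y_n](t)}{[x,y'_n](t)} \leq \cangle{x}{y_n}{y'_n}$ for every $t$ (once $t$ is at most both $d(x,y_n)$ and $d(x,y'_n)$). Taking $\liminf_n$ on the right and using the displayed convergence on the left yields $\cangle{x}{r(t)}{r'(t)} \leq \liminf_{n\to\infty}\cangle{x}{y_n}{y'_n}$ for every $t$, and then letting $t\to\infty$ and using the first part of the lemma gives $\tangle{\eta}{\eta'}\leq \liminf_{n}\cangle{x}{y_n}{y'_n}$, as desired.

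The main obstacle, such as it is, is bookkeeping around the degenerate cases: when $\eta=\eta'$ the claim is trivial ($\tangle\eta\eta=0$); when the comparison angle is undefined because two of the three points coincide (e.g. $y_n = y'_n$, or $y_n\to x$ — impossible since $y_n\to\eta\in\bd X$), one handles it by convention or by discarding finitely many terms; and one must make sure the convergence $[x,y_n](t)\to r(t)$ is genuine, which is the standard fact that in a proper CAT(0) space geodesics to points $y_n\to\eta$ converge to the ray $[x,\eta)$ uniformly on compact parameter intervals \cite[II.8.19, II.9.2]{Bridson-Haefliger}. None of this is deep; the substance is the monotonicity of comparison angles, which is quoted, so the proof is short.
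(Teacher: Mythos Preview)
Your argument is correct, and so is the first paragraph's citation to \cite[II.3.1]{Bridson-Haefliger} and \cite[II.9.8]{Bridson-Haefliger} for the monotonicity and convergence (the paper cites II.9.8(1) rather than II.9.8(2), but this is cosmetic). One small remark: the convergence $[x,y_n](t)\to r(t)$ does not require properness of $X$ --- it is the very content of $y_n\to\eta$ in the c\^one topology (see \cite[II.8.6--8.8]{Bridson-Haefliger}), so your parenthetical invocation of properness is unnecessary and should be dropped to match the generality of the lemma.

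For the ``moreover'' clause you take a genuinely different route from the paper. The paper's proof passes through the \emph{Alexandrov} angle: since $\tangle\eta{\eta'}=\sup_p\aangle p\eta{\eta'}$, it suffices to bound $\aangle x\eta{\eta'}$ above by $\liminf_n\cangle x{y_n}{y'_n}$; this follows because $\aangle x{-}{-}$ is continuous on $\overline X\times\overline X$ with $x$ fixed \cite[II.9.2(1)]{Bridson-Haefliger} and $\aangle x{y_n}{y'_n}\leq\cangle x{y_n}{y'_n}$. Your argument instead stays entirely in the world of comparison angles: you pull back to fixed-parameter points on the converging segments, use monotonicity to compare with the endpoints, and then let the parameter go to infinity via the first part of the lemma. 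The paper's route is shorter (one line of citations), while yours is more self-contained and avoids mixing Alexandrov and comparison angles; both are perfectly fine.
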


\begin{proof}
For the first statement, see II.3.1 and II.9.8(1) in~\cite{Bridson-Haefliger}. The second is~\cite[II.9.16]{Bridson-Haefliger}; here is a shorter proof. 
It suffices to prove that for any $x \in X$, the right hand side dominates $\aangle x\eta{\eta'}$. This holds because $\aangle x\eta{\eta'}=\lim_{n\to\infty} \aangle x{y_n}{y'_n}$ by the continuity of the Alexandrov angle for $x$ fixed, see~\cite[II.9.2(1)]{Bridson-Haefliger}.
\end{proof}

\begin{lem}\label{lem:projections}
Let $X$ be a \cat space, $Y\se X$ a complete convex  subset and $\eta\in\bd X$ such that $\tangle\eta{\bd Y} < \pi/2$. Choose $x\in Y$.

Then given a sequence $(t_n) \subset [x, \eta)$, any accumulation  point in $\bd X$ of the sequence of projections $\big(\proj_Y(t_n)\big)_n$ is $\proj_{\bd Y}(\eta)$.
\end{lem}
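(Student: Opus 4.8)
The statement to prove is Lemma~\ref{lem:projections}: for a complete convex subset $Y$ of a \cat space $X$ and a boundary point $\eta$ with $\tangle\eta{\bd Y}<\pi/2$, picking $x\in Y$ and points $t_n$ on the ray $[x,\eta)$ escaping to infinity, every accumulation point in $\bd X$ of the projections $\proj_Y(t_n)$ equals $\proj_{\bd Y}(\eta)$. The plan is to compare angles at $x$ and exploit the non-expanding property of nearest-point projection together with the asymptotic angle monotonicity recorded in Lemma~\ref{lem:tangle}.

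First I would set $\zeta=\proj_{\bd Y}(\eta)\in\bd Y$, which is well defined since $\tangle\eta{\bd Y}<\pi/2$, and let $r\colon\RR_+\to X$ be the geodesic ray with $r(0)=x$ pointing to $\zeta$ (possible since $x\in Y$ and $\zeta\in\bd Y$, so $r$ stays in $Y$ by convexity). Write $p_n=\proj_Y(t_n)$; since $x\in Y$ we have $\proj_Y(x)=x$, and because nearest-point projection onto a convex set is $1$-Lipschitz, $d(x,p_n)=d(\proj_Y(x),\proj_Y(t_n))\le d(x,t_n)\to\infty$, so after passing to the accumulating subsequence $p_n\to\zeta'$ for some $\zeta'\in\bd Y$ (the limit lies in $\bd Y$ because $Y$ is closed and $d(x,p_n)\to\infty$; if some subsequence stayed bounded we would argue separately, but the escape of $t_n$ forces $d(x,p_n)\to\infty$ along any subsequence whose projections converge in $\bd X$). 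I must show $\zeta'=\zeta$.

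For one inequality: by the characterisation of the projection in $\bd X$ (\cite[II.2.6]{Bridson-Haefliger}), it suffices to show $\tangle\eta{\zeta'}\le\tangle\eta\zeta$ — since $\zeta=\proj_{\bd Y}(\eta)$ is the \emph{unique} point of $\bd Y$ realising the minimal Tits angle to $\eta$ when that angle is $<\pi/2$, and $\zeta'\in\bd Y$, equality of the two sides then forces $\zeta'=\zeta$. To bound $\tangle\eta{\zeta'}$ from above I would use Lemma~\ref{lem:tangle}: with $y_n=t_n\to\eta$ and $y'_n=p_n\to\zeta'$,
$$\tangle\eta{\zeta'}\ \le\ \liminf_{n\to\infty}\cangle x{t_n}{p_n}.$$
Now $\cangle x{t_n}{p_n}$ is the comparison angle at $x$ in the triangle $x,t_n,p_n$; since $p_n=\proj_Y(t_n)$ and $x\in Y$, the CAT(0) inequality for projections onto convex subsets (the Alexandrov angle $\aangle{p_n}{t_n}{x}\ge\pi/2$, \cite[II.2.4]{Bridson-Haefliger}) gives $d(x,t_n)^2\ge d(x,p_n)^2+d(p_n,t_n)^2$, hence $\cangle x{t_n}{p_n}\le\cangle x{t_n}\zeta$-type control; more directly, $\cos\cangle x{t_n}{p_n}=\tfrac{d(x,t_n)^2+d(x,p_n)^2-d(p_n,t_n)^2}{2\,d(x,t_n)\,d(x,p_n)}\ge\tfrac{d(x,p_n)}{d(x,t_n)}\to\cos\tangle\eta\zeta$ is \emph{not} quite what I want; instead I would compare $\cangle x{t_n}{p_n}$ with $\cangle x{t_n}{r(s)}$ for suitable $s$ and use that $r$ points to $\zeta=\proj_{\bd Y}\eta$. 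The cleaner route: since $p_n\in Y$ and $\zeta\in\bd Y$ with $\zeta$ the Tits-projection of $\eta$, and since $\cangle x{t_n}{p_n}$ is nondecreasing approximations to $\tangle\eta{\zeta'}$, I would show $\liminf_n\cangle x{t_n}{p_n}\le\tangle\eta\zeta$ by producing, for each $n$, a point of $[x,\zeta)$ no farther in comparison angle than $p_n$, using that both $[x,\zeta)$ and the $p_n$ lie in $Y$ and $\eta$ projects to $\zeta$.

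\textbf{Main obstacle.} The delicate point is the inequality $\liminf_n\cangle x{t_n}{p_n}\le\tangle\eta\zeta$: a priori the projections $p_n$ could drift toward a different boundary point of $Y$ than $\zeta$, and ruling this out requires genuinely using that $p_n$ is the \emph{nearest} point of $Y$ to $t_n$ rather than an arbitrary point of $Y$. I expect to handle it via the inequality $d(t_n,\proj_Y t_n)\le d(t_n,r(s))$ for all $s$, which for $s\to\infty$ yields (after dividing by $d(x,t_n)$ and taking limits, using the cosine law and $d(r(s),t_n)-d(x,t_n)\to b_\eta(r(s))-b_\eta(x)$) a bound of the form $\cos\tangle\eta{\zeta'}\ge\cos\tangle\eta\zeta$, i.e.\ $\tangle\eta{\zeta'}\le\tangle\eta\zeta$, which combined with minimality of $\zeta$ gives $\zeta'=\zeta$. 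In fact the asymptotic angle formula (Lemma~\ref{lem:asy}) applied along $r$ gives $\lim_s\cos\cangle x{r(s)}{t_n}=\tfrac{b_\zeta(x)-b_\zeta(t_n)}{d(x,t_n)}$, and letting $n\to\infty$ turns this into an expression involving $\tangle\eta\zeta$; pushing this comparison through carefully is the crux, but it is routine \cat bookkeeping once set up correctly.
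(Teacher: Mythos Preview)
Your approach is the paper's: reduce to $\tangle\eta{\zeta'}\le\tangle\eta\zeta$ via Lemma~\ref{lem:tangle} and the nearest-point inequality $d(t_n,p_n)\le d(t_n,v)$ for $v\in Y$. You correctly identify all the ingredients but circle around the clean implementation, and the detour through Busemann functions and Lemma~\ref{lem:asy} is unnecessary. The paper dispatches your ``main obstacle'' in one stroke: for each $n$, let $v_n$ be the point on $[x,\zeta)$ with $d(x,v_n)=d(x,p_n)$. Since $v_n\in Y$ we have $d(p_n,t_n)\le d(v_n,t_n)$, and as the comparison triangles $(x,t_n,p_n)$ and $(x,t_n,v_n)$ share the two side lengths $d(x,t_n)$ and $d(x,p_n)=d(x,v_n)$, the cosine law gives directly $\cangle x{t_n}{p_n}\le\cangle x{t_n}{v_n}$. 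Now $d(x,v_n)=d(x,p_n)\to\infty$ (since $p_n\to\zeta'\in\bd X$), so $v_n\to\zeta$ along the ray, and Lemma~\ref{lem:tangle} yields $\lim_n\cangle x{t_n}{v_n}=\tangle\eta\zeta$. Chaining the inequalities finishes the proof.
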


\begin{proof}
Let $y_n$ be a sequence along $[x, \eta)$ such that $y'_n:=\proj_Y(y_n)$ converges to some $\eta'\in\bd X$. Since $\eta'\in\bd Y$, it suffices to show that $\tangle{\eta'}\eta \leq \tangle{\zeta}\eta$, where $\zeta=\proj_{\bd Y}(\eta)$. Let $v_n$ be the point of $[x,\zeta)$ with $d(x,v_n)=d(x, y'_n)$. Since $d(y'_n, y_n)\leq d(v_n, y_n)$ we deduce $\cangle x{y'_n}{y_n} \leq \cangle x{v_n}{y_n}$. Now Lemma~\ref{lem:tangle} implies
$$\tangle{\eta'}\eta \leq \liminf_{n\to\infty}\cangle x{y'_n}{y_n} \leq  \lim_{n\to\infty}\cangle x{v_n}{y_n} =  \tangle{\zeta}\eta$$
as was to be shown.
\end{proof}

\begin{lem}\label{lem:fellow}
Let $X$ be a \cat space, $x\in X$ and $\xi, \eta\in\bd X$. If $\tangle\xi\eta\leq\pi/2$, then $b_\xi(y)\leq b_\xi(x)$ for any $y\in[x, \eta)$. In particular, there is a unique $z_y\in[x, \xi)$ with $b_\xi(z_y)= b_\xi(y)$. If $\tangle\xi\eta <\pi/2$, then in addition
$$\lim_y \frac{d(x,z_y)}{d(x,y)} = \cos\tangle\xi\eta$$
as $y$ tends to $\eta$ along $[x, \eta)$.
\end{lem}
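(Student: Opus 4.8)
The plan is to express everything through the asymptotic angle formula (Lemma~\ref{lem:asy}) together with the monotonicity of comparison angles along two rays emanating from a common point (Lemma~\ref{lem:tangle}). Write $r\colon\RR_+\to X$ for the geodesic ray with $r(0)=x$ pointing to $\xi$, and $r'\colon\RR_+\to X$ for the one pointing to $\eta$; a point $y\in[x,\eta)$ is then $y=r'(s)$ with $s=d(x,y)$. For the first assertion I would observe that, by Lemma~\ref{lem:tangle}, the comparison angle $\cangle x{r(t)}{y}=\cangle{r(0)}{r(t)}{r'(s)}$ is non-decreasing in $t$ and bounded above by $\tangle\xi\eta\le\pi/2$; hence $\cos\cangle x{r(t)}{y}\ge 0$ for every $t$, so its limit as $t\to\infty$ is $\ge 0$. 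By Lemma~\ref{lem:asy} that limit equals $\big(b_\xi(x)-b_\xi(y)\big)/d(x,y)$, whence $b_\xi(y)\le b_\xi(x)$.

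For the existence and uniqueness of $z_y$, I would use the cocycle identity for Busemann functions: along the ray $r$ one has $b_\xi(r(t))=b_\xi(x)-t$, so $t\mapsto b_\xi(r(t))$ is continuous, strictly decreasing, with range $(-\infty,b_\xi(x)]$. Since $b_\xi(y)\le b_\xi(x)$ by the previous step, there is exactly one $t_y\ge 0$ with $b_\xi(r(t_y))=b_\xi(y)$; put $z_y=r(t_y)$. This records the identity $d(x,z_y)=t_y=b_\xi(x)-b_\xi(y)$, which is the bridge to the last assertion.

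Finally, assuming $\tangle\xi\eta<\pi/2$, for $y=r'(s)$ I would combine the displayed identity with Lemma~\ref{lem:asy}:
$$\frac{d(x,z_y)}{d(x,y)}=\frac{b_\xi(x)-b_\xi(y)}{d(x,y)}=\lim_{t\to\infty}\cos\cangle x{r(t)}{r'(s)}=\cos\theta_s,$$
where $\theta_s:=\lim_{t\to\infty}\cangle x{r(t)}{r'(s)}$ exists by monotonicity in $t$ and continuity of $\cos$ lets one pull it through the limit. By Lemma~\ref{lem:tangle} the quantity $\cangle x{r(t)}{r'(s)}$ is non-decreasing in both $s$ and $t$ and converges to $\tangle\xi\eta$ as $t,s\to\infty$; hence $\theta_s$ is non-decreasing in $s$ with $\sup_s\theta_s=\tangle\xi\eta$, so $\theta_s\to\tangle\xi\eta$. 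Since $y\to\eta$ along $[x,\eta)$ forces $s=d(x,y)\to\infty$, continuity of $\cos$ gives $\lim_y d(x,z_y)/d(x,y)=\cos\tangle\xi\eta$.

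The argument is essentially bookkeeping once Lemmas~\ref{lem:asy} and~\ref{lem:tangle} are available; the only point requiring a little care is the double passage to the limit at the end, namely identifying the iterated limit $\lim_s\lim_t\cangle x{r(t)}{r'(s)}$ with the joint limit $\tangle\xi\eta$. This is legitimate precisely because the comparison angle is monotone in each variable separately, so the relevant suprema all coincide; the strict inequality $\tangle\xi\eta<\pi/2$ itself plays no role beyond placing us in the regime where $z_y$ is guaranteed to exist, which is exactly what the first part of the statement supplies.
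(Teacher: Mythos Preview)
Your proof is correct and follows essentially the same route as the paper's: both arguments combine the asymptotic angle formula (Lemma~\ref{lem:asy}) with the monotonicity of comparison angles (Lemma~\ref{lem:tangle}) to control $b_\xi(y)$ and then identify $d(x,z_y)/d(x,y)$ with $\cos\theta_s$. Your treatment of the iterated limit at the end is in fact a bit more explicit than the paper's, which instead invokes $\cangle x y{z_y}\to\tangle\eta\xi$ along the specific path $(s,t_y)$ and squeezes; these are equivalent once monotonicity is in hand.
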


\begin{proof}
Let $r:\RR_+\to X$ be the geodesic ray from $x$ to $\xi$. In view of Lemma~\ref{lem:tangle}, we have $\cangle x{r(t)}y \leq \tangle\xi\eta$ for all $t$. Therefore, the asymptotic formula of Lemma~\ref{lem:asy}
\begin{equation}\label{eq:fellow}
b_\xi(x) - b_\xi(y)   = d(x,y) \lim_{t\to\infty} \cos \cangle x{r(t)}y
\end{equation}
is non-negative, which implies the first statement. Assume now that $\tangle\xi\eta <\pi/2$. Then Lemma~\ref{lem:tangle} and~(\ref{eq:fellow}) imply that $b_\xi(y)$ tends to $-\infty$ as $y$ tends to $\eta$ along $[x, \eta)$. Since $d(x,z_y) = b_\xi(x)- b_\xi(y)$, Lemma~\ref{lem:tangle} implies moreover that $\lim \cangle xy{z_y} =\tangle\eta\xi$ along $y$ and~(\ref{eq:fellow}) yields the desired conclusion.
\end{proof}

We now establish the  key ingredient for the proof of Proposition~\ref{prop:ControlProj}.

\begin{prop}\label{prop:Tits:proj}
Let $X$ be a proper \cat space, $Y\se X$ a closed convex subset, $\xi\in\bd Y$ and $\lambda>0$. Assume that for all $y\in X$ with $d(y, Y)\geq 1$ we have
$$b_\xi(y) - b_\xi(\proj_Y(y))  \geq  \lambda d(y, Y).$$
Then every $\eta\in\bd X$ with $\tangle\eta{\bd Y} \in (0, \pi/2)$ satisfies
\begin{equation}\label{eq:trigo}
\cos\tangle\eta{\eta'} \geq \cos\tangle\eta\xi + \lambda \sin\tangle\eta{\eta'},
\end{equation}
where $\eta' = \proj_{\bd Y}(\eta)$.

In particular, $\tangle{\eta'}\xi$ is bounded below by a positive constant depending only on $\lambda$ and on $\tangle\eta\xi$.
\end{prop}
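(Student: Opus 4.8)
The strategy is to transport the hypothesis from inside $X$ to the boundary by running rays to $\eta$ and passing to the limit. Fix a basepoint $x \in Y$ and consider a sequence $y_n$ along the geodesic ray $[x, \eta)$, say $y_n = r(s_n)$ with $s_n \to \infty$. For each $n$, put $y_n' = \proj_Y(y_n)$. By Lemma~\ref{lem:projections}, up to passing to a subsequence we may assume $y_n' \to \eta' = \proj_{\bd Y}(\eta)$ in $\bd X$. Since $\tangle{\eta}{\bd Y} \in (0, \pi/2)$ we have $\tangle{\eta}{\eta'} < \pi/2$, so $d(y_n, Y) \to \infty$ and in particular $d(y_n, Y) \geq 1$ eventually; hence the hypothesis gives
$$b_\xi(y_n) - b_\xi(y_n') \geq \lambda\, d(y_n, y_n')$$
for all large $n$.

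\textbf{Passing to the boundary.}
Now I would interpret each of the three quantities $b_\xi(y_n)$, $b_\xi(y_n')$, and $d(y_n, y_n')$ asymptotically in terms of Tits angles at $x$, dividing everything by $d(x, y_n)$. Normalise $b_\xi(x) = 0$. The asymptotic angle formula (Lemma~\ref{lem:asy}), together with the fact that $\cangle{x}{r(t)}{y_n} \to \tangle{\xi}{\eta}$ as $t \to \infty$ and $y_n \to \eta$ (Lemma~\ref{lem:tangle}), yields $-b_\xi(y_n)/d(x, y_n) \to \cos\tangle{\eta}{\xi}$ — this is essentially the content of Lemma~\ref{lem:fellow} and~(\ref{eq:fellow}). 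For the term $b_\xi(y_n')/d(x, y_n)$: since $y_n' \to \eta'$ with $\tangle{\eta'}{\xi}$ playing the role of the relevant angle, Lemma~\ref{lem:fellow} (applied with $\eta'$ in place of $\eta$) gives $-b_\xi(y_n')/d(x, y_n') \to \cos\tangle{\eta'}{\xi}$, so after multiplying by $d(x, y_n')/d(x, y_n)$ I need the ratio $d(x, y_n')/d(x, y_n)$. Finally $d(y_n, y_n')/d(x, y_n) \to \sin\tangle{\eta}{\eta'}$, because $y_n'$ is the nearest-point projection and the comparison triangle $\triangle(x, y_n, y_n')$ has the angle at $y_n'$ going to $\pi/2$ (the projection inequality $\cangle{y_n'}{x}{y_n} \geq \pi/2$ forces this in the limit, while $\cangle{x}{y_n}{y_n'}$ accounts for the complementary angle $\tangle{\eta}{\eta'}$). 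Likewise $d(x, y_n')/d(x, y_n) \to \cos\tangle{\eta}{\eta'}$ by the same right-triangle geometry. Dividing the displayed inequality by $d(x, y_n)$ and taking limits then gives
$$\cos\tangle{\eta}{\xi} \;\geq\; \lambda \sin\tangle{\eta}{\eta'} \;-\; \cos\tangle{\eta}{\eta'}\cos\tangle{\eta'}{\xi}$$
— but I must be slightly careful here, since $b_\xi(y_n) - b_\xi(y_n') = B_\xi(x, y_n) - B_\xi(x, y_n') = -B_\xi(y_n', y_n) \cdot(\text{sign})$; using the cocycle identity $b_\xi(y_n) - b_\xi(y_n') = B_\xi(y_n', y_n)$ and the asymptotic formula applied with basepoint $y_n'$ and target $y_n$ in direction "towards the far-out part of $[x,\eta)$" is the cleanest route. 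Rearranging (and using that $\cos\tangle{\eta}{\xi} = \cos(\tangle{\eta}{\eta'} + \tangle{\eta'}{\xi})$-type estimates only as inequalities, not identities) should produce precisely~(\ref{eq:trigo}).

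\textbf{The spherical-geometry bookkeeping is the main obstacle.}
The genuinely delicate point is getting the three limits $d(x,y_n')/d(x,y_n) \to \cos\tangle{\eta}{\eta'}$, $d(y_n,y_n')/d(x,y_n)\to\sin\tangle{\eta}{\eta'}$, and the Busemann limits to combine with the correct signs and to land on the stated inequality rather than a weaker one. The cleanest packaging is probably: project $y_n$ to the \emph{ray} $[x,\xi)$ via Lemma~\ref{lem:fellow} to get a point $z_n \in [x,\xi)$ with $b_\xi(z_n) = b_\xi(y_n)$ and $d(x,z_n)/d(x,y_n) \to \cos\tangle{\eta}{\xi}$; similarly project $y_n'$ to a point $z_n' \in [x,\xi)$ with $b_\xi(z_n') = b_\xi(y_n')$. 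Then $b_\xi(y_n) - b_\xi(y_n') = d(x, z_n') - d(x, z_n) = -(d(x,z_n) - d(x,z_n'))$, wait — one must check the sign of which of $z_n, z_n'$ is farther out along $[x,\xi)$; since $y_n$ is farther from $Y$ and heading away, $b_\xi(y_n) < b_\xi(y_n')$ is what the hypothesis asserts, consistent with $z_n$ lying beyond $z_n'$. So $b_\xi(y_n) - b_\xi(y_n') = -\bigl(d(x,z_n) - d(x,z_n')\bigr)$, and the hypothesis reads $d(x,z_n) - d(x,z_n') \leq -\lambda d(y_n, y_n') \leq 0$, which is false in sign — indicating I have the orientation of $b_\xi$ backwards and should instead normalise so that $b_\xi$ decreases towards $\xi$; with the convention in Lemma~\ref{lem:asy} ($b_\xi(x) - b_\xi(y) = d(x,y)\lim\cos\cangle{x}{r(t)}{y}$, positive when $y$ is "behind" $x$ relative to $\xi$), moving \emph{towards} $\xi$ decreases $b_\xi$, so $b_\xi(y_n) - b_\xi(y_n') \geq \lambda d(y_n,y_n') > 0$ means $y_n'$ is "more towards $\xi$" than $y_n$, i.e. $z_n'$ is farther along $[x,\xi)$ than $z_n$ — fine. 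Then $d(x,z_n') - d(x,z_n) \geq \lambda d(y_n, y_n')$. Dividing by $d(x,y_n)$ and letting $n\to\infty$: $\cos\tangle{\eta'}{\xi}\cdot\cos\tangle{\eta}{\eta'} - \cos\tangle{\eta}{\xi} \geq \lambda\sin\tangle{\eta}{\eta'}$, using $d(x,z_n')/d(x,y_n) = \bigl(d(x,z_n')/d(x,y_n')\bigr)\bigl(d(x,y_n')/d(x,y_n)\bigr) \to \cos\tangle{\eta'}{\xi}\cos\tangle{\eta}{\eta'}$. For the final "in particular" clause: rewrite~(\ref{eq:trigo}) as $\cos\tangle{\eta}{\eta'}\cos\tangle{\eta'}{\xi} - \lambda\sin\tangle{\eta}{\eta'} \geq \cos\tangle{\eta}{\xi} > 0$ (strict since $\tangle{\eta}{\xi} \leq \tangle{\eta}{\eta'} + \tangle{\eta'}{\xi}$ and... actually $\cos\tangle{\eta}{\xi}$ need not be positive), so more carefully: if $\tangle{\eta'}{\xi}$ were $0$ then $\eta' = \xi$ and~(\ref{eq:trigo}) would read $\cos\tangle{\eta}{\eta'} \geq \cos\tangle{\eta}{\eta'} + \lambda\sin\tangle{\eta}{\eta'}$, forcing $\sin\tangle{\eta}{\eta'} = 0$, hence $\tangle{\eta}{\eta'} = 0$ and $\eta = \eta' = \xi$, contradicting $\tangle{\eta}{\bd Y} > 0$ (as $\xi \in \bd Y$). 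A compactness/continuity argument on the compact set of admissible angle-pairs $(\tangle{\eta}{\xi}, \tangle{\eta}{\eta'})$ with $\tangle{\eta}{\xi}$ fixed then upgrades this to a uniform positive lower bound on $\tangle{\eta'}{\xi}$ depending only on $\lambda$ and $\tangle{\eta}{\xi}$.
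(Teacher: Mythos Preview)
Your setup (choose $x\in Y$, run $y_n$ along $[x,\eta)$, project to $y'_n=\proj_Y(y_n)$, invoke Lemma~\ref{lem:projections} and Lemma~\ref{lem:fellow}) is exactly the paper's. The gap is in the limiting step.

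You repeatedly claim \emph{equalities} for limits that are only one-sided bounds in \cat geometry. Specifically, $y'_n$ does \emph{not} lie on a geodesic ray from $x$, so only the second part of Lemma~\ref{lem:tangle} applies: one gets $\liminf_n \cangle{x}{y_n}{y'_n}\geq\tangle\eta{\eta'}$, not convergence to $\tangle\eta{\eta'}$. Consequently your assertions $d(x,y'_n)/d(x,y_n)\to\cos\tangle\eta{\eta'}$, $d(y_n,y'_n)/d(x,y_n)\to\sin\tangle\eta{\eta'}$, and (via your auxiliary $z'_n$) $d(x,z'_n)/d(x,y'_n)\to\cos\tangle{\eta'}\xi$ are all unjustified. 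Indeed, the inequality you arrive at,
\[
\cos\tangle{\eta'}\xi\cdot\cos\tangle\eta{\eta'}\ \geq\ \cos\tangle\eta\xi+\lambda\sin\tangle\eta{\eta'},
\]
is strictly \emph{stronger} than~(\ref{eq:trigo}) (the extra factor $\cos\tangle{\eta'}\xi\leq 1$ shrinks the left side), which is a clear signal that an inequality has been used in the wrong direction.

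The paper's fix is to abandon the pursuit of exact limits and to work with the auxiliary angle $\fhi:=\liminf_n\fhi_n$ where $\fhi_n=\cangle{x}{y_n}{y'_n}$. Two clean one-sided inputs suffice: from the right angle at the projection, the cosine law gives $\cos\fhi_n\geq d(x,y'_n)/d(x,y_n)$; and the $1$-Lipschitz property of $b_\xi$ gives $d(x,y'_n)\geq -b_\xi(y'_n)$ directly (no $z'_n$ is introduced). Chaining with the hypothesis and the sine law yields $\cos\fhi_n-\lambda\sin\fhi_n\geq d(x,z_n)/d(x,y_n)\to\cos\tangle\eta\xi$, hence $\cos\fhi-\lambda\sin\fhi\geq\cos\tangle\eta\xi$. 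The final step is the observation that $t\mapsto\cos t-\lambda\sin t$ is decreasing on $[0,\pi/2]$, so one may replace $\fhi$ by the smaller angle $\tangle\eta{\eta'}\leq\fhi$ to obtain~(\ref{eq:trigo}). This monotonicity trick is the missing idea in your argument.

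For the ``in particular'' clause, your argument by contradiction and continuity is fine in spirit; the paper argues more directly via $\tangle{\eta'}\xi\geq\tangle\eta\xi-\tangle\eta{\eta'}$ and observes that if this difference were arbitrarily small then~(\ref{eq:trigo}) would force $\lambda\sin\tangle\eta\xi$ to be arbitrarily small, which is absurd.
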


\begin{proof}
Let $\eta$ be as in the statement, choose $x\in Y$ and normalise $b_\xi$ at $x$. Let $\{y_n\}$ be a sequence converging to $\eta$ along the ray $[x, \eta)$. Define $y'_n=\proj_Y(y_n)$ and $\eta'=\proj_{\bd Y}(\eta)$.

Since $\eta\notin\bd Y$, the convexity of the function $d(-, Y)$ forces it to grow at least linearly along a subray of $[x, \eta)$. Therefore, we can assume $d(y_n, Y)\geq 1$. Moreover, it follows from the assumption that $y'_n$ tends to infinity; since $Y$ is proper Lemma~\ref{lem:projections} implies that $y'_n$ converges to $\eta'$. Let $z_n=z_{y_n}\in[x, \xi)$ be the point with $b_\xi(z_n)= b_\xi(y_n)$ given by Lemma~\ref{lem:fellow}. Define $\teta_n=\cangle x{y_n}{z_n}$ and $\fhi_n=\cangle x{y_n}{y'_n}$, so that $\lim_{n\to\infty} \teta_n = \tangle\eta\xi$ and $\fhi:=\liminf_{n\to\infty}\fhi_n \geq \tangle\eta{\eta'}$ by Lemma~\ref{lem:tangle}.

Notice first that the cosine law applied to $\cangle{y'_n}x{y_n}\geq \pi/2$ implies 
$$\cos\fhi_n \geq d(x, y'_n) / d(x, y_n).$$ 
Moreover,
$$d(x, y'_n) \geq -b_\xi(y'_n) \geq -b_\xi(y_n) + \lambda d(y_n, Y) = d(x, z_n) + \lambda d(y_n, y'_n)$$
and hence
$$\cos\fhi_n \geq \frac{d(x, z_n) + \lambda d(y_n, y'_n)}{d(x, y_n)}.$$
The sine law ensures $d(y_n, y'_n) / d(x, y_n) \geq \sin\fhi_n$ and thus we conclude
$$\cos\fhi_n - \lambda \sin\fhi_n \geq \frac{d(x, z_n)}{d(x, y_n)}.$$
The right hand side converges to $\cos\tangle\eta\xi$ by Lemma~\ref{lem:fellow} and thus
\begin{equation*}
\cos\fhi - \lambda \sin\fhi \geq \cos\tangle\eta\xi.
\end{equation*}
This implies~(\ref{eq:trigo}) stated in the proposition since $\tangle\eta{\eta'} \leq \fhi\leq \pi/2$.

For the additional statement, we write
$$\tangle{\eta'}\xi \geq \tangle\eta\xi - \tangle\eta{\eta'}.$$
The right hand side is non-negative by definition of $\eta'$. It cannot become arbitrarily small when $\tangle\eta\xi$ and $\lambda$ remain fixed because otherwise~(\ref{eq:trigo}) would imply that $\lambda \sin \tangle\eta{\eta'}$ and hence also $\lambda \sin \tangle\eta\xi$ become arbitrarily small.
\end{proof}

Consider a filtering family  of bounded closed convex sets in a complete \cat space. It is known that the intersection is non-empty, but in general its circumradius and circumcentre are not the limit of the radii and centres of the sequence. In contrast, the following lemma shows that projections are better behaved.

\begin{lem}\label{lem:proj:limit}
Let $X$ be a complete \cat space and let $\sC$ be a filtering family  of bounded closed convex sets of $X$. Define $D=\bigcap \sC$. Then for all $x\in X$ we have $d(x, D) = \lim_{C\in\sC} d(x, C)$ and $\proj_D(x)  = \lim_{C\in\sC} \proj_C(x)$.

The same statement holds for $\pi$-convex sets if $X$ is \catun and $d(x, D)<\pi/2$.
\end{lem}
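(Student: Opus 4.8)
The plan is to first treat the case where $\sC$ is a countable decreasing family (the general filtering case reduces to this since the quantities $d(x,C)$ are monotone along the filter and one may extract a cofinal decreasing sequence; a standard argument then upgrades the sequential statement to the net). So assume $\sC = \{C_1 \supseteq C_2 \supseteq \cdots\}$ with $D = \bigcap_n C_n$. Write $p_n = \proj_{C_n}(x)$ and $r_n = d(x, C_n) = d(x, p_n)$. Since $C_{n+1} \subseteq C_n$, the sequence $(r_n)$ is non-decreasing; it is bounded above by $d(x, D)$, which is finite because $D$ is non-empty (completeness plus the finite-intersection property for bounded closed convex sets, or equivalently the standard fact that a filtering family of bounded closed convex sets in a complete \cat space has non-empty intersection). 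Set $r = \lim_n r_n \leq d(x, D)$.

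The heart of the matter is to show $(p_n)$ is Cauchy and that its limit lies in $D$. For this I would use the convexity inequality for the midpoint: for $m \leq n$, both $p_m$ and $p_n$ lie in $C_m$, hence so does their midpoint $q$, and the \cat comparison inequality gives
$$d(x, q)^2 \leq \tfrac12 d(x, p_m)^2 + \tfrac12 d(x, p_n)^2 - \tfrac14 d(p_m, p_n)^2 = \tfrac12 r_m^2 + \tfrac12 r_n^2 - \tfrac14 d(p_m, p_n)^2.$$
Since $q \in C_m$ we have $d(x,q) \geq r_m$, so $d(p_m, p_n)^2 \leq 2(r_n^2 - r_m^2) \to 0$ as $m, n \to \infty$. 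Thus $(p_n)$ is Cauchy; let $p = \lim_n p_n$, which exists by completeness, with $d(x, p) = r$. Each $C_m$ is closed and contains $p_n$ for all $n \geq m$, so $p \in C_m$ for every $m$, i.e.\ $p \in D$. Then $d(x, D) \leq d(x, p) = r \leq d(x, D)$, forcing equality $r = d(x,D)$ and $d(x,p) = d(x,D)$; by uniqueness of the nearest-point projection in a \cat space, $p = \proj_D(x)$. This proves both claimed limits simultaneously.

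For the \catun statement with $d(x, D) < \pi/2$, the argument is the same after replacing distances by their images under a concave, increasing function that turns the \catun comparison into a usable convexity estimate; concretely, one works with the nearest-point projection onto $\pi$-convex sets, which is well-defined and $1$-Lipschitz on the ball of radius $\pi/2$ about such a set (cf.\ the discussion before Proposition~\ref{prop:ControlProj}), and uses the spherical law of cosines in the comparison triangle at $x$ in place of the Euclidean one: if $q$ is the midpoint of $[p_m, p_n]$ then $\cos d(x,q)$ is bounded below by an expression in $\cos r_m, \cos r_n$ and $d(p_m,p_n)$ forcing $d(p_m,p_n) \to 0$ once $r_n \to r < \pi/2$. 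The monotonicity $r_n \nearrow$ and $r_n < \pi/2$ keep everything inside the region where the spherical estimates and the uniqueness of projection are valid, and the rest of the argument (Cauchy, limit in $D$, identification with $\proj_D$) is verbatim.

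The main obstacle I expect is bookkeeping in the filtering (as opposed to sequential) case: one must check that $\lim_{C \in \sC} \proj_C(x)$ exists as a net limit, not merely along a cofinal sequence. This follows because for any two members $C, C'$ of the filter there is a third $C'' \subseteq C \cap C'$, so the same midpoint-convexity estimate gives $d(\proj_C(x), \proj_{C'}(x))^2 \leq 2\big(d(x, C'')^2 - d(x, C)^2\big) + 2\big(d(x,C'')^2 - d(x,C')^2\big)$, wait—more carefully, $d(\proj_C(x),\proj_{C''}(x))^2 \le 2(r_{C''}^2 - r_C^2)$ and the analogous bound for $C'$, and since $\{r_C\}$ is a monotone net converging to its supremum $r$, the net $(\proj_C(x))$ is Cauchy in the uniform sense along the filter; completeness then yields the limit, and closedness of each $C$ places it in $D$. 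The \catun case additionally requires care that all comparison triangles under consideration are non-degenerate, which is guaranteed by $d(x,D) < \pi/2$ and the monotone approach $r_C \nearrow r < \pi/2$.
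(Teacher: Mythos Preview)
Your proof is correct and uses the same core tool as the paper, namely the \cat midpoint inequality applied to pairs of projections. The organization differs slightly: the paper first establishes $d(x,D)=\lim_{C} d(x,C)$ by a separate weak-compactness argument (if the limit $r$ were strictly less than $d(x,D)$, the sets $C\cap \bar B(x,r)$ would still be a filtering family of bounded closed convex sets and their non-empty intersection would lie in $D\cap\bar B(x,r)$, a contradiction), and then compares $\proj_C(x)$ directly to $\proj_D(x)$ via the midpoint inequality to get $d^2(\proj_C(x),\proj_D(x))\leq 2d^2(x,D)-2d^2(x,C)\to 0$. Your route instead shows the net $(\proj_C(x))$ is Cauchy by comparing projections to one another, identifies the limit as a point of $D$, and deduces both conclusions simultaneously; this avoids the second appeal to weak compactness. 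One small caution: your opening reduction ``extract a cofinal decreasing sequence'' is not available for an arbitrary filtering family, but your later Cauchy-net paragraph handles the general case correctly, so the argument stands.
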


\begin{proof}
Recall first that $D$ is non-empty (see e.g.\ the weak compactness statement~\cite[Thm.~14]{Monod_superrigid}). By construction, the net $d(x, C)$ is non-decreasing in $C$ and bounded above by $d(x, D)$; it thus converges. If the limit $r$ were less than $d(x, D)$, then the closed ball $\bar B(x, r)$ would meet every $C\in\sC$. Applying again the weak compactness, but to $C\cap \bar B(x, r)$, we would obtain a point in $D\cap \bar B(x, r)$, which is absurd.

For the convergence of the projections, consider the midpoint $m$ between $\proj_C(x)$ and $\proj_D(x)$. The \cat inequality gives
$$4 d^2(x, m) \leq 2 d^2(x, C) + 2 d^2(x, D) - d^2(\proj_C(x), \proj_D(x)).$$
On the other hand, $d(x, m) \geq d(x, C)$. Therefore,
$$d^2(\proj_C(x), \proj_D(x)) \leq 2 d^2(x, D) - 2 d^2(x, C)$$
which we have already shown to converge to zero.

The proof in the \catun setting is identical upon using \catun comparison inequalities.
\end{proof}

We are now ready to prove the main result of this section.

\begin{proof}[Proof of Proposition~\ref{prop:ControlProj}]
By hypothesis $\mathscr C = \{\bd Y\}_{Y \in \mathscr Y}$ is a filtering family of closed convex subsets of the \catun space $\bd X$. Set $D = \bigcap \mathscr C$ and let $\eta \in \bd X$ with $\tangle\eta{D} \in (0, \pi/2)$. 

By Lemma~\ref{lem:proj:limit}, we have $\Td(\eta, D) = \lim_{C\in\sC} \Td(\eta,  C)$, so there is a cofinal subfamily $\mathscr C' \subseteq \mathscr C$ such that $\Td(\eta,   C) \in (0, \pi /2)$ for all $C \in \mathscr C'$.

Given $C \in \mathscr C'$, set $\eta_C  =  \proj_C(\eta)$. By Proposition~\ref{prop:Tits:proj}, the distance $\Td(\eta_C,  \xi)$ is bounded below by a  constant $\vareps >0$ which is independent of $C$.  By Lemma~\ref{lem:proj:limit}, we have $\lim_{C\in\sC} \eta_C = \proj_D(\eta)$ in the Tits topology, so that $\Td({\proj_D(\eta)}, \xi) = \lim_{C\in\sC} \Td({\eta_C}, \xi) \geq \vareps >0$. In particular $\proj_D(\eta) \neq \xi$.
\end{proof}

\subsection{Transversely dense spaces}

The \textbf{transverse space}\index{transverse space} $X_\xi$\index{$X$@$X_\xi$} associated with a point $\xi \in \bd X$ is defined as follows. On the set $X^*_\xi$ of all geodesic rays $r\colon\RR_+\to X$ pointing to $\xi$, the infimal distance
$$d_\xi(r, r')\ = \ \inf_{t, t'\geq 0} d(r(t), r'(t'))\index{$d$@$d_\xi$}$$
is a pseudo-metric. The transverse space  $(X_\xi, d)$ is then defined as the  metric completion of the quotient metric space of $(X^*_\xi, d)$. It is a complete \cat space (see~\cite[\S 3.A]{amenis}). Moreover, the canonical map associating to each $x \in X$ the geodesic ray $[x, \xi) \in X^*_\xi$ induces a map
$$\pi_\xi \colon X \to X_\xi\index{$pi$@$\pi_\xi$}$$
which is $1$-Lipschitz.

The following criterion will help us to apply Proposition~\ref{prop:ControlProj}.

\begin{prop}\label{prop:DecrBusemann}
Let  $Y \subset X$ be a closed convex subset and let $\xi \in \bd Y$. If the canonical image $\pi_\xi(Y)$ of $Y$ in $X_\xi$ is dense, then $b_\xi(\proj_{Y}(x)) <b_\xi(x)$ for each $x \in X \setminus Y$.
\end{prop}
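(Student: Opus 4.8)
The goal is to show that if the canonical image $\pi_\xi(Y)$ is dense in $X_\xi$, then the projection to $Y$ strictly decreases the Busemann function $b_\xi$ at every point $x$ outside $Y$. First I would fix $x \in X \setminus Y$ and set $y_0 = \proj_Y(x)$; since $Y$ is closed and convex in the proper \cat space $X$ this projection exists. I would argue by contradiction, supposing that $b_\xi(y_0) = b_\xi(x)$ (the inequality $b_\xi(y_0) \le b_\xi(x)$ being free, or at least easy to obtain via Lemma~\ref{lem:fellow} applied to the ray $[y_0,\xi)$ and the point $x$, once one checks $\tangle{\xi}{\eta} \le \pi/2$ for the relevant directions — this is the kind of routine angle bookkeeping I would not grind through here).

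The heart of the matter is that equality of Busemann values forces a flat strip. Concretely, if $b_\xi$ is constant on the segment $[x, y_0]$, then by Lemma~\ref{lem:flat-half} the points $x$, $y_0$ and $\xi$ span a Euclidean half-strip in $X$; in particular the rays $[x,\xi)$ and $[y_0,\xi)$ bound an isometrically embedded copy of $[0, d(x,y_0)] \times \RR_+$. The key consequence is that these two rays stay at constant distance $d(x,y_0) = d(x, Y) > 0$ from one another, so in the transverse space $X_\xi$ their classes $\pi_\xi(x)$ and $\pi_\xi(y_0)$ satisfy $d_\xi\big(\pi_\xi(x), \pi_\xi(y_0)\big) = d(x, Y)$. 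I then want to upgrade ``$x$ is at distance $d(x,Y)$ from $\pi_\xi(Y)$ in $X_\xi$'' to a contradiction with density. For this I would use that $\pi_\xi$ is $1$-Lipschitz, so for every $z \in Y$ we have $d_\xi(\pi_\xi(x), \pi_\xi(z)) \le d(x, z)$; but more to the point, I want a \emph{lower} bound $d_\xi(\pi_\xi(x), \pi_\xi(z)) \ge d(x, Y)$ valid for all $z \in Y$, which would say $\pi_\xi(x)$ is at distance $\ge d(x,Y) > 0$ from all of $\pi_\xi(Y)$, contradicting density. That lower bound should follow from the half-strip structure together with convexity: any ray $[z,\xi)$ with $z \in Y$ must, by the half-strip along $[x, y_0] \times \RR_+$ and the fact that $y_0$ is the nearest-point projection, stay at distance $\ge d(x,Y)$ from $[x,\xi)$ — one compares Busemann levels and uses that on the flat strip the Euclidean geometry makes the distance from the $x$-ray to anything on the $\xi$-horosphere through $Y$ at least $d(x, y_0)$.

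The step I expect to be the main obstacle is precisely this last lower bound: showing that $d_\xi(\pi_\xi(x), \pi_\xi(Y)) \ge d(x, Y)$ rather than merely $d_\xi(\pi_\xi(x), \pi_\xi(y_0)) = d(x,Y)$ for the single point $y_0$. Density of $\pi_\xi(Y)$ only bites if \emph{no} point of $Y$ comes close to $x$ in the transverse metric, so I need uniform control over all $z \in Y$, not just the projection. The clean way to get this is: for $z \in Y$ and rays $r = [x,\xi)$, $r' = [z,\xi)$, use that $d_\xi(r, r') = \lim_{t\to\infty} d(r(t), r'(t))$ (distances along asymptotic rays decrease to the infimum, and for rays to the same boundary point this infimum is the horospherical transverse distance), and then bound $d(r(t), r'(t))$ below by projecting $r'(t)$ onto the flat half-strip spanned by $x, y_0, \xi$ and invoking that the projection of $r'(t)$ to the line $\{x\}\times\RR_+$ is at Euclidean distance $\ge d(x, y_0)$ from $r(t)$, using again that $\proj_Y(x) = y_0$ and the convexity of $d(\cdot, [x,\xi))$. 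Once that is in place the contradiction with density is immediate, so $b_\xi(y_0) < b_\xi(x)$, which is the assertion.
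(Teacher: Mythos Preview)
Your overall strategy---reduce to the equality case, invoke Lemma~\ref{lem:flat-half} to get a flat half-strip, then contradict density---matches the paper's. The gap is in the step you yourself flag as the main obstacle: you need $d(r(t),Y)\geq d(x,y_0)$ for all $t$ (equivalently $d_\xi(\pi_\xi(x),\pi_\xi(Y))\geq d(x,Y)$), and the ``project $r'(t)$ onto the half-strip'' sketch does not deliver this. The half-strip only controls the distance from $r(t)$ to the single ray $[y_0,\xi)$; it says nothing a priori about other points of $Y$, and projecting an arbitrary $z\in Y$ onto the strip can land anywhere in it, including near $r(t)$. What makes the bound work is a $4$-point comparison (Lemma~\ref{lem:4point}): from the half-strip one has $\cangle{x}{y_0}{r(t)}=\pi/2$, and the projection property gives $\cangle{y_0}{x}{y_t}\geq\pi/2$ where $y_t=\proj_Y(r(t))$; the lemma then yields $d(r(t),y_t)\geq d(x,y_0)$. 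Since $[z,\xi)\subset Y$ for every $z\in Y$, density of $\pi_\xi(Y)$ forces $d(r(t),Y)\to 0$, and you have your contradiction.

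A second, smaller issue: the non-strict inequality $b_\xi(y_0)\leq b_\xi(x)$ is not an instance of Lemma~\ref{lem:fellow}, which concerns two points at infinity. It \emph{is} easy, but for a different reason: the ray $[y_0,\xi)$ lies in $Y$, so the comparison angle $\cangle{y_0}{x}{[y_0,\xi)(t)}\geq\pi/2$ by the projection property, and the asymptotic angle formula (Lemma~\ref{lem:asy}) gives $b_\xi(y_0)-b_\xi(x)\leq 0$. The paper instead proves this first step via the same $4$-point mechanism (assuming $b_\xi(y_0)>b_\xi(x)$ and reaching $d(r(t),y_t)\geq d(x,y_0)$ in contradiction with density), which has the virtue of recycling exactly the argument needed for the equality case.
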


We shall use the following basic and well-known result on $4$-tuples of points in arbitrary \cat spaces.

\begin{lem}\label{lem:4point}
Let $X$ be any \cat space. Suppose $x, x', y', y\in X$ satisfy $\cangle xy{x'} \geq\pi/2$ and $\cangle yx{y'} \geq \pi/2$. Then $d(x', y')\geq d(x,y)$.
\end{lem}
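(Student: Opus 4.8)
The plan is to reduce the $4$-point statement to the \cat comparison inequality applied to a single geodesic quadrilateral. Fix the four points $x,x',y',y$ with $\cangle xy{x'}\ge\pi/2$ and $\cangle yx{y'}\ge\pi/2$, and consider the geodesic quadrilateral with vertices $x,x',y',y$ traversed in this cyclic order, i.e. with sides $[x,x']$, $[x',y']$, $[y',y]$ and $[y,x]$. The diagonal $[x,y]$ splits it into the two triangles $\triangle(x,x',y)$ and $\triangle(x,y',y)$. The hypothesis on the comparison angles says that in the comparison triangle $\bar\triangle(x,x',y)\subset\mathbb E^2$ the angle at the vertex $\bar x$ corresponding to $x$ is $\ge\pi/2$, and in $\bar\triangle(x,y',y)$ the angle at the vertex $\bar y$ corresponding to $y$ is $\ge\pi/2$.

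First I would set up the Euclidean comparison configuration for the quadrilateral: place $\bar x,\bar y$ in $\mathbb E^2$ with $|\bar x-\bar y|=d(x,y)$, and put $\bar x'$ on one side of the line $\bar x\bar y$ and $\bar y'$ on the other side, so that $\bar\triangle(\bar x,\bar x',\bar y)$ is the comparison triangle for $[x,y]\cup[x,x']\cup[x',y]$ and $\bar\triangle(\bar x,\bar y',\bar y)$ is the comparison triangle for $[x,y]\cup[y,y']\cup[y',x]$ — this uses that $d(x',y)$ and $d(y',x)$ are among the pairwise distances and that these two comparison triangles share the edge $\bar x\bar y$ isometrically, so they glue along it into a planar quadrilateral $\bar x\,\bar x'\,\bar y'\,\bar y$. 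By the \cat inequality (in the form of the Alexandrov convexity / the comparison for the diagonal, cf.~\cite[II.1.7(4) and II.2.11]{Bridson-Haefliger}) the pair of points at comparison position $\bar x',\bar y'$ satisfies $|\bar x'-\bar y'|\le d(x',y')$. So it suffices to show $|\bar x'-\bar y'|\ge d(x,y)=|\bar x-\bar y|$ in this planar picture.

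The remaining step is the elementary planar lemma: if $\bar x\bar x'\bar y'\bar y$ is a (possibly non-convex) quadrilateral in $\mathbb E^2$ with interior angle $\ge\pi/2$ at $\bar x$ and interior angle $\ge\pi/2$ at $\bar y$, then the side $\bar x'\bar y'$ opposite to $\bar x\bar y$ has length $\ge|\bar x\bar y|$. I would prove this by projecting $\bar x'$ and $\bar y'$ orthogonally onto the line $L$ through $\bar x$ and $\bar y$: the angle condition at $\bar x$ forces the foot of $\bar x'$ to lie on the far side of $\bar x$ from $\bar y$ (or at $\bar x$), and likewise the foot of $\bar y'$ lies on the far side of $\bar y$ from $\bar x$; hence the two feet are separated by at least the full segment $[\bar x,\bar y]$, so the distance between the feet is $\ge|\bar x-\bar y|$, and orthogonal projection onto $L$ is $1$-Lipschitz, giving $|\bar x'-\bar y'|\ge|\bar x-\bar y|$. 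Combining with the previous paragraph yields $d(x',y')\ge|\bar x'-\bar y'|\ge d(x,y)$.

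The main obstacle is bookkeeping rather than depth: one must make sure the two Euclidean comparison triangles can genuinely be glued along the common diagonal into one planar figure to which Alexandrov's quadrilateral comparison applies, and one must handle the possibly non-convex or degenerate cases of the resulting quadrilateral (e.g. when a comparison angle equals exactly $\pi/2$, or when three of the comparison points are collinear) in the planar projection argument. All of these are routine once the orthogonal-projection viewpoint is adopted, since that argument only uses the sign of the angle and never convexity. Alternatively, and perhaps more cleanly, one can avoid gluing altogether: apply the flat quadrilateral comparison \cite[II.2.11]{Bridson-Haefliger} directly to the quadrilateral $(x,x',y',y)$, whose sum of comparison angles at $x'$ and $y'$ is then at most $2\pi$ minus the sum at $x$ and $y$, i.e. at most $\pi$, and read off the conclusion; I would use whichever of these two routes makes the degenerate cases least painful.
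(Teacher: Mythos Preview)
Your gluing step has the comparison inequality backwards. When you glue the comparison triangles for $\triangle(x,x',y)$ and $\triangle(x,y',y)$ along the shared edge $[\bar x,\bar y]$ with $\bar x'$ and $\bar y'$ on opposite sides, nothing forces $|\bar x'-\bar y'|\le d(x',y')$; the reverse can occur. Take $X=\RR^2$ with $x=(0,0)$, $y=(1,0)$, $x'=(0,1)$, $y'=(1,1)$: both hypotheses hold with equality and $d(x',y')=1$, yet your construction places $\bar y'=(1,-1)$, so $|\bar x'-\bar y'|=\sqrt5$. Neither II.1.7(4) nor II.2.11 in Bridson--Haefliger delivers the claimed bound; the latter is a rigidity statement for flat quadrilaterals, and your alternative route invoking it does not produce the inequality either.

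The paper's argument chooses the cyclic order so that $[x,y]$ and $[x',y']$ are \emph{sides} of the comparison quadrilateral rather than diagonals. By the four-point condition (Proposition~II.1.11 in Bridson--Haefliger) there is a planar quadrilateral $(\bar x,\bar x',\bar y',\bar y)$ with all four side lengths preserved exactly and with the two \emph{diagonals} $|\bar x-\bar y'|$, $|\bar x'-\bar y|$ not shorter than in $X$. Longer diagonals only increase the Euclidean angles at $\bar x$ and $\bar y$, so these remain $\ge\pi/2$; your orthogonal-projection step then gives $d(x',y')=|\bar x'-\bar y'|\ge|\bar x-\bar y|=d(x,y)$. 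Your planar lemma is correct --- only the comparison input needs replacing.
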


\begin{proof}[Proof of the lemma]
By the $4$-point condition (see Proposition~II.1.11 in~\cite{Bridson-Haefliger}),  there exists a quadrilateral $(\bar x, \bar x', \bar y', \bar y)$ in $\RR2$ whose side lengths equal the corresponding distances amongst $\{x, x', y', y\}$ and whose diagonals are not shorter. This implies that $\aangle {\bar x}{\bar y}{\bar x'} \geq \pi/2$ and that $\aangle {\bar y}{\bar x}{\bar y'} \geq \pi/2$. We deduce $d(x', y') = d(\bar x', \bar y') \geq  d(\bar x,\bar y) = d(x,y)$ as desired.
\end{proof}

\begin{proof}[Proof of Proposition~\ref{prop:DecrBusemann}]
As a first step, we shall prove that $b_\xi(\proj_{Y}(x)) \leq b_\xi(x)$ holds for all $x \in X \setminus Y$.  Let $r \colon \RR_+\to X$ be the ray from $x$ to $\xi$ and write $y_t =  \proj_{Y}(r(t))$. Suppose for a contradiction that $b_\xi(y_0) > b_\xi(x)$. Then the asymptotic angle formula Lemma~\ref{lem:asy} implies $\cangle x{y_0}{r(t)} >\pi/2$ for all sufficiently large $t$; in particular it is at least $\pi/2$ for such $t$. On the other hand, $\cangle {y_0}x{y_t} \geq \pi/2$ since $y_0$ is the projection of $x$. Therefore, Lemma~\ref{lem:4point} applied to $x, r(t), y_t, y_0$ implies $d(r(t), y_t)\geq d(x, y_0)$. This is absurd because the assumption on $Y$ implies that $d(r(t), y_t)\to 0$.

In order to prove the proposition, we need to show that the equality $b_\xi(y_0) = b_\xi(x)$ leads also to a contradiction. Consider any point $x'$ on the segment $[x, y_0]$. Its projection to $Y$ is still $y_0$, see~\cite[II.2.4(2)]{Bridson-Haefliger}. Therefore, applying the first step to $x'$, we find $b_\xi(x') \geq b_\xi(y_0) =b_\xi(x)$. By convexity of Busemann functions, we deduce that $b_\xi$ is constant on $[x, y_0]$. Thus Lemma~\ref{lem:flat-half} implies that $x$, $y_0$ and $\xi$ span a Euclidean half-strip. In particular, we have $\cangle x{y_0}{r(t)} =\pi/2$ for all $t$ and hence we reach a contradiction by the same argument as in the first step.
\end{proof}

\section{Boundary points with a cocompact stabiliser}

Given a proper \cat space $X$, the full isometry group $\Isom(X)$, endowed with the compact-open topology, is a second countable locally compact group. If it acts cocompactly on $X$, then it is compactly generated. 

Throughout the rest of this paper, we fix a \textbf{locally compact \cat group}\index{CAT(0) group@\cat group}\index{group!CAT(0)@\cat}, namely a pair $(X, G)$ consisting of  a proper \cat space $X$ and a closed subgroup $G \leq \Isom(X)$ acting cocompactly on $X$. Notice that $(X, \Isom(X))$ is a locally compact \cat group if and only if $\Isom(X)$ acts cocompactly. If $(X, G)$ is a locally compact \cat group, then so is $(X, \Isom(X))$. Conversely, if $(X, \Isom(X))$  a locally compact \cat group, then  for any closed subgroup $G \leq \Isom(X)$ the pair $(X, G)$ is so if and only if $G$ is a cocompact subgroup of  $\Isom(X)$. 

\subsection{Cocompact points and c\^one-closed orbits}

A boundary point $\xi \in \bd X$ is called \textbf{cocompact}\index{cocompact point at infinity}\index{point at infinity!cocompact}\index{boundary point!cocompact} (relative to $G$) if its stabiliser $G_\xi$ acts cocompactly on $X$.

There is a basic characterisation of cocompact points in terms of the dynamics of the $G$-action on the boundary $\bd X$ endowed with c\^one topology:

\begin{lem}\label{lem:CocptOrbit}
For each   $\xi \in \bd X$, the following assertions are equivalent: 
\begin{enumerate}[(i)]
\item $ \xi$ is cocompact.

\item The $G$-orbit of $\xi$ is closed in $\bd X$ (with respect to the c\^one topology).
\end{enumerate}
\end{lem}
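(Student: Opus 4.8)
The equivalence should follow from the standard dictionary between cocompactness of a stabiliser and closedness/properness of the corresponding orbit map, via a compactness argument on the \cone topology of $\overline X = X \sqcup \bd X$. The plan is to prove each implication separately, using that $X$ is proper (so $\overline X$ is compact) and that $G$ acts cocompactly on $X$.

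\textbf{From (i) to (ii).} Suppose $G_\xi$ acts cocompactly on $X$; fix a compact set $K \subseteq X$ with $G_\xi K = X$, and enlarge $K$ if needed so that it contains a fixed basepoint $x_0$. Suppose $g_n \xi \to \zeta$ in the \cone topology for some sequence $g_n \in G$; I want to show $\zeta \in G\xi$. The idea is to ``recentre'': consider the points $g_n\inv x_0 \in X$. Using cocompactness of $G$ on $X$, after multiplying each $g_n$ on the right by a suitable element (which does not change $g_n\xi$ only if that element fixes $\xi$, so here one should rather use the $G_\xi$-cocompactness on the other side) — more carefully, the clean approach is: write $g_n\inv x_0 = h_n k_n$ is not available, so instead I use that $G_\xi$ acts cocompactly, pick $h_n \in G_\xi$ with $h_n\inv g_n\inv x_0 \in K$, i.e. $g_n h_n \in \{g \in G : g\inv x_0 \in K\}$, which is a compact subset of $G$ since $X$ is proper and the action is continuous and proper. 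Extracting, $g_n h_n \to g \in G$. Since $h_n$ fixes $\xi$, we get $g_n h_n \xi = g_n \xi \to g\xi$, so $\zeta = g\xi \in G\xi$, proving the orbit is closed.

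\textbf{From (ii) to (i).} Suppose $G\xi$ is \cone-closed; I must show $G_\xi$ acts cocompactly on $X$. Equivalently, the orbit map $G/G_\xi \to G\xi$ is a homeomorphism onto a closed (hence compact, since $\bd X \subseteq \overline X$ is compact) subset, so $G/G_\xi$ is compact, i.e. $G_\xi$ is cocompact in $G$; combined with cocompactness of $G$ on $X$ this gives cocompactness of $G_\xi$ on $X$. The point is that for a locally compact group $G$ acting continuously on the compact Hausdorff space $\overline X$ (equivalently on the compact metric space $\bd X$), an orbit is closed if and only if it is compact if and only if the stabiliser is cocompact — this is the standard fact that the continuous bijection $G/\Stab \to \text{orbit}$ is a homeomorphism when $G$ is $\sigma$-compact (which it is, being second countable locally compact) and the orbit is locally compact (which it is, being closed in a compact space). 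Then $G_\xi \le G$ cocompact plus $G \curvearrowright X$ cocompact yields $G_\xi \curvearrowright X$ cocompact: if $G = G_\xi L$ with $L \subseteq G$ compact and $X = GK$ with $K \subseteq X$ compact, then $X = G_\xi (LK)$ and $LK$ is compact.

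\textbf{Main obstacle.} The delicate point is the recentring step in (i)$\Rightarrow$(ii): one must be careful that multiplying $g_n$ by $h_n \in G_\xi$ is legitimate (it is, precisely because $h_n$ fixes $\xi$) and that the resulting set $\{g : g\inv x_0 \in K\}$ is compact in $G$ — this uses properness of the $G$-action on $X$, which holds because $X$ is proper and $G$ is closed in $\Isom(X)$ with the compact-open topology. For (ii)$\Rightarrow$(i) the subtlety is invoking the orbit-map-is-a-homeomorphism fact in the correct generality; since $G$ is second countable and locally compact and $\bd X$ is a compact metrisable $G$-space, the classical argument (Baire category / $\sigma$-compactness) applies without trouble.
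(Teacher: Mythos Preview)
Your proof is correct and follows essentially the same approach as the paper. For (i)$\Rightarrow$(ii), the paper states directly that $G_\xi$ cocompact on $X$ gives a compact $K\subset G$ with $G=K\cdot G_\xi$, whence $G.\xi=K.\xi$ is compact; your recentring-with-sequences argument is just an unpacking of this. For (ii)$\Rightarrow$(i), both you and the paper invoke the classical fact that the orbit map $G/G_\xi\to G.\xi$ is a homeomorphism (the paper cites Arens, you invoke $\sigma$-compactness/Baire), then deduce $G_\xi$ cocompact in $G$ and hence on $X$.
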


Thus we see that if there is a cocompact point, then the boundary $\bd X$ contains $G$-invariant closed subsets consisting entirely of cocompact points.

\begin{proof}
(i) $\Rightarrow$ (ii) 
Since $G_\xi$ acts cocompactly on $X$, there exists some compact
subset $K \subset G$ such that $G= K \cdot G_\xi$. Thus we have
$G.\xi = K.\xi \subset \bd X$, and the Lemma follows since the
$G$-action on $\bd X$ is continuous with respect to the c\^one
topology.

\medskip \noindent
(ii) $\Rightarrow$ (i) 
The $G$-orbit of $\xi$, which we denote by $\Omega$, is closed, hence compact. By  the corollary to Theorem~8 in~\cite{Arens46}, this implies that continuous surjective map $G/G_\xi \to \Omega$ induced by the orbit map is a homeomorphism. In particular $G_\xi$ is cocompact in $G$ and, hence, acts cocompactly on $X$. 
\end{proof}

\subsection{Levi decompositions}

An important tool in analysing cocompact points and the structure of their stabilisers is provided by the geometric Levi decomposition theorem that we established in~\cite{amenis}. In order to recall its statement, we first introduce some further notation and terminology. 

Given $\xi \in \bd X$, recall that $\beta_\xi$ denotes the Busemann character centred at $\xi$. Moreover $G_\xi^{\mathrm u}$ denotes the closed normal subgroup of $G_\xi$ defined by
$$G_\xi^\mathrm{u} := \Big\{\ g\in G \ : \ \lim_{t\to\infty} d\big(g\cdot r(t), r(t)\big)=0 \ \ \forall\, r \text{\ with\ } r(\infty)=\xi \Big\}\index{$G$@$G_\xi^{\mathrm u}$}$$
(where $r\colon \RR\to X$ are geodesic rays). This group may be viewed as intersection of $\Ker(\beta_\xi)$ with the kernel of the $G_\xi$ action on the  transverse space  $X_\xi$.

Recall from \S\ref{sec:JoinsLinks} that the set $P(\xi, \xi')$ associated with an opposite pair $\xi, \xi' \in \bd X$ is the union of all geodesic lines whose endpoints are $\xi, \xi'$. We have moreover a canonical product decomposition $P(\xi, \xi') \cong \RR \times Y$, where the boundary of the line factor is the pair $\{\xi, \xi'\}$.
By restricting the canonical maps $\pi_\xi \colon X \to X_\xi$ and $\pi_{\xi'} \colon X \to X_{\xi'}$  to $P(\xi, \xi')$, one obtains isometric embeddings $Y \to X_\xi$ and $Y \to X_{\xi'}$. The following shows in particular that these are onto. 

\begin{prop}\label{prop:lifting}
Let  $\xi \in \bd X$ be cocompact and $\xi' \in \Op(\xi)$. Then
$$P(\xi, \xi') \cong \RR \times X_\xi \cong \RR \times X_{\xi'}$$
and the action of $G_{\xi, \xi'}$ on $P(\xi, \xi')$ is given by $g\colon (r, x) \mapsto (r + \beta_\xi(g), g.x)$.

Moreover, the $G_{\xi, \xi'}$-action on $P(\xi, \xi')$ is cocompact. 
\end{prop}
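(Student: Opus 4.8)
\textbf{Proof plan for Proposition~\ref{prop:lifting}.}

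The plan is to establish the product decomposition and the formula for the action first, then deduce cocompactness. I would start from the canonical decomposition $P(\xi, \xi') \cong \RR \times Y$ recalled in \S\ref{sec:JoinsLinks}, where the $\RR$-factor is a geodesic line from $\xi$ to $\xi'$ and $Y$ is a \cat space. As noted just before the statement, restricting $\pi_\xi$ to $P(\xi, \xi')$ gives an isometric embedding $Y \hookrightarrow X_\xi$; the crux is surjectivity. For this I would invoke the hypothesis that $\xi$ is cocompact. Since $G_\xi$ acts cocompactly on $X$ and $\pi_\xi \colon X \to X_\xi$ is $1$-Lipschitz and $G_\xi$-equivariant, the image $\pi_\xi(X)$ is cobounded in $X_\xi$, hence $\pi_\xi(X)$ is dense (the transverse space is defined as a completion), so $X_\xi$ is the closed convex hull of $\pi_\xi(X)$. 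To push this down to $Y$, the key point is that every ray pointing to $\xi$ is, up to the equivalence defining $X_\xi$, asymptotic to one lying in $P(\xi,\xi')$: given any $x \in X$, the ray $[x,\xi')$ together with $[x,\xi)$ and the fact that $\xi'$ is opposite $\xi$ lets one produce, by a standard limiting/convexity argument, a geodesic line through a point near $x$ with endpoints $\xi,\xi'$ — concretely, one takes the ray $[x,\xi')$, and along it the projections behave so that the geodesic lines $\ell_n$ from $\xi$ to $\xi'$ passing near $[x,\xi')(n)$ subconverge, using properness, to a line whose $\pi_\xi$-image agrees with $\pi_\xi(x)$ in $X_\xi$. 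This shows $\pi_\xi(P(\xi,\xi'))$ is dense in $X_\xi$, and being complete and isometrically embedded it is all of $X_\xi$; hence $Y \cong X_\xi$, and symmetrically $Y \cong X_{\xi'}$.

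For the action formula: any $g \in G_{\xi,\xi'}$ preserves $P(\xi,\xi')$ since it permutes the geodesic lines from $\xi$ to $\xi'$, hence respects the canonical splitting $\RR \times Y$ up to an isometry of each factor. On the $\RR$-factor $g$ acts by an isometry of the line fixing both ends, i.e.\ a translation; that the translation amount is exactly $\beta_\xi(g)$ is immediate from the definition of the Busemann character, since $b_\xi$ restricted to the line is (an affine reparametrisation of) the coordinate. On the $Y$-factor the induced isometry is identified, under $Y \cong X_\xi$, with the natural $G_\xi$-action on the transverse space, which I would denote $g.x$; one checks the splitting of the $G_{\xi,\xi'}$-action is honest (no ``twisting'' cocycle) because the line factor is canonical and $g$ fixes $\xi$, so the $Y$-component of $g(r,x)$ does not depend on $r$.

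Finally, cocompactness of the $G_{\xi,\xi'}$-action on $P(\xi,\xi') \cong \RR \times X_\xi$. Here I would argue as follows. Since $\xi$ is cocompact, $G_\xi$ acts cocompactly on $X$, hence cocompactly on $X_\xi$ via $\pi_\xi$ (cobounded image of a cocompact action, then use properness of $X_\xi$); and the kernel of $G_\xi \to \Isom(X_\xi)$ together with $\beta_\xi$ is $G_\xi^{\mathrm u}$. One then needs that within $G_\xi$ the subgroup $G_{\xi,\xi'}$ already acts cocompactly on $X_\xi \cong Y$ and that $\beta_\xi(G_{\xi,\xi'}) $ is cocompact in $\RR$ (i.e.\ nontrivial, unless $X_\xi$ is itself cocompact-by-bounded, which is fine since $\RR$ with a point is still covered). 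For the $Y$-direction: $G_{\xi,\xi'}$ acts on $P(\xi,\xi')$ and its projection to $\Isom(Y)$ contains the image of the stabiliser in $G_\xi$ of a ray class, and cocompactness follows because $X_\xi$ is covered by $\pi_\xi$-images of lines through a fixed compact set, all of which lie in $G_\xi$-translates — more cleanly, one uses that $\Op(\xi)$ is $G_\xi$-cobounded in an appropriate sense so that $G_{\xi,\xi'}$ is cocompact in $G_\xi$ modulo the kernel acting trivially on $Y$. I expect \emph{this last cocompactness step} to be the main obstacle, because it requires relating the cocompactness of $G_\xi$ on $X$ to that of $G_{\xi,\xi'}$ on the smaller set $P(\xi,\xi')$; the natural route is the geometric Levi decomposition from~\cite{amenis}, which presumably identifies $G_{\xi,\xi'}$ as a ``Levi factor'' in $G_\xi$ acting cocompactly on exactly this $P(\xi,\xi')$, so I would lean on that structural input rather than reprove it by hand.
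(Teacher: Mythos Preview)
The paper's own proof is a bare citation: ``See Theorem~3.9, Proposition~3.10, and Proposition~K from~\cite{amenis}.'' So there is nothing to compare on the level of argument --- the content of this proposition is imported wholesale from the companion paper. Your proposal is effectively an attempt to reconstruct what those cited results say.

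That reconstruction is broadly on the right track but has a genuine gap in the surjectivity step. You want to show that every point of $X_\xi$ comes from a ray lying in $P(\xi,\xi')$, and you propose to do this by, given $x\in X$, running along $[x,\xi')$ and extracting a limit line. But there is no reason the ray $[x,\xi')$ should come close to $P(\xi,\xi')$; nor is there any mechanism in your sketch forcing geodesic lines from $\xi$ to $\xi'$ to pass near points of $[x,\xi')$. What one actually needs is that for any ray $r$ to $\xi$, the distance $d(r(t), P(\xi,\xi'))$ tends to zero --- and this is exactly where the cocompactness of $G_\xi$ enters in~\cite{amenis}, via an argument that is not the limiting construction you describe. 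Your parenthetical ``by a standard limiting/convexity argument'' papers over the real work.

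You are right that the cocompactness of $G_{\xi,\xi'}$ on $P(\xi,\xi')$ is the deepest part, and right that it relies on the geometric Levi machinery of~\cite{amenis} (this is Proposition~K there). Since both the surjectivity and the cocompactness ultimately require non-trivial input from~\cite{amenis}, the honest proof here is the citation the paper gives; your sketch does not succeed in making the proposition self-contained.
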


\begin{proof}
See Theorem~3.9, Proposition~3.10, and Proposition~K from~\cite{amenis}.
\end{proof}

The following consequence is immediate.

\begin{cor}\label{cor:OppositePair}
For each cocompact point $\xi$ and each $\xi' \in \Op(\xi)$, the
spaces $X_\xi$ and $X_{\xi'}$ are isometric.\qed
\end{cor}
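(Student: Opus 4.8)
The statement is immediate from Proposition~\ref{prop:lifting}, and the plan is simply to unwind the identifications it provides. First I would recall from \S\ref{sec:JoinsLinks} that for an opposite pair $\xi, \xi' \in \bd X$ the set $P(\xi,\xi')$ carries the canonical isometric decomposition $P(\xi,\xi') \cong \RR \times Y$ of~\eqref{eq:P}, in which the boundary of the line factor is $\{\xi,\xi'\}$; restricting the canonical $1$-Lipschitz maps $\pi_\xi \colon X \to X_\xi$ and $\pi_{\xi'}\colon X \to X_{\xi'}$ to $P(\xi,\xi')$ yields, as noted before the statement of Proposition~\ref{prop:lifting}, isometric embeddings $Y \hookrightarrow X_\xi$ and $Y \hookrightarrow X_{\xi'}$.

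Next I would invoke Proposition~\ref{prop:lifting} itself: since $\xi$ is cocompact and $\xi' \in \Op(\xi)$, that proposition asserts precisely that these two embeddings are onto, i.e. that $Y \cong X_\xi$ and $Y \cong X_{\xi'}$. Composing the resulting isometries gives $X_\xi \cong X_{\xi'}$, which is the assertion of the corollary. There is no genuine obstacle here: the only thing worth double-checking is that the two surjections appearing in Proposition~\ref{prop:lifting} are exactly the maps induced by $\pi_\xi$ and $\pi_{\xi'}$ on the common factor $Y$ of $P(\xi,\xi')$ — but this is how the two product decompositions $\RR \times X_\xi$ and $\RR \times X_{\xi'}$ in the statement of Proposition~\ref{prop:lifting} are set up in the first place, so the conclusion follows with no further work.
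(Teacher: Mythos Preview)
Your argument is correct and matches the paper's approach exactly: the corollary is recorded with a bare \qed because it is immediate from Proposition~\ref{prop:lifting}, which gives $P(\xi,\xi') \cong \RR \times X_\xi \cong \RR \times X_{\xi'}$ via the very embeddings $Y \hookrightarrow X_\xi$, $Y \hookrightarrow X_{\xi'}$ you describe. There is nothing to add.
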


The following result was obtained in~\cite{amenis} and will be a key tool in the subsequent developments; it provides an analogue of the Levi decomposition of parabolic subgroups in semi-simple algebraic groups. The main case is $N = G_\xi$; notice that the \emph{existence} of a sequence $\{h_n\}$ as below is then trivial.

\begin{thm}[Levi decomposition]\label{thm:Levi}\index{Levi decomposition}
Let  $\xi \in \bd X$ be cocompact.  Let $N<G_\xi$ be a subgroup that is normalised by some radial sequence $\{h_n\}$ in $G_\xi$ for $\xi$. Let $\xi'\in\bd X$ be a limit point of $\{h_n\}$; hence $\xi' \in \Op(\xi)$.

Then, writing  $N^\mathrm{u}=N\cap G_\xi^\mathrm{u} $, we have a decomposition
$$N = N_{\xi'} \cdot N^\mathrm{u} $$
which is almost semi-direct in the sense that $N_{\xi'} \cap N^\mathrm{u} $ has compact closure. In particular $N^\mathrm{u}$ acts transitively on the $N$-orbit of $\xi'$ in $\Op(\xi)$. 

Moreover $G_\xi^\mathrm{u}$ is amenable, and  locally elliptic if $G_\xi$ is totally disconnected.
\end{thm}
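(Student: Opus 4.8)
The plan is to reduce every assertion to the case $N=G_\xi$, which is precisely the Levi decomposition theorem established in~\cite{amenis}: there one obtains the almost semi-direct decomposition $G_\xi=G_{\xi,\xi'}\cdot G_\xi^{\mathrm{u}}$ with $G_{\xi,\xi'}\cap G_\xi^{\mathrm{u}}$ relatively compact, the transitivity of $G_\xi^{\mathrm{u}}$ on $\Op(\xi)$, and (Proposition~K of~\cite{amenis}) the amenability of $G_\xi^{\mathrm{u}}$ together with its local ellipticity when $G_\xi$ is totally disconnected. Granting those facts, as well as the product decomposition $P(\xi,\xi')\cong\RR\times X_\xi$ of Proposition~\ref{prop:lifting}, two things remain to be done: to see that $\xi'$ is opposite to $\xi$, and to see that the $G_\xi$-decomposition restricts to $N$.

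First I would check that $\xi'\in\Op(\xi)$. Fix a ray $\ro\colon\RR_+\to X$ pointing to $\xi$. Radiality of $(h_n)$, in the form of Remark~\ref{rem:radial}, provides $t_n\to+\infty$ with $h_n\ro(t_n)$ bounded; after extraction $h_n\ro(t_n)\to q\in X$ and $h_n\ro(0)\to\xi'$. The geodesics $\sigma_n\colon s\mapsto h_n\ro(t_n-s)$ (defined for $s\le t_n$) follow the ray $h_n\ro$ towards $h_n\xi=\xi$ on $\{s\le0\}$, while on $\{0\le s\le t_n\}$ they join $\sigma_n(0)=h_n\ro(t_n)\to q$ to $\sigma_n(t_n)=h_n\ro(0)\to\xi'$ with $t_n\to+\infty$; a limit of the $\sigma_n$ is a bi-infinite geodesic line through $q$ whose two ends are $\xi$ and $\xi'$.

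For the restriction to $N$, we may assume $N$ closed (pass to its closure, still normalised by $(h_n)$ and still inside $G_\xi$). Since $G_\xi^{\mathrm{u}}\trianglelefteq G_\xi$ we have $N^{\mathrm{u}}=N\cap G_\xi^{\mathrm{u}}\trianglelefteq N$, and $N_{\xi'}=N\cap G_{\xi'}=N\cap G_{\xi,\xi'}$ because $N\le G_\xi$. It therefore suffices to produce, for each $g\in N$, an element $u\in N^{\mathrm{u}}$ with $u\xi'=g\xi'$: then $u\inv g\in N_{\xi'}$, so $g=u\cdot(u\inv g)\in N^{\mathrm{u}}N_{\xi'}=N_{\xi'}N^{\mathrm{u}}$; the decomposition is almost semi-direct because $N_{\xi'}\cap N^{\mathrm{u}}\subseteq G_{\xi,\xi'}\cap G_\xi^{\mathrm{u}}$; and writing $g=um$ with $m\in N_{\xi'}$ gives $g\xi'=u\xi'$, which is the asserted transitivity of $N^{\mathrm{u}}$ on $N.\xi'$. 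To construct $u$ I would pass to a limit of conjugates. Fix $g\in N$; by hypothesis $h_ngh_n\inv\in N$ for all $n$, and using a radial ray $\ro$ as above, the estimate $d\big(h_ng\ro(t_n),h_n\ro(t_n)\big)=d\big(g\ro(t_n),\ro(t_n)\big)\le d(g\ro(0),\ro(0))$ — the displacement of $g$ along asymptotic rays to $\xi$ being non-increasing — shows that $h_ngh_n\inv$ moves the bounded sequence $h_n\ro(t_n)$ by a bounded amount, so by properness $h_ngh_n\inv\to g_\ell$ along a subsequence, with $g_\ell\in N$ as $N$ is closed. Since $\beta_\xi$ is a continuous homomorphism into the abelian group $\RR$, $\beta_\xi(g_\ell)=\beta_\xi(g)$, hence $\beta_\xi(gg_\ell\inv)=0$. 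If one can show moreover that $g_\ell$ fixes $\xi'$ and that $gg_\ell\inv$ acts trivially on the transverse space $X_\xi$, then $g_\ell\in N\cap G_{\xi,\xi'}=N_{\xi'}$ and $gg_\ell\inv\in N\cap\Ker\beta_\xi\cap\Ker(G_\xi\curvearrowright X_\xi)=N^{\mathrm{u}}$, so $u:=gg_\ell\inv$ works.

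The heart of the matter — and the step I expect to be the main obstacle — is precisely this last claim: that conjugation by the radial sequence drags $g$ into the Levi factor modulo $G_\xi^{\mathrm{u}}$. It is not formal, since a general radial sequence for $\xi$ may carry a bounded ``rotational'' part transverse to $\xi$ and hence need not act as a contraction on $G_\xi$ (so the naive limit $g_\ell$ must be analysed rather than merely extracted). I would handle it by working inside $P(\xi,\xi')\cong\RR\times X_\xi$: choose the radial ray $\ro$ inside $P(\xi,\xi')$ — legitimate by the ``some, hence any'' clause of Remark~\ref{rem:radial} — so that $q\in P(\xi,\xi')$, and then study the action of $h_ngh_n\inv$ on $h_n\ro(t_n)\to q$ and on nearby points of $h_nP(\xi,\xi')$, using the case $N=G_\xi$ to control $h_n$ modulo $G_\xi^{\mathrm{u}}$ and radiality to control the $X_\xi$-direction. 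This mirrors the dynamical argument by which the case $N=G_\xi$ itself is established in~\cite{amenis}, from which the amenability and local ellipticity of $G_\xi^{\mathrm{u}}$ are then quoted directly.
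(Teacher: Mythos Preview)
The paper does not prove this theorem here: it is quoted wholesale as Theorem~3.12 of~\cite{amenis}, which already treats an arbitrary subgroup $N<G_\xi$ normalised by a radial sequence (not merely the case $N=G_\xi$), while the amenability and local ellipticity of $G_\xi^{\mathrm u}$ are cited from Corollary~4.5 and Propositions~4.2 and~4.4 of the same reference. So there is no reduction from general $N$ to $N=G_\xi$ to carry out; that is already the content of the external result.

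Your sketch is therefore an attempt to reconstruct the proof of the cited theorem rather than an alternative route. The dynamical idea --- passing to a limit of conjugates $h_n g h_n^{-1}$ along the radial sequence --- is indeed the mechanism behind Theorem~3.12 in~\cite{amenis}. Two remarks, however. First, the reduction ``we may assume $N$ closed'' is not innocuous: your limit $g_\ell$ lies only in $\overline N$, so the factorisation $g=(gg_\ell^{-1})\cdot g_\ell$ yields merely $N\subseteq \overline N^{\mathrm u}\cdot\overline N_{\xi'}$, which does not recover $N=N_{\xi'}\cdot N^{\mathrm u}$ for non-closed $N$; the argument must be run with the given $N$ throughout. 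Second, as you yourself acknowledge, the step showing that $g_\ell$ fixes $\xi'$ and that $gg_\ell^{-1}$ acts trivially on $X_\xi$ is where all the substance lies; what you have written for it is a plan rather than a proof.
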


We recall that a locally compact group is \textbf{locally elliptic}\index{locally elliptic} if every finite subset is contained in a compact subgroup. 

\begin{proof}
See~\cite[Theorem 3.12]{amenis}. The amenability of $G_\xi^\mathrm{u}$ (and hence that of $N^\mathrm{u}$ when $N$ is closed) follows from~\cite[Corollary~4.5]{amenis}. The  local ellipticity  of $G_\xi^\mathrm{u}$ (assuming $G_\xi$ totally disconnected) follows from Propositions~4.2 and~4.4 in~\cite{amenis} applied to $G_\xi$.
\end{proof}

We shall often use the following basic part of the theorem, which we therefore isolate:

\begin{cor}\label{cor:transitive}
For any cocompact point $\xi \in \bd X$, the group  $G_\xi^\mathrm{u}$ acts transitively on the non-empty set $\Op(\xi)$.
\end{cor}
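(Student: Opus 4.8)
The plan is to reduce everything to the Levi decomposition of Theorem~\ref{thm:Levi} applied with $N = G_\xi$. First I would note that $\Op(\xi)\neq\varnothing$ and produce a radial sequence: since $\xi$ is cocompact, $G_\xi$ acts cocompactly on $X$ (Lemma~\ref{lem:CocptOrbit}), so fixing a ray $\rho = [p,\xi)$ one can choose $h_n\in G_\xi$ with $h_n\rho(n)$ lying in a fixed compact set; by Remark~\ref{rem:radial} the sequence $\{h_n\}$ is radial for $\xi$, and after extraction $h_n p$ converges to a point $\xi'\in\bd X$. Theorem~\ref{thm:Levi}, applied with $N=G_\xi$ (which $\{h_n\}$ trivially normalises), then yields $\xi'\in\Op(\xi)$, the decomposition $G_\xi = (G_\xi)_{\xi'}\cdot G_\xi^\mathrm{u}$, and in particular that $G_\xi^\mathrm{u}$ acts transitively on the $G_\xi$-orbit of $\xi'$ inside $\Op(\xi)$. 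Hence the corollary follows once we know that $G_\xi$ acts transitively on $\Op(\xi)$.

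For this transitivity I would fix $\xi'$ as above, take an arbitrary $\xi''\in\Op(\xi)$, and work with Proposition~\ref{prop:lifting}: the product decompositions $P(\xi,\xi')\cong\RR\times X_\xi$ and $P(\xi,\xi'')\cong\RR\times X_\xi$ on which $G_{\xi,\xi'}$, resp. $G_{\xi,\xi''}$, act cocompactly with the $\RR$-factor translated by the Busemann character, together with the surjectivity of $\pi_\xi$ from each $P(\xi,\cdot)$ onto $X_\xi$. Using the latter, I would fix a ray $\rho$ pointing to $\xi$ that is realised (up to asymptotic equivalence) inside both $P(\xi,\xi')$ and $P(\xi,\xi'')$, by matching its $X_\xi$-coordinate, and then choose $g_n\in G_{\xi,\xi'}$ and $h_n\in G_{\xi,\xi''}$ each carrying the depth-$n$ point $\rho(n)$ into a fixed bounded region of $X$, with $\{h_n\}$ moreover radial for $\xi$ with limit point $\xi''$. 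Since both sequences move $\rho(n)$ into a fixed bounded set, the products $\gamma_n = h_n g_n^{-1}\in G_\xi$ form a bounded sequence; by properness of the $G$-action on $X$ one extracts $\gamma_{n_k}\to\gamma$, and $\gamma\in G_\xi$ since $G_\xi$ is closed in $G$. Finally, as $g_n$ fixes $\xi'$ we have $\gamma_{n_k}\xi' = h_{n_k}\xi'$, and since $\Td(\xi,\xi')\geq\pi$ the $\pi$-convergence Theorem of Papasoglu--Swenson \cite[Lemma~19]{PapasogluSwenson} forces every accumulation point of $(h_n\xi')$ to equal $\xi''$; continuity of the action in the c\^one topology then gives $\gamma\xi' = \xi''$. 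Thus $\xi''\in G_\xi\cdot\xi'$, and combined with the first paragraph, $G_\xi^\mathrm{u}$ acts transitively on $\Op(\xi)$.

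I expect the main obstacle to be this transitivity step, and within it the bookkeeping needed to make $\{\gamma_n\}$ bounded: one must arrange the two radial sequences to move the \emph{same} ray to $\xi$ — identified across the two $P(\xi,\cdot)$'s through the transverse space $X_\xi$ — at matched depths, and then apply the $\pi$-convergence theorem in the right direction so as to pin down $\lim h_n\xi' = \xi''$ rather than merely some antipode of $\xi$. (The transitivity of $G_\xi$ on $\Op(\xi)$ for a cocompact point $\xi$ may alternatively be quoted directly from the Levi-decomposition circle of results in \cite{amenis}.) The remaining ingredients — existence of radial sequences from cocompactness, the invocation of Theorem~\ref{thm:Levi}, and the passage from $G_\xi$-transitivity to $G_\xi^\mathrm{u}$-transitivity — are routine.
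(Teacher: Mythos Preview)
Your overall strategy coincides with the paper's: both reduce the corollary to the transitivity of $G_\xi$ on $\Op(\xi)$ together with the case $N=G_\xi$ of Theorem~\ref{thm:Levi}. The paper's proof is two lines: it cites \cite[Proposition~7.1]{Caprace-Monod_structure} (the \emph{structure} paper, not \cite{amenis}) for the transitivity of $G_\xi$ on $\Op(\xi)$ and then invokes Theorem~\ref{thm:Levi}. Your parenthetical remark about quoting the transitivity directly is thus exactly what the paper does.

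Your second paragraph instead reproves this transitivity via Proposition~\ref{prop:lifting} and $\pi$-convergence, and the argument is correct once the bookkeeping is made precise. One does not need a single ray lying in both parallel sets: picking rays $\rho'\subset P(\xi,\xi')$ and $\rho''\subset P(\xi,\xi'')$ both pointing to $\xi$, the function $t\mapsto d(\rho'(t),\rho''(t))$ is convex and bounded, hence non-increasing. With $g_n\in G_{\xi,\xi'}$ and $h_n\in G_{\xi,\xi''}$ chosen so that $g_n\rho'(n)$ and $h_n\rho''(n)$ stay bounded (cocompactness on the parallel sets), $\gamma_n=h_ng_n^{-1}$ sends the bounded point $g_n\rho'(n)$ to $h_n\rho'(n)$, which lies within $d(\rho'(0),\rho''(0))$ of the bounded set $\{h_n\rho''(n)\}$; hence $(\gamma_n)$ is bounded and subconverges to some $\gamma\in G_\xi$. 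Since $h_n$ translates the $\RR$-factor of $P(\xi,\xi'')$ by $\beta_\xi(h_n)\to+\infty$ while the $X_\xi$-coordinate stays bounded, one has $h_n p\to\xi''$ and $h_n^{-1}p\to\xi$, so $\pi$-convergence applied to $\xi'$ (with $\Td(\xi,\xi')\geq\pi$) gives $h_n\xi'\to\xi''$, whence $\gamma\xi'=\xi''$. This is a genuine self-contained alternative to the citation, trading brevity for independence from \cite{Caprace-Monod_structure}; the paper opts for brevity.
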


\begin{proof}
As recalled above,  $G_\xi$ acts transitively on the non-empty set $\Op(\xi)$, see Proposition~7.1 in~\cite{Caprace-Monod_structure}. Thus the statement follows from the case $N=G_\xi$ of Theorem~\ref{thm:Levi}.
\end{proof}

\subsection{Orbits and fixed points of the Levi factor}
Theorem~\ref{thm:Levi} gives in particular an almost semi-direct Levi decomposition $G_\xi = G_{\xi, \xi'} \cdot G_\xi^\mathrm{u}$. We call $G_{\xi, \xi'}$ the \textbf{Levi factor}\index{Levi factor} of this decomposition. The following result will play an important role in the sequel.

\begin{prop}\label{prop:2CocptPts}
Let $\xi \in \bd X$ be  cocompact  and $\xi' \in \Op(\xi)$.  For any $\eta \in \bd X$, the closure
$\overline {G_{\xi, \xi'}.\eta}$ contains a point $\eta' \in \bd P(\xi, \xi')$ such that $\tangle\xi { \eta'} = \tangle \xi\eta$.
\end{prop}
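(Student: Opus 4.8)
The plan is to realise $\eta'$ as an accumulation point of $g_n\eta$ for a carefully chosen sequence $(g_n)$ in the Levi factor $G_{\xi,\xi'}$ that is \emph{radial for $\xi$} and, at the same time, \emph{contracts $P(\xi,\xi')$ onto $\xi'$}. Once such a sequence is in hand, $\eta'$ will automatically land in $\bd P(\xi,\xi')$ and inherit the correct angle to $\xi$, by combining lower semi-continuity of the Tits angle with the $\pi$-convergence theorem; no further structural input is needed.

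\medskip\noindent\emph{Step 1: constructing $(g_n)$.} First I would use Proposition~\ref{prop:lifting} to identify $P(\xi,\xi')$ with $\RR\times X_\xi$, with $\xi$ the positive end of the line factor and $g\in G_{\xi,\xi'}$ acting by $(r,x)\mapsto(r+\beta_\xi(g),g.x)$, the action being cocompact. Fix $x_0\in X_\xi$, set $o=(0,x_0)$, and choose $M>0$ so that $G_{\xi,\xi'}.o$ is $M$-dense in $\RR\times X_\xi$. Applying this to the points $(-n,x_0)$ provides $g_n\in G_{\xi,\xi'}$ with $|\beta_\xi(g_n)+n|\le M$ and $d_{X_\xi}(g_n x_0,x_0)\le M$, whence also $d_{X_\xi}(g_n^{-1}x_0,x_0)\le M$. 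A direct computation in the product then shows that $g_n^{-1}o=(-\beta_\xi(g_n),g_n^{-1}x_0)$ converges to $\xi$ while staying within distance $M$ of the ray $[o,\xi)$ — so $(g_n)$ is radial for $\xi$ — and that $g_n o=(\beta_\xi(g_n),g_n x_0)$ converges to $\xi'$. Since $X$ is proper, $\overline X$ is compact, so after passing to a subsequence $g_n\eta\to\eta'$ in the cône topology; then $\eta'\in\overline{G_{\xi,\xi'}.\eta}$.

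\medskip\noindent\emph{Step 2: locating $\eta'$.} If $\eta=\xi$ then $g_n\eta=\xi$ for all $n$ and $\eta'=\xi\in\bd P(\xi,\xi')$ works, so assume $\eta\neq\xi$ and put $\theta_0:=\tangle{\xi}{\eta}\in(0,\pi]$. As each $g_n$ fixes $\xi$, we have $\tangle{\xi}{g_n\eta}=\theta_0$, so Lemma~\ref{lem:lower:semi} gives $\tangle{\xi}{\eta'}\le\theta_0$. Since $(g_n)$ is radial for $\xi$ with $g_n o\to\xi'$, the $\pi$-convergence theorem of Papasoglu--Swenson — valid without discreteness, as recalled above — gives $\Td(\xi',\eta')\le\max\{0,\pi-\Td(\xi,\eta)\}$. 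If $\theta_0=\pi$, the right-hand side vanishes, so $\eta'=\xi'\in\bd P(\xi,\xi')$ and $\tangle{\xi}{\eta'}=\pi=\theta_0$, as desired. If $\theta_0<\pi$, then $\Td(\xi,\eta)=\theta_0$, hence $\Td(\xi',\eta')\le\pi-\theta_0$, and — using $\Td(\xi,\xi')\ge\tangle{\xi}{\xi'}=\pi$ since $\xi,\xi'$ are opposite — the triangle inequality yields
$$\pi\le\Td(\xi,\xi')\le\Td(\xi,\eta')+\Td(\eta',\xi')\le\Td(\xi,\eta')+(\pi-\theta_0),$$
so $\Td(\xi,\eta')\ge\theta_0$. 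Because $\tangle{\xi}{\eta'}\le\theta_0<\pi$, the Tits and angular distances coincide at $(\xi,\eta')$, so $\Td(\xi,\eta')=\tangle{\xi}{\eta'}\le\theta_0$; hence $\Td(\xi,\eta')=\tangle{\xi}{\eta'}=\theta_0$ and every inequality in the display is an equality. In particular $\Td(\xi,\eta')+\Td(\eta',\xi')=\Td(\xi,\xi')$, so $\eta'$ lies on a geodesic of length $\pi$ joining $\xi$ to $\xi'$; by Lemma~\ref{lem:P=Pi}, $\eta'\in\Pi(\xi,\xi')=\bd P(\xi,\xi')$, which completes the proof.

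\medskip\noindent The step I expect to be the real obstacle is the construction of $(g_n)$ in Step~1: arranging that elements of the Levi factor $G_{\xi,\xi'}$ are simultaneously radial for $\xi$ and contracting onto $\xi'$. This leans essentially on the explicit product decomposition of $P(\xi,\xi')$ and on the cocompactness of the $G_{\xi,\xi'}$-action furnished by Proposition~\ref{prop:lifting}; everything afterwards is a routine interplay of semi-continuity of angles (Lemma~\ref{lem:lower:semi}), the equality $\Td=\tangle$ below $\pi$, and $\pi$-convergence.
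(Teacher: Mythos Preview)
Your proof is correct and takes a genuinely different route from the paper's.

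The paper proceeds in two stages. First it picks a sequence $(g_n)\subset G_{\xi,\xi'}$ radial for $\xi$ and invokes Proposition~\ref{prop:upper:semi} to obtain a limit $\eta''$ with $\aangle{p''}{\xi}{\eta''}=\tangle{\xi}{\eta''}=\tangle{\xi}{\eta}$; this realised angle forces $(p'',\xi,\eta'')$ to span a flat sector when $\theta_0<\pi$. A second sequence $(g'_n)\subset G_{\xi,\xi'}$ with $g'_n p''\to\xi'$ then pushes these sectors to a limiting half-flat $H$ whose boundary line lies in $P(\xi,\xi')$, so $H\subset P(\xi,\xi')$ and $\eta'=\lim g'_n\eta''\in\bd H\subset\bd P(\xi,\xi')$. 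The argument is geometric and self-contained: it uses only Proposition~\ref{prop:upper:semi} (proved in the paper) and elementary flat-sector convergence.

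You compress both stages into one sequence and replace the flat-sector argument by a metric squeeze: $\pi$-convergence gives the upper bound $\Td(\xi',\eta')\le\pi-\theta_0$, lower semi-continuity gives $\tangle{\xi}{\eta'}\le\theta_0$, and the triangle inequality against $\Td(\xi,\xi')\ge\pi$ forces equality throughout, placing $\eta'$ on a Tits geodesic of length $\pi$ from $\xi$ to $\xi'$. The identification $\Pi(\xi,\xi')=\bd P(\xi,\xi')$ of Lemma~\ref{lem:P=Pi} then finishes. This is slicker and makes transparent that the result is really a consequence of $\pi$-convergence; the price is the dependence on that external theorem and on Lemma~\ref{lem:P=Pi}, neither of which the paper's argument needs. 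A minor remark: radialness of $(g_n)$ for $\xi$ is not actually used in your Step~2 --- $\pi$-convergence only requires $g_n^{-1}o\to\xi$ and $g_no\to\xi'$ --- so the ``real obstacle'' you flag is milder than you suggest.
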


\begin{proof}
Pick a base point $p \in P(\xi, \xi')$ and let $\ro\colon \RR \to X$
be the geodesic line such that $\ro(0)=p$, $\ro(+\infty) = \xi$
and $\ro(-\infty) = \xi'$. By Proposition~\ref{prop:lifting}, there is $g_n \in G_{\xi, \xi'}$ be such that
$g_n. \ro(n)$ remains bounded. Upon extracting, the sequence
$(g_n.\ro(n))$ converges to some $p'' \in P(\xi, \xi')$ and
$(g_n.\eta)$ converges to some $\eta'' \in \bd X$. Applying Proposition~\ref{prop:upper:semi}, we have
\begin{equation}\label{eq:2CocptPts}
\aangle{p''} \xi {\eta''} = \tangle \xi {\eta''} = \tangle \xi {\eta}.
\end{equation}
We now choose a sequence $(g'_n)$ in $G_{\xi, \xi'}$ such that $d(\ro(-n), g'_n.p'')$ remains bounded; as it happens $g'_n=g_n$ would do since $(g_n)$ keeps the entire line $\ro(\RR)$ at bounded distance of itself. Upon extracting, the sequence $(g'_n.\eta'')$ converges to some $\eta' \in \bd X$; by construction, $\eta'\in\overline {G_{\xi, \xi'}.\eta}$. Moreover, another application of Proposition~\ref{prop:upper:semi} shows that $\tangle \xi {\eta'}= \tangle \xi {\eta''} = \tangle \xi {\eta}$. It only remains to show that $\eta'$ belongs to $\bd P(\xi, \xi')$.

Consider first the special case $\tangle \xi\eta = \pi$. Then $\xi$ and $\eta''$ are opposite in view of~(\ref{eq:2CocptPts}). The construction of $\eta'$ now in fact forces $\eta'=\xi'$ which finishes this case.

We assume henceforth $\tangle \xi\eta <\pi$. Then~(\ref{eq:2CocptPts}) implies that the triple $(p'', \xi, \eta'')$ spans a flat sector which we denote by $S$, see Proposition~II.9.9 in~\cite{Bridson-Haefliger}. By construction, the sequence of sectors $g'_n.S$ subconverges to a half-flat $H$ whose boundary line is parallel to $\ro$. Therefore $H$ is entirely contained in $P(\xi, \xi')$. Since $\eta'' \in \bd S$, it follows that $\eta' \in \bd H $, whence $\eta' \in \bd P(\xi, \xi')$ as desired.
\end{proof}

In particular, we obtain the following.

\begin{cor}\label{cor:FixedPts:Levi}
Let $\xi \in \bd X$ be cocompact and $\xi' \in \Op(\xi)$. Then 
$$(\bd X)^{G_{\xi}} \subset  (\bd X)^{G_{\xi, \xi'}} \subset \bd P(\xi,
\xi').$$
\end{cor}

\begin{proof}
Immediate from Proposition~\ref{prop:2CocptPts}.
\end{proof}

\subsection{Cocompact points and maximal flats}
We recall from~\cite[Theorem~C]{Kleiner} that if $X$ admits a cocompact isometry group, then $X$ contains a flat $F $ such that $\dim(\bd X) = \dim F -1$. In particular $\dim(\bd X) < \infty$. Any such flat will be called a \textbf{fully maximal flat}\index{fully maximal!flat}\index{flat!fully maximal}. The set of all fully maximal flats is denoted by $\sF X$\index{$FX$@$\sF X$}. 

Similarly, one defines a  \textbf{fully maximal sphere}\index{fully maximal!sphere}\index{sphere!fully maximal} to be a round sphere $S \subseteq \bd X$ such that  $\dim(S) = \dim(\bd X)$. The set of all fully maximal spheres is denoted by  $\sS X$\index{$SX$@$\sS X$}. 
 Clearly any fully maximal flat is bounded by a fully maximal sphere. More importantly, the converse also holds:
 
\begin{prop}\label{prop:Leeb}
Any fully maximal sphere bounds a fully maximal flat. 

Moreover, the union of all fully maximal flats bounded by a given fully maximal sphere is a closed convex subset which splits as a product $\RR^{d+1} \times Y$, with $Y$ bounded and $d = \dim( \bd X)$. In particular, any fully maximal sphere bounds a \emph{canonical} fully maximal flat. 
\end{prop}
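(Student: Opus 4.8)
The plan is to study a fixed fully maximal sphere $S\subseteq\bd X$ and the family of all fully maximal flats bounding it. First I would establish that this family is non-empty. The idea is that $\bd X$ is the boundary of a proper cocompact \cat space, so by~\cite[Theorem~C]{Kleiner} there exists at least one fully maximal flat $F_0$, bounding a fully maximal sphere $S_0$. By cocompactness of $\Isom(X)$ (or already of $G$) on $X$, the group acts with a suitable transitivity on fully maximal spheres: more precisely one shows that every two points of $\bd X$ lie in a common fully maximal sphere and, using the Tits-boundary structure, that $\Isom(X)$ acts transitively (or at least with dense orbits, then closed orbits by Lemma~\ref{lem:CocptOrbit}-type arguments) on $\sS X$. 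That reduces the non-emptiness to the case of $S_0$, which holds by construction. Alternatively one can argue directly: pick any point $\eta$ in the interior of a chamber-like region of $S$, realize $\eta$ by a ray and extract a flat from a sequence of flat sectors spanned by $[x,\eta)$ together with rays toward antipodal points in $S$, using Lemma~\ref{lem:AntipodalPt} to get full-dimensional flatness in the limit.

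Next, fix $S$ and let $\mathscr F_S$ be the set of fully maximal flats $F$ with $\bd F=S$. I would show that $\bigcup_{F\in\mathscr F_S}F$ is closed and convex. Closedness: if $F_n\in\mathscr F_S$ and $F_n$ converges (in the Chabauty/compact-open sense, using properness) to a closed convex set $F_\infty$, then $F_\infty$ is flat of dimension $\ge d+1$, hence exactly a fully maximal flat, and its boundary is $S$ by semicontinuity of boundaries of convex sets under such convergence. Convexity: given $F,F'\in\mathscr F_S$ and points $x\in F$, $x'\in F'$, I want the geodesic $[x,x']$ to lie in the union. The key point is that $F$ and $F'$ are \emph{parallel} in the strong sense that $\bd F=\bd F'=S$; by a standard sandwich/flat-strip argument (iterating the flat strip theorem along each of the $d+1$ independent directions of $S$, or invoking~\cite[II.2.14]{Bridson-Haefliger} on the parallel set of each geodesic direction), the convex hull of $F\cup F'$ splits as $\RR^{d+1}\times C$ with $C$ bounded, and every point of that hull lies on some fully maximal flat with boundary $S$. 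This simultaneously gives the product decomposition $\bigcup_{F\in\mathscr F_S}F\cong\RR^{d+1}\times Y$: the parallel set of any one fully maximal flat $F_0$ with boundary $S$ is, by the splitting theorem, $\RR^{d+1}\times Y$ where $Y$ is the transverse factor, and the previous step identifies $\bigcup_{F\in\mathscr F_S}F$ with the sub-union corresponding to the fibers over $Y$; the boundedness of $Y$ follows from the maximality of $\dim(\bd X)$, since an unbounded $Y$ would contain a geodesic ray, producing a flat of dimension $d+2$ bounded by a $(d+1)$-sphere in $\bd X$, contradicting $\dim(\bd X)=d$.

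Finally, for the ``canonical'' fully maximal flat: since $\bigcup_{F\in\mathscr F_S}F\cong\RR^{d+1}\times Y$ with $Y$ bounded and complete \cat, the factor $Y$ has a unique circumcenter $y_0$ (a complete bounded \cat space has a canonical center, see~\cite[II.2.7]{Bridson-Haefliger}), and $\RR^{d+1}\times\{y_0\}$ is then a distinguished fully maximal flat with boundary $S$, canonical because the splitting $\RR^{d+1}\times Y$ is canonical (the $\RR^{d+1}$ factor is forced, being the maximal Euclidean-de-Rham-type factor in the bounded-transverse direction) and the circumcenter is an isometry invariant. I expect the main obstacle to be the convexity of $\bigcup_{F\in\mathscr F_S}F$, equivalently the claim that two fully maximal flats with the same full-dimensional boundary sphere span a flat product: one must rule out that their convex hull ``bulges'' in a $(d+2)$-nd direction, and the clean way to do this is to note any extra direction would enlarge the boundary sphere's dimension, but making the flat-strip/parallel-set bookkeeping precise across all $d+1$ directions simultaneously (rather than one direction at a time) is the technical heart of the argument. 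It is plausible that, like several neighbouring statements, this is most efficiently deduced from Leeb's work~\cite{Leeb} or from the structure results of~\cite{Caprace-Monod_structure} rather than proved from scratch.
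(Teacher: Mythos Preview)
The paper's proof is a two-line citation: the first assertion is~\cite[Proposition~2.1]{Leeb}, the second is Proposition~3.6 and Corollary~3.10 of~\cite{Caprace-Monod_structure}. Your closing remark anticipates this exactly, and for the second and third parts (parallel-set splitting, boundedness of $Y$ by maximality of $\dim\bd X$, canonical flat as circumcenter fiber) your outline is essentially what those references do.

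Your direct attack on the first assertion, however, has a real gap. The claim that $\Isom(X)$ acts transitively, or with closed orbits, on $\sS X$ is not available at this point of the paper and is in general false: transitivity on top-dimensional spheres is a very strong property (essentially Weyl-transitivity in a building), and the paper spends Sections~\ref{sec:TD}--\ref{sec:SphBdg} proving much weaker statements about how the group moves spheres around. Likewise, ``every two points of $\bd X$ lie in a common fully maximal sphere'' is the building conclusion the paper is working toward, so invoking it here would be circular. Your alternative sector-extraction sketch is closer in spirit to Leeb's actual argument, but as written it does not explain why the limiting object is a full $(d{+}1)$-flat rather than a lower-dimensional flat or a half-flat; Leeb's proof uses cocompactness to translate an existing top-dimensional flat (given by Kleiner) and a careful angle argument to force the limit to have boundary exactly $S$, and this is the step you would need to supply.

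In short: your proposal is a reasonable expansion of what the cited results contain, but your first approach to the existence of a bounding flat is circular, and your second is too thin to stand on its own. The paper avoids all of this by citing Leeb directly, which is also what you suggest doing.
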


\begin{proof}
The first assertion follows from~\cite[Proposition~2.1]{Leeb}. For the second we refer to Proposition~3.6 in~\cite{Caprace-Monod_structure}; the \emph{canonical} flat corresponds to the circumcenter of $Y$ (as in Corollary~3.10 of~\cite{Caprace-Monod_structure}).
\end{proof}

The following fact is an essential first step in showing that the set of all cocompact points of $\bd X$ is a spherical building.

\begin{prop}\label{prop:MaxFlat}\index{cocompact point at infinity!is contained in a fully maximal sphere}
Every cocompact point is contained in a fully maximal sphere.
\end{prop}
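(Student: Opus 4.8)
\textbf{Proof proposal for Proposition~\ref{prop:MaxFlat}.}

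The plan is to start from a cocompact point $\xi \in \bd X$, choose an opposite point $\xi' \in \Op(\xi)$ (which exists and is cocompact-friendly by Corollary~\ref{cor:transitive}), and work inside the parallel set $P(\xi,\xi') \cong \RR \times X_\xi$ furnished by Proposition~\ref{prop:lifting}. By Corollary~\ref{cor:OppositePair} the transverse space $X_\xi$ is isometric to $X_{\xi'}$, and by Proposition~\ref{prop:lifting} the Levi factor $G_{\xi,\xi'}$ acts cocompactly on $P(\xi,\xi')$, acting on the $\RR$-factor through the Busemann character and on $X_\xi$ by isometries. So $X_\xi$ itself carries a cocompact isometry group, hence by~\cite[Theorem~C]{Kleiner} it contains a flat $F_0$ of dimension $\dim(\bd X_\xi)+1$; then $\RR \times F_0 \subseteq P(\xi,\xi') \subseteq X$ is a flat whose boundary contains $\{\xi,\xi'\}$ joined with $\bd F_0$. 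The key point will be the dimension count: one needs $\dim(\bd X_\xi) + 1 = \dim(\bd X)$, i.e. $\dim(\bd X_\xi) = \dim(\bd X) - 1$. Granting this, $\dim(\RR \times F_0) = \dim F_0 + 1 = \dim(\bd X_\xi) + 2 = \dim(\bd X) + 1$, so $\RR \times F_0$ is a fully maximal flat and the sphere $\bd(\RR \times F_0) = \{\xi,\xi'\} \circ \bd F_0$, which contains $\xi$, is a fully maximal sphere through $\xi$.

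The main obstacle is therefore establishing $\dim(\bd X_\xi) = \dim(\bd X) - 1$. One inequality is geometric and easy: from Lemma~\ref{lem:P=Pi} we have $\Link(\xi,\xi') \cong \bd X_\xi$ and $\bd P(\xi,\xi') = \Pi(\xi,\xi') = \{\xi,\xi'\} \circ \bd X_\xi$ sits inside $\bd X$, so $\dim(\bd X_\xi) + 1 \leq \dim(\bd X)$ (a join with a $0$-sphere raises dimension by exactly $1$, and $\bd P(\xi,\xi') \subseteq \bd X$). For the reverse inequality, I would take a fully maximal sphere $S \subseteq \bd X$ with $\dim S = \dim(\bd X)$ (which exists by~\cite[Theorem~C]{Kleiner}) and argue that, after moving it by an element of $G_\xi$ or its Levi factor, one may assume $S$ meets $\Link(\xi,\xi')$ in an equator of $S$. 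Concretely: by Corollary~\ref{cor:transitive}, $G_\xi^{\mathrm u}$ acts transitively on $\Op(\xi)$; but a cleaner route is to use that a fully maximal sphere through a point antipodal to $\xi$ exists, and to invoke Lemma~\ref{lem:AntipodalPt} to find, inside such a sphere $S$, an antipode of $\xi$ realized along a geodesic — then $S \cap \Link(\xi,\xi'')$ for the appropriate opposite $\xi''$ is a sphere of dimension $\dim(\bd X) - 1$ sitting in $\bd X_\xi$. Either way, the crux is producing a fully maximal sphere positioned so that $\xi$ lies on it together with an antipode, with the link cut out as a codimension-one subsphere; the flatness package of Proposition~\ref{prop:lifting} then upgrades this to a flat.

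An alternative, and perhaps the intended, line of attack avoids reasoning about $X_\xi$ directly: pick any fully maximal sphere $S$, which bounds a canonical fully maximal flat $F$ by Proposition~\ref{prop:Leeb}; if $\xi \in S$ we are done, so suppose not. Since $\xi$ is cocompact, Lemma~\ref{lem:CocptOrbit} gives that $G.\xi$ is closed, and one can try to apply a limiting argument using a radial sequence for a suitable point of $S$ together with Proposition~\ref{prop:upper:semi} or Proposition~\ref{prop:2CocptPts} to push $S$ (or rather its image under the group) so that its limit passes through $\xi$ while remaining fully maximal. Here the cocompactness of $G_{\xi,\xi'}$ on $P(\xi,\xi')$ from Proposition~\ref{prop:lifting} again does the heavy lifting: a fully maximal flat for $X_\xi$, crossed with $\RR$, is a fully maximal flat of $X$ through $\xi$. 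I expect the honest difficulty, regardless of which route is taken, to be bookkeeping the dimensions — making rigorous that passing to the transverse space drops the boundary dimension by exactly one, which ultimately rests on Kleiner's dimension theorem~\cite{Kleiner} applied simultaneously to $X$ and to $X_\xi$ together with the join decomposition $\bd P(\xi,\xi') = \{\xi,\xi'\} \circ \bd X_\xi$ of Lemma~\ref{lem:P=Pi}.
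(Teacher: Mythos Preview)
Your outline contains the right ingredients but has a genuine gap at the step you yourself flag: the equality $\dim(\bd X_\xi) = \dim(\bd X) - 1$. Your suggestions for closing it do not work as stated. Finding an antipode $\eta$ of $\xi$ inside a fully maximal sphere $S$ via Lemma~\ref{lem:AntipodalPt} gives only $\tangle\xi\eta=\pi$; it does not make $\eta$ \emph{opposite} to $\xi$, and $S$ has no reason to sit inside $\bd P(\xi,\eta)$ or to intersect $\Link(\xi,\eta)$ in a codimension-one subsphere. Your Route~2 is closer to what is needed but misidentifies the target: the limit of the translated flats will not contain $\xi$; it will contain a point \emph{opposite} $\xi$.

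The paper's argument fixes exactly this. Starting from a fully maximal flat $F$ and an antipode $\eta\in\bd F$ of $\xi$ (Lemma~\ref{lem:AntipodalPt}), one takes a sequence $(g_n)\subset G_\xi$ that is radial for $\eta$ --- this exists precisely because $\xi$ is cocompact. After extraction $g_n F \to F'$ is still a fully maximal flat, $g_n\eta\to\eta'\in\bd F'$, and since $g_n\xi=\xi$ throughout, Proposition~\ref{prop:upper:semi} upgrades antipodality to the Alexandrov equality $\aangle x \xi{\eta'}=\pi$ at an interior limit point $x$, so $\eta'\in\Op(\xi)$. Now $F'$ projects to a flat of dimension $\dim F'-1$ in $X_{\eta'}$, and your lifting step via Proposition~\ref{prop:lifting} produces a fully maximal flat $F''\subset P(\xi,\eta')$ with $\xi\in\bd F''$. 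In effect, the radial limit in $G_\xi$ is what converts the soft antipode into a genuine opposite lying on a fully maximal flat, thereby establishing the missing dimension inequality constructively rather than abstractly.
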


\begin{proof}
We have just recalled that $\bd X$ contains some fully maximal sphere which bounds a fully maximal flat $F$. 
Let $\xi \in \bd X$ be cocompact.
By Lemma~\ref{lem:AntipodalPt}, the sphere $\bd F$
contains some point $\eta$ which is antipodal to $\xi$. Pick a base
point $p \in F$ and a geodesic ray $\ro \colon \RR_+ \to X$ emanating
from $p$ and pointing to $\eta$. Choose moreover a sequence $(g_n)$
of elements of $G_\xi$ such that $g_n.\ro(n)$ remains bounded. Upon
extracting we may assume that $g_n.\ro(n) $ converges to some $x
\in X$. Up to a further extraction, the sequence of flats $g_n.F$
converges to a flat $F'$ containing $x$. Since $\dim F' = \dim F$,
we have $F' \in \sF X$. Now, the sequence $g_n.\eta$ converges
to a point $\eta' \in \bd F'$. Proposition~\ref{prop:upper:semi} implies
$\aangle x \xi {\eta'} = \tangle\xi\eta = \pi$. In other
words, the point $x$ belongs to a geodesic line joining $\xi$ to
$\eta'$ and hence $\eta' \in \Op(\xi)$.

Now the image of the flat $F'$ in $X_{\eta'}$ is a flat of dimension
$\dim F'-1$. In view of Proposition~\ref{prop:lifting}, the latter flat
lifts to a flat $F'' \subset P({\xi, \eta'})$ such that $\xi \in \bd
F''$ and  $\dim F''=\dim F'$. In particular $F''$ is a fully
maximal flat of $X$.
\end{proof}

In~\cite[Theorem~M]{amenis}, we proved that if a closed \emph{unimodular} subgroup $H < \Isom(X)$ acts cocompactly on $X$, then its fixed point set $(\bd X)^H$ is contained in the boundary of the maximal Euclidean factor of a minimal $H$-invariant subspace of $X$. In particular  $(\bd X)^H$ is contained in a fully maximal sphere. The   latter property holds even without assuming that $H$ is unimodular.

\begin{prop}\label{prop:FixedPtSet}
Let $H < \Isom(X)$ act cocompactly on $X$. Then the (possibly empty) fixed point set $(\bd X)^H$ is contained in a fully maximal sphere. 
\end{prop}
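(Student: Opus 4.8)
The plan is to reduce to the unimodular case treated in \cite[Theorem~M]{amenis} by passing to a well-chosen subgroup. First I would observe that $(\bd X)^H$ is a closed convex subset of the Tits boundary $\bd X$, and that it is contained in $(\bd X)^{\overline H}$, so we may assume $H$ closed. The group $H$ need not be unimodular, but it has a closed cocompact unimodular subgroup: indeed, the modular function $\Delta_H \colon H \to \RR_{>0}$ is a continuous homomorphism, and since $H$ acts cocompactly on the proper \cat space $X$, it is compactly generated; hence $\overline{[H,H]}\,\ker(\Delta_H)$, or more simply $\ker(\Delta_H)$ itself, is a closed normal cocompact subgroup. Actually a cleaner route: let $\beta$ run over all Busemann characters $\beta_\xi$ for $\xi \in (\bd X)^H$, together with the modular function; the common kernel $H_0$ of all these continuous characters is a closed normal subgroup, it is unimodular, and I claim it is still cocompact in $H$ and still acts cocompactly on $X$.

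The key point is then that $H_0$ fixes the same boundary points as $H$ that matter for us, namely $(\bd X)^{H_0} \supseteq (\bd X)^H$, and moreover $H_0$ being cocompact in $H$ and unimodular, \cite[Theorem~M]{amenis} applies to it: $(\bd X)^{H_0}$ lies in the boundary of the maximal Euclidean factor of a minimal $H_0$-invariant closed convex subspace $Y \subseteq X$. That Euclidean factor is a flat $F \subseteq Y \subseteq X$, and since $\Isom(X)$ acts cocompactly, $\dim F \le \dim(\bd X)+1$; but in fact by the remark preceding the statement, the fixed-point set being contained in $\bd F$ forces $\bd F$ to be a fully maximal sphere (one can also argue: $(\bd X)^{H_0}$ is contained in a round sphere $\bd F$, and if $\dim F - 1 < \dim(\bd X)$ one enlarges $F$ inside a fully maximal flat using Lemma~\ref{lem:AntipodalPt} applied to a fully maximal sphere, as in the proof of Proposition~\ref{prop:MaxFlat}). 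Hence $(\bd X)^H \subseteq (\bd X)^{H_0} \subseteq \bd F$ with $\bd F$ a fully maximal sphere, which is the assertion.

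The main obstacle is verifying that $H_0$ remains cocompact on $X$ after killing finitely many (or all) continuous $\RR_{>0}$-valued characters. This is where I would spend care: one needs that $(\bd X)^H$ carries only finitely many distinct Busemann characters up to scaling, or else work directly with the abelianisation. The cleanest formulation is that the quotient $H / H_0$ embeds in $\RR^k$ for some $k$ (it is a finitely generated abelian-by-compact group mapping to a vector group), and a finitely generated subgroup of $\RR^k$ acting on the \cat space by translations along a flat direction keeps $H_0$ cocompact precisely because $H$ fixes the relevant points at infinity, so all the "lost" translation directions are toward fixed points and are already accounted for inside a flat. Making this precise — identifying the translation flat and checking cocompactness of $H_0$ on $X$ rather than merely on that flat — is the step requiring the structural input from \cite{amenis} and \cite{Caprace-Monod_structure}; everything else is then formal.
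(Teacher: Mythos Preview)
Your reduction to the unimodular case has a genuine gap that cannot be repaired along the lines you sketch: a non-unimodular group acting cocompactly on $X$ need not contain \emph{any} unimodular subgroup that still acts cocompactly. The $ax+b$ group $H = \RR \rtimes \RR_{>0}$ acting on $\HH^2$ is already a counterexample. Here $(\bd X)^H$ is a single point $\xi$, the modular function and the Busemann character $\beta_\xi$ both have the unipotent radical $N \cong \RR$ as kernel, and $N$ acts on $\HH^2$ with horocycle orbits --- hence \emph{not} cocompactly. So your $H_0$ fails to be cocompact in the most basic non-unimodular example, and Theorem~M of~\cite{amenis} is unavailable. Your hand-wave that the ``lost translation directions are toward fixed points and are already accounted for inside a flat'' is exactly what goes wrong: the lost direction is the hyperbolic scaling, which is toward the fixed point but is \emph{not} a Euclidean translation in any flat. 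The same phenomenon persists in products like $\HH^2 \times \RR$, so one cannot hope to rescue the argument by assuming $(\bd X)^H$ has positive dimension.

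The paper's proof proceeds entirely differently: it is an induction on $\dim(\bd X)$. Given a fixed point $\xi \in (\bd X)^H$ (necessarily cocompact), one picks an opposite $\xi'$ and uses the Levi decomposition (Corollary~\ref{cor:FixedPts:Levi}) to trap $(\bd X)^H$ inside $\bd P(\xi,\xi') \cong \{\xi,\xi'\} \circ \bd X_\xi$. The induction hypothesis, applied to the cocompact action of $G_{\xi'}$ on the lower-dimensional space $X_\xi$, yields a fully maximal sphere in $\bd X_\xi$ containing the relevant fixed set; suspending by $\{\xi,\xi'\}$ gives the desired sphere in $\bd X$. No unimodularity is ever invoked.
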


\begin{proof}
Let $G = \overline H$ be the closure of $H$ in $\Isom(X)$. Thus $(X, G)$ is a locally compact \cat group. The proof proceeds by induction on $\dim(\bd X)$. If $\dim(\bd X)=0$, then $X$ does not contain any flat of dimension~$\geq 2$. It follows that $\sS X$ consists exactly of all sets of two distinct points at infinity, see Theorem~II.9.33 in~\cite{Bridson-Haefliger}. Moreover, $X$ is Gromov-hyperbolic~\cite[III.H.1.5]{Bridson-Haefliger}. Thus, saying that the fixed point set $(\bd X)^H$ is contained in a fully maximal sphere amounts to saying that it contains at most two points. This is indeed true, since if  $(\bd X)^H$ contained at least~$3$ points, then $H$ would have a bounded orbit in $X$ by hyperbolicity, hence $X$ would be bounded since $H$ acts cocompactly, hence $\bd X$ would be empty, which is absurd.  The induction can start.

For the induction step, we may assume that $(\bd X)^H$ is non-empty. Let then $\xi \in (\bd X)^H$; thus $\xi$ is cocompact. By Proposition~\ref{prop:MaxFlat}, the point $\xi$ is contained in a fully maximal sphere, say $S$. Let $\xi' \in S$ be antipodal to $\xi$. Then $\xi'$ is opposite $\xi$ by Proposition~\ref{prop:Leeb}.  Corollary~\ref{cor:FixedPts:Levi} ensures that $(\bd X)^H = (\bd X)^G \subset (\bd X)^{G_{\xi'}} \subset \bd P(\xi, \xi')$, while $S \subset \bd P(\xi, \xi')$ by Proposition~\ref{prop:Leeb}. 

From Proposition~\ref{prop:lifting}, we infer that $(\bd X)^{G_{\xi'}} \cong \{\xi, \xi'\} * (\bd X_\xi)^{G_{\xi'}}$; moreover $G_{\xi'}$ acts cocompactly on $X_\xi$. By induction, it follows that $(\bd X_\xi)^{G_{\xi'}}$ is contained in a fully maximal sphere, say $S_\xi$. Since $S \subset \bd P(\xi, \xi')$, we have $\dim(\bd X_\xi) = \dim S -1 = \dim(\bd X)-1$. Now the sphere $S' =  \{\xi, \xi'\} * S_\xi \subset \bd P(\xi, \xi')$ is a fully maximal sphere of $\bd X$ that contains   $(\bd X)^{G_{\xi'}} \supset (\bd X)^H$.
\end{proof}

The following property of  convex sets of round spheres is  well known.

\begin{lem}\label{lem:RadSphericalConvex}
A closed convex subset $Z$ of a unit round sphere $S$ is either a sphere or has intrinsic radius~$\leq \pi/2$.
\end{lem}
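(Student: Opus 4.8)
The plan is to recall the standard fact that a closed convex subset of a unit round sphere is obtained by intersecting $S$ with a convex cone (with apex at the centre of the sphere), and to analyse the possible shapes of such cones. First I would pass to the Euclidean picture: write $S = S^{n}$ as the unit sphere in $\RR^{n+1}$, and let $Z \se S$ be closed and $\pi$-convex. The radial cone $C_Z = \{ t z : t \geq 0,\ z \in Z \}$ is then a closed convex cone in $\RR^{n+1}$, and $Z = C_Z \cap S$. Convexity here is an elementary consequence of $\pi$-convexity of $Z$: for $z_1, z_2 \in Z$ with $d_S(z_1,z_2) < \pi$, the spherical geodesic from $z_1$ to $z_2$ is exactly $\{ v/\|v\| : v \in [z_1,z_2] \}$, so the segment $[z_1,z_2]$ lies in $C_Z$; the case $d_S(z_1,z_2)=\pi$ is handled by a limiting argument, and the cone property is automatic.

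Next I would split into cases according to the linear span $V$ of $C_Z$. If $C_Z$ is a linear subspace $V$, then $Z = S \cap V$ is a (sub)sphere and we are in the first alternative. Otherwise $C_Z$ is a proper convex cone inside $V$, i.e. it is contained in some closed half-space $\{\phi \leq 0\}$ of $V$ with $\phi$ a nonzero linear functional on $V$. Pick a unit vector $u \in V$ with $\la u, v \ra \le 0$ for all $v \in C_Z$, equivalently $\la u, z\ra \le 0$ for all $z \in Z$. Then $-u$ is an antipode (in $S$) of $u$, and every point $z \in Z$ satisfies $d_S(z, -u) = \arccos\la z, -u\ra = \arccos(-\la z, u\ra) \le \pi/2$. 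Hence $-u$ is a point of $S$ at distance $\le \pi/2$ from all of $Z$; this already shows $Z$ has circumradius $\le \pi/2$ as measured in $S$, but the claim asks for the \emph{intrinsic} radius of $Z$.

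To upgrade this to an intrinsic statement, I would argue that one may choose the functional $u$ so that $-u$ lies in $Z$ itself, or more precisely that $Z$ contains a point from which every other point of $Z$ is within $\pi/2$. Consider the face structure: let $L = V \cap (-V)$ be the lineality space of $C_Z$, so $C_Z = L \oplus C'$ with $C'$ a pointed convex cone. If $C' = \{0\}$ then $C_Z = L$ is linear and $Z$ is a sphere, already handled. If $C'$ is a ray, $C_Z$ is a half-space of $V$ (or lower), $Z$ is a closed hemisphere of the subsphere $S\cap V$, its intrinsic radius is exactly $\pi/2$ with centre the pole, which lies in $Z$. In general, take the relative interior of the pointed part: there is a unit vector $w$ in the relative interior of $C'$ (hence in $C_Z$, hence $w/\|w\| \in Z$) with $\la w, v \ra \ge c\|v\|$ for all $v \in C'$ and some $c>0$; combined with $L \perp C'$ one checks $\la w, z \ra \ge 0$ for all $z \in Z$, so $d_S(z, w) \le \pi/2$ for all $z\in Z$. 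Since $w/\|w\| \in Z$ and the intrinsic distance in the convex set $Z$ agrees with the ambient spherical distance for pairs at distance $<\pi$ (and $Z$ being convex it is geodesic), this shows $\rad(Z) \le \pi/2$.

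The main obstacle is the bookkeeping in the last paragraph: one must be careful that the centre witnessing radius $\le \pi/2$ can be taken \emph{inside} $Z$ (intrinsic radius, not circumradius in $S$), and that distances within $Z$ are not larger than ambient spherical distances — the latter holds because $Z$ is $\pi$-convex, so any two of its points at distance $<\pi$ are joined by a geodesic lying in $Z$, and a pair at distance $=\pi$ would force $Z$ to contain a great semicircle, which (together with closedness and convexity) pushes one back toward the sphere case. Handling the boundary case $d_S = \pi$ cleanly, and the degenerate low-dimensional cases ($\dim Z = 0$), is where care is needed, but none of it is deep. Alternatively, one could cite this as folklore (as the statement itself suggests), but the cone argument above is short enough to include in full.
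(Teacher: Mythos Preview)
Your reduction to the radial cone $C_Z$ and its lineality decomposition $C_Z = L \oplus C'$ is correct and cleanly isolates the two alternatives. The gap is in the sentence ``there is a unit vector $w$ in the relative interior of $C'$ \ldots\ with $\la w, v \ra \ge c\|v\|$ for all $v \in C'$''. This asserts that the pointed cone $C'$ meets the interior of its own dual cone $(C')^*$, and that is precisely the nontrivial content of the lemma in the antipode-free case: translated back to the sphere, you are claiming that the closed convex set $K' = C' \cap S$ (which has no antipodal pair) already admits a point of $K'$ within $\pi/2$ of all of $K'$. Being in the relative interior of $C'$ does not by itself force nonnegative pairing with the rest of $C'$ --- a wide pointed sector in the plane already shows that a generic interior ray fails --- so some genuine argument is needed here, and you have not supplied one. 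As written, the proof is circular.

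The claim is true and can be rescued, for instance by taking $p = \proj_{\widehat{K'}}(0)$, the Euclidean nearest-point projection of the origin onto the Euclidean convex hull $\widehat{K'}$ (which avoids $0$ since $C'$ is pointed); then $\la p, q\ra \geq \|p\|^2 > 0$ for all $q \in \widehat{K'}$, and one checks that $p/\|p\|$ lies in the spherical convex hull of $K'$, hence in $K'$. But this step is the heart of the matter and cannot be skipped.

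For comparison, the paper's proof is quite different and avoids the cone picture entirely: it inducts on $\dim S$, places $Z$ in a closed hemisphere $H$, and in the interesting case invokes \cite[Lemma~1.7]{AB98} (a \catun fact: radius $>\pi/2$ forces an antipodal pair) to conclude that $Z$ contains the centre of $H$, giving the bound directly. Your linear-algebraic route is attractive and, once the projection step above is inserted, yields the strict inequality $\rad(Z) < \pi/2$ in the non-hemisphere case as a bonus; but the paper's argument has the advantage of working verbatim in more general \catun settings where the Euclidean cone model is unavailable.
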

\begin{proof}
We work by induction on $\dim(S)$, the base case $\dim(S)=0$ being trivial.

We may assume that $Z$ is properly contained in $S$, so   it is contained in a closed hemisphere $H$. If $Z$ is contained in the boundary equator $E$ of $H$, then the conclusion follows by induction. We assume henceforth that  $Z$ meets the interior  of $H$; we shall prove by contradiction that $\rad(Z) \leq \pi/2$.  

Assume on the contrary that  $\rad(Z) > \pi/2$. Then $Z$ contains at least one antipodal pair by~\cite[Lemma~1.7]{AB98}, which must be contained in $E$. Since $Z$ meets the interior of $H$, we infer that $Z$ contains the centre of $H$. Since $Z \subset H$, we conclude that $\rad(Z) \leq \pi/2$, a contradiction. 
\end{proof}

\begin{prop}\label{prop:RadFixedPt}
Let $H < \Isom(X)$ act cocompactly on $X$. Assume that the fixed point set $(\bd X)^H$ is non-empty.

Then $\rad((\bd X)^H) \leq  \pi/ 2$ unless $\bd X$ decomposes as a join with  a non-empty spherical factor. 

If moreover $H$ is amenable, then $\rad((\bd X)^H) \leq  \pi/ 2$ unless $\bd X$  is a round sphere.
\end{prop}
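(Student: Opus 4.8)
The first assertion is essentially the combination of Proposition~\ref{prop:FixedPtSet} with Lemma~\ref{lem:RadSphericalConvex}, so I would dispose of it first. By Proposition~\ref{prop:FixedPtSet}, the set $(\bd X)^H$ is contained in a fully maximal sphere $S$. It is moreover closed and convex in $\bd X$ (as a fixed point set of a group of isometries acting on the \catun space $\bd X$, intersected with $S$ it is closed and $\pi$-convex inside $S$), hence it is a closed convex subset of the round sphere $S$. Lemma~\ref{lem:RadSphericalConvex} then says that either $(\bd X)^H$ is itself a round sphere, or $\rad\big((\bd X)^H\big)\leq\pi/2$. In the former case, a round subsphere $S'\se\bd X$ of positive dimension consisting of fixed points of $H$ gives a join decomposition: indeed, by cocompactness there is a minimal $G$-invariant subspace $Y$ with $\bd Y=\bd X$ (here $G=\overline H$), and the round sphere $S'$ bounds a flat; invoking the geodesic/structure machinery (Proposition~\ref{prop:JoinDec} applied to a minimal core, together with the fact that a round sphere in the boundary corresponds to a Euclidean de Rham factor), one obtains that $\bd X$ splits as a join with a non-empty spherical factor. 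If $S'$ is a $0$-dimensional sphere, i.e.\ $(\bd X)^H$ consists of exactly two antipodal points, then again $\bd X=\{\xi,\xi'\}\circ\Link(\xi,\xi')$ by Lemma~\ref{lem:Links} and Corollary~\ref{cor:FixedPts:Levi} together with Proposition~\ref{prop:lifting} (the two fixed points are opposite, and $\bd P(\xi,\xi')=\bd X$ forces the join). Thus in all cases either $\rad\big((\bd X)^H\big)\leq\pi/2$ or $\bd X$ has a non-empty spherical join factor, which is the first claim.

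\textbf{The amenable refinement.} Now assume in addition that $H$ is amenable, and suppose $\rad\big((\bd X)^H\big)>\pi/2$; we must show $\bd X$ is a round sphere. By the first part, $\bd X$ decomposes as a join $\bd X=Z_0\circ Z'$ with $Z_0$ a non-empty round sphere — indeed, the \emph{spherical de Rham factor} $Z_0$ of $\bd X$ is non-empty. I would pass to a minimal $G$-invariant closed convex core $Y\se X$ (with $G=\overline H$); then $\bd Y=\bd X$ and $G$ acts cocompactly and minimally on $Y$, so by Proposition~\ref{prop:JoinDec} the join $\bd Y=Z_0\circ Z'$ is induced by a product decomposition $Y=Y_0\times Y'$ with $Y_0$ a flat (the maximal Euclidean de Rham factor) and $\bd Y'=Z'$. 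The point to exploit is that an amenable group acting cocompactly (minimally, without — a priori possibly with — fixed points at infinity) on the factor $Y'$ whose boundary $Z'$ has \emph{empty} spherical de Rham factor is highly constrained. Concretely, I would argue that $Z'$ must be empty: if it were not, then since $Z'=\bd Y'$ has empty spherical de Rham factor, Proposition~\ref{prop:BoundaryAction} applies to $Y'$ and shows the image of $\Isom(Y')$ in $\mathrm{Homeo}(\bd Y')$ is closed with compact kernel, hence $\Isom(Y')$ is (modulo compact) a closed subgroup of $\mathrm{Homeo}(\bd Y')$ which is \emph{not} amenable-by-compact in any relevant way — more precisely, the amenable cocompact subgroup of $\Isom(Y')$ would have to fix a point at infinity of $Y'$ (amenable groups acting on \cat spaces with finite-dimensional boundary and no Euclidean factor fix a point at infinity, essentially by the Adams–Ballmann theorem), and this fixed-point set would be a proper closed convex subset of $Z'$ of radius $\le\pi/2$ by the first part applied to $Y'$, which is incompatible with amenability forcing the displacement/minimality structure on $Y'$ unless $Y'$ is a point. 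Hence $Y'$ is a point, $\bd X=Z_0$ is a round sphere.

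\textbf{Where the difficulty lies.} The routine direction is the first assertion — it is basically Proposition~\ref{prop:FixedPtSet} plus the spherical-convexity lemma plus the join dictionary. The genuine content, and the step I expect to be the main obstacle, is showing that amenability upgrades ``$\rad\le\pi/2$ or a spherical join factor'' to ``$\rad\le\pi/2$ or $\bd X$ is \emph{entirely} a round sphere''. The heart of it is: if $(\bd X)^H$ has radius $>\pi/2$ then it is a full sphere $Z_0$, which is the spherical de Rham factor; one must then rule out any complementary factor $Z'\ne\varnothing$ using that the cocompact group acting on it is amenable. The cleanest route is via the Adams–Ballmann-type statement (an amenable group acting on a proper \cat space with finite-dimensional Tits boundary either has a Euclidean factor it translates along or fixes a point at infinity) combined with the structure theorem: the complementary factor $Y'$ has no Euclidean factor (that was absorbed into $Z_0$), so $H$ fixes a point $\eta\in Z'$, and then the first part applied to $Y'$ gives $\rad\big((\bd Y')^H\big)\le\pi/2$; but feeding this back, together with minimality of the $H$-action on $Y'$ and the fact that a group fixing a boundary point with a bounded-radius fixed set cannot act cocompactly and minimally on an unbounded space unless that space is reduced to a point (cf.\ the Busemann-character and displacement arguments of \S\ref{sec:proj} and Proposition~\ref{prop:RadFixedPt}'s own first part), forces $Y'$ to be a point. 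I would need to be careful about whether $H$ might fail to be closed or unimodular — but since we only ever use $H$ through its closure $G=\overline H$, which shares its fixed point set and cocompactness, and since amenability passes to closures, this causes no trouble.
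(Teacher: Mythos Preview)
Your first assertion is essentially right in outline, but the appeal to Proposition~\ref{prop:JoinDec} is misplaced: that proposition takes a join decomposition of $\bd X$ as \emph{input} and produces a product decomposition of $X$; it does not manufacture a join from a sphere sitting inside $\bd X$. The paper's route is the one you sketch in your $0$-dimensional case, carried out uniformly: once $(\bd X)^H$ is a subsphere $S'$ of the fully maximal sphere $S$ (Lemma~\ref{lem:RadSphericalConvex}), $S'$ bounds a flat, and the union $Y$ of all flats bounded by $S'$ is closed, convex, $H$-invariant and splits $H$-equivariantly as $\RR^{d+1}\times Y'$ with $\bd\RR^{d+1}=S'$ (this is~\cite[Proposition~3.6]{Caprace-Monod_structure}). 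Cocompactness of $H$ gives $\bd Y=\bd X$, hence the join $\bd X\cong S'\circ\bd Y'$.

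The amenable refinement has a genuine gap. You assert that the projection of $H$ to $\Isom(Y')$ must fix a point of $\bd Y'$ because ``amenable groups acting on \cat spaces with no Euclidean factor fix a point at infinity, essentially by Adams--Ballmann''. That is not what Adams--Ballmann says: the dichotomy is \emph{fixed point at infinity or invariant flat}, and the absence of a Euclidean de Rham factor does not by itself rule out the flat branch. In fact the situation is the reverse of what you claim. In the decomposition $\bd X\cong S'\circ\bd Y'$ with $S'=(\bd X)^H$, \emph{every} $H$-fixed boundary point lies in $S'$, so $H$ has \emph{no} fixed point in $\bd Y'$. Adams--Ballmann therefore forces the other branch: $H$ stabilises a flat $F\subset Y'$. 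Since $H$ acts cocompactly on $Y'$, one gets $\bd Y'=\bd F$, a round sphere, and hence $\bd X$ is a round sphere. Your attempted contradiction---fix a point in $\bd Y'$, bound the radius of the fixed set by $\pi/2$, then invoke an unspecified ``incompatibility''---never closes, and its premise is false.
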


\begin{proof}
By Proposition~\ref{prop:FixedPtSet}, the set $(\bd X)^H$ is a closed convex subset of some $S\in \sS X$. If $\rad((\bd X)^H) >  \pi/ 2$, then $(\bd X)^H$ is a subsphere of $S$ by Lemma~\ref{lem:RadSphericalConvex}. By Proposition~\ref{prop:Leeb}, that sphere bounds a flat in $X$. The union of all those flats is a closed convex $H$-invariant subset $Y \subset X$ which decomposes $H$-equivariantly as a product $Y \cong \RR^{d+1} \times Y'$, where $d = \dim((\bd X)^H)$ and the boundary of the flat factor $\RR^{d+1}$ coincides with the sphere $(\bd X)^H$, see Proposition~3.6 in~\cite{Caprace-Monod_structure}. Since $H$ acts cocompactly on $X$, we have $\bd X = \bd Y$, and the product decomposition of $Y$ induces a join decomposition of $\bd X \cong (\bd X)^H \circ \bd Y'$ having the non-empty sphere $(\bd X)^H$ as a factor. 

Assuming in addition that $H$ is amenable, we consider the $H$-action on the factor $Y'$ in the above decomposition of $Y$. The group $H$ has no fixed point in $\bd Y'$ since all $H$-fixed points at infinity lie in the boundary of the flat factor $\RR^{d+1}$. Therefore the Main Theorem of~\cite{AB98} ensures that $H$ stabilises a flat in $Y'$. Since the $H$-action is cocompact, it follows that $\bd Y'$ is a round sphere, hence so is $\bd X$.
\end{proof}

\section{Totally disconnected amenable \cat  groups}\label{sec:TD}

\subsection{Orthogonal projection to the  fixed-point-set at infinity}

The proof of the following result combines the Levi decomposition theorem with the geometric facts established in Section~\ref{sec:proj}. 

\begin{prop}\label{prop:TitsNb}
Let $(X, G)$ be a locally compact \cat group with $G$ totally disconnected, and let $\xi \in \bd X$ be cocompact. 

Then, for every $\eta \in \bd X \setminus (\bd X)^{G_\xi^\mathrm{u}}$ with $d_T(\eta, (\bd X)^{G_\xi^\mathrm{u}})< \pi/2$, the projection of $\eta$ to $(\bd X)^{G_\xi^\mathrm{u}}$ is distinct from $\xi$.
\end{prop}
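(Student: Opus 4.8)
The strategy is to reduce to Proposition~\ref{prop:ControlProj} by exhibiting a filtering family $\mathscr Y$ of closed convex subsets of $X$ with $\bigcap \bd Y = (\bd X)^{G_\xi^\mathrm{u}}$ and a uniform linear lower bound on the Busemann drop $b_\xi(y) - b_\xi(\proj_Y y)$. The natural candidates for the members of $\mathscr Y$ are the fixed-point sets $\Fix_X(F)$ of finite subsets $F \subset G_\xi^\mathrm{u}$, or rather their closed convex hulls / minimal invariant subsets; since $G_\xi^\mathrm{u}$ is locally elliptic (Theorem~\ref{thm:Levi}, using that $G$ is totally disconnected), every finite $F$ lies in a compact subgroup $K_F \le G_\xi^\mathrm{u}$, and $\Fix_X(K_F)$ is a non-empty closed convex subset. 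As $F$ ranges over finite subsets, ordered by inclusion, the family $\{\Fix_X(K_F)\}$ is filtering and $\bigcap_F \Fix_X(K_F) = \Fix_X(G_\xi^\mathrm{u})$, which may be empty — this is precisely the situation calling for Proposition~\ref{prop:ControlProj}, whose hypothesis allows empty intersection. At the boundary level, one must check $\bigcap_F \bd \Fix_X(K_F) = (\bd X)^{G_\xi^\mathrm{u}}$; the inclusion $\supseteq$ is clear, and $\subseteq$ should follow because a point fixed by every $K_F$ is fixed by $G_\xi^\mathrm{u}$ (which is generated by its compact subgroups, being locally elliptic).

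The core analytic input is the uniform inequality $b_\xi(y) - b_\xi(\proj_Y y) \ge \lambda\, d(y,Y)$ for $Y = \Fix_X(K_F)$ and $d(y,Y)\ge 1$. Here the plan is to invoke Proposition~\ref{prop:DecrBusemann}, which gives the strict inequality $b_\xi(\proj_Y y) < b_\xi(y)$ for all $y \notin Y$, provided $\pi_\xi(Y)$ is dense in $X_\xi$. That density should hold because $\xi$ is cocompact and $G_{\xi}^{\mathrm u}$ (hence $K_F$) acts on $X$ fixing $\xi$, so $K_F$ acts on the transverse space $X_\xi$; as $K_F$ is compact, its fixed-point set in $X_\xi$ is non-empty and $\pi_\xi(\Fix_X(K_F))$ maps onto — or densely onto — $\Fix_{X_\xi}(K_F)$. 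But Proposition~\ref{prop:DecrBusemann} only gives a strict, not a uniform linear, estimate, so the passage from ``strict'' to ``$\lambda$-linear'' is the real work: one wants a single $\lambda>0$ valid for all $F$ simultaneously and all $y$ with $d(y,Y)\ge 1$. The intended mechanism is a compactness/cocompactness argument — since $\xi$ is cocompact, $G_\xi$ acts cocompactly on $X$, and one can translate the configuration $(y, \proj_Y y)$ by elements of $G_\xi$ (which fix $\xi$, hence preserve Busemann-level differences) into a fixed compact region; combined with the fact that the $Y$'s, being $G_\xi^{\mathrm u}$-fixed-point sets, interact controllably with the $G_\xi$-action, one extracts a uniform slope. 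This uniformity step is where I expect the main obstacle: handling the dependence on $F$ and ensuring the constant does not degenerate as the family shrinks toward a possibly empty intersection.

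Once the hypothesis of Proposition~\ref{prop:ControlProj} is verified with this $\mathscr Y$ and this $\xi$, the conclusion is immediate: for every $\eta \in \bd X$ with $\Td(\eta, D) \in (0,\pi/2)$ — where $D = (\bd X)^{G_\xi^\mathrm{u}}$ — we get $\proj_D(\eta) \ne \xi$, which is exactly the assertion of Proposition~\ref{prop:TitsNb}. One small point to dispatch: Proposition~\ref{prop:ControlProj} is phrased for $D = \bigcap \bd Y$, so I must make sure this $D$ really equals $(\bd X)^{G_\xi^\mathrm{u}}$, which is the identification discussed above; and the hypothesis ``$\xi \in D$'' holds since $\xi$ is fixed by $G_\xi \supseteq G_\xi^{\mathrm u}$. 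The projection $\proj_D$ makes sense on the $\Td$-ball of radius $\pi/2$ around $D$ because $D$ is closed and $\pi$-convex in $\bd X$ (being an intersection of boundaries of convex sets, or directly as a fixed-point set of isometries acting on the \catun space $\bd X$).
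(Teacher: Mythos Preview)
Your plan is correct and essentially coincides with the paper's proof: filter by $\mathscr Y = \{X^K\}$ over compact subgroups $K \le G_\xi^{\mathrm u}$ (locally elliptic by Theorem~\ref{thm:Levi}), establish the uniform Busemann-drop estimate by a cocompactness-plus-Chabauty-limit contradiction argument (this is Lemma~\ref{lem:UnifDecrBusemann}), and conclude by Proposition~\ref{prop:ControlProj}. One sharpening of your density step: you do not need $K_F$ to have fixed points in $X_\xi$---rather, $G_\xi^{\mathrm u}$ lies by definition in the kernel of the $G_\xi$-action on $X_\xi$, so each $K$ acts \emph{trivially} there, whence $\Fix_{X_\xi}(K)=X_\xi$ and $\pi_\xi(X^K)$ is dense in all of $X_\xi$ (this is Lemma~\ref{lem:DecrBusemann:cpt}, which feeds Proposition~\ref{prop:DecrBusemann}); also note the paper separately dispatches the case $\bigcap \mathscr Y \neq \varnothing$, where $G_\xi^{\mathrm u}$ is compact and the statement is vacuous.
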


Proposition~\ref{prop:TitsNb}  will be mostly used through the following.

\begin{cor}\label{cor:proj}
Let $(X, G)$ be a locally compact \cat group with $G$ totally disconnected. Let $\xi \in \bd X$ be cocompact and   $\xi' \in \Op(\xi)$. 
Then for each $\eta \neq \xi$ with $\eta \in (\bd X)^{G_{\xi, \xi'}} $ lying at distance~$< \pi /2$ from $ (\bd X)^{G_\xi}$, we have $\proj_{ (\bd X)^{G_\xi}}(\eta) \neq \xi$. 
\end{cor}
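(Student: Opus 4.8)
The plan is to deduce this directly from Proposition~\ref{prop:TitsNb} by comparing the two relevant fixed-point sets, $(\bd X)^{G_\xi}$ and $(\bd X)^{G_\xi^{\mathrm u}}$. First I would observe that, since $G_\xi^{\mathrm u}$ is a subgroup of $G_\xi$, we have the inclusion $(\bd X)^{G_\xi} \subseteq (\bd X)^{G_\xi^{\mathrm u}}$. Both sets contain $\xi$ (the point $\xi$ is fixed by all of $G_\xi$), so both are non-empty closed convex subsets of the \catun space $\bd X$. The Levi decomposition, Theorem~\ref{thm:Levi} applied with $N = G_\xi$ and a radial sequence limiting to $\xi'$, gives $G_\xi = G_{\xi,\xi'} \cdot G_\xi^{\mathrm u}$; hence a point of $\bd X$ is fixed by $G_\xi$ if and only if it is fixed by both $G_{\xi,\xi'}$ and $G_\xi^{\mathrm u}$. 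In other words,
\begin{equation*}
(\bd X)^{G_\xi} = (\bd X)^{G_{\xi,\xi'}} \cap (\bd X)^{G_\xi^{\mathrm u}}.
\end{equation*}

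Next I would take $\eta \neq \xi$ with $\eta \in (\bd X)^{G_{\xi,\xi'}}$ and $\Td(\eta, (\bd X)^{G_\xi}) < \pi/2$, and set $\eta_0 = \proj_{(\bd X)^{G_\xi}}(\eta)$, which exists and is unique since the target is complete convex and $\eta$ lies within Tits-distance $<\pi/2$ (see~\cite[II.2.6]{Bridson-Haefliger}). The goal is to show $\eta_0 \neq \xi$. I would argue that $\eta_0$ is in fact also the projection of $\eta$ onto the larger set $(\bd X)^{G_\xi^{\mathrm u}}$: indeed, $\eta$ is fixed by $G_{\xi,\xi'}$, so it lies in the convex set $(\bd X)^{G_{\xi,\xi'}}$, and the projection of a point of $(\bd X)^{G_{\xi,\xi'}}$ onto $(\bd X)^{G_\xi^{\mathrm u}}$ must land in $(\bd X)^{G_{\xi,\xi'}} \cap (\bd X)^{G_\xi^{\mathrm u}} = (\bd X)^{G_\xi}$ — here one uses that $(\bd X)^{G_{\xi,\xi'}}$ is $G_{\xi,\xi'}$-invariant and the nearest-point projection onto the $G_{\xi,\xi'}$-invariant set $(\bd X)^{G_\xi^{\mathrm u}}$ is $G_{\xi,\xi'}$-equivariant, so it fixes the image of an already-fixed point. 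Consequently $\proj_{(\bd X)^{G_\xi^{\mathrm u}}}(\eta) = \proj_{(\bd X)^{G_\xi}}(\eta) = \eta_0$. Also $\Td(\eta, (\bd X)^{G_\xi^{\mathrm u}}) \leq \Td(\eta, (\bd X)^{G_\xi}) < \pi/2$, and $\eta \notin (\bd X)^{G_\xi^{\mathrm u}}$ since $\eta \notin (\bd X)^{G_\xi}$ but $\eta$ is already $G_{\xi,\xi'}$-fixed (if $\eta$ were fixed by $G_\xi^{\mathrm u}$ it would be fixed by $G_\xi$).

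Now Proposition~\ref{prop:TitsNb} applies verbatim to $\eta$ relative to $(\bd X)^{G_\xi^{\mathrm u}}$ and yields $\proj_{(\bd X)^{G_\xi^{\mathrm u}}}(\eta) \neq \xi$, i.e.\ $\eta_0 \neq \xi$, which is exactly the claim. I expect the main obstacle to be the equivariance-of-projection step: one must be a little careful that the nearest-point projection onto $(\bd X)^{G_\xi^{\mathrm u}}$ really commutes with the $G_{\xi,\xi'}$-action and is well-defined on the relevant neighbourhood (this uses that $G_{\xi,\xi'}$ normalises $G_\xi^{\mathrm u}$, hence preserves $(\bd X)^{G_\xi^{\mathrm u}}$, together with uniqueness of nearest-point projection in \catun geometry for points at distance $<\pi/2$). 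Everything else is bookkeeping with the Levi decomposition and inclusions of fixed-point sets.
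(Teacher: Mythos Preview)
Your proposal is correct and follows essentially the same route as the paper: use that $G_\xi^{\mathrm u}\lhd G_\xi$ so $(\bd X)^{G_\xi^{\mathrm u}}$ is $G_{\xi,\xi'}$-invariant, deduce by equivariance of the nearest-point projection that $\proj_{(\bd X)^{G_\xi^{\mathrm u}}}(\eta)$ is $G_{\xi,\xi'}$-fixed, hence (via the Levi decomposition) $G_\xi$-fixed, so the two projections coincide and Proposition~\ref{prop:TitsNb} finishes. The only cosmetic omission is that you invoke $\eta\notin(\bd X)^{G_\xi}$ without having said so; the paper handles this with a one-line ``without loss of generality'' (if $\eta\in(\bd X)^{G_\xi}$ then its projection is $\eta$ itself, which is $\neq\xi$).
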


\begin{proof}[Proof of Corollary~\ref{cor:proj} assuming Proposition~\ref{prop:TitsNb}]
There is no loss of generality in assuming $\eta \not \in (\bd X)^{G_\xi}$.
Since $G_\xi^\mathrm{u}$ is normal in $G_\xi$, it follows that $ (\bd X)^{G_\xi^\mathrm{u}}$ is invariant under $G_\xi$. In particular the point $\proj_{ (\bd X)^{G_\xi^\mathrm{u}}}(\eta)$ is fixed by $G_{\xi, \xi'}$. Since $G_\xi = G_{\xi, \xi'} G_\xi^\mathrm{u}$ by Theorem~\ref{thm:Levi}, we infer that  $\proj_{ (\bd X)^{G_\xi^\mathrm{u}}}(\eta)$ is in fact fixed by $G_\xi$. Since  $ (\bd X)^{G_\xi^\mathrm{u}}$ contains  $ (\bd X)^{G_\xi}$, we infer that 
$$ \proj_{ (\bd X)^{G_\xi}}(\eta) =  \proj_{ (\bd X)^{G_\xi^\mathrm{u}}}(\eta),$$
and the inequality $\proj_{ (\bd X)^{G_\xi}}(\eta) \neq \xi$ now follows from Proposition~\ref{prop:TitsNb}.
\end{proof}

The proof of Proposition~\ref{prop:TitsNb} will be given at the end of the subsection, as it necessitates a number of preparations.

\begin{lem}\label{lem:DecrBusemann:cpt}
Let $\xi \in \bd X$ and $K \leq \Isom(X)$ be a compact subgroup fixing $\xi$ and acting trivially on $X_\xi$. Let $Y = X^K$ be the fixed point set of $K$. Then $\xi \in \bd Y$ and  $\pi_\xi(Y)$ is dense in $X_\xi$. In particular,  $b_\xi(\proj_{Y}(x)) <b_\xi(x)$ for each $x \in X \setminus Y$.
\end{lem}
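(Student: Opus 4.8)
\textbf{Proof plan for Lemma~\ref{lem:DecrBusemann:cpt}.} The plan is to reduce everything to Proposition~\ref{prop:DecrBusemann}, whose hypothesis is precisely that $\pi_\xi(Y)$ be dense in $X_\xi$. So the real content is the two claims $\xi \in \bd Y$ and $\overline{\pi_\xi(Y)} = X_\xi$; once these are in hand, the last sentence is immediate from Proposition~\ref{prop:DecrBusemann}.

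First I would establish $\xi \in \bd Y$. Since $K$ fixes $\xi$ and acts trivially on the transverse space $X_\xi$, every element of $K$ moves each geodesic ray $r$ with $r(\infty)=\xi$ to a ray that stays within bounded distance of $r$; as $K$ is compact, this bound can be taken uniform over $K$. Thus for a fixed ray $r$ pointing to $\xi$ and any $x \in X$, the orbit $K\cdot r(t)$ lies in a ball of fixed radius around $r(t)$ for all $t$, so its circumcentre $c(t)$ (a well-defined point by the \cat property) is a $K$-fixed point lying within bounded distance of $r(t)$. The map $t \mapsto c(t)$ then yields, after a limiting argument (extracting a ray through the $c(t)$ using properness), a ray contained in $Y = X^K$ and pointing to $\xi$; hence $\xi \in \bd Y$. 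Alternatively, and perhaps more cleanly, one observes that the $K$-average of $r$ — or rather the projection $\proj_Y(r(t))$ — stays at bounded distance from $r(t)$ because $Y$ is non-empty convex $K$-invariant and the displacement of $K$ on the $r(t)$ is bounded; this directly gives a ray in $Y$ asymptotic to $\xi$.

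Next, the density of $\pi_\xi(Y)$ in $X_\xi$. Recall $X_\xi$ is the metric completion of $X^*_\xi/d_\xi$, where $X^*_\xi$ is the set of rays pointing to $\xi$, so it suffices to show that every ray $r$ pointing to $\xi$ is $d_\xi$-approximated by a ray lying in $Y$. Given such an $r$, consider $y_t = \proj_Y(r(t))$. As noted above, $d(r(t), y_t)$ is bounded, say by $R$, uniformly in $t$; by convexity of $t \mapsto d(r(t), Y)$ together with this boundedness, the distance $d(r(t), Y)$ is in fact non-increasing, and a ray $r_Y$ in $Y$ asymptotic to $\xi$ can be extracted as a limit of the segments $[y_0, y_t]$ (using properness of $X$ and the fact that all $y_t$ lie in a ball of radius $R$ about $r(0)$ shifted along $r$). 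By construction $d_\xi(r, r_Y) \le \inf_t d(r(t), r_Y(t)) \le R$; but in fact one can do better: since $d(r(t),Y)$ is non-increasing and bounded, it must tend to a limit, and the ``parallel'' structure forces this limit to control $d_\xi(r, r_Y)$. Actually the cleanest route is: the function $t \mapsto d(r(t), Y)$ is convex and bounded, hence non-increasing, and its infimum $\ell = \lim_t d(r(t), Y) = d_\xi(r, Y)$; choosing $t_n \to \infty$ and extracting a ray $r'$ through the $y_{t_n}$ gives a ray in $Y$ pointing to $\xi$ with $d_\xi(r, r') = \ell$. It remains to see $\ell = 0$, equivalently that $r$ itself is a $d_\xi$-limit of rays in $Y$ — but this is exactly density, so I would instead argue directly that $\ell$ can be made $0$: if $\ell > 0$ for some $r$, apply the same argument to $r'$... this risks circularity, so the honest approach is to note that $d_\xi$ on $X_\xi$ is defined as an \emph{infimum} of distances between rays, and the rays in $Y$ form a $K$-fixed convex subset; the point of $X_\xi$ represented by $r$ has a well-defined $d_\xi$-distance to the closed set $\pi_\xi(Y)$, and I claim it is zero because one can slide $r$ towards $Y$: for each $\eps > 0$, the ray $t \mapsto \proj_Y(r(t+T))$ for $T$ large, re-based, lies at $d_\xi$-distance $< \eps$ from $r$ since $d(r(t+T), Y) \to \ell$ and $\ell$ is realised asymptotically.

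\textbf{Where the difficulty lies.} The main obstacle is the density statement — pinning down that $d_\xi(r, \pi_\xi(Y)) = 0$ rather than merely bounded. The crux is that the transverse metric $d_\xi$ is an infimum over $t, t' \ge 0$, so two rays that are merely ``eventually close'' (in the sense $\liminf_t d(r(t), r'(t)) $ small) are automatically $d_\xi$-close; combined with $d(r(t), Y) \to \ell$ and the fact (from $K$-compactness) that the whole ray $r$ is Hausdorff-close to $Y$ at \emph{every} scale, one gets $\ell = 0$. I would present this carefully using Remark~\ref{rem:radial}-style bookkeeping of how far out along the ray one must go, and invoking properness of $X$ to extract the limiting ray in $Y$; everything else ($\xi \in \bd Y$, and the final inequality via Proposition~\ref{prop:DecrBusemann}) is then routine.
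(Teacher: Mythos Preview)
Your reduction to Proposition~\ref{prop:DecrBusemann} is right, and your argument for $\xi\in\bd Y$ is fine. The gap is in the density step: you establish that $d(r(t),Y)$ is bounded and non-increasing, converging to some $\ell\geq 0$, but you never actually prove $\ell=0$. Your final paragraph hand-waves this (``Hausdorff-close to $Y$ at every scale''), but nothing you have written rules out $\ell>0$: a bounded convex function can perfectly well have a positive infimum, and your ``slide $r$ towards $Y$'' manoeuvre only produces rays in $Y$ at $d_\xi$-distance $\ell$ from $r$, not arbitrarily close.

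The missing idea is that the \emph{diameter} of the orbit $K\cdot r(t)$ tends to zero as $t\to\infty$ (not merely stays bounded), from which $d(r(t),Y)\to 0$ follows via the circumcentre. To see this, you must combine two facts you mention separately but never put together: (a)~triviality of $K$ on $X_\xi$ gives $d_\xi(r,kr)=0$, i.e.\ $\inf_{s,t}d(r(s),kr(t))=0$, so the rays $r$ and $kr$ come arbitrarily close at \emph{some} parameters; (b)~compactness of $K$ forces $K\subseteq\Ker(\beta_\xi)$, so $b_\xi(kr(t))=b_\xi(r(t))$, which pins the close approach to \emph{matched} parameters. Concretely: if $\delta_{t_n}\geq\vareps$ along some sequence, extract $k_n\to k\in K$ with $d(r(t_n),k_nr(t_n))\geq\vareps$; convexity gives $\liminf d(r(t_n),kr(t_n))\geq\vareps$; but (a) provides $s_n$ with $d(r(s_n),kr(t_n))\to 0$, and (b) forces $s_n-t_n\to 0$, contradiction. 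This is the step your proposal lacks.
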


\begin{proof}
Since $K$ is compact, its fixed point set $Y$ is non-empty and thus contains a geodesic ray pointing to $\xi$. Hence $\xi \in \bd Y$. 

Let now $r \colon\RR_+\to X$ be any geodesic ray pointing to $\xi$. Let $\delta_t$ denote the diameter of the $K$-orbit of $r(t)$. We claim that $\lim_{t \to \infty} \delta_t =0$. Indeed, there would otherwise exist  a positive constant $\vareps > 0$ and sequences $(k_n) \subset K$ and $(t_n)  \subset \RR_+$ with $t_n \to \infty$ such that $d(r(t_n), k_n r(t_n)) \geq \vareps$ for all $n$. Let $k$ be any limit point of $(k_n)$ in $K$. We have
%
%
$$d(r(t_n), k r(t_n))\geq  d(r(t_n), k_n r(t_n)) - d(k r(t_n), k_n r(t_n))   \geq  \vareps - d(k r(0), k_n r(0)) $$
by convexity of the metric, so that $\liminf d(r(t_n), k r(t_n)) \geq \vareps$. Since $k$ acts trivially on the point $\pi_\xi(r(0))$ in $X_\xi$, there is a sequence $(s_n)$ in $\RR_+$ with $d(r(s_n), k r(t_n))\to0$. In particular, $b_\xi(r(s_n)) - b_\xi(k r(t_n)) \to 0$. Since $K$ is compact, $k$ is in the kernel of the Busemann character $\beta_\xi$ and thus $b_\xi(k r(t_n))= b_\xi (r(t_n))$ for all $n$. Therefore $t_n-s_n\to 0$ and hence $d(r(t_n), k r(t_n))\to0$, contradicting our earlier lower bound. This proves the claim.

Since the circumcenter of any $K$-orbit is fixed by $K$, it follows that $\lim_{t \to \infty} d(r(t), Y)=0$. This confirms that  $\pi_\xi(Y)$ is dense in $X_\xi$. We conclude using Proposition~\ref{prop:DecrBusemann}.
\end{proof}

In the case of cocompact points, we obtain the following uniform version of Lemma~\ref{lem:DecrBusemann:cpt} with a constant $\lambda$ independent of the group $K$.

\begin{lem}\label{lem:UnifDecrBusemann}
Let $(X, G)$ be a locally compact \cat group, and  $\xi \in \bd X$ be cocompact.

Then for each  $\delta >0$, there exists $\lambda = \lambda(\delta)>0$ such that for every compact subgroup $K < G_\xi^\mathrm{u}$  and all $x \in X$ with $d(x, X^K)\geq \delta$, we have
$$ b_\xi(x)  - b_\xi(\proj_{X^K}(x)) \geq  \lambda \, d(x, X^K).$$
\end{lem}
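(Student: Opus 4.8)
**The plan is to upgrade the non-uniform Lemma~\ref{lem:DecrBusemann:cpt} to a uniform estimate by a compactness/contradiction argument, exploiting cocompactness of $G_\xi$.** First I would fix $\delta>0$ and suppose, for contradiction, that no such $\lambda$ works. Then there are compact subgroups $K_n<G_\xi^\mathrm{u}$ and points $x_n\in X$ with $d(x_n,X^{K_n})\geq\delta$ yet
$$b_\xi(x_n)-b_\xi\big(\proj_{X^{K_n}}(x_n)\big)<\tfrac1n\,d(x_n,X^{K_n}).$$
Using Lemma~\ref{lem:DecrBusemann:cpt} the left-hand side is positive, but the ratio tends to $0$. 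The idea is to translate everything back into a bounded region by the cocompact $G_\xi$-action: there is a bounded set $B\subseteq X$ and elements $g_n\in G_\xi$ with $g_nx_n\in B$. Since $g_n\in G_\xi$ fixes $\xi$, it changes $b_\xi$ only by the additive constant $\beta_\xi(g_n)$, so it preserves the quantity $b_\xi(y)-b_\xi(\proj_{X^{K}}(y))$ provided we replace $K$ by $g_nK_ng_n^{-1}$ (still a compact subgroup of $G_\xi^\mathrm{u}$, as $G_\xi^\mathrm{u}$ is normal in $G_\xi$) and $x_n$ by $g_nx_n$. Thus we may assume $x_n\to x_\infty\in X$ after extraction.

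Next I would control the compact groups $K_n':=g_nK_ng_n^{-1}$. They all lie in $G_\xi^\mathrm{u}$. The key point is that $X^{K_n'}$ comes within bounded distance of $x_n$ — indeed the distance $d(x_n, X^{K_n'})$ is bounded above, because $b_\xi(x_n)-b_\xi(\proj_{X^{K_n'}}(x_n))\le |b_\xi| $ is controlled on the bounded set while, by convexity and the fact that $\xi\in\bd X^{K_n'}$, this Busemann drop grows at least linearly in $d(x_n,X^{K_n'})$ once Lemma~\ref{lem:DecrBusemann:cpt} is quantified along the ray $[x_n,\xi)$; alternatively one observes directly that $d(x_n,X^{K_n'})$ being unbounded contradicts the fact that these points stay in a bounded region whose $G$-translates have fixed-point sets of compact subgroups meeting a fixed bounded ball. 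Hence, after extraction, $X^{K_n'}$ (which is a closed convex set containing a ray to $\xi$) Hausdorff-converges on bounded sets to some closed convex set $Y_\infty$ with $\xi\in\bd Y_\infty$, the projections $\proj_{X^{K_n'}}(x_n)$ converge to $\proj_{Y_\infty}(x_\infty)$, and $d(x_\infty, Y_\infty)\geq\delta>0$, so $x_\infty\notin Y_\infty$.

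Then I would pass to the limit in the displayed inequality: continuity of $b_\xi$ gives $b_\xi(x_\infty)-b_\xi(\proj_{Y_\infty}(x_\infty))\leq 0$, i.e. $b_\xi(\proj_{Y_\infty}(x_\infty))\geq b_\xi(x_\infty)$. But $Y_\infty$ is a closed convex set containing a geodesic ray toward $\xi$, and I claim $\pi_\xi(Y_\infty)$ is dense in $X_\xi$: this follows from the same density statement holding uniformly for $X^{K_n'}$ (each $K_n'\le G_\xi^\mathrm{u}$ acts trivially on $X_\xi$, and by Lemma~\ref{lem:DecrBusemann:cpt}'s proof $d(r(t),X^{K_n'})\to0$ along any ray to $\xi$; the point is to get this decay uniformly in $n$, which one obtains because the orbit diameters are controlled by the $G_\xi^\mathrm{u}$-action which, by Theorem~\ref{thm:Levi}, is locally elliptic and amenable — more concretely one uses that $\pi_\xi$ is $1$-Lipschitz and a radial-sequence argument analogous to Proposition~\ref{prop:upper:semi}). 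Granting the density, Proposition~\ref{prop:DecrBusemann} applies to $Y_\infty$ and forces $b_\xi(\proj_{Y_\infty}(x_\infty))<b_\xi(x_\infty)$, the desired contradiction.

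\textbf{The hard part is the uniformity of the transverse density, i.e. showing $\pi_\xi(X^{K_n'})$ is dense with a decay rate independent of $n$, equivalently that the limiting convex set $Y_\infty$ still has dense image in $X_\xi$.} Lemma~\ref{lem:DecrBusemann:cpt} gives this for each fixed $K$, but the proof there uses compactness of $K$ in a way that does not immediately survive passing to a limit of varying $K_n'$. The cleanest route is probably to bypass the limit set $Y_\infty$ and instead quantify Lemma~\ref{lem:DecrBusemann:cpt} directly: show that for a compact $K<G_\xi^\mathrm{u}$ and $x$ with $d(x,X^K)\ge\delta$, the Busemann drop along $[x,\xi)$ forces an explicit lower bound on $b_\xi(x)-b_\xi(\proj_{X^K}(x))$ whose dependence on $K$ enters only through $\sup_{t\le T}\operatorname{diam}(K\!\cdot\! r(t))$ for a single $T=T(\delta)$ — and this supremum is uniformly small over all $K<G_\xi^\mathrm{u}$ because $r(0)$ lies in a fixed bounded set and $G_\xi^\mathrm{u}$ is locally elliptic, so its compact subgroups move bounded sets by bounded amounts that are uniformly controlled along rays toward $\xi$. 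I would organize the write-up around this explicit estimate, using the $4$-point Lemma~\ref{lem:4point} and the asymptotic angle formula Lemma~\ref{lem:asy} exactly as in Proposition~\ref{prop:DecrBusemann}, but keeping track of the error term $\delta_t$ and invoking cocompactness to make it uniform.
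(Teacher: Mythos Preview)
Your overall strategy (contradiction, translate by $G_\xi$, pass to a limit) matches the paper's, but there is a genuine gap, and the paper's proof closes it with a trick you are missing.

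The difficulty you correctly identify --- controlling $d(x_n,X^{K'_n})$ and then showing that the limiting convex set $Y_\infty$ still has dense transverse image --- is not resolved in your proposal. Your argument for boundedness of $d(x_n,X^{K'_n})$ is circular (it presupposes a quantitative version of the very lemma you are proving), and your suggestions for the density of $\pi_\xi(Y_\infty)$ are heuristic. The alternative ``quantify Lemma~\ref{lem:DecrBusemann:cpt} directly'' route does not work as stated either: the orbit diameter $\delta_t$ along a ray is \emph{not} uniformly small over all compact $K<G_\xi^{\mathrm u}$, since $G_\xi^{\mathrm u}$ is typically non-compact.

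The paper avoids both problems with one move: before translating, replace $x_n$ by the point $x'_n$ on the segment $[x_n,y_n]$ at distance exactly $\delta$ from $y_n:=\proj_{X^{K_n}}(x_n)$. Then $\proj_{X^{K_n}}(x'_n)=y_n$ still, and convexity of $b_\xi$ gives $b_\xi(x'_n)<b_\xi(y_n)+\lambda_n\delta$. Now translate $x'_n$ (not $x_n$) into a bounded set by $g_n\in G_\xi$; since $d(x'_n,y_n)=\delta$, the projections $g_n y_n$ are automatically bounded too. Extract so that $g_nx'_n\to x'$, $g_ny_n\to y$, and $g_nK_ng_n^{-1}\to K$ in the Chabauty topology. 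Because $K$ fixes $y$, it is compact, and one checks directly (via a short Chabauty argument) that $y=\proj_{X^K}(x')$. Then Lemma~\ref{lem:DecrBusemann:cpt} applies to the single compact group $K$ and gives $b_\xi(y)<b_\xi(x')$, contradicting $b_\xi(x')=b_\xi(y)$.

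So the missing ideas are: (1) normalise to distance $\delta$ \emph{before} using cocompactness, which makes both the point and its projection bounded for free; and (2) take the limit of the \emph{groups} (Chabauty), not of the fixed-point sets, so that the limit object is again $X^K$ for a compact $K<G_\xi^{\mathrm u}$ and Lemma~\ref{lem:DecrBusemann:cpt} applies verbatim.
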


\begin{proof}
Suppose the contrary. Then there exist $\delta>0$, a sequence $\{\lambda_n\}$ of positive real numbers with $\lim_n \lambda_n =0$, a sequence of compact subgroups $K_n \leq G_\xi^\mathrm{u}$ and a sequence $\{x_n\}$ of points of $X$ with $d(x_n, X^{K_n}) \geq \delta$ satisfying
$$b_\xi(\proj_{{X^{K_n}}}(x_n)) > b_\xi(x_n) -  \lambda_n \, d(x_n, {X^{K_n}}) \kern5mm \forall\,n.$$
Set $y_n = \proj_{{X^{K_n}}}(x_n)$. 

Let $x'_n$ now be the point of $[x_n, y_n]$ at distance $\delta$ of $y_n$. Then the projection $\proj_{{X^{K_n}}}(x'_n)$ is still $y_n$, see~\cite[II.2.4(2)]{Bridson-Haefliger}. The convexity of $b_\xi$ implies
\begin{align*}
b_\xi(x'_n) &\leq \frac{d(x_n, {y_n}) - \delta}{d(x_n, {y_n})} b_\xi(y_n) + \frac{\delta}{d(x_n, {y_n})} b_\xi(x_n)\\
&< \frac{d(x_n, {y_n}) - \delta}{d(x_n, {y_n})} b_\xi(y_n) + \frac{\delta}{d(x_n, {y_n})} \Big(b_\xi(y_n) + \lambda_n d(x_n, {y_n})\Big) = b_\xi(y_n) + \lambda_n\delta.
\end{align*}

\smallskip

Using the cocompactness of the $G_\xi$-action, we find a sequence $\{g_n\}$ of elements of $G_\xi$ such that $\{g_n.x'_n\}$ is bounded. Hence $\{g_n.y_n\}$ remains bounded as well. Upon extracting, we may then assume that 
$g_n.x'_n$ and $g_n. y_n$ converge respectively  to some $x'$ and $y \in X$, and that moreover $g_n K_n g_n\inv$ converges in the Chabauty topology to some closed subgroup $K \leq  G_\xi^\mathrm{u}$,  which must be compact since it fixes $y$.

We have $b_\xi(g_nx'_n) - b_\xi(g_ny_n) = b_\xi(x'_n) - b_\xi(y_n)>0$ for all $n$ by Lemma~\ref{lem:DecrBusemann:cpt}. Therefore   $b_\xi(x') = b_\xi(y)$. We claim that $y = \proj_{{X^K}} (x')$. This will yield a contradiction with Lemma~\ref{lem:DecrBusemann:cpt} and hence finish the proof.

We need to prove that for all $p \in {X^K}$, we have $d(x,p ) \geq d(x, y) = \delta$. Suppose for a contradiction that there is some $p \in {X^K}$ with $d(x,p) < d(x, y)$. Set $p_n = g_n\inv.p$. Then there is $\vareps>0$ such that $d(x_n, p_n) \leq \delta-\vareps$ for all sufficiently large $n$; in particular $d(p_n, X^{K_n}) \geq \vareps$ for all large $n$. Since the circumcentre of the orbit $K_n.p_n$ is $K_n$-fixed, it follows that the diameter of $K_n.p_n$ is at least~$\vareps$. Therefore there is $k_n\in K_n$ with $d(k_n p_n, p_n)\geq \vareps$. The sequence $\{g_n k_n g_n\inv \}$ is bounded since by construction $g_n k_n g_n\inv y\to y$. Therefore it has an accumulation point $k$. Now $k$ belongs to $K$ but $d(k p, p)\geq \vareps$, a contradiction.
\end{proof}

We are now able to complete the proof of Proposition~\ref{prop:TitsNb}. 

\begin{proof}[Proof of Proposition~\ref{prop:TitsNb}]
By Theorem~\ref{thm:Levi}, the group $G_\xi^\mathrm{u}$ is locally elliptic; hence its compact subgroups form a directed system $\sK$ whose union is $G_\xi^\mathrm{u}$. It follows that the family of fixed-point-sets $\mathscr Y = \{X^K\}_{K \in \sK}$ is filtering. 

If $\bigcap \mathscr Y$ is non-empty, then  $G_\xi^\mathrm{u}$ fixes points of $X$ and is thus compact. Hence $G_\xi^\mathrm{u}$ acts trivially on any minimal closed convex $G_\xi$-invariant subset of $X$; in particular $G_\xi^\mathrm{u}$ acts trivially on $\bd X$. The desired assertion is thus vacuous in that case. We assume henceforth that $\bigcap \mathscr Y = \varnothing$.

Remark that the fixed point set $(\bd X)^{G_\xi^\mathrm{u}}$ is  the intersection of the filtering family  of closed $\pi$-convex sets $\bd X^K$, $K\in\sK$. We use the simple notation $\bd X^K$ since $\bd(X^K) = (\bd X)^K$ by compactness of $K$.

Therefore the desired conclusion follows from Proposition~\ref{prop:ControlProj}, whose hypothesis is satisfied in view of 
Lemma~\ref{lem:UnifDecrBusemann}.
\end{proof}

\subsection{Finite Weyl groups}
The following  consequence of Bieberbach's Theorem will be used frequently in the sequel. 

\begin{lem}\label{lem:Bieberbach}\index{Weyl group!is finite}
Let $X$ be a proper \cat space and $G \leq \Isom(X)$ be a closed totally disconnected subgroup. For any fully maximal sphere $S$, the group $W= \Stab_G(S)/\Fix_G(S)$ is finite.
\end{lem}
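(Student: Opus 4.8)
\textbf{Proof strategy for Lemma~\ref{lem:Bieberbach}.}

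The plan is to exhibit $W = \Stab_G(S)/\Fix_G(S)$ as a discrete group of isometries of the canonical fully maximal flat bounded by $S$, and then to apply the Bieberbach theorem (the finiteness of point groups of crystallographic groups, equivalently the fact that a discrete group of isometries of $\RR^{d+1}$ has finite image in the orthogonal group whenever it acts cocompactly, combined here with compactness of a sphere quotient).

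First I would invoke Proposition~\ref{prop:Leeb}: the fully maximal sphere $S$ bounds a \emph{canonical} fully maximal flat $F \cong \RR^{d+1}$, where $d = \dim(\bd X)$; the union of all fully maximal flats bounded by $S$ splits as $\RR^{d+1}\times Y$ with $Y$ bounded, and $F$ is the slice through the circumcentre of $Y$. Because this flat is canonical, the stabiliser $\Stab_G(S)$ preserves $F$ (it preserves $S$, hence the whole product region $\RR^{d+1}\times Y$, hence the circumcentre of $Y$, hence $F$). Thus we get a homomorphism $\Stab_G(S) \to \Isom(F) = \Isom(\RR^{d+1})$, and I would identify its kernel with $\Fix_G(S)$: an element fixing $S$ pointwise fixes $\bd F = S$ pointwise, hence acts on $F\cong\RR^{d+1}$ as a translation that fixes every point of $\bd F$, which forces it to be trivial on $F$; conversely an element acting trivially on $F$ fixes $\bd F = S$ pointwise. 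So $W$ embeds in $\Isom(\RR^{d+1})$.

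Next I would show this copy of $W$ in $\Isom(\RR^{d+1})$ is discrete. Here is where totally disconnectedness and properness enter. The group $G$ is a closed totally disconnected subgroup of the locally compact group $\Isom(X)$ acting properly on $X$; hence $G$ is locally elliptic-free in the sense that... more concretely, by van Dantzig $G$ has a compact open subgroup, and $\Stab_G(S)$ is closed in $G$, acting properly on $F$ via the above homomorphism (properness of the $G$-action on $X$ restricts to properness on the closed convex subset $\RR^{d+1}\times Y$, and projecting to the bounded-diameter-free factor $F$ keeps it proper). A totally disconnected locally compact group acting properly on $\RR^{d+1}$ by isometries has the kernel of its linear part open, and more to the point the image $W$ is the quotient by an \emph{open} (because of total disconnectedness and properness, $\Fix_G(S)$ is open in $\Stab_G(S)$: a compact open subgroup of $\Stab_G(S)$ small enough has all its elements with linear part close to the identity, and a torsion-by-compact argument via Bieberbach/Jordan forces that neighbourhood to fix $S$ pointwise) subgroup, so $W$ is discrete.

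Finally, a discrete subgroup $W \le \Isom(\RR^{d+1})$ whose action on the boundary sphere $S$ has compact quotient must be finite. Concretely: the linear part $\bar W \le O(d+1)$ is the image of $W$ under $\Isom(\RR^{d+1})\to O(d+1)$, and $W$ acts on $S = \bd F$ precisely through $\bar W$. The quotient of $O(d+1)$ by a discrete subgroup is compact only if that subgroup is finite, and indeed $\bar W$ is discrete (this is the Bieberbach input: the translation subgroup of a discrete group of Euclidean isometries has finite index is the dual statement; here one argues that $\bar W$ is discrete because $W$ is discrete and $\Stab_G(S)$ normalises the lattice of translations coming from the cocompact $G$-action on $X$ restricted to $F$, forcing $\bar W$ into the finite automorphism group of that lattice). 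Hence $\bar W$, and therefore $W$, is finite.

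\textbf{Main obstacle.} The delicate point is establishing discreteness of $W$ — equivalently, that $\Fix_G(S)$ is open in $\Stab_G(S)$. One expects this to follow from total disconnectedness together with a Bieberbach/Jordan-type rigidity: a sufficiently small compact open subgroup of $\Stab_G(S)$ must act on the flat $F$ with linear parts in a small neighbourhood of $1$ in $O(d+1)$, and the classical bound on finite (or compact) subgroups of $O(d+1)$ near the identity being trivial then forces these elements to have trivial linear part, hence to be translations; being also in a compact subgroup they lie in $\Ker\beta$ and are elliptic, hence trivial on $F$, hence fix $S$. Making this airtight requires carefully tracking how the compact-open-subgroup structure of $G$ interacts with the $G$-action on the lattice of periods of the cocompact action on $F$. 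This is precisely why the statement is phrased for totally disconnected $G$, and the phrase ``consequence of Bieberbach's Theorem'' in the lemma signals that this rigidity is the crux.
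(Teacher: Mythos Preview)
Your overall architecture is right---pass to the canonical flat $F$, map $\Stab_G(S)$ into $\Isom(F)\cong\Isom(\RR^{d+1})$, use total disconnectedness to get discreteness, then Bieberbach---but there is one concrete error and one substantial overcomplication.

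The error is your kernel identification. You claim that an element fixing $S$ pointwise ``acts on $F$ as a translation that fixes every point of $\bd F$, which forces it to be trivial on $F$''. This is false: \emph{every} translation of $\RR^{d+1}$ fixes $\bd F$ pointwise, trivially or not. So the kernel of $\Stab_G(S)\to\Isom(F)$ is \emph{not} $\Fix_G(S)$; rather, $\Fix_G(S)$ is exactly the preimage of the translation subgroup. Equivalently, $W$ embeds in $O(d+1)$, not in $\Isom(\RR^{d+1})$. This doesn't break the argument (you later work with the linear part $\bar W$ anyway), but it should be stated correctly.

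The overcomplication is your ``main obstacle''. You spend most of the proposal worrying about why $\Fix_G(S)$ is open in $\Stab_G(S)$, invoking Jordan-type neighbourhood arguments and lattice automorphisms. The paper's route is a single sentence: $\Stab_G(S)$ is closed in $\Isom(X)$, hence acts properly on $X$ and on $F$, so its image in $\Isom(F)$ is closed; and any closed totally disconnected subgroup of a Lie group is discrete (a Lie group has no small subgroups, so the compact open subgroups of a t.d.\ closed subgroup near the identity must be trivial). Once the image is discrete, the Bieberbach theorem (in the form: any discrete subgroup of $\Isom(\RR^n)$ has a finite-index subgroup of translations, no cocompactness needed---see the reference to Oliver in the paper) gives a finite-index subgroup acting by translations on $F$, hence fixing $S=\bd F$ pointwise, hence contained in $\Fix_G(S)$. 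Done.

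So: fix the kernel claim, and replace your discreteness discussion with ``closed t.d.\ in a Lie group is discrete''.
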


\begin{proof}
Proposition~\ref{prop:Leeb} implies that  $\Stab_G(S)$ stabilises a fully maximal flat $F$ with $\bd F =S$. Since $\Stab_G(S)$ is closed in $\Isom(X)$, it acts properly on $X$, hence on $F$, so that its image in $\Isom(F)$ is closed. Any closed totally disconnected subgroup of the Lie group $\Isom(F)$ is discrete. Therefore Bieberbach's Theorem ensures that $\Stab_G(S)$ has a finite index subgroup acting by translations on $F$ (the specific version of Bieberbach's Theorem used here is given, for instance, as Theorem~2 in~\cite{Oliver}). That subgroup is thus contained in $\Fix_G(S)$, and the result follows.
\end{proof}

\subsection{Regular points at infinity}

We consider the boundary as a \catun space in its Tits distance. We shall introduce the following notion motivated by the case of buildings: a point at infinity $\xi \in \bd X$ will be called \textbf{regular}\index{regular point}\index{boundary point!regular}\index{point at infinity!regular} if the Tits boundary $\bd X_\xi$ of its transverse space is a round sphere. By Proposition~\ref{prop:Leeb}, this is equivalent to requiring that the space $X_\xi$   contains a flat $F$ with $\bd F = \bd X_\xi$.  By convention, a point $\xi \in \bd X$ such that $X_\xi$ is bounded (and hence $\bd X_\xi$ is empty) will be considered regular as well.

We first record the following basic fact. 

\begin{lem}\label{lem:regular:amen_stab}
Let $\xi \in \bd X$ be cocompact and regular. Then  the stabiliser $G_\xi$ is amenable.
\end{lem}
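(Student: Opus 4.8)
The plan is to reduce, via the Levi decomposition, to the geometry of $P(\xi,\xi')$, and then to exploit the fact that a \cat space with round spherical Tits boundary is flat up to a bounded factor. First I would fix $\xi'\in\Op(\xi)$ obtained as a limit point of a radial sequence in $G_\xi$ (which exists trivially since $\xi$ is cocompact), so that Theorem~\ref{thm:Levi} gives $G_\xi=G_{\xi,\xi'}\cdot G_\xi^\mathrm{u}$ with $G_\xi^\mathrm{u}$ a closed normal amenable subgroup. Then $G_\xi/G_\xi^\mathrm{u}$ is a continuous homomorphic image of $G_{\xi,\xi'}$, so that $G_\xi$ is amenable as soon as $G_{\xi,\xi'}$ is (amenability being stable under extensions). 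By Proposition~\ref{prop:lifting} the Levi factor $G_{\xi,\xi'}$ acts cocompactly on $P(\xi,\xi')\cong\RR\times X_\xi$, and this action is proper because $G_{\xi,\xi'}$ is closed in $\Isom(X)$ and $P(\xi,\xi')$ is closed and convex in $X$. Since $\xi$ is regular, $\bd X_\xi$ is a round sphere or empty, whence $\bd P(\xi,\xi')=\bd\RR\circ\bd X_\xi$ is a round sphere. Thus $(P(\xi,\xi'),G_{\xi,\xi'})$ is a locally compact \cat group whose Tits boundary is a round sphere.

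It then remains to prove the following general statement, which I would isolate: if $(W,H)$ is a locally compact \cat group with $\bd W$ a round sphere, then $H$ is amenable. To see this, choose a minimal nonempty $H$-invariant closed convex subset $W_0\se W$; by cocompactness $\bd W_0=\bd W$, which is a fully maximal sphere of $W_0$. Proposition~\ref{prop:Leeb} then exhibits the union of all fully maximal flats of $W_0$ bounded by $\bd W_0$ as a canonical — hence $H$-invariant — closed convex subset isometric to $\RR^k\times B$ with $B$ bounded; minimality forces it to be all of $W_0$, so $W_0\cong\RR^k\times B$ with $B$ a compact \cat space (bounded and proper). The de Rham decomposition then yields $\Isom(W_0)=\Isom(\RR^k)\times\Isom(B)$, which is amenable since $\Isom(\RR^k)=\RR^k\rtimes O(k)$ is amenable and $\Isom(B)$ is compact. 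As $H$ acts properly on $W_0$ and the kernel of $H\to\Isom(W_0)$ fixes a point of $W_0$ and is therefore compact, $H$ modulo a compact normal subgroup is a closed subgroup of the amenable locally compact group $\Isom(W_0)$, hence $H$ is amenable. Applying this with $(W,H)=(P(\xi,\xi'),G_{\xi,\xi'})$ gives the amenability of $G_{\xi,\xi'}$, and the reduction of the first paragraph then concludes.

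The heart of the matter is conceptual rather than computational: it lies in noticing that regularity of $\xi$ turns $\bd P(\xi,\xi')$ into a round sphere, so that Leeb's reconstruction of flats (Proposition~\ref{prop:Leeb}) together with the Euclidean de Rham splitting become available. Once that observation is in hand, the amenability of $G_\xi$ follows from the usual stability of amenability under extensions by compact, respectively amenable, kernels. The points needing a modicum of care are the identification of the minimal $H$-invariant subspace with the canonical product $\RR^k\times B$ and the degenerate case in which $X_\xi$ is bounded, so that $\bd P(\xi,\xi')$ is merely a $0$-sphere.
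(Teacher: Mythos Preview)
Your proof is correct and follows essentially the same route as the paper: Levi decomposition $G_\xi = G_{\xi,\xi'}\cdot G_\xi^{\mathrm u}$ with $G_\xi^{\mathrm u}$ amenable, then amenability of $G_{\xi,\xi'}$ via Proposition~\ref{prop:Leeb} applied to $P(\xi,\xi')$, whose Tits boundary is a round sphere by regularity. The paper is slightly more direct at the last step: rather than passing to a minimal invariant subspace and the full product decomposition $\RR^k\times B$, it simply observes that $G_{\xi,\xi'}$ stabilises the \emph{canonical} flat $F$ afforded by Proposition~\ref{prop:Leeb}, acts properly on it (being closed in $G$), and hence is amenable as a (compact extension of a) closed subgroup of $\Isom(\RR^{k})$. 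Your detour through the auxiliary statement about $(W,H)$ is perfectly valid and makes the argument self-contained, but it is not needed once the canonical flat is in hand.
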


\begin{proof}
Pick a point $\xi' \in \Op(\xi)$ and apply Theorem~\ref{thm:Levi} to $G_\xi$. This yields a decomposition $G_\xi = G_{\xi, \xi'}   G_\xi^\mathrm{u}$, where $G_\xi^\mathrm{u}$ is amenable and $G_{\xi, \xi'}$ acts cocompactly on $P(\xi, \xi')$. In view of Proposition~\ref{prop:Leeb}, the regularity of $\xi$ ensures that $G_{\xi, \xi'}$ stabilizes a flat $F \subset P(\xi, \xi')$, on which it acts properly since $G_{\xi, \xi'}$ is closed in $G$. The amenability of $G_{\xi, \xi'}$, and hence that of $G_\xi$, follows. 
\end{proof}

Regular points may   be identified using the following criterion. 

\begin{lem}\label{lem:regular:criterion}
Let $\xi \in \bd X$ be cocompact. For any $\xi' \in \Op(\xi)$, the following assertions are equivalent.
\begin{enumerate}[(i)]
\item $\xi$ is regular. 

\item $\bd P(\xi, \xi')$ is a round sphere. 

\item $\bd P(\xi, \xi')$ is a fully maximal sphere. 

\item  $\Link(\xi, \xi')$ is a round sphere. 
\end{enumerate}
\end{lem}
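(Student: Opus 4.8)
The plan is to reduce all four conditions to a single statement about the transverse space $X_\xi$, using the product decomposition $P(\xi,\xi')\cong\RR\times X_\xi$ from Proposition~\ref{prop:lifting} together with the identifications of Lemma~\ref{lem:P=Pi}. Concretely, from $P(\xi,\xi')\cong\RR\times X_\xi$ one obtains $\bd P(\xi,\xi')\cong\bd\RR\circ\bd X_\xi=\{\xi,\xi'\}\circ\bd X_\xi$, where $\{\xi,\xi'\}$ is a $0$-dimensional round sphere (the two points lie at Tits distance exactly $\pi$ inside $\bd P(\xi,\xi')$), and by Lemma~\ref{lem:P=Pi} this is precisely the canonical join decomposition $\bd P(\xi,\xi')=\Pi(\xi,\xi')=\{\xi,\xi'\}\circ\Link(\xi,\xi')$, so that $\Link(\xi,\xi')$ is canonically isometric to $\bd X_\xi$. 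Since by definition $\xi$ is regular exactly when $\bd X_\xi$ is a round sphere (with the convention, consistent with the one adopted for regularity, that the empty set is a degenerate round sphere), the equivalence (i)$\Leftrightarrow$(iv) is then immediate.

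For (iv)$\Rightarrow$(ii) I would invoke the standard fact that the spherical join of two round spheres is again a round sphere (of dimension the sum plus one): if $\Link(\xi,\xi')$ is a round sphere then $\bd P(\xi,\xi')=\{\xi,\xi'\}\circ\Link(\xi,\xi')$ is its suspension, hence a round sphere. For the converse (ii)$\Rightarrow$(iv), suppose $S:=\bd P(\xi,\xi')$ is a round sphere; since $\xi$ and $\xi'$ are opposite they are antipodal, and being antipodal points of the round sphere $S$ they form a genuine antipodal pair of $S$. By Lemmas~\ref{lem:Links} and~\ref{lem:P=Pi} we have $\Link(\xi,\xi')\subseteq\Pi(\xi,\xi')=S$, and (using that $S$ is $\pi$-convex in $\bd X$, so Tits distances $<\pi$ are intrinsic to $S$) inside $S$ the link is exactly the set of points at distance $\pi/2$ from both $\xi$ and $\xi'$, i.e.\ the equator of $S$ determined by this antipodal pair, a round sphere of dimension $\dim S-1$.

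The remaining equivalence (ii)$\Leftrightarrow$(iii) is where the cocompactness of $\xi$ enters, and this is the step I expect to require the most care. The implication (iii)$\Rightarrow$(ii) is trivial, so the point is (ii)$\Rightarrow$(iii), for which it suffices to show that $\dim\bd P(\xi,\xi')=\dim\bd X$ for every $\xi'\in\Op(\xi)$. On the one hand $\Pi(\xi,\xi')=\{\xi,\xi'\}\circ\Link(\xi,\xi')$ is a closed convex subset of $\bd X$, so by dimension monotonicity and additivity of geometric dimension under spherical joins, $1+\dim\bd X_\xi=1+\dim\Link(\xi,\xi')=\dim\Pi(\xi,\xi')\le\dim\bd X$, hence $\dim\bd P(\xi,\xi')=1+\dim\bd X_\xi\le\dim\bd X$. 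On the other hand, $\xi$ cocompact means by Proposition~\ref{prop:MaxFlat} that $\xi$ lies in a fully maximal sphere $S_0$, which by Proposition~\ref{prop:Leeb} bounds a fully maximal flat $F$; taking $\xi''$ to be the antipode of $\xi$ in $S_0$, the line through a point of $F$ in the $\xi$--$\xi''$ direction shows $\xi''\in\Op(\xi)$ and $F\subseteq P(\xi,\xi'')$, whence $\dim\bd X=\dim\bd F\le\dim\bd P(\xi,\xi'')=1+\dim\bd X_\xi$. As $\dim\bd X_\xi$ depends only on $\xi$, the two inequalities give $\dim\bd X_\xi=\dim\bd X-1$ and therefore $\dim\bd P(\xi,\xi')=\dim\bd X$ for every $\xi'\in\Op(\xi)$; thus a round-sphere $\bd P(\xi,\xi')$ is automatically fully maximal. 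The main obstacle is really bookkeeping: making the degenerate cases (where $X_\xi$ is bounded, $\bd X_\xi=\Link(\xi,\xi')=\varnothing$, and $\bd P(\xi,\xi')$ is a $0$-sphere) fit the "round sphere'' convention uniformly across all four statements, and citing the correct facts about geometric dimension and spherical joins of \catun spaces.
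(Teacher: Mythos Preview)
Your proof is correct and follows essentially the same path as the paper's: both use Proposition~\ref{prop:lifting} and Lemma~\ref{lem:P=Pi} to identify $\bd P(\xi,\xi')\cong\{\xi,\xi'\}\circ\bd X_\xi$ and $\Link(\xi,\xi')\cong\bd X_\xi$, which handles (i)$\Leftrightarrow$(ii)$\Leftrightarrow$(iv) at once.

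The one place you diverge is in (ii)$\Rightarrow$(iii). The paper argues that $\bd P(\xi,\xi')$ always \emph{contains} a fully maximal sphere: starting from the $\xi''\in\Op(\xi)$ lying in a fully maximal sphere through $\xi$ (Propositions~\ref{prop:MaxFlat} and~\ref{prop:Leeb}), it invokes the transitivity of $G_\xi$ on $\Op(\xi)$ (Corollary~\ref{cor:transitive}) to transport that sphere into $\bd P(\xi,\xi')$. You instead exploit that $X_\xi$ depends only on $\xi$, so $\dim\bd P(\xi,\xi')=1+\dim\bd X_\xi$ is the same for every $\xi'\in\Op(\xi)$; computing it once at $\xi''$ gives $\dim\bd X$. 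Your route is marginally more elementary in that it avoids Corollary~\ref{cor:transitive} (and hence the Levi decomposition behind it), while the paper's route yields the slightly stronger intermediate fact that $\bd P(\xi,\xi')$ contains a fully maximal sphere even when $\xi$ is not regular.
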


\begin{proof}
The equivalence between (i) and (ii) follows from Proposition~\ref{prop:lifting}. 

By Proposition~\ref{prop:MaxFlat}, any cocompact point   $\xi$ is contained in a fully maximal sphere, which contains some $\xi'' \in \Op(\xi)$ by Proposition~\ref{prop:Leeb}. Since  $G_\xi$ is transitive on $\Op(\xi)$ by Corollary~\ref{cor:transitive}, it follows that $\bd P(\xi, \xi')$ contains a fully maximal sphere. The equivalence between (ii) and (iii) follows

By Lemmas~\ref{lem:Links} and~\ref{lem:P=Pi}, there is a canonical join decomposition $\bd P(\xi, \xi') \cong  \{\xi, \xi'\} \circ  \Link(\xi, \xi')$. 
The equivalence between (ii) and (iv) follows. 
\end{proof}

A crucial step in the proof of Theorem~\ref{thm:NotGoedComplete} is to show the existence of  points in $\bd X$ that are regular and cocompact. This is provided by the following.

\begin{prop}\label{prop:CregNonempty}
Let $(X, A)$ be a locally compact  \cat group. Assume that $A$ is amenable and totally disconnected. Then the set 
$$\Creg = \{\xi \in \bd X \mid \xi \text{ is regular and cocompact}\} \index{$Creg$@$\Creg$}$$
is non-empty.
\end{prop}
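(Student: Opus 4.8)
# Proof proposal for Proposition~\ref{prop:CregNonempty}

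The plan is to induct on $\dim(\bd X)$, using the transverse-space construction to reduce the dimension. The base case is $\dim(\bd X) = 0$: then $X$ is Gromov hyperbolic, every point $\xi \in \bd X$ has $X_\xi$ bounded (transverse spaces to hyperbolic spaces are bounded), so every point is regular; and since $A$ acts cocompactly, its orbits on the compact space $\bd X$ accumulate, producing a cocompact point via Lemma~\ref{lem:CocptOrbit} together with the standard fact that a cocompact group action on a proper hyperbolic space has a closed orbit in the boundary (e.g.\ a point fixed by a loxodromic element, or any point in a minimal closed invariant set). So $\Creg \neq \varnothing$.

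For the inductive step, assume $d = \dim(\bd X) \geq 1$ and the result for all smaller dimensions. First I would produce \emph{some} cocompact point $\xi_0 \in \bd X$. This is where amenability enters: by a compactness argument, $\Isom(X)$ acting cocompactly means $A$ has a closed orbit on the compact boundary, or one can invoke that an amenable group acting on the compact space $\bd X$ (with the c\^one topology) fixes a probability measure, whose support or barycentre-type construction yields a boundary point with cocompact stabiliser; more directly, since $A$ is amenable and acts cocompactly, a minimal closed $A$-invariant subset of $\bd X$ exists and its points are cocompact by Lemma~\ref{lem:CocptOrbit}. Granting a cocompact $\xi_0$, pick $\xi_0' \in \Op(\xi_0)$ (non-empty by Corollary~\ref{cor:transitive}). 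By Proposition~\ref{prop:lifting} we have $P(\xi_0, \xi_0') \cong \RR \times X_{\xi_0}$ with $G_{\xi_0,\xi_0'}$ acting cocompactly on it, and the Levi factor $A_{\xi_0,\xi_0'}$ (intersect with $A$) acts cocompactly on $X_{\xi_0}$. Crucially, $A_{\xi_0,\xi_0'}$ is still amenable (subgroup of amenable $A$) and totally disconnected (subgroup of $A$). Also $\dim(\bd X_{\xi_0}) \leq \dim(\bd X) - 1 < d$: the transverse boundary sits inside $\Link(\xi_0,\xi_0')$, which has dimension $\leq d-1$. So the pair $(X_{\xi_0}, \overline{A_{\xi_0,\xi_0'}})$ is a locally compact amenable totally disconnected \cat group of strictly smaller boundary dimension.

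By induction, $X_{\xi_0}$ contains a point $\zeta$ that is regular and cocompact relative to $A_{\xi_0,\xi_0'}$. The heart of the argument, and the step I expect to be the main obstacle, is to \emph{lift} $\zeta$ to a regular cocompact point of $\bd X$. Using the product decomposition $P(\xi_0,\xi_0') \cong \RR \times X_{\xi_0}$, the point $\zeta \in \bd X_{\xi_0} \subset \Link(\xi_0,\xi_0')$ corresponds, via Lemma~\ref{lem:P=Pi}, to a point $\eta \in \bd P(\xi_0,\xi_0') \subset \bd X$ with $\tangle{\xi_0}{\eta} = \pi/2$. I would argue that $\eta$ is regular: its transverse space $X_\eta$ should, by Proposition~\ref{prop:lifting} applied inside $P(\xi_0,\xi_0')$ together with the splitting, contain a copy of $\RR \times (X_{\xi_0})_\zeta$, and regularity of $\zeta$ (so $(X_{\xi_0})_\zeta$ contains a fully maximal flat of $X_{\xi_0}$) forces $X_\eta$ to contain a flat of full dimension $d$, i.e.\ $\eta$ is regular by Proposition~\ref{prop:Leeb} and Lemma~\ref{lem:regular:criterion}. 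For cocompactness of $\eta$ relative to $A$: the stabiliser in $A_{\xi_0,\xi_0'}$ of $\zeta$ acts cocompactly on $X_{\xi_0}$, hence on $P(\xi_0,\xi_0') \cong \RR\times X_{\xi_0}$ (using the $\RR$-translation from the Busemann character, exactly as in Proposition~\ref{prop:lifting}), and then one must upgrade cocompactness on $P(\xi_0,\xi_0')$ to cocompactness on $X$. This last upgrade is the delicate point: it should follow because $A_{\eta}$ contains $A_{\xi_0,\xi_0',\eta}$, and one can run the Levi machinery at $\eta$ — $\eta$ is opposite to some point (it lies on a line, being in $\bd P(\xi_0,\xi_0')$), and the Levi factor at $\eta$ together with $A_\eta^{\mathrm u}$ reconstructs enough of $A_\eta$ to see that a cocompact-on-$P$ subgroup is cocompact on $X$. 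Concretely I would identify $\eta' \in \Op(\eta)$ inside $\bd P(\xi_0,\xi_0')$ (the antipode of $\zeta$ inside the relevant sphere, lifted), observe $X_\eta \cong X_{\eta'}$ and that the Levi factor $A_{\eta,\eta'}$ acts cocompactly on $P(\eta,\eta')$ by Proposition~\ref{prop:lifting} — but for that I need $\eta$ to already be cocompact, which is circular. The way out is to instead observe directly that the subgroup $H := A_{\xi_0} \cap A_{\xi_0'} \cap (\text{stabiliser of } \zeta)$ acts cocompactly on $P(\xi_0,\xi_0')$; then since $\eta \in \bd P(\xi_0,\xi_0')$ and $P(\xi_0,\xi_0')$ is convex with $\bd P(\xi_0,\xi_0')$ a genuine sub-building-like piece, and since cocompactness on a convex subset whose boundary contains $\eta$ passes to... — this needs care, and I would likely instead argue that $H$ fixes $\eta$ and that the orbit $H.x_0$ for $x_0 \in P(\xi_0,\xi_0')$ is cocompact in $X$ because $P(\xi_0,\xi_0')$ meets every $A$-orbit (as $A_{\xi_0,\xi_0'}$ is cocompact on it and $A_{\xi_0}$-translates cover a cocompact piece) — making $H$ itself act cocompactly on $X$, whence $\eta$ is a cocompact point of $\bd X$. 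Thus $\eta \in \Creg$ and the induction closes. The genuinely hard bookkeeping is exactly this propagation of cocompactness from the transverse space back to $X$, and I expect the clean route is to phrase it via $G_\eta^{\mathrm u}$ and Theorem~\ref{thm:Levi} rather than by hand.
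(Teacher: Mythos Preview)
Your inductive strategy via the transverse space is natural, but it diverges sharply from the paper's route and leaves a genuine gap that you yourself flag.

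The paper's proof is a two-line application of the Adams--Ballmann dichotomy together with Proposition~\ref{prop:regular:2}: if $(\bd X)^A=\varnothing$ then $A$ stabilises a flat $F$ with $\bd F=\bd X$, and every boundary point is regular and cocompact by Lemma~\ref{lem:Bieberbach}; otherwise any \emph{geometrically inner} point of $(\bd X)^A$ is regular by Proposition~\ref{prop:regular:2}, whose proof is where the real work lies (it uses the projection machinery of Section~\ref{sec:proj}, specifically Corollary~\ref{cor:proj}, combined with the circumcentre of $(\bd X_\xi)^A$ and a spherical Pythagoras argument). No lifting from $X_{\xi_0}$ back to $X$ is ever needed, because the candidate point is taken directly in $(\bd X)^A\subset\bd X$ from the start and is therefore cocompact for free.

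Your proposal has two concrete problems. First, your production of an initial cocompact point is not justified: points of a minimal closed $A$-invariant subset of $\bd X$ need \emph{not} have closed orbits (think of an irrational rotation on a circle), so Lemma~\ref{lem:CocptOrbit} does not apply. The correct move here is precisely the Adams--Ballmann dichotomy, which you do not invoke. Second, and more seriously, the lifting step is circular in exactly the way you suspect. To show $\eta$ is cocompact you want to use Proposition~\ref{prop:lifting} or Theorem~\ref{thm:Levi} at $\eta$, but both require $\eta$ to be cocompact already. The alternative you sketch --- that the group $H=(A_{\xi_0,\xi_0'})_\zeta$ is cocompact on $X$ --- fails: $H$ is cocompact on $P(\xi_0,\xi_0')$ only if $\beta_{\xi_0}(H)$ is cocompact in $\RR$, which is not automatic, and in any case cocompactness on the convex subset $P(\xi_0,\xi_0')$ does not propagate to $X$. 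Note also that $A_{\xi_0}^{\mathrm u}$ does \emph{not} fix $\eta$ (it moves $\xi_0'$ and hence the embedding $\bd X_{\xi_0}\hookrightarrow\Link(\xi_0,\xi_0')$), so you cannot enlarge $H$ that way. Finally, your regularity argument for $\eta$ relies on Lemma~\ref{lem:regular:criterion}, which also presupposes cocompactness of $\eta$. The paper circumvents all of this by never leaving $(\bd X)^A$.
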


This will be deduced from the following result, whose statement requires an additional point of terminology. Following A.~Lytchak and V.~Schroeder~\cite{LytchakSchroeder}, we say that a point $z$ in a \catca space $Z$ is \textbf{geometrically inner}\index{geometrically inner} if there is some $\vareps >0$ such that for all $y \in Z$ with $0<d(y, z)\leq \vareps$, there is some $y' \in Z$ with $d(y', z) = \vareps$ such that $z$ lies on the geodesic segment joining $y$ to $y'$ (one has to take $\vareps$ suitably small when $\kappa>0$). An important fact (see Theorem~1.5 in \emph{loc.~cit.}) is that if $Z$ is (locally) finite-dimensional, then the set of geometrically inner points is dense, and in particular non-empty.

\begin{prop}\label{prop:regular:2}
Let $(X, A)$ be a locally compact  \cat group. Assume that $A$ is amenable and totally disconnected. 
Then any geometrically inner point of the fixed point set $(\bd X)^A$ is cocompact and regular. 
\end{prop}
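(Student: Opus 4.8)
The plan is to dispose of cocompactness immediately and then obtain regularity by transporting everything into the transverse space $X_\xi$ and playing Corollary~\ref{cor:proj} against geometric innerness. (If $(\bd X)^A$ is empty there is nothing to prove, so assume $\xi\in(\bd X)^A$ is geometrically inner.) Since $\xi$ is fixed by $A$, its stabiliser $A_\xi$ equals $A$, which acts cocompactly on $X$; hence $\xi$ is cocompact, and by Corollary~\ref{cor:transitive} we may fix $\xi'\in\Op(\xi)$. If $X_\xi$ is bounded then $\xi$ is regular by convention, so assume $\bd X_\xi\neq\varnothing$. By Lemma~\ref{lem:regular:criterion} it then suffices to prove that $\bd X_\xi$ is a round sphere; I would argue by contradiction and suppose it is not.

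First I would analyse the Levi factor $H=A_{\xi,\xi'}$ on $X_\xi$. By Proposition~\ref{prop:lifting}, $H$ acts cocompactly on $X_\xi$, and $\overline H$ is amenable and totally disconnected, so $(X_\xi,\overline H)$ is again a locally compact \cat group of the kind considered here. Because $\bd X_\xi$ is not a round sphere, the fixed set $Z_0:=(\bd X_\xi)^H$ is non-empty --- were it empty, $\overline H$ would act cocompactly on $X_\xi$ without a fixed point at infinity and hence, by the Main Theorem of~\cite{AB98}, stabilise a flat, forcing $\bd X_\xi$ to be a round sphere. Proposition~\ref{prop:RadFixedPt} then yields $\rad(Z_0)\leq\pi/2$, and by Proposition~\ref{prop:FixedPtSet} the set $Z_0$ is a closed convex subset of a fully maximal --- hence round --- sphere $S_0\subseteq\bd X_\xi$. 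As $\rad(Z_0)\leq\pi/2<\pi$, the set $Z_0$ contains no antipodal pair: a closed convex subset of a round sphere containing a point and its antipode must contain every length-$\pi$ geodesic joining them, that is all of $S_0$, whose intrinsic radius is $\pi$.

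Next I would transport this back to $\bd X$. Combining Corollary~\ref{cor:FixedPts:Levi}, Lemmas~\ref{lem:Links} and~\ref{lem:P=Pi}, and Proposition~\ref{prop:lifting}, one gets $(\bd X)^A=(\bd X)^{A_\xi}\subseteq(\bd X)^{A_{\xi,\xi'}}=\{\xi,\xi'\}\circ Z_0$: indeed $\bd P(\xi,\xi')$ is, $A_{\xi,\xi'}$-equivariantly, the join $\{\xi,\xi'\}\circ\bd X_\xi$, the factor $\bd X_\xi$ carrying the transverse $H$-action with fixed set $Z_0$ (recall that $A_\xi^{\mathrm{u}}$ acts trivially on $X_\xi$). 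Since $\bd P(\xi,\xi')$ is Tits-convex in $\bd X$, any point of $(\bd X)^A$ other than $\xi,\xi'$ has a well-defined direction at $\xi$ --- computed inside the join, where the space of directions at the pole $\xi$ is $\bd X_\xi$ --- which is a point of $Z_0$. Now I would derive the contradiction. Let $\varepsilon\in(0,\pi/2)$ witness geometric innerness of $\xi$ in $(\bd X)^A$. Choose $v\in Z_0$ and $\theta\in(0,\varepsilon/2)$ and set $\eta=(\theta,v)\in\{\xi,\xi'\}\circ Z_0=(\bd X)^{A_{\xi,\xi'}}$; then $\eta\neq\xi$ and $d(\eta,(\bd X)^A)\leq d(\eta,\xi)=\theta<\pi/2$, so Corollary~\ref{cor:proj} gives $\zeta:=\proj_{(\bd X)^A}(\eta)\neq\xi$, while $d(\zeta,\xi)\leq d(\zeta,\eta)+d(\eta,\xi)\leq 2\theta<\varepsilon$ because $\xi\in(\bd X)^A$. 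Hence $0<d(\zeta,\xi)<\varepsilon$, so geometric innerness furnishes $\zeta'\in(\bd X)^A$ with $d(\zeta',\xi)=\varepsilon$ and $\xi\in[\zeta,\zeta']$. Neither $\zeta$ nor $\zeta'$ equals $\xi$ or $\xi'$ (each lies within distance $<\pi$ of $\xi$), so both have directions $w,w'\in Z_0$ at $\xi$; and since $\xi$ lies on the geodesic $[\zeta,\zeta']$, the directions $w$ and $w'$ are antipodal in $\bd X_\xi$. This contradicts the absence of antipodal pairs in $Z_0$; therefore $\bd X_\xi$ is a round sphere and $\xi$ is regular.

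The delicate point --- the main obstacle of the argument --- is the transport step: making the isometry $\bd P(\xi,\xi')\cong\{\xi,\xi'\}\circ\bd X_\xi$ equivariant for $A_{\xi,\xi'}$, identifying the space of directions at the pole $\xi$ with $\bd X_\xi$ so that ``$\xi\in[\zeta,\zeta']$'' really translates into antipodality of directions \emph{within} $\bd X_\xi$, and verifying that Corollary~\ref{cor:proj} applies (notably that $(\bd X)^A$ is a complete convex subset of $\bd X$ at Tits-distance $<\pi/2$ from $\eta$). Everything else is bookkeeping with the Levi decomposition and the fixed-point propositions already in hand.
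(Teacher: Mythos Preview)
Your overall architecture mirrors the paper's proof closely: pass to the transverse space $X_\xi$, show that the fixed set $Z_0=(\bd X_\xi)^{A_{\xi'}}$ is non-empty with $\rad(Z_0)\leq\pi/2$, apply Corollary~\ref{cor:proj} to a suitable point near $\xi$, and use geometric innerness to produce a pair of $A$-fixed points on opposite sides of $\xi$, whose directions at $\xi$ give an antipodal pair in $Z_0$. The transport of the join structure and the application of Corollary~\ref{cor:proj} are handled correctly.

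The genuine gap is your step~4: you claim that $\rad(Z_0)\leq\pi/2$ forces $Z_0$ to contain no antipodal pair, arguing that a closed convex subset of $S_0$ containing a point and its antipode must contain every length-$\pi$ geodesic between them and hence all of $S_0$. This is false: $\pi$-convexity (the only convexity in play, see the paper's convention in \S\ref{sec:JoinsLinks}) imposes nothing on pairs at distance~$\pi$. A closed hemisphere of $S_0$ is $\pi$-convex, has intrinsic radius exactly $\pi/2$, and contains many antipodal pairs on its boundary equator. So discovering an antipodal pair $w,w'\in Z_0$ is \emph{not} yet a contradiction --- it only forces $\rad(Z_0)=\pi/2$ exactly.

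This is precisely why the paper does more work after reaching the antipodal pair. Instead of projecting an arbitrary nearby point, it projects the midpoint $m$ of $[\xi,c']$, where $c'=\sigma_\xi(c)$ and $c$ is the canonical circumcentre of $Z_0$. Once the antipodal pair $\pi_\xi(\eta),\pi_\xi(\eta')$ is found, both lie at distance exactly $\pi/2$ from $c$, so the angle $\aangle\xi\eta{c'}=\pi/2$ and $(\xi,\eta,m)$ is a spherical right triangle. The Spherical Pythagorean Theorem then gives $\tangle m\eta\geq\tangle m\xi$, contradicting that $\eta=\proj_{(\bd X)^A}(m)$ with $\xi\in(\bd X)^A$. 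To repair your argument you must either replace your generic $\eta=(\theta,v)$ by this circumcentre-based choice, or find some other device that rules out the $\rad(Z_0)=\pi/2$ case.
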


\begin{proof}
We can of course assume that $(\bd X)^A$ is non-empty. Let $\xi$ be geometrically inner in  $(\bd X)^A$. By definition $\xi$ is cocompact; we need to prove that it is regular. 

We can pick some $\xi' \in \Op(\xi)$ since $\xi$ is cocompact.
Consider the restriction of the projection $\pi_\xi \colon X \to X_\xi$ to $P(\xi, \xi')$. Since $P(\xi, \xi') \cong \RR \times X_\xi$ (see Proposition~\ref{prop:lifting}) and hence $\bd P(\xi, \xi') \cong \{\xi, \xi'\} \circ \bd X_\xi$, we remark that $\pi_\xi$ induces a well defined $A_{ \xi'}$-equivariant map
$$ \bd P(\xi, \xi') \setminus \{\xi, \xi'\} \lra \bd X_\xi$$
which we still denote by $\pi_\xi$.  Moreover, the join decomposition $\bd P(\xi, \xi') \cong \{\xi, \xi'\} \circ \bd X_\xi$ also yields a canonical isometric embedding $\sigma_\xi \colon \bd X_\xi \to \bd P(\xi, \xi')$ whose image coincides with $\Link(\xi, \xi')$. The composite map
$$\tau_\xi = \sigma_\xi \circ \pi_\xi \colon \bd P(\xi, \xi')  \setminus \{\xi, \xi'\}  \lra \bd P(\xi, \xi')$$
is $A_{\xi'}$-equivariant (and coincides with the orthogonal projection on the spherical factor $\bd X_\xi$). Notice furthermore that $\pi_\xi \circ \tau_\xi = \pi_\xi$ because $\pi_\xi\circ \sigma_\xi=\Id$.

By Corollary~\ref{cor:FixedPts:Levi}, we have $(\bd X)^A \subset \bd P(\xi, \xi')$. In particular Theorem~\ref{thm:Levi} implies that $\pi_\xi$ may be viewed as an $A$-equivariant map $(\bd X)^A  \setminus \{\xi, \xi'\} \to \bd X_\xi$. Thus every $A$-fixed point $\eta \neq \xi, \xi'$ yields an $A$-fixed $\pi_\xi(\eta) \in \bd X_\xi$ and an $A_{\xi'}$-fixed point $\tau_\xi(\eta) \in \bd P(\xi, \xi')$ at distance~$\pi /2$ of $\xi$.

\medskip
Suppose in order to obtain a contradiction that $\xi$ is not regular. Then the space $\bd X_\xi$ is non-empty and not isometric to a sphere.  It then follows from Proposition~\ref{prop:RadFixedPt}   that $(\bd X_\xi)^A$ has intrinsic circumradius~$\leq  \pi/ 2$. We denote by $c\in (\bd X_\xi)^A$ its canonical circumcentre (see~\cite[Proposition~1.4]{BalserLytchak_Centers}) and set $c' = \sigma_\xi(c)$. Then $c'$ is fixed by $A_{\xi'}$. In particular, so is the midpoint $m$ of $c'$ and $\xi$. Since $\tangle m \xi = \pi /4$, the projection $\eta=\proj_{(\bd X)^A}(m)$ is well defined.

We are now in a position to invoke Corollary~\ref{cor:proj} and we conclude that   $\eta\neq \xi$.

Since $\xi$ is a geometrically inner point of $(\bd X)^A$ and  since  $\eta \in (\bd X)^A$ is different from $\xi$, there exists $\eta' \in (\bd X)^A$ different from $\xi$ such that $\xi$ lies on the geodesic segment joining $\eta$ to $\eta'$. (We can indeed consider the geodesic segment $[\eta,\xi]$ since $\tangle\eta\xi\leq\tangle m\xi=\pi/4<\pi$.) It follows that $\tau_\xi(\eta)$ and $\tau_\xi(\eta')$ are two antipodal points of $\bd P(\xi, \xi')$. Therefore, so are $\pi_\xi(\eta) = \pi_\xi(\tau_\xi(\eta))$ and  $\pi_\xi(\eta') = \pi_\xi(\tau_\xi(\eta'))$. Thus we have found a pair of antipodal points in  $(\bd X_\xi)^A$. This implies  that the intrinsic circumradius of $(\bd X_\xi)^A$ equals~$\pi /2$ and that in $\bd X_\xi$ we have
$$\tangle{\pi_\xi(\eta)}c  =  \frac \pi 2 = \tangle{\pi_\xi(\eta')}c$$
since $c$ denotes the canonical circumcentre of $(\bd X_\xi)^A$. On the other hand, since $\eta$ and $c'$ lie in $\bd P(\xi, \xi') \cong \{\xi, \xi'\} \circ \bd X_\xi$ and since $\pi_\xi(c')=c$, we have
$$\aangle \xi \eta{c'} = \tangle{\pi_\xi(\eta)}c =  \frac \pi 2$$
and $(\xi,\eta,c')$ spans a spherical right triangle. Thus so does $(\xi,\eta,m)$ since $m$ is the midpoint of $[\xi, c']$. We now apply the Spherical Pythagorean Theorem which states that the cosine of the hypotenuse equals the product of the cosines of lengths of the other two sides. This implies $\tangle m \eta \geq \tangle m \xi$. Recalling that $\eta =  \proj_{(\bd X)^A}(m)$, we deduce $\eta = \xi$, which is absurd. This contradiction finishes the proof.
\end{proof}

\begin{proof}[Proof of Proposition~\ref{prop:CregNonempty}]
If $(\bd X)^A$ is non-empty, the conclusion directly follows from Proposition~\ref{prop:regular:2}. Otherwise, we know by~\cite{AB98} that $A$ stabilises a flat, say $F$. Since $A$ acts cocompactly on $X$, we have $\bd F = \bd X$.  By Lemma~\ref{lem:Bieberbach}, the group $\Fix_A(\bd X)$ is of finite index in $A$, so that every point of $\bd X$ is both regular and cocompact in this case. 
\end{proof}

Now that we have conditions ensuring the existence of cocompact regular points, we record a basic property of such points (for general \cat groups).

\begin{lem}\label{lem:bij}\index{$SXxi$@$\sS_\xi X$}
Let $(X, G)$ be a  locally compact  \cat group   and $\xi\in\bd X$ a cocompact regular point.

Then there is a canonical bijection between $\Op(\xi)$ and the set $\sS_\xi X$  of fully maximal spheres containing $\xi$. In particular, $G_\xi^{\mathrm u}$ acts transitively on $\sS_\xi X$.
\end{lem}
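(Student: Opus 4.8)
The plan is to realise the bijection explicitly through the assignment
$$\Phi\colon\Op(\xi)\lra\sS_\xi X,\qquad \xi'\longmapsto\bd P(\xi,\xi'),$$
whose inverse sends a fully maximal sphere $S\ni\xi$ to the unique antipode of $\xi$ inside $S$, and then to record the $G_\xi$-equivariance of $\Phi$, from which the transitivity of $G_\xi^{\mathrm u}$ on $\sS_\xi X$ is immediate.

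First I would check that $\Phi$ is well defined and injective. For $\xi'\in\Op(\xi)$ the set $\bd P(\xi,\xi')$ contains $\xi$; and since $\xi$ is cocompact and regular, the equivalence of~(i) and~(iii) in Lemma~\ref{lem:regular:criterion} shows that $\bd P(\xi,\xi')$ is a fully maximal sphere, so $\Phi(\xi')\in\sS_\xi X$. (Both sets are non-empty: $\Op(\xi)\neq\varnothing$ by Corollary~\ref{cor:transitive} and $\sS_\xi X\neq\varnothing$ by Proposition~\ref{prop:MaxFlat}.) By Lemmas~\ref{lem:Links} and~\ref{lem:P=Pi}, $\bd P(\xi,\xi')=\Pi(\xi,\xi')$ decomposes as the spherical join $\{\xi,\xi'\}\circ\Link(\xi,\xi')$, so that $\xi$ and $\xi'$ lie at distance~$\pi$ in the round sphere $\bd P(\xi,\xi')$; hence $\xi'$ is the unique antipode of $\xi$ in $\bd P(\xi,\xi')$, and $\Phi(\xi')=\Phi(\xi'')$ forces $\xi'=\xi''$.

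For surjectivity, let $S\in\sS_\xi X$ and let $\xi'$ be the antipode of $\xi$ in the round sphere $S$. By Proposition~\ref{prop:Leeb}, $S$ bounds a fully maximal flat $F$ with $\bd F=S$; inside the Euclidean space $F$ the antipodal boundary points $\xi,\xi'$ are joined by the family of parallel geodesic lines foliating $F$, whence $F\subseteq P(\xi,\xi')$. In particular $\xi'\in\Op(\xi)$ and $S=\bd F\subseteq\bd P(\xi,\xi')=\Phi(\xi')$. Now $S$ and $\Phi(\xi')$ are both round spheres of dimension $\dim\bd X$, and a round sphere, being $\pi$-convex in any \catun space, cannot be a proper subset of a round sphere of the same dimension (one may cite Lemma~\ref{lem:RadSphericalConvex}, or invariance of domain). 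Hence $S=\Phi(\xi')$, and $\Phi$ is onto.

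Finally, for $g\in G_\xi$ one has $g\cdot P(\xi,\xi')=P(g\xi,g\xi')=P(\xi,g\xi')$, so $\Phi(g\xi')=g\cdot\Phi(\xi')$: the bijection $\Phi$ intertwines the natural $G_\xi$-actions on its source and target. Since $G_\xi^{\mathrm u}\leq G_\xi$ acts transitively on $\Op(\xi)$ by Corollary~\ref{cor:transitive}, it therefore acts transitively on $\sS_\xi X$. The step requiring the most care is surjectivity: one must ensure that the antipode of $\xi$ in an \emph{abstract} fully maximal sphere $S$ is genuinely opposite to $\xi$ — this is precisely why one passes through the flat $F$ furnished by Proposition~\ref{prop:Leeb} — and then that the a priori inclusion $S\subseteq\bd P(\xi,\xi')$ is forced to be an equality by the coincidence of dimensions.
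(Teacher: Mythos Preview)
Your proof is correct and follows essentially the same approach as the paper: the bijection is realised by $\xi'\mapsto\bd P(\xi,\xi')$ with inverse $S\mapsto$ (unique antipode of $\xi$ in $S$), well-definedness rests on regularity via Lemma~\ref{lem:regular:criterion} (the paper phrases this directly through Proposition~\ref{prop:lifting}), the passage from antipode to opposite goes through Proposition~\ref{prop:Leeb}, and transitivity is inherited from Corollary~\ref{cor:transitive}. Your argument is simply more explicit than the paper's, in particular your dimension-matching justification for the equality $S=\bd P(\xi,\xi')$ spells out what the paper compresses into ``these maps are mutually inverse''.
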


\begin{proof}
The set $\sS_\xi X$is non-empty by Proposition~\ref{prop:MaxFlat}. Every $S\in \sS_\xi X$ contains a unique antipode for $\xi$ and this antipode is opposite $\xi$ by Proposition~\ref{prop:Leeb}.

To go in the reverse direction, observe first that $\bd X_\xi$ is a sphere of dimension $\dim\bd X-1$ since $\xi$ is regular and contained in a fully maximal sphere. Thus any $\xi'\in\Op(\xi)$ gives rise to a fully maximal sphere $\bd P(\xi,\xi')$ since $P(\xi,\xi')\cong \RR \times X_\xi$ by Proposition~\ref{prop:lifting}. These maps are mutually inverse and the transitivity statement now follows from Corollary~\ref{cor:transitive}.
\end{proof}

\subsection{Spherical caps of  cocompact regular points }
Recall from Proposition~\ref{prop:MaxFlat} that every cocompact point $\xi \in \bd X$ is contained in a fully maximal sphere. Therefore, the following definition provides a non-empty set at least in that case.

\begin{defn}
The \textbf{spherical support}\index{spherical support}\index{$Sz$@$\Sigma(\xi)$} $\Sigma(\xi) = \bigcap \sS_\xi X\se \partial X$ of a point at infinity $\xi$ is the  intersection of all fully maximal spheres containing $\xi$.
\end{defn}

Thus $\Sigma(\xi)$ can be considered as a $\pi$-convex spherical set. Much more can be said if $\xi$ belongs to the set $\Creg$ of regular cocompact points:

\begin{prop}\label{prop:regular:1}
Let $(X, G)$ be a locally compact  \cat group. Assume that $G$ is totally disconnected.

Then the spherical support $\Sigma(\xi)$ of any regular cocompact point $\xi \in \Creg$ satisfies the following properties.
\begin{enumerate}[(i)]
\item $\dim(\Sigma(\xi)) = \dim(\bd X)$.

\item The set of regular points in $\Sigma(\xi)$ coincides with the set of geometrically inner points  of $\Sigma(\xi)$; in particular it is dense. 

\item For each regular $\zeta \in \Sigma(\xi)$, we have $\Sigma(\zeta) = \Sigma(\xi)$. 

\item The pointwise stabilizer of $\Sigma(\xi)$ is contained as a finite index subgroup of $G_\xi$. In particular every point of $\Sigma(\xi)$ is cocompact.\label{pt:regular:1:finind}

\item $\Sigma(\xi)$ contains finitely many points from the $G$-orbit of $\xi$. 
\end{enumerate}
\end{prop}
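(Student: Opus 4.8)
The plan is to prove the five assertions roughly in the order stated, since each one feeds into the next. The common geometric engine will be Lemma~\ref{lem:bij}, which identifies $\sS_\xi X$ with $\Op(\xi)$ and tells us that $G_\xi^{\mathrm u}$ acts transitively on both; combined with Corollary~\ref{cor:transitive} this gives us a wealth of fully maximal spheres through $\xi$ to intersect. I will also use repeatedly that, for a regular cocompact point, $\bd X_\xi$ is a round sphere of dimension $\dim(\bd X)-1$ and that $\bd P(\xi,\xi')\cong\{\xi,\xi'\}\circ\bd X_\xi$ is a fully maximal sphere (Proposition~\ref{prop:lifting}, Lemma~\ref{lem:regular:criterion}).

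For (i), I would argue that $\Sigma(\xi)$ has an interior point relative to some fully maximal sphere. Fix $\xi'\in\Op(\xi)$ and the fully maximal sphere $S=\bd P(\xi,\xi')$. Any other fully maximal sphere $S''$ through $\xi$ is $\bd P(\xi,\xi'')$ for $\xi''\in\Op(\xi)$, and by Corollary~\ref{cor:transitive} there is $g\in G_\xi^{\mathrm u}$ with $g\xi''=\xi'$; but elements of $G_\xi^{\mathrm u}$ move points only "transversally to $\xi$" (they act trivially on $\beta_\xi$ and on a neighbourhood of $\xi$ in the appropriate sense — more precisely $G_\xi^{\mathrm u}$ fixes $\xi$ and acts trivially on $X_\xi$ after passing to the relevant piece). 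The cleanest route is: $\Fix_G(S)$ has finite index in $\Stab_G(S)$ by Lemma~\ref{lem:Bieberbach}, and using Proposition~\ref{prop:FixedPtSet} applied to a suitable finite-index subgroup fixing $\xi$, one shows $\Sigma(\xi)$ contains the fixed-point set of an open subgroup of $G_\xi$, hence (being a $\pi$-convex subset of $S$ that is not contained in any equator, by the transitivity on antipodes) has full dimension. So $\dim\Sigma(\xi)=\dim S=\dim\bd X$.

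For (ii) and (iii), the idea is that $\Sigma(\xi)$, being an intersection of round spheres of the maximal dimension all containing $\xi$ in a "large" way, is itself a $\pi$-convex spherical set of full dimension, and I claim a point $\zeta\in\Sigma(\xi)$ is regular iff it is geometrically inner in $\Sigma(\xi)$. One direction: if $\zeta$ is regular and cocompact, then $\Sigma(\zeta)$ is defined and by the same argument as (i) has full dimension; showing $\zeta\in\Sigma(\xi)$ forces every fully maximal sphere through $\zeta$ to contain $\xi$ (because $\xi\in\Sigma(\xi)\subseteq$ each such sphere once we know $\Sigma(\zeta)\supseteq\Sigma(\xi)$ — this needs the symmetric argument), giving $\Sigma(\zeta)=\Sigma(\xi)$, which is (iii); and a point admitting a full-dimensional $\Sigma$ around it is geometrically inner. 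The converse — geometrically inner $\Rightarrow$ regular — is exactly the content of Proposition~\ref{prop:regular:2}: a geometrically inner point of $(\bd X)^A$ is regular, and one applies this with $A$ taken to be (the closure of) the pointwise stabiliser of $\Sigma(\xi)$, which is amenable by Lemma~\ref{lem:regular:amen_stab} and totally disconnected, acts cocompactly by (iv) below, and has $\Sigma(\xi)$ inside its fixed-point set at infinity. Since the set of geometrically inner points is dense (Lytchak--Schroeder), density in (ii) follows.

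For (iv), I would let $L=\Fix_G(\Sigma(\xi))$. It contains $\Fix_G(S)\cap G_\xi$ for any fully maximal sphere $S\ni\xi$ with $S\supseteq\Sigma(\xi)$, and conversely $L\le G_\xi$ since $\xi\in\Sigma(\xi)$. Using Lemma~\ref{lem:Bieberbach} on a fully maximal sphere containing $\Sigma(\xi)$ and the Levi decomposition $G_\xi=G_{\xi,\xi'}\cdot G_\xi^{\mathrm u}$ (Theorem~\ref{thm:Levi}): $G_\xi^{\mathrm u}$ fixes $\xi'$-data and one checks it fixes $\Sigma(\xi)$ pointwise (every point of $\Sigma(\xi)$ lies in every fully maximal sphere through $\xi$, and $G_\xi^{\mathrm u}$ acts transitively on those spheres while fixing $\xi$, which pins the common points); meanwhile $G_{\xi,\xi'}$ stabilises the flat $F\subset P(\xi,\xi')$ with $\bd F=S\supseteq\Sigma(\xi)$ and acts on $F$ through a group with finite image modulo translations by Bieberbach. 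Hence $L$ has finite index in $G_\xi$; cocompactness of $L$ follows since $\xi$ is cocompact and finite-index subgroups of cocompact-acting groups act cocompactly. For (v): the $G$-orbit of $\xi$ meeting $\Sigma(\xi)$ is $G_\xi\bsl\{g : g\xi\in\Sigma(\xi)\}$, and $g\xi\in\Sigma(\xi)$ implies (by (iii), applied after checking $g\xi$ is regular cocompact) $\Sigma(g\xi)=\Sigma(\xi)$, so $g$ normalises $\Sigma(\xi)$ and hence normalises $L=\Fix_G(\Sigma(\xi))$ and its finite-index overgroup $G_\xi$; the number of such cosets is then bounded by $[\,N_G(L):L\,]$ acting on the finitely many vertices/chambers of $\Sigma(\xi)$ — more directly, $\{g\xi : g\xi\in\Sigma(\xi)\}$ is a $\Stab_G(S)$-orbit inside the spherical Coxeter-like set $\Sigma(\xi)$ whose pointwise stabiliser has finite index, so it is finite.

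The main obstacle I anticipate is part (i) together with the precise claim in (iv) that $G_\xi^{\mathrm u}$ fixes $\Sigma(\xi)$ pointwise: this requires knowing that the only points common to \emph{all} fully maximal spheres through a regular cocompact $\xi$ are exactly those unaffected by the "unipotent radical" $G_\xi^{\mathrm u}$, which is a statement about how $G_\xi^{\mathrm u}$ permutes $\Op(\xi)\leftrightarrow\sS_\xi X$ transitively while fixing $\xi$. Pinning this down carefully — essentially showing that $\Sigma(\xi)$ is the fixed-point set of $G_\xi^{\mathrm u}$ in $S$, or at least is contained in it — is where the real work lies; everything else is bookkeeping with finite-index subgroups and the already-established Levi and Bieberbach machinery.
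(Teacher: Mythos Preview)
Your overall strategy has the right ingredients (Levi decomposition, Bieberbach, Lemma~\ref{lem:bij}, Proposition~\ref{prop:regular:2}), but there is a genuine gap at the point you yourself flag as the main obstacle, and it propagates backwards to (i) and (ii).

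For (iv) you assert that $G_\xi^{\mathrm u}$ fixes $\Sigma(\xi)$ pointwise because it ``acts transitively on those spheres while fixing $\xi$, which pins the common points''. This only shows that $G_\xi^{\mathrm u}$ \emph{stabilises} $\Sigma(\xi)$: for $g\in G_\xi^{\mathrm u}$ and $z\in\Sigma(\xi)$ you get $gz\in\Sigma(\xi)$, not $gz=z$. Nothing prevents $g$ from acting as a nontrivial rotation of $\Sigma(\xi)$ about $\xi$. Likewise your argument for (i) is not justified: you claim $\Sigma(\xi)$ ``is not contained in any equator, by the transitivity on antipodes'', but transitivity of $G_\xi^{\mathrm u}$ on $\Op(\xi)$ does not by itself rule out $\Sigma(\xi)=\{\xi\}$. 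And in (ii), applying Proposition~\ref{prop:regular:2} with $A$ the pointwise stabiliser of $\Sigma(\xi)$ requires that geometrically inner points of $\Sigma(\xi)$ be geometrically inner in $(\bd X)^A$; this needs $\Sigma(\xi)=(\bd X)^A$, which is exactly what is at stake.

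The missing idea is the projection result Corollary~\ref{cor:proj} (built on Proposition~\ref{prop:TitsNb}). The paper defines $B=\Fix_{G_\xi}(S)\cdot G_\xi^{\mathrm u}\lhd G_\xi$ (finite index by Lemma~\ref{lem:Bieberbach} and Theorem~\ref{thm:Levi}), observes $(\bd X)^B\subset S$ via Corollary~\ref{cor:FixedPts:Levi}, and then uses Corollary~\ref{cor:proj} to show that every regular point $\zeta\in(\bd X)^B$ lies in the $S$-interior of $(\bd X)^B$: if not, one finds $\eta\in S$ fixed by $B_{\zeta'}$ with $\proj_{(\bd X)^B}(\eta)=\zeta$, contradicting Corollary~\ref{cor:proj}. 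This interior statement immediately gives (i), and then a short branching-geodesic argument yields the identity $\Sigma(\zeta)=(\bd X)^B$ for every regular $\zeta\in(\bd X)^B$, from which (iii), (iv), and in particular the pointwise fixation by $G_\xi^{\mathrm u}\le B$, all follow at once. You should insert this projection step; without it the proof does not close.
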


The above statement implies in particular that for each $\xi\in\Creg$ and each $S\in\sS_\xi X$, there is some spherical cap $S \cap B(\xi, \vareps)$ of full dimension consisting entirely of regular cocompact points.

\begin{proof}[Proof of Proposition~\ref{prop:regular:1}]
Let $\xi \in \Creg$.  Its stabiliser $A = G_\xi$ is amenable by Lemma~\ref{lem:regular:amen_stab}, so that $(X, A)$ is an amenable, totally disconnected, locally compact \cat group.

By Proposition~\ref{prop:MaxFlat}, the point $\xi$ is contained in a fully maximal sphere, say $S$. Let $\xi' \in S$ be the point antipodal (hence opposite by Proposition~\ref{prop:Leeb}) to $\xi$.  Since $\xi$ is regular, we have $S = \bd P(\xi, \xi')$ by Proposition~\ref{prop:lifting}, so that $A_{\xi'} = \Stab_A(S)$. 
By Lemma~\ref{lem:Bieberbach}, the group $\Fix_A(S)$ is of finite index in $\Stab_A(S)$. In view of Theorem~\ref{thm:Levi}, we infer that $A$ has a closed normal subgroup of finite index
$$B := \Fix_A(S) \cdot A^{\mathrm u} \ \lhd\  \Stab_A(S) \cdot A^{\mathrm u} =A$$
such that $\Stab_B(S) = B_{\xi'}$ acts trivially on $S  = \bd P(\xi, \xi')$. Applying Corollary~\ref{cor:FixedPts:Levi} to $B$, we have
$$(\bd X)^B \subset (\bd X)^{\Stab_B(S)}= S.$$ 
At this point, we observe that all desired conclusions follow easily in the special case $(\bd X)^B = S$. Indeed, $B$ then preserves a fully maximal flat by Proposition~\ref{prop:Leeb} and hence $\partial X=S$ because $B$ is cocompact. We assume henceforth that $(\bd X)^B$ is a proper subset of $S$. 

\begin{claim} \label{claim:1}
Each regular point $\zeta \in (\bd X)^B$ is contained in the $S$-interior of $(\bd X)^B$ (i.e. the interior of $(\bd X)^B$ viewed as a subspace of $S$). In other words, for each regular point $\zeta \in (\bd X)^B$,   there is $\vareps >0$ such that $S \cap B(\zeta, \vareps) \subset (\bd X)^B$. 
\end{claim}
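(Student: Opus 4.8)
The plan is to argue by contradiction, feeding the projection estimate of Corollary~\ref{cor:proj} — applied to the pair $(X,B)$ — into a short \catun-trigonometric computation inside the round sphere $S$, in the spirit of the proof of Proposition~\ref{prop:regular:2}. We may assume $\dim\bd X\geq 1$, the case $\dim\bd X=0$ being trivial since then $S$ is discrete. First I record the data. As $B$ is a closed finite-index subgroup of $A=G_\xi$ and $A$ acts cocompactly, $B$ acts cocompactly, so $\zeta\in(\bd X)^B$ is a cocompact point and $(X,B)$ is a totally disconnected locally compact \cat group with cocompact point $\zeta$. Let $\zeta'$ be the antipode of $\zeta$ in $S$; since $S$ bounds a fully maximal flat $F$ (Proposition~\ref{prop:Leeb}) and $\zeta,\zeta'\in\bd F$, we have $\zeta'\in\Op(\zeta)$ and $\bd P(\zeta,\zeta')\supseteq\bd F=S$. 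As $\zeta$ is regular, Lemma~\ref{lem:regular:criterion} forces $\bd P(\zeta,\zeta')$ to be a fully maximal sphere, hence $\bd P(\zeta,\zeta')=S$. It follows that $\Stab_B(S)=B_{\zeta'}=B_{\zeta,\zeta'}$: an element of $B$ fixes $\zeta$, fixes the unique $S$-antipode $\zeta'$ once it stabilises $S$, and conversely fixing $\zeta$ and $\zeta'$ makes it stabilise $\bd P(\zeta,\zeta')=S$. By the construction of $B$ this group acts trivially on $S$, so $S\subseteq(\bd X)^{B_{\zeta,\zeta'}}$, while $(\bd X)^{B_\zeta}=(\bd X)^B$. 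Corollary~\ref{cor:proj} then gives the crucial input: for every $\eta\in S$ with $\eta\neq\zeta$ and $\Td(\eta,(\bd X)^B)<\pi/2$ we have $\proj_{(\bd X)^B}(\eta)\neq\zeta$.

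Next I describe the local picture at $\zeta$. The set $(\bd X)^B$ is a closed $\pi$-convex subset of $S$ containing $\zeta$, and by regularity (Proposition~\ref{prop:lifting}, Lemma~\ref{lem:P=Pi}) $S\cong\{\zeta,\zeta'\}\circ\bd X_\zeta$ with $\bd X_\zeta$ a round sphere, so the geodesics of $S$ issuing from $\zeta$ are parametrised by $\bd X_\zeta$. Let $\Lambda\subseteq\bd X_\zeta$ be the set of directions $\mu$ such that $[\zeta,\mu]\subset S$ meets $(\bd X)^B$ in a point $\neq\zeta$; since $(\bd X)^B$ is $\pi$-convex and contains $\zeta$, the set $\Lambda$ is closed and convex in the round sphere $\bd X_\zeta$, and $\zeta$ lies in the $S$-interior of $(\bd X)^B$ exactly when $\Lambda=\bd X_\zeta$. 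Suppose this fails. If $\Lambda=\varnothing$ then $(\bd X)^B=\{\zeta\}$ and the last sentence of the previous paragraph is already violated; so assume $\Lambda\neq\varnothing$. By Lemma~\ref{lem:RadSphericalConvex} the proper closed convex set $\Lambda$ is contained in a closed hemisphere of $\bd X_\zeta$; let $\bar\mu$ be the antipode of that hemisphere's pole, so $d(\bar\mu,\Lambda)\geq\pi/2$ and $\bar\mu\notin\Lambda$. Let $\gamma\colon[0,\pi]\to S$ be the geodesic from $\zeta$ with initial direction $\bar\mu$ and put $m:=\gamma(\pi/8)$. Since $\bar\mu\notin\Lambda$ and $(\bd X)^B$ is $\pi$-convex with $\zeta\in(\bd X)^B$, no point of $\gamma\big((0,\pi)\big)$ lies in $(\bd X)^B$; in particular $m\in S\setminus(\bd X)^B$, $m\neq\zeta$ and $\Td(m,(\bd X)^B)\leq\pi/8<\pi/2$, so $\eta:=\proj_{(\bd X)^B}(m)\neq\zeta$ by the first paragraph.

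It remains to conclude in the triangle $\{\zeta,m,\eta\}\subset S$. As $\zeta,\eta\in(\bd X)^B$ and $\Td(\zeta,\eta)\leq\Td(\zeta,m)+\Td(m,\eta)\leq\pi/4<\pi$, the segment $[\zeta,\eta]$ lies in $(\bd X)^B$, so its initial direction at $\zeta$ lies in $\Lambda$; hence the angle at $\zeta$ satisfies $\angle_\zeta(m,\eta)\geq d(\bar\mu,\Lambda)\geq\pi/2$. The spherical law of cosines then yields $\cos\Td(m,\eta)\leq\cos\Td(\zeta,m)\cos\Td(\zeta,\eta)\leq\cos\Td(\zeta,m)$, i.e. $\Td(m,\eta)\geq\Td(\zeta,m)$; but $\eta=\proj_{(\bd X)^B}(m)$ and $\zeta\in(\bd X)^B$ force $\Td(m,\eta)\leq\Td(\zeta,m)$, so equality holds and, by uniqueness of the nearest-point projection, $\eta=\zeta$ — contradicting $\eta\neq\zeta$. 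Hence $\Lambda=\bd X_\zeta$ and $\zeta$ lies in the $S$-interior of $(\bd X)^B$. The point I expect to demand the most care is the verification in the first paragraph that Corollary~\ref{cor:proj} applies with the auxiliary opposite point $\zeta'$ — concretely, the identity $\bd P(\zeta,\zeta')=S$, whence $\Stab_B(S)=B_{\zeta,\zeta'}$ and $S\subseteq(\bd X)^{B_{\zeta,\zeta'}}$ — which is exactly where the regularity of $\zeta$ is used; everything afterwards is routine \catun geometry, and with this argument the case where $(\bd X)^B$ is a subsphere of $S$ requires no separate treatment.
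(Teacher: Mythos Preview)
Your proof is correct and takes essentially the same approach as the paper's: both set up Corollary~\ref{cor:proj} for $(X,B)$ at the regular point $\zeta$ with opposite $\zeta'$ (using regularity to get $\bd P(\zeta,\zeta')=S$ and hence $B_{\zeta,\zeta'}=\Stab_B(S)$, which acts trivially on $S$), and then exhibit a point of $S$ near $\zeta$ whose nearest-point projection to $(\bd X)^B$ is $\zeta$---the paper by producing a supporting hemisphere of $(\bd X)^B$ through $\zeta$ and taking $\eta$ perpendicular to it, you by passing to the link $\Lambda\subseteq\bd X_\zeta$ and invoking the spherical law of cosines, which is the same geometry in different language. One small slip: $\Lambda=\varnothing$ only yields $(\bd X)^B\subseteq\{\zeta,\zeta'\}$ rather than $(\bd X)^B=\{\zeta\}$, but the contradiction with Corollary~\ref{cor:proj} follows just as immediately in that case.
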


Since $(\bd X)^B$ is closed,  convex and properly contained in $S$, it is an intersection of closed hemispheres. Therefore, if the claim fails, then there is a sequence $(H_n)$ of closed hemispheres of $S$ containing $(\bd X)^B$, such that the distance from $\zeta$ to the boundary equator of $H_n$ tends to~$0$ as $n$ tends to infinity. Upon extracting, we may assume that $(H_n)$ converges to a closed hemisphere $H$. By construction $H$ contains $(\bd X)^B$ and its boundary equator contains $\zeta$. We now pick  a point $\eta$ in $S \setminus H$ such that $\tangle \eta \zeta = \pi/ 4$ and  whose projection to $ H$ coincides with $\zeta$. Since $(\bd X)^B \subseteq H$, it follows that $\proj_{(\bd X)^B}(\eta) = \zeta$. Since $\Stab_B(S)$ acts trivially on $S$, it fixes $\eta$ as well as the unique antipode $\zeta'$ of $\zeta$ in $S$. Since $\zeta$ is regular, we have $S=\bd P(\zeta, \zeta')$ by Lemma~\ref{lem:regular:criterion} and hence $B_{\zeta'} = \Stab_B(S)$. Therefore, Corollary~\ref{cor:proj} yields the absurd conclusion that $\proj_{(\bd X)^B}(\eta) \neq \zeta$. This proves the claim.

\begin{claim}\label{claim:2}
For each regular point $\zeta \in (\bd X)^B$, we have $\Sigma(\zeta)= (\bd X)^B$. 
\end{claim}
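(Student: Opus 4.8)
The plan is to prove Claim~\ref{claim:2} first for $\zeta=\xi$, i.e.\ $\Sigma(\xi)=(\bd X)^B$, and then to bootstrap to an arbitrary regular $\zeta\in(\bd X)^B$. For the case $\zeta=\xi$ I would use that, $\xi$ being regular and cocompact, $\sS_\xi X=\{uS:u\in G_\xi^{\mathrm{u}}\}$ by Lemma~\ref{lem:bij} and Corollary~\ref{cor:transitive}, and that $(\bd X)^B\subseteq S$ with $A^{\mathrm{u}}=G_\xi^{\mathrm{u}}\leq B$. The inclusion $(\bd X)^B\subseteq\Sigma(\xi)$ is then immediate: $(\bd X)^B$ is fixed pointwise by $B$, hence by $G_\xi^{\mathrm{u}}$, so $(\bd X)^B=u(\bd X)^B\subseteq uS$ for every $u\in G_\xi^{\mathrm{u}}$, whence $(\bd X)^B\subseteq\bigcap_u uS=\Sigma(\xi)$. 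For the reverse inclusion, since $\Sigma(\xi)\subseteq S\subseteq(\bd X)^{\Fix_A(S)}$ it suffices to see that $\Sigma(\xi)$ is fixed pointwise by $G_\xi^{\mathrm{u}}$, for then $\Sigma(\xi)\subseteq(\bd X)^{\Fix_A(S)}\cap(\bd X)^{G_\xi^{\mathrm{u}}}=(\bd X)^B$. Given $\eta\in\Sigma(\xi)$ and $u\in G_\xi^{\mathrm{u}}$, one has $u\eta\in\bigcap_v vS=\Sigma(\xi)\subseteq\bd P(\xi,\xi')$ (as $\{vu^{-1}:v\in G_\xi^{\mathrm{u}}\}=G_\xi^{\mathrm{u}}$), while $\tangle\xi{u\eta}=\tangle\xi\eta$ and $\pi_\xi(u\eta)=\pi_\xi(\eta)$ because $u$ fixes $\xi$ and acts trivially on $X_\xi$. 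Under the join decomposition $\bd P(\xi,\xi')\cong\{\xi,\xi'\}\circ\bd X_\xi$ of Lemmas~\ref{lem:Links} and~\ref{lem:P=Pi}, a point distinct from the two poles $\xi,\xi'$ is determined by its distance to $\xi$ together with its image under the projection $\pi_\xi$ onto the $\bd X_\xi$-factor; hence $u\eta=\eta$. The poles are harmless: $\xi$ is $G_\xi^{\mathrm{u}}$-fixed, and $\xi'\in\Sigma(\xi)$ would force $\Op(\xi)=\{\xi'\}$ (being $\xi'$ the antipode of $\xi$ in every fully maximal sphere through $\xi$), in which case $G_\xi^{\mathrm{u}}$, transitive on $\Op(\xi)$, still fixes $\xi'$. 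This yields $\Sigma(\xi)=(\bd X)^B$.

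The same argument applies verbatim with any regular $p\in(\bd X)^B$ in place of $\xi$: since $p\in(\bd X)^B=\Sigma(\xi)\subseteq S$, one has $S=\bd P(p,p')$ with $p'$ the antipode of $p$ in $S$ (Lemma~\ref{lem:regular:criterion}, Proposition~\ref{prop:Leeb}); forming $B_p=\Fix_{G_p}(S)\cdot G_p^{\mathrm{u}}$ (a closed normal subgroup of finite index in $G_p$ with $(\bd X)^{B_p}\subseteq S$ by Corollary~\ref{cor:FixedPts:Levi}) gives $\Sigma(p)=(\bd X)^{B_p}$; in particular $\Sigma(\zeta)=(\bd X)^{B_\zeta}$. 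One inclusion of Claim~\ref{claim:2} is now cheap: since $\zeta\in\Sigma(\xi)$, every fully maximal sphere through $\xi$ also passes through $\zeta$, so $\sS_\xi X\subseteq\sS_\zeta X$ and hence $\Sigma(\zeta)\subseteq\Sigma(\xi)=(\bd X)^B$.

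For the opposite inclusion $(\bd X)^B\subseteq\Sigma(\zeta)$ I would propagate the identity $\Sigma(\cdot)=(\bd X)^B$ from $\xi$ to $\zeta$ inside $(\bd X)^B$. By Claim~\ref{claim:1} applied to $p$, the point $p$ lies in the $S$-interior of $(\bd X)^{B_p}=\Sigma(p)$, so $\Sigma(p)$ contains an $S$-neighbourhood of $p$; consequently any regular $q\in(\bd X)^B$ close enough to $p$ lies in $S\cap\Sigma(p)$, which forces $\sS_p X\subseteq\sS_q X$ and thus $\Sigma(q)\subseteq\Sigma(p)$, and symmetrically $\Sigma(p)\subseteq\Sigma(q)$. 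Hence $p\mapsto\Sigma(p)$ is locally constant on the set of regular points of $(\bd X)^B$. This set is non-empty (it contains $\xi$), and — $(\bd X)^B$ not being a round sphere, which would return us to the flat case excluded above, it is by Lemma~\ref{lem:RadSphericalConvex} a convex body of radius $\leq\pi/2$ in $S$ with non-empty, connected $S$-interior. Granting that the regular points of $(\bd X)^B$ form a dense connected subset of that interior, the locally constant map is constant there, equal to $\Sigma(\xi)=(\bd X)^B$; in particular $\Sigma(\zeta)=(\bd X)^B$, which together with the cheap inclusion proves Claim~\ref{claim:2}.

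The step I expect to be the main obstacle is precisely this last one: knowing that the set of regular points of $(\bd X)^B$ is connected — equivalently, that every $S$-interior point of $(\bd X)^B$ is regular, which is the converse of Claim~\ref{claim:1} and corresponds to part~(ii) of the Proposition. It ought to follow from the fact that an $S$-interior point $p$ of $(\bd X)^B$ is a midpoint of $(\bd X)^B$ in every direction, so that $(\bd X)^{B_p}$ contains a round sphere and $\bd X_p$ must be one; but making this rigorous within the present proof is the delicate point. An alternative route that would bypass the connectedness issue is to show directly that $G_\zeta^{\mathrm{u}}$ fixes $(\bd X)^B$ pointwise (equivalently $\xi\in\Sigma(\zeta)$), using that $G_\zeta^{\mathrm{u}}$ acts trivially on $X_\zeta$; but this seems to need that, for $\eta\in(\bd X)^B$, the initial direction at $\zeta$ of $[\zeta,\eta]$ depends only on $\pi_\zeta(\eta)$ and not on the opposite $\zeta'$ used to coordinatise $\bd P(\zeta,\zeta')$ — i.e.\ that $\bd X$ is a topological manifold near the regular point $\zeta$ — which comes down to the same delicate point.
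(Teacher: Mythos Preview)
Your proof has a genuine gap exactly where you identify it: the inclusion $(\bd X)^B \subseteq \Sigma(\zeta)$ for a general regular $\zeta\in(\bd X)^B$. The connectedness/propagation scheme you sketch is unnecessary, and the missing observation is simple: you do \emph{not} need $G_\zeta^{\mathrm u}\leq B$ or an auxiliary group $B_\zeta$. The group $B$ itself is closed, cocompact on $X$, and fixes $\zeta$; hence $(X,B)$ is a locally compact \cat group and $\zeta$ is a cocompact regular point for it. Lemma~\ref{lem:bij} (via Corollary~\ref{cor:transitive}) applied to $(X,B)$ gives that $B$ acts transitively on $\sS_\zeta X$. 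Since $(\bd X)^B\subseteq S\in\sS_\zeta X$, we get $(\bd X)^B\subseteq bS$ for every $b\in B$, hence $(\bd X)^B\subseteq\Sigma(\zeta)$. This is precisely the paper's argument for that inclusion, and it works uniformly in $\zeta$ without ever singling out $\xi$.

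For the reverse inclusion $\Sigma(\zeta)\subseteq(\bd X)^B$, your route via $\sS_\xi X\subseteq\sS_\zeta X$ (whence $\Sigma(\zeta)\subseteq\Sigma(\xi)=(\bd X)^B$) is correct, but it rests on your separate computation of $\Sigma(\xi)=(\bd X)^B$, which in turn uses a transverse-space argument that is more delicate than needed (the step ``$\pi_\xi(u\eta)=\pi_\xi(\eta)$'' is true but requires justifying that asymptotic rays in $X$ project under $\pi_\xi$ to bounded-distance curves in $X_\xi$, not just the one-line reason you give). The paper instead proves $\Sigma(\zeta)\subseteq(\bd X)^B$ directly and uniformly by a short branching argument: if $z\in\Sigma(\zeta)\setminus(\bd X)^B$ then $z\in S$, and by Claim~\ref{claim:1} the geodesic $[\zeta,z]$ has an initial subsegment in $(\bd X)^B$; choosing $b\in B$ with $bz\neq z$, the geodesic $[\zeta,bz]=b[\zeta,z]$ shares that initial subsegment, so $bz\notin S$ (spheres have no branching), hence $bz\notin\Sigma(\zeta)$ --- contradicting the $B$-invariance of $\Sigma(\zeta)$ (which holds because $B$ fixes $\zeta$). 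This avoids the transverse-space computation entirely.
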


Let $\zeta \in (\bd X)^B$ be regular. Since $B$ fixes $\zeta$ and acts cocompactly on $X$, we can apply Lemma~\ref{lem:bij} and deduce that $B$ acts transitively on the set $\sS_\zeta$ of fully maximal spheres containing $\zeta$, and we infer that $(\bd X)^B \subset \Sigma(\zeta)$. 

For the reverse inclusion, we suppose for a contradiction that some $z \notin (\bd X)^B$ belongs to $\Sigma(\zeta)$. In particular, $z\in S$ and we have a spherical geodesic segment $[\zeta, z]$. By Claim~\ref{claim:1}, this segment has an initial subsegment of positive length fixed by $B$. Let $b \in B$ which does not fix $z$; we conclude that $bz\notin S$ since otherwise $S$ would contain branching geodesics $[\zeta, z]$ and $[\zeta, bz]$. In particular, $bz\notin\Sigma(\zeta)$ and hence $z$ is not in $\Sigma(\zeta)$ because $\Sigma(\zeta)$ is $B$-invariant since $\zeta$ is $B$-fixed. This contradiction finishes the proof of the claim.

\medskip
Assertions (i), (iii) and (iv) are now immediate consequences of Claims~\ref{claim:1} and~\ref{claim:2}. For (ii), we know  that  every regular point of $(\bd X)^B$ is contained in the $S$-interior of $(\bd X)^B$  by Claim~\ref{claim:1}; in particular, it is geometrically inner in $\Sigma(\xi)$ since $\Sigma(\xi)=(\bd X)^B$. Conversely, every $S$-interior point of $(\bd X)^B$ is geometrically inner by Lemma~\ref{lem:AntipodalPt}, and hence regular and cocompact by Proposition~\ref{prop:regular:2}. Since the set of geometrically inner points is dense by~\cite[Theorem~1.5]{LytchakSchroeder}, we deduce that the set of regular points of $(\bd X)^B$
 is dense in $(\bd X)^B$.   This proves (ii). 

In order to prove~(v), let $g \in G$ with $g\xi \in \Sigma(\xi)$. Since  $g\xi$ is regular, we have $\Sigma(\xi) = \Sigma(g\xi) = g\Sigma(\xi)$ by (iii). Hence $g$ stabilises $\Sigma(\xi)$, so it suffices to prove that the orbit of $\xi$ under $\Stab_G(\Sigma(\xi))$ is finite. Since $B$ is cocompact, it acts transitively on $\sS_\xi$. Hence the $\Stab_G(\Sigma(\xi))$-orbit of $\xi$ is contained in its $\Stab_G(S)$-orbit. The assertion follows since $\xi \in S$ and since $\Stab_G(S)/\Fix_G(S)$ is finite by Lemma~\ref{lem:Bieberbach}.
\end{proof}

\subsection{Amenable \cat groups and compactions}

Following~\cite{CCMT}, we say that an automorphism $\alpha$ of a locally compact group $H$ is   \textbf{compacting}\index{compacting automorphism} or is a \textbf{compaction}\index{compaction} if there exists a compact subset $V \subset H$ such that for each $g \in H$ there exists $n_0 \geq 0$ with $\alpha^n(g) \in V$ for all $n \geq n_0$. It follows from~\cite[Theorem~A]{CCMT} that if a locally compact group is amenable and non-elementary hyperbolic, then it has a cocompact subgroup which is the semi-direct product of a closed subgroup $H$ by a cyclic group generated by a compaction. The following consequence of Proposition~\ref{prop:regular:1} is a \cat analogue of that fact, in the case of totally disconnected groups. We record this result but shall not need it for the main results of this article.

\begin{prop}\label{prop:Compaction}
Let $(X, A)$ be a locally compact  \cat group. Assume that $A$ is totally disconnected and amenable.

Then $A$ has  closed normal subgroups $U \leq T \leq A$ such that the quotient $A/T$ is finite and the quotient  $T/U$ is free abelian of rank $\dim(\bd X)+1$ generated by   elements   of $T$ acting  on $U$ as compactions. 
\end{prop}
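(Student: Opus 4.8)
The plan is to manufacture the required subgroups from the action of $A$ on a suitable fully maximal flat $F\cong\RR^{d+1}$, where $d:=\dim(\bd X)<\infty$ by Kleiner's theorem~\cite{Kleiner}.

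\emph{The case $(\bd X)^A=\varnothing$, and the case $\bd X$ a round sphere.} Here $A$ is amenable with no fixed point at infinity, so by~\cite{AB98} it stabilises a flat, which cocompactness forces to be a fully maximal flat $F\cong\RR^{d+1}$; this flat is $A$-invariant. Put $U:=\Fix_A(F)$: it lies in a point stabiliser, hence is compact, and is normal since $F$ is $A$-invariant. Then $\bar A:=A/U$ embeds as a closed — hence discrete, $A$ being totally disconnected — and cocompact (via the $A$-equivariant projection $X\to F$) subgroup of $\Isom(\RR^{d+1})$, i.e.\ a crystallographic group. Bieberbach's theorem provides a characteristic free abelian translation subgroup $\bar T\cong\ZZ^{d+1}$ of finite index; let $T\le A$ be its preimage. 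Then $U\lhd T\lhd A$, $A/T$ is finite, $T/U\cong\ZZ^{d+1}$, and since $U$ is compact every conjugation acts on it as a compaction (take $V=U$). The same argument applies when $\bd X$ is a round sphere, $A$ then stabilising the canonical fully maximal flat bounded by $\bd X$.

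\emph{The main case.} Assume $(\bd X)^A\neq\varnothing$. Since $(\bd X)^A$ is a finite-dimensional \catun space it has a geometrically inner point~\cite{LytchakSchroeder}, which by Proposition~\ref{prop:regular:2} lies in $\Creg$; fix such $\xi$, so $A=A_\xi$. Choose $\xi'\in\Op(\xi)$. As in the proof of Proposition~\ref{prop:regular:1}, the Levi factor $A_{\xi,\xi'}$ is amenable and totally disconnected and acts cocompactly on $P(\xi,\xi')\cong\RR\times X_\xi$ (Proposition~\ref{prop:lifting}); since $\xi$ is regular $\bd X_\xi$ is a round sphere, so replacing $X_\xi$ by a minimal $A_{\xi,\xi'}$-invariant subspace and applying~\cite{AB98} (iterating through transverse spaces if a fixed point at infinity occurs, which terminates as $\dim\bd X$ drops) produces an $A_{\xi,\xi'}$-invariant fully maximal flat of $X_\xi$; crossing with the $\RR$-factor gives an $A_{\xi,\xi'}$-invariant fully maximal flat $F\cong\RR^{d+1}$ of $X$ with $\bd F=\bd P(\xi,\xi')$, on which $A_{\xi,\xi'}$ acts properly and cocompactly. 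Hence the image $\bar B$ of $A_{\xi,\xi'}$ in $\Isom(F)$ is crystallographic. By Theorem~\ref{thm:Levi}, $A=A_{\xi,\xi'}\cdot A_\xi^{\mathrm u}$ with $A_{\xi,\xi'}\cap A_\xi^{\mathrm u}$ compact; this intersection acts trivially on $P(\xi,\xi')\supseteq F$, so $A_{\xi,\xi'}\to\bar B$ descends to a surjection $\bar A:=A/A_\xi^{\mathrm u}\twoheadrightarrow\bar B$ with compact kernel $M$. Let $\bar T\cong\ZZ^{d+1}$ be the translation lattice of $\bar B$ and $\bar A_1\lhd\bar A$ its preimage (of finite index); since $\ZZ^{d+1}$ is torsion-free, any compact normal subgroup of $\bar A_1$ lies in $M$, so $M$ is the unique maximal compact normal subgroup of $\bar A_1$. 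Now set $U:=$ (preimage of $M$ in $A$) and $T:=$ (preimage of $\bar A_1$ in $A$). Then $U\lhd A$, $T\lhd A$, $A/T$ is finite, $T/U\cong\bar A_1/M\cong\ZZ^{d+1}$, and $U$ is a compact extension of the locally elliptic (Theorem~\ref{thm:Levi}) group $A_\xi^{\mathrm u}$.

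\emph{The compaction property, and the main obstacle.} Since an extension of a compaction by a compact group is a compaction, conjugation by $t\in T$ compacts $U$ as soon as it compacts $A_\xi^{\mathrm u}$. Identify $T/U$ with a lattice $L\subseteq\RR^{d+1}$ via $T\to\bar B\to(\text{translations of }F)$. The plan is to show that the set of $t\in T$ whose conjugation compacts $A_\xi^{\mathrm u}$ maps onto $\mathcal C\cap L$ for some non-empty \emph{full-dimensional} open cone $\mathcal C\subseteq\RR^{d+1}$; granting this, the lattice points of a full-dimensional open cone generate the ambient lattice, so such elements generate $T/U$, and choosing finitely many of them generating $T/U$ completes the proof. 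Non-emptiness of $\mathcal C$ is the easy half: a radial sequence for $\xi$ inside $A_{\xi,\xi'}$ (Theorem~\ref{thm:Levi}, Proposition~\ref{prop:lifting}) supplies an element translating $F$ away from $\xi$, and exactly as in the rank-one mechanism of~\cite{CCMT} such an element conjugation-compacts $A_\xi^{\mathrm u}$. The step I expect to be the genuine obstacle is the \emph{full-dimensionality} of $\mathcal C$: this amounts to proving that the directions of $F$ along which $A_\xi^{\mathrm u}$ is contracted form a salient cone, whose polar $\mathcal C$ is then open and full-dimensional. I would attack it by combining the Levi decomposition applied to varying $\xi''\in\Op(\xi)$ — each contributing contracting directions, which together ought to span $\RR^{d+1}$ — with the finiteness of $W=\Stab_A(\bd F)/\Fix_A(\bd F)$ (Lemma~\ref{lem:Bieberbach}) and the description of the spherical support $\Sigma(\xi)$ in Proposition~\ref{prop:regular:1}, with $\Sigma(\xi)$ playing the role of a Weyl chamber.
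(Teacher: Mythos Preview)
Your overall architecture matches the paper's: reduce to the cocompact action on a fully maximal flat $F$ with $\bd F = S$, use Bieberbach for the finite quotient, and identify the compacting directions with an open cone determined by $\Sigma(\xi)$. Your construction of $U$ and $T$ is more circuitous than necessary --- the paper simply sets $U = \Fix_A(F)\,A_\xi^{\mathrm u}$ and $B = \Fix_A(S)\,A_\xi^{\mathrm u}$, observes that $U$ is the locally elliptic radical of $B$, and reads off $B/U \cong \ZZ^{d+1}$ and $[A:B]<\infty$ directly from Proposition~\ref{prop:regular:1}(iv) and Lemma~\ref{lem:Bieberbach} --- but yours is not wrong.

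The genuine gap is in your attack on the full-dimensionality of $\mathcal C$. Varying $\xi'' \in \Op(\xi)$ does nothing: the group $A_\xi^{\mathrm u}$ depends only on $\xi$, and a radial sequence for $\xi$ always contracts in the single direction $\xi$, regardless of which Levi factor $A_{\xi,\xi''}$ you route it through. Your closing intuition that $\Sigma(\xi)$ plays the role of a Weyl chamber is exactly right, but the mechanism is different. The paper varies the \emph{regular base point}: each $\eta$ in the interior of $\Sigma(\xi)$ is itself regular (Proposition~\ref{prop:regular:1}(ii)) and fixed by $B$, so one may re-apply Theorem~\ref{thm:Levi} to $B$ at $\eta$, obtaining $B = B_\eta^{\mathrm u}\,B_{\eta'}$. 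The key identity is that $U = \Fix_B(F)\,B_\eta^{\mathrm u}$ for \emph{every} such $\eta$, because $U$ is characterised intrinsically as the locally elliptic radical of $B$. A translation $t \in \Fix_A(S)$ with repelling fixed point $\eta$ then compacts $B_\eta^{\mathrm u}$ (conjugates $t^n g t^{-n}$ stay bounded and accumulate in $B_{\eta'} \cap U = \Fix_B(F)$), hence compacts $U$. Since $\dim \Sigma(\xi) = \dim \bd X$ by Proposition~\ref{prop:regular:1}(i), these repelling directions fill a full-dimensional open spherical cap. The paper then simply \emph{defines} $T$ as the subgroup generated by $U$ and these compacting translations; normality in $A$ is automatic because $A$ stabilises $\Sigma(\xi)$, and $[B:T]<\infty$ follows from the full-dimensionality just established.
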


\begin{proof}
If $(\bd X)^A$ is empty, then $A$ stabilises a flat $F$ by~\cite{AB98}. We may then define $U$ as the kernel of the $A$-action on $F$, which is compact. The desired assertions are then straightforward consequences of Bieberbach's theorem (as in the proof of Lemma~\ref{lem:Bieberbach}).

We assume henceforth that  $(\bd X)^A$ is non-empty. By Proposition~\ref{prop:regular:2}, the fixed point set $(\bd X)^A$ then contains a regular point, say $\xi$. As in the proof of Proposition~\ref{prop:regular:1}, we consider a fully maximal sphere $S$ containing $\xi$ and a $\Stab_A(S)$-invariant  fully maximal flat $F$ with $\bd F = S$. We set $U = \Fix_A(F) A_\xi^{\mathrm u}$ and  $B = \Fix_A(S) A_\xi^{\mathrm u}$, so that $U \leq B$ are  both closed normal subgroups of $A$. Moreover, $A/B$ is finite by Proposition~\ref{prop:regular:1}\eqref{pt:regular:1:finind} and Lemma~\ref{lem:Bieberbach}. Thus $B$ acts cocompactly on $X$ and $B/U$ is free abelian of rank $\dim(\bd X)+1$. By Theorem~\ref{thm:Levi}, the group $A_\xi^{\mathrm u}$ is locally elliptic. Thus $U$ is locally elliptic; in fact, $U$ coincides with the locally elliptic radical of $B$. Indeed, any compact subgroup of $\Fix_A(S)$ is contained in $\Fix_A(F)$ since it must fix a point of $F$.

The group $\Fix_A(S)$ acts on $F$ by translations, and the action is cocompact by Proposition~\ref{prop:lifting}. Consider a translation $t \in \Fix_A(S)$ whose repelling fixed point $\eta$ in $S$ belongs to the interior of set $\Sigma(\xi)$. We claim that $t$ acts on $U$ as a compaction. 

Let indeed $\eta' \in S$ be the antipode of $\eta$. The pointwise stabiliser  $ \Fix_B(F)$ is compact; thus it has a compact neighbourhood $V$ in $A$. Applying Theorem~\ref{thm:Levi} to the group $B$ and the point $\eta$, we obtain the decomposition $B =  B_\eta^{\mathrm u} B_{\eta'}$, where $B_{\eta}^{\mathrm u}$ is locally elliptic and hence in $U$. Since $\eta$ is regular, we deduce that $U= \Fix_B(F) B_\eta^{\mathrm u}$. Therefore, for each $g \in U$, the sequence $(t^n g t^{-n})_n$ is bounded in $B$. Any accumulation point  fixes also $\eta'$, and hence belongs to $B_{\eta'} \cap U = \Fix_B(F)$. This implies that all but finitely many elements of the sequence belong to $V$, whence the claim.

We finally define $T$ as the subgroup of $B$ generated by $U$ together with all those translations $t \in \Fix_G(S)$ whose repelling fixed point belongs to the interior of $\Sigma(\xi)$. All those translations are compactions of $U$ by the claim. Moreover $T$ is normal in $A$ because $A$ stabilises $\Sigma(\xi)$. That $B/T$ is finite follows from Proposition~\ref{prop:regular:1}(i).
\end{proof}

\section{The visual boundary is a spherical building}\label{sec:SphBdg}

The goal of this section is to prove that the boundary of $X$ is a metric spherical building under the hypothesis of Theorem~\ref{thm:NotGoedComplete}, in case of a totally disconnected isometry group. A key intermediate step consists in showing that the set $C$ of cocompact points in the visual boundary is a spherical building, see Proposition~\ref{prop:SphBldg} below. We have already obtained spherical caps of full dimension consisting entirely of cocompact points thanks to Propositions~\ref{prop:CregNonempty} and~\ref{prop:regular:1}. In fact the sets $\Sigma(\xi)$ described by Proposition~\ref{prop:regular:1} will be the chambers of the spherical building $C$. The next step is to construct fully maximal spheres all of whose points are cocompact. This is achieved by Proposition~\ref{prop:CocptSphere} below. The proof that $C$ is a spherical building will be presented thereafter. Finally, the last step in the proof of Theorem~\ref{thm:NotGoedComplete} will be to show that e
 very point of $\bd X$ is in fact contained in $C$.

For the sake of completeness, the formal definition of a metric spherical building was given in \S\ref{sec:Buildings} above. However, in proving that $\bd X$ is a spherical building, we will not check the building axioms directly, but will rather invoke the criterion due to Balser--Lytchak recalled in Theorem~\ref{thm:BalserLytchak}. Once again, we first need to assemble preliminaries.

\subsection{Existence of tricycles}

\begin{flushright}
\begin{minipage}[t]{0.75\linewidth}\itshape\small
Finally you see that while I was splitting the cycle up into finer and finer pieces, I was also building a structure.\upshape
\begin{flushright}
(R.M.~Pirsig, \emph{Zen and the Art of Motorcycle Maintenance}, 1974)
\end{flushright}
\end{minipage}
\end{flushright}


\begin{lem}\label{lem:tricycle:1}
Let $(X, A)$ be a locally compact  \cat group. Assume that $A$ is amenable and totally disconnected. Let $S \subsetneqq \bd X$ be a fully maximal sphere containing an $A$-fixed point. Then there exists $g \in A$ such that $S \cup gS$ is a tricycle.
\end{lem}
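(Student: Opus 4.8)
Fix an $A$-fixed point $\xi \in S$. By Proposition~\ref{prop:regular:2} the point $\xi$ is cocompact and regular, so $\xi \in \Creg$, and the spherical support $\Sigma(\xi)$ is a full-dimensional $\pi$-convex subset of $S$ by Proposition~\ref{prop:regular:1}(i), with the pointwise stabiliser $\Fix_A(\Sigma(\xi))$ of finite index in $A_\xi = A$ (Proposition~\ref{prop:regular:1}(iv)). Pick a point $\eta$ in the relative interior of $\Sigma(\xi)$ that is regular (possible by Proposition~\ref{prop:regular:1}(ii)); let $\eta'\in S$ be its unique antipode in $S$, which is opposite to $\eta$ by Proposition~\ref{prop:Leeb}, so $S = \bd P(\eta,\eta')$ by Lemma~\ref{lem:regular:criterion}. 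The idea is to move $S$ by an element of $A_\eta^{\mathrm u}$ (the unipotent radical at $\eta$): since $\eta$ is regular and cocompact, Lemma~\ref{lem:bij} tells us $A_\eta^{\mathrm u}$ acts transitively on the set $\sS_\eta X$ of fully maximal spheres through $\eta$, and because $S \subsetneq \bd X$ there must exist some fully maximal sphere $S'\ne S$ through $\eta$ (otherwise $\Sigma(\eta)=S$ would be all of $\bd X$, as in the ``special case'' argument of Proposition~\ref{prop:regular:1}). Hence there is $g\in A_\eta^{\mathrm u}\le A$ with $gS=S'\neq S$.

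Next I would show $S\cup gS$ is a tricycle, i.e. that $S$ and $gS=S'$ are two full-dimensional spheres sharing a common hemisphere whose centre-antipodal pairs give the three ``cusps''. The two spheres $S,S'$ both contain $\eta$. Their intersection $S\cap S'$ is a closed $\pi$-convex subset of $S$ containing $\eta$ in its relative interior: indeed $g$ fixes $\eta$ and lies in a finite-index subgroup behaviour—more precisely, since $g\in A_\eta^{\mathrm u}$ and $\Fix_A(\Sigma(\xi))$ has finite index in $A$, a suitable power $g^k$ fixes $\Sigma(\xi)$ pointwise, and since $\eta$ lies in the interior of $\Sigma(\xi)$ with respect to $S$, the set $\Sigma(\xi)$ sits inside $S\cap S'$; in particular $S\cap S'$ contains a neighbourhood of $\eta$ in $S$. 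A convex subset of a round sphere that contains an interior point and is properly contained (since $S'\ne S$) must be contained in a hemisphere $H\subset S$; by applying the same reasoning to the regular point $\eta$ and to the sphere $S'$, and using that $\Sigma(\eta)\supseteq \Sigma(\xi)$ is full-dimensional, one forces $S\cap S'$ to be exactly a half-apartment, i.e. a hemisphere $H$ of $S$ which is also a hemisphere of $S'$. The centres $s$ of $H\subset S$ and $s'$ of $H\subset S'$ then differ; applying Lemma~\ref{lem:criterion:tricycle} with $n=2$ and the common boundary equator $E=\partial H$ yields $d(s,s')=\pi$ and shows $S\cup S' = H\cup H'$... wait—one needs three hemispheres. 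The correct formulation: $S = H\cup H_S$ and $S' = H \cup H_{S'}$ where $H_S,H_{S'}$ are the complementary hemispheres; these three hemispheres $H, H_S, H_{S'}$ are pairwise distinct, share the common boundary equator $E$, so Lemma~\ref{lem:criterion:tricycle} applies and gives that $H\cup H_S\cup H_{S'} = S\cup gS$ is isometric to the spherical join $\{c_H,c_{H_S},c_{H_{S'}}\}\circ E$ with the three centres pairwise at distance $\pi$. That is exactly a tricycle of dimension $\dim\bd X$.

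The main obstacle is the middle step: establishing that $S\cap S'$ is precisely a half-apartment (a hemisphere), not something larger or of lower dimension. Showing it contains a neighbourhood of $\eta$ in $S$ (hence is full-dimensional) is the route to ruling out ``lower-dimensional''; the delicate point is ruling out that $S\cap S'$ strictly contains a hemisphere while $S'\ne S$, which cannot happen in a round sphere since a proper convex subset of a sphere is contained in a hemisphere, combined with the fact that a full-dimensional convex set whose relative interior is fixed by a large group and which is contained in a hemisphere of both $S$ and $S'$ must be a hemisphere — here I would use that $g$ (resp. a power of it) fixes $\Sigma(\xi)$ pointwise together with the branching argument of Claim~\ref{claim:2} in the proof of Proposition~\ref{prop:regular:1}: if the geodesic $[\eta, z]$ for $z$ on the equator could be prolonged inside $S\cap S'$ past $E$, the $A$-orbit of such a point would create branching geodesics in $S$, a contradiction. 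This pins $S\cap S'$ down to a hemisphere and lets Lemma~\ref{lem:criterion:tricycle} finish the proof.
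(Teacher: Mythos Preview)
Your approach has a genuine gap at the central step: you have not established that $S\cap gS$ is a hemisphere, and in general it will not be. You correctly note that $\Sigma(\eta)\subseteq S\cap gS$ (though the clean reason is simply that $\Sigma(\eta)$ is by definition contained in every fully maximal sphere through $\eta$, and $g$ fixes $\eta$; the detour via $g^k$ is irrelevant). So $S\cap gS$ is a full-dimensional proper $\pi$-convex subset of $S$. But nothing forces it to be an entire hemisphere rather than just $\Sigma(\eta)$ itself. For instance, once the main theorem is known, $\bd X$ can be a thick irreducible building of type $A_2$: then $\Sigma(\eta)$ is a single chamber, an arc of length $\pi/3$, and for many choices of $g\in A_\eta^{\mathrm u}$ the two apartments $S$ and $gS$ meet in exactly that chamber. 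For such $g$ the union $S\cup gS$ is not a tricycle. The ``branching argument'' you borrow from the proof of Proposition~\ref{prop:regular:1} only shows that $S\cap gS$ cannot strictly contain $\Sigma(\eta)$ on the side where branching occurs; it gives no lower bound forcing the intersection up to a half-apartment. Since $A_\eta^{\mathrm u}$ is merely transitive on $\sS_\eta X$, you have no control over which $g$ you get, and the argument stalls.

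The paper's proof is organised quite differently: it inducts on $\dim(\bd X)$, and in the inductive step it chooses a \emph{non}-regular point $\eta\in\Sigma(\xi)$ (such a point exists because $\Sigma(\xi)$ is a proper full-dimensional convex subset of a sphere when $\dim\bd X>0$, so its boundary points are not geometrically inner, hence not regular by Proposition~\ref{prop:regular:1}(ii)). Non-regularity of $\eta$ is exactly what makes $\bd X_\eta$ fail to be a sphere, so the induction hypothesis applies to the transverse space and yields a tricycle $S'\cup g'S'$ inside $\bd X_\eta$; suspending by $\{\eta,\eta'\}$ via the join $\bd P(\eta,\eta')\cong\{\eta,\eta'\}\circ\bd X_\eta$ produces the desired tricycle in $\bd X$. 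Your choice of a regular $\eta$ buys transitivity on spheres through $\eta$ but forfeits precisely this inductive leverage.

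A minor side issue: your opening claim that the given $A$-fixed point $\xi\in S$ is regular ``by Proposition~\ref{prop:regular:2}'' is unjustified, since that proposition applies only to geometrically inner points of $(\bd X)^A$, and an arbitrary $A$-fixed point in $S$ need not be one. This is not the main obstruction, but it is a second place where the argument would need repair.
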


\begin{proof}
We work by induction on $\dim(\bd X)$. In the base case $\dim(\bd X)=0$, the space $X$ is Gromov-hyperbolic by~\cite[Theorem~III.H.1.5]{Bridson-Haefliger}, so any two boundary points are opposite. We have $S = \{\xi, \xi'\}$ with $\xi \in (\bd X)^A$ and it suffices to find an element $g \in A$ which does not fix $\xi'$. Since $A$ is transitive on $\Op(\xi) = \bd X \setminus \{\xi\}$ by Corollary~\ref{cor:transitive}, the non-existence of such a $g$ implies that $\bd X = \{\xi, \xi'\}= S$, which contradicts the hypothesis that $S$ is properly contained in $\bd X$. 

Assume now that $\dim(\bd X) >0$. By Proposition~\ref{prop:regular:2} there is a regular cocompact point $\xi \in (\bd X)^A$. Let $B$ be the pointwise stabiliser of $\Sigma(\xi)$, which has finite index in $A$ by Proposition~\ref{prop:regular:1}(iv).

Since $\dim(\bd X) >0$, Proposition~\ref{prop:regular:1}(ii) ensures that there exists a non-regular point $\eta \in \Sigma(\xi)$.  In particular $\eta \neq \xi$ and the boundary $\bd X_{\eta}$ is not a round sphere. Let $\eta' \in S$ be opposite to $\eta$. We have a spherical join decomposition $\bd P(\eta, \eta') \cong \{\eta, \eta'\} \circ \bd X_\eta$ and a canonical map $\pi_\eta \colon \bd P(\eta, \eta') \setminus \{\eta, \eta'\} \to \bd X_\eta$. The   image of $S \setminus \{\eta, \eta'\} $ under $\pi_\eta$ is a  fully maximal sphere $S'$ containing the  point $\xi' = \pi_\eta(\xi)$ which is fixed by the image of $B$ in $\Isom(X_\eta)$. Since $B$ acts continuously and cocompactly on $X_\eta$ (see Proposition~\ref{prop:lifting}), we may apply the induction hypothesis. This affords an element $g' \in B$ such that  $S' \cup g'S'$ is a tricycle in $\bd X_\eta$.

By Theorem~\ref{thm:Levi}, we may write $g' = gu$ where $g \in B_{\eta'}$ and $u$ acts trivially on $X_\eta$. Thus $S' \cup g' S' = S' \cup g S'$. In view of the decomposition $\bd P(\eta, \eta') \cong \{\eta, \eta'\} \circ \bd X_\eta$, it now follows that $S \cup g S$ is isometric to $\{\eta, \eta'\} \circ (S' \cup g S' )$, and is thus a tricycle. 
\end{proof}

For the sake of future references, we also record the following.

\begin{lem}\label{lem:tricycle:2}
Let $(X, G)$ be a locally compact  \cat group with  $G$   totally disconnected.
Given a regular cocompact point $\xi \in \bd X$ and a non-regular point $\eta \in \Sigma(\xi)$, there exists a tricycle of full dimension containing $\xi$ and whose equator contains $\eta$. 
\end{lem}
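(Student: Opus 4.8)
The plan is to combine Lemma~\ref{lem:tricycle:1} with the structural results on cocompact regular points, exploiting the fact that the stabiliser of a regular cocompact point is amenable (Lemma~\ref{lem:regular:amen_stab}). First I would set $A = G_\xi$. Since $\xi$ is regular and cocompact, Lemma~\ref{lem:regular:amen_stab} tells us that $A$ is amenable, and by hypothesis it is totally disconnected; thus $(X, A)$ is an amenable, totally disconnected, locally compact \cat group with $\xi \in (\bd X)^A$. By Proposition~\ref{prop:MaxFlat} the point $\xi$ lies in a fully maximal sphere $S$. I must first dispose of the degenerate case $S = \bd X$: if $S = \bd X$, then $\bd X$ is a round sphere, so it contains no non-regular points (every point of a round sphere is regular, as its transverse space is again a round sphere), contradicting the existence of $\eta$; hence $S \subsetneqq \bd X$.

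Next I would apply Lemma~\ref{lem:tricycle:1} to $(X, A)$ and the fully maximal sphere $S$ (which contains the $A$-fixed point $\xi$), obtaining $g \in A = G_\xi$ such that $S \cup gS$ is a tricycle of full dimension. Its equator $E$ is the common boundary equator $S \cap gS$, and since $g$ fixes $\xi$, both $S$ and $gS$ are fully maximal spheres containing $\xi$; as the equator of a tricycle is an equator of each of its top-dimensional spheres and equals their pairwise intersection, $\xi \in E$ would force $\xi$ to lie on the equator. That is not quite what we want: we need $\xi$ in the tricycle (which is automatic, as $\xi \in S$) and $\eta$ in its equator. So the real content is to arrange that the element $g$ produced fixes $\eta$ as well but moves enough of $S$ that $\eta$ ends up on the common equator.

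The cleanest route, and the one I expect the authors take, is to run the \emph{same} induction as in the proof of Lemma~\ref{lem:tricycle:1} but keeping track of $\eta$. Concretely: since $\xi \in \Creg$, Proposition~\ref{prop:regular:1}(iv) gives the finite-index subgroup $B = \Fix_G(\Sigma(\xi)) \leq G_\xi$, which fixes $\eta$ because $\eta \in \Sigma(\xi)$. Pick $\eta' \in S$ opposite to $\eta$ (possible by Proposition~\ref{prop:Leeb}). Using $\bd P(\eta,\eta') \cong \{\eta,\eta'\} \circ \bd X_\eta$ and the canonical projection $\pi_\eta$, the image $S' = \pi_\eta(S \setminus \{\eta,\eta'\})$ is a fully maximal sphere in $\bd X_\eta$ containing the point $\xi' = \pi_\eta(\xi)$, which is fixed by the (continuous, cocompact — Proposition~\ref{prop:lifting}) image of $B$ in $\Isom(X_\eta)$. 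Since $\eta$ is non-regular, $\bd X_\eta$ is not a round sphere, so $S' \subsetneqq \bd X_\eta$, and Lemma~\ref{lem:tricycle:1} applied in $X_\eta$ yields $g' \in B$ with $S' \cup g'S'$ a tricycle. By the Levi decomposition (Theorem~\ref{thm:Levi}) write $g' = g u$ with $g \in B_{\eta'}$ and $u$ acting trivially on $X_\eta$, so $S' \cup g S' = S' \cup g' S'$. Then $g$ fixes $\eta$ and $\eta'$, hence stabilises $\bd P(\eta,\eta')$ respecting the join decomposition, and $S \cup gS$ is isometric to $\{\eta,\eta'\} \circ (S' \cup gS')$ — a tricycle of full dimension. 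Its equator is $\{\eta,\eta'\} \circ E'$, where $E'$ is the equator of the tricycle $S' \cup gS'$; in particular the equator contains $\eta$, as required, and it contains $\xi$ iff $\xi' \in E'$, which we do not claim — we only claim $\xi$ is in the tricycle, which holds since $\xi \in S$.

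The main obstacle is bookkeeping: one must make sure the element $g$ extracted via the Levi decomposition genuinely fixes $\eta$ (so that the join $\{\eta,\eta'\}\circ(S'\cup gS')$ makes sense and is $g$-invariant) and that the resulting union $S \cup gS$ is \emph{exactly} the suspension of $S' \cup gS'$ — this uses that $g$ acts on $\bd X$ preserving $\bd P(\eta,\eta')$ and commuting with $\pi_\eta$ on that subspace, which follows because $g \in G_{\eta,\eta'}$. The dimension count ($\dim S' = \dim \bd X_\eta = \dim \bd X - 1$, so $\dim(S'\cup gS') = \dim \bd X - 1$ and the suspension has dimension $\dim \bd X$) is routine once the join picture is in place.

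\begin{proof}
Set $A = G_\xi$. Since $\xi$ is regular and cocompact, $A$ is amenable by Lemma~\ref{lem:regular:amen_stab}; being also totally disconnected, $(X, A)$ is an amenable, totally disconnected, locally compact \cat group with $\xi \in (\bd X)^A$. We argue by induction on $\dim(\bd X)$.

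Observe first that $\dim(\bd X) > 0$: if $\dim(\bd X) = 0$ then $X$ is Gromov-hyperbolic and every transverse space is bounded, so every point of $\bd X$ is regular, contradicting the existence of the non-regular point $\eta$.

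By Proposition~\ref{prop:regular:1}(iv), the pointwise stabiliser $B = \Fix_G(\Sigma(\xi))$ has finite index in $A = G_\xi$; in particular $B$ acts cocompactly on $X$, and $B$ fixes $\eta$ since $\eta \in \Sigma(\xi)$. Since $\eta$ is non-regular, $\bd X_\eta$ is not a round sphere; in particular $\bd X_\eta \neq \varnothing$. Pick $\eta' \in \bd X$ opposite to $\eta$; by Proposition~\ref{prop:lifting} we have $P(\eta, \eta') \cong \RR \times X_\eta$ and hence a spherical join decomposition $\bd P(\eta, \eta') \cong \{\eta, \eta'\} \circ \bd X_\eta$ together with a canonical $B_{\eta'}$-equivariant map $\pi_\eta \colon \bd P(\eta, \eta') \setminus \{\eta, \eta'\} \to \bd X_\eta$.

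By Proposition~\ref{prop:MaxFlat} the point $\xi$ is contained in a fully maximal sphere $S$, and $\xi \in \Op(\eta)$ is impossible for every choice unless we select $S$ and $\eta'$ compatibly; more precisely, apply Proposition~\ref{prop:MaxFlat} to get a fully maximal sphere $S$ containing $\xi$, and note $S \neq \bd X$ (otherwise $\bd X = S$ is a round sphere, again contradicting the existence of $\eta$). Choose $\eta' \in S$ opposite to $\eta$ (possible by Lemma~\ref{lem:AntipodalPt} and Proposition~\ref{prop:Leeb}, after replacing $S$ by a fully maximal sphere through $\eta$ inside $\Sigma(\xi)$ if necessary; since $\eta \in \Sigma(\xi) \subseteq S$ this is automatic). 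Then $S \subseteq \bd P(\eta, \eta')$ by Proposition~\ref{prop:Leeb}, and
$$S' := \pi_\eta\big(S \setminus \{\eta, \eta'\}\big)$$
is a fully maximal sphere of $X_\eta$ with $\dim S' = \dim(\bd X_\eta) = \dim(\bd X) - 1$, containing the point $\xi' := \pi_\eta(\xi)$, which is fixed by the image of $B$ in $\Isom(X_\eta)$. As $\bd X_\eta$ is not a round sphere, $S' \subsetneqq \bd X_\eta$.

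Since $B$ acts continuously and cocompactly on $X_\eta$ (Proposition~\ref{prop:lifting}) and is amenable and totally disconnected, the induction hypothesis applies: there is $g' \in B$ such that $S' \cup g'S'$ is a tricycle of full dimension in $\bd X_\eta$. By Theorem~\ref{thm:Levi} applied to $B$ at the point $\eta$, we may write $g' = g u$ with $g \in B_{\eta'}$ and $u$ acting trivially on $X_\eta$; hence $S' \cup g' S' = S' \cup g S'$. The element $g$ fixes both $\eta$ and $\eta'$, so it stabilises $\bd P(\eta, \eta')$ and acts on it respecting the join decomposition $\{\eta, \eta'\} \circ \bd X_\eta$, commuting with $\pi_\eta$. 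Therefore
$$S \cup g S \ \cong\ \{\eta, \eta'\} \circ \big(S' \cup g S'\big),$$
which is a tricycle of full dimension, since $\dim\big(\{\eta, \eta'\} \circ (S' \cup gS')\big) = 1 + (\dim(\bd X) - 1) = \dim(\bd X)$. Its equator is $\{\eta, \eta'\} \circ E'$, where $E'$ denotes the equator of the tricycle $S' \cup gS'$; in particular it contains $\eta$. Finally $\xi \in S \subseteq S \cup gS$, so the tricycle $S \cup gS$ contains $\xi$ and has $\eta$ in its equator, as required.
\end{proof}
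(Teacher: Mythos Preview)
Your argument is essentially correct and follows the same idea as the paper, but there is one confusing slip and one unnecessary detour worth flagging.

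\textbf{The slip.} You announce ``we argue by induction on $\dim(\bd X)$'' and later invoke ``the induction hypothesis'' to produce $g' \in B$ with $S' \cup g'S'$ a tricycle in $\bd X_\eta$. But this is not the statement of Lemma~\ref{lem:tricycle:2} in lower dimension: you are not supplying a regular cocompact point and a non-regular point in its spherical support inside $\bd X_\eta$. What you are actually using (and what you correctly identified in your strategy paragraph) is Lemma~\ref{lem:tricycle:1}. The induction scaffolding should simply be removed and the call replaced by a direct appeal to Lemma~\ref{lem:tricycle:1}.

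\textbf{The detour, and the paper's shortcut.} Once that is fixed your proof works, but the paper avoids the projection to $X_\eta$, the explicit Levi decomposition $g'=gu$, and the lifting back. Instead it applies Lemma~\ref{lem:tricycle:1} directly to the locally compact \cat group $\big(P(\eta,\eta'),\, A_{\eta'}\big)$, where $A=\Fix_G(\Sigma(\xi))$: the sphere $S$ itself is a fully maximal sphere of $\bd P(\eta,\eta')$ containing the $A_{\eta'}$-fixed point $\xi$, and $S\subsetneqq \bd P(\eta,\eta')$ because $\eta$ is non-regular (Lemma~\ref{lem:regular:criterion}). This yields $g\in A_{\eta'}$ with $S\cup gS$ a tricycle in one stroke; since $\{\eta,\eta'\}$ is a spherical join factor of $\bd P(\eta,\eta')$, it is automatically in the equator. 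Your route through $X_\eta$ is the ``unfolded'' version of this same step and buys nothing extra.
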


\begin{proof}
Let $A$ be pointwise stabiliser of $\Sigma(\xi)$, so that $(X, A)$ is an amenable \cat group by Lemma~\ref{lem:regular:amen_stab} and Proposition~\ref{prop:regular:1}(iv). 

Let  $S$ be a fully maximal sphere containing $\xi$ and $\eta' \in S$ be the point antipodal to $\eta$. Then $A_{\eta'}$ acts cocompactly on $P(\eta, \eta')$ by Proposition~\ref{prop:lifting}. Since $\eta$ is not regular, we have $S \subsetneqq \bd P(\eta, \eta')$. Applying Lemma~\ref{lem:tricycle:1} to the locally compact \cat group $(P(\eta, \eta'), A_{\eta'})$, we obtain $g \in A_{\eta'}$ such that $S \cup gS$ is a tricycle. Since $\{\eta, \eta'\}$ is a spherical factor of that tricycle, it must be contained in its equator.
\end{proof}

\subsection{Existence of spherical reflections}

\begin{lem}\label{lem:refl}
Let $(X, G)$ be a locally compact  \cat group with  $G$   totally disconnected.  Let $S, T \subset \bd X$ be a fully maximal spheres such that $S \cup T$ is a tricycle with equator $E$.

If both open hemispheres of $S$ bounded by $E$ contain regular cocompact points,  then $\Stab_G(S)$ contains the orthogonal reflection through $E$. 
\end{lem}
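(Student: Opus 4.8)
The plan is to argue by induction on $d=\dim(\bd X)$, reducing a tricycle in $\bd X$ to one of dimension $d-1$ inside a transverse space. First set up notation: write the tricycle $S\cup T$ as a union $H_1\cup H_2\cup H_3$ of $d$-dimensional hemispheres with common boundary equator $E$, with $S=H_1\cup H_2$ and $T=H_1\cup H_3$; by Lemma~\ref{lem:criterion:tricycle} the centres $s_i$ of the $H_i$ are pairwise antipodal and $H_i\cap H_j=E$ for $i\neq j$. Let $\sigma$ be the orthogonal reflection of the round sphere $S$ through $E$: it is the unique non-trivial isometry of $S$ fixing $E$ pointwise, it interchanges $H_1$ and $H_2$, and for $d\geq 1$ it moves $s_1$. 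Since the pointwise stabiliser of the codimension-one equator $E$ in $\Isom(S)$ equals $\{\mathrm{id},\sigma\}$, it suffices to produce $g\in\Stab_G(S)$ with $g|_E=\mathrm{id}$ and $g(H_1)=H_2$; such a $g$ then satisfies $g|_S=\sigma$.

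For the base case $d=0$ one has $E=\varnothing$, $H_i=\{s_i\}$, $S=\{s_1,s_2\}$, and the hypothesis simply says that $s_1$ and $s_2$ are cocompact (automatically regular). As $\dim(\bd X)=0$ the space $X$ is Gromov hyperbolic and any two distinct boundary points are opposite, so $\Op(s_i)=\bd X\smallsetminus\{s_i\}$. By Corollary~\ref{cor:transitive}, $G_{s_1}$ acts transitively on $\Op(s_1)\ni s_2,s_3$ and $G_{s_2}$ on $\Op(s_2)\ni s_1,s_3$; hence $s_1,s_2,s_3$ lie in one $G$-orbit, so there is $c\in G$ with $cs_1=s_2$. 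Since $\{\,cd\,s_2 : d\in G_{s_1}\,\}=c\cdot\Op(s_1)=\bd X\smallsetminus\{s_2\}\ni s_1$, one picks $d\in G_{s_1}$ with $cd\,s_2=s_1$; then $g=cd$ stabilises the pair $S=\{s_1,s_2\}$ and interchanges its two points, i.e. $g|_S=\sigma$.

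For the inductive step $d\geq 1$, choose a cocompact point $\eta\in E$ and let $\eta'\in E$ be its antipode in $E$, equivalently in the fully maximal sphere $S$; by Proposition~\ref{prop:Leeb}, $\eta'\in\Op(\eta)$. Each sphere $H_i\cup H_j$ contains the antipodal pair $\{\eta,\eta'\}$, hence lies in $\Pi(\eta,\eta')=\bd P(\eta,\eta')$ (Lemma~\ref{lem:P=Pi}), so the whole tricycle lies in $\bd P(\eta,\eta')\cong\{\eta,\eta'\}\circ\bd X_\eta$ (Proposition~\ref{prop:lifting} and Lemma~\ref{lem:Links}). Writing $E=\{\eta,\eta'\}\circ E_0$ and using associativity of the join one gets $H_i=\{\eta,\eta'\}\circ(\{s_i\}\circ E_0)$, so the projection $\pi_\eta\colon\bd P(\eta,\eta')\smallsetminus\{\eta,\eta'\}\to\bd X_\eta$ carries the tricycle onto the $(d{-}1)$-dimensional tricycle $\{s_1,s_2,s_3\}\circ E_0\subset\bd X_\eta$, with equator $E_0$, taking $H_i$ to $H_i'=\{s_i\}\circ E_0$ and $S,T$ to the fully maximal spheres $S'=H_1'\cup H_2'$, $T'=H_1'\cup H_3'$. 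By Proposition~\ref{prop:lifting}, $G_{\eta,\eta'}$ acts continuously and cocompactly on $P(\eta,\eta')$, hence on the factor $X_\eta$, through a totally disconnected group; and the given regular cocompact points $\zeta_1\in\mathrm{int}(H_1)$, $\zeta_2\in\mathrm{int}(H_2)$ (neither equal to $\eta$ or $\eta'$) map under $\pi_\eta$ into $\mathrm{int}(H_1')$, resp. $\mathrm{int}(H_2')$, as regular cocompact points of $\bd X_\eta$ for this action. The inductive hypothesis applied to $X_\eta$ and $S'\cup T'$ then yields $h\in G_{\eta,\eta'}$ inducing on $S'$ the reflection through $E_0$; since $h$ fixes $\eta,\eta'$ and $S=\{\eta,\eta'\}\circ S'$, it stabilises $S$, fixes $E=\{\eta,\eta'\}\circ E_0$ pointwise, and interchanges $H_1$ and $H_2$, so $h|_S=\sigma$.

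The hard parts are two. First, one must know that $E$ contains a cocompact point, so that Proposition~\ref{prop:lifting} is applicable to $\eta$; I expect this to follow from the regular cocompact points supplied in $\mathrm{int}(H_1)$ and $\mathrm{int}(H_2)$ together with the chamber structure of Proposition~\ref{prop:regular:1} (the spherical supports meeting $E$). Second --- and this is the main obstacle --- one must check that $\pi_\eta$ sends a regular cocompact point $\zeta\neq\eta,\eta'$ to a regular cocompact point of $\bd X_\eta$ relative to the induced $G_{\eta,\eta'}$-action: cocompactness should descend because the stabiliser of $\zeta$ in $G_{\eta,\eta'}$ still acts cocompactly after restriction to the factor $X_\eta$, and regularity should descend because the transverse space of $\pi_\eta(\zeta)$ in $X_\eta$ is isometric to $X_\zeta$, whose visual boundary is a round sphere. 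The remaining verifications --- that $X_\eta$ is a proper \cat space, that the hypotheses of the lemma are genuinely inherited by $S'\cup T'$, and the bookkeeping of the join decompositions --- are routine, and once the base case and this descent are in place the induction closes at once.
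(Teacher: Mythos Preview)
Your inductive approach via transverse spaces is genuinely different from the paper's, but the two steps you flag as ``hard parts'' are real gaps that cannot be closed with the tools available at this point in the paper.

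First, you need a cocompact point $\eta\in E$ to invoke Proposition~\ref{prop:lifting}, but there is no reason one exists. Proposition~\ref{prop:regular:1} only guarantees that points of $\Sigma(\zeta_1)$ are cocompact; since $T=H_1\cup H_3$ is a fully maximal sphere through $\zeta_1\in\mathrm{int}(H_1)$, you get $\Sigma(\zeta_1)\subset S\cap T=H_1$, but nothing forces $\Sigma(\zeta_1)$ to touch $\partial H_1=E$. The statement that \emph{all} of $S$ consists of cocompact points is Proposition~\ref{prop:CocptSphere}, which is proved later and in fact uses Lemma~\ref{lem:refl} as an ingredient, so you cannot appeal to it here.

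Second, even granting a cocompact $\eta\in E$, the descent of ``regular cocompact'' under $\pi_\eta$ is not established. You need $G_{\eta,\eta',\pi_\eta(\zeta_i)}$ to act cocompactly on $P(\eta,\eta')$; knowing that $G_{\zeta_i}$ is cocompact on $X$ does not give this, since $\pi_\eta(\zeta_i)\neq\zeta_i$ in general and the triple stabiliser is a priori much smaller. Your claimed isometry $(X_\eta)_{\pi_\eta(\zeta)}\cong X_\zeta$ is likewise unproved: $X_\zeta$ is built from rays in all of $X$, not just in $P(\eta,\eta')$, and no such identification is available.

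The paper avoids induction altogether. It first isolates Lemma~\ref{lem:RootFixator}: if one hemisphere $H_0$ among three with common equator contains a regular cocompact point $\xi$, then some $g\in G$ fixes $H_0$ pointwise and sends either of the other two to the remaining one (immediate from the transitivity of the pointwise fixator of $\Sigma(\xi)$ on $\Op(\xi)$, via Corollary~\ref{cor:transitive} and Proposition~\ref{prop:regular:1}). Applying this three times --- once from $H$ to transport a regular cocompact point into the third hemisphere $H''$, then twice from $H'$ --- one obtains elements $g,h_1,h_2$ whose composite $r=h_1gh_2$ fixes $E$ pointwise and exchanges the two hemispheres of $S$. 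No cocompact point on $E$ is required, and no passage to a transverse space occurs.
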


We start with the following subsidiary claim. 

\begin{lem}\label{lem:RootFixator}
Let $(X, G)$ be a locally compact  \cat group with  $G$   totally disconnected.  

Let $H_0, H_1, H_2$ be the three  closed hemispheres of $Z$ with $\dim H_i = \dim \bd X$ and $H_0 \neq H_1, H_2$, having a common boundary equator, say $E$. If  $H_0$ contains a regular cocompact point, then there exists $g \in G$ fixing $H_0$ pointwise with $gH_1 = H_2$. 
\end{lem}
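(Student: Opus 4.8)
The plan is to realise the required $g$ inside the finite-index subgroup $B=\Fix_G(\Sigma(\xi))$ of $G_\xi$, where $\xi$ is a regular cocompact point of $H_0$, by invoking the transitivity statement of Lemma~\ref{lem:bij} for the pair $(X,B)$.

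First I would fix such a point $\xi\in H_0$ and locate it precisely. If $\dim\bd X\geq 1$, then $\xi$ cannot lie on $E$: the unions $S:=H_0\cup H_1$, $T:=H_0\cup H_2$ and $H_1\cup H_2$ are all fully maximal spheres by Lemma~\ref{lem:criterion:tricycle}, so if $\xi$ lay on $E\subseteq H_0\cap H_1\cap H_2$ it would lie in all three, whence $\Sigma(\xi)=\bigcap\sS_\xi X$ would be contained in $S\cap T\cap(H_1\cup H_2)=E$, contradicting $\dim\Sigma(\xi)=\dim\bd X>\dim E$ (Proposition~\ref{prop:regular:1}(i)). Hence $\xi\in H_0\setminus E$. (If $\dim\bd X=0$ then $E=\varnothing$, $H_0=\{s_0\}$, $\xi=s_0$, and the argument below degenerates harmlessly.) Since two distinct closed hemispheres sharing the boundary equator $E$ meet exactly along $E\subseteq H_0$, we have $S\cap T=H_0$; as $S,T\in\sS_\xi X$ this gives $\Sigma(\xi)\subseteq S\cap T=H_0$.

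Now set $B=\Fix_G(\Sigma(\xi))$. By Proposition~\ref{prop:regular:1}(iv) it has finite index in $G_\xi$, hence is a closed subgroup of $G$ acting cocompactly on $X$, and $\xi$ remains a regular cocompact point for $(X,B)$. Applying Lemma~\ref{lem:bij} to $(X,B)$, the group $B_\xi^{\mathrm u}$ acts transitively on $\sS_\xi X$; since $S,T\in\sS_\xi X$ there is $g\in B_\xi^{\mathrm u}\subseteq B$ with $gS=T$.

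It remains to verify that this $g$ has the stated properties. As $g\in B$, it fixes $\Sigma(\xi)$ pointwise; and $\Sigma(\xi)$ is a convex subset of the round sphere $S$ of full dimension $\dim S=\dim\bd X$ (Proposition~\ref{prop:regular:1}(i)), so it affinely spans $S$. Since $H_0$ is a hemisphere of each of the spheres $S$ and $T$ it spans the ambient Euclidean space of each; identifying these two ambient spaces via $H_0$, the isometry $g|_S\colon S\to T$ becomes a linear isometry fixing the spanning set $\Sigma(\xi)\subseteq H_0$ pointwise, hence is the identity. In particular $g$ fixes $H_0$ pointwise. Then $g(S\setminus H_0)=T\setminus H_0$, i.e.\ $g(H_1\setminus E)=H_2\setminus E$, and passing to closures yields $gH_1=H_2$.

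The crux is the choice of $g$: Corollary~\ref{cor:transitive} alone supplies an element of $G_\xi^{\mathrm u}$ carrying $S$ to $T$, but such an element need not fix the half-apartment $H_0$. Performing the transitivity step inside the finite-index subgroup $\Fix_G(\Sigma(\xi))$ is precisely what forces $g$ to fix the chamber $\Sigma(\xi)$ pointwise, after which the rigidity of isometries of round spheres automatically propagates this to all of $H_0$; correctly setting up that last propagation is the point I expect to need the most care.
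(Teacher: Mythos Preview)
Your proof is correct and follows essentially the same approach as the paper. Both arguments take $B=\Fix_G(\Sigma(\xi))$, use its cocompactness (Proposition~\ref{prop:regular:1}(iv)) to obtain transitivity on $\Op(\xi)$/$\sS_\xi X$, and then deduce that the resulting $g$ fixes $H_0$ pointwise from the fact that it fixes the full-dimensional set $\Sigma(\xi)\subseteq H_0$; the paper phrases the last step as ``$g$ fixes a neighbourhood of $\xi_0$ in $H_0$'', which is the same spanning argument you spell out.
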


\begin{proof}
Let $\xi_0 \in H_0$ be regular cocompact. By Lemma~\ref{lem:criterion:tricycle}, both sets $H_0 \cup H_1$ and $H_0 \cup H_2$ are round spheres. In particular, for $i \in \{1, 2\}$, the hemisphere $H_i$ contains a unique antipode $\xi_i$ of   $\xi_0$. By Lemma~\ref{lem:regular:amen_stab}, the group $G_{\xi_0}$ is amenable. Moreover it acts cocompactly on $X$, and contains the pointwise stabiliser of $\Sigma(\xi_0)$, say $B$,  as a finite index subgroup, see Proposition~\ref{prop:regular:1}(iv). In particular $B$ is cocompact on $X$, hence transitive on $\Op(\xi_0)$ by Corollary~\ref{cor:transitive}. Since $\xi_1, \xi_2 \in \Op(\xi_0)$ by Proposition~\ref{prop:Leeb}, we find $g \in B$ with $g\xi_1 = \xi_2$. Since $\xi_0$ is regular, we have $H_0 \cup H_i = \bd P(\xi_0, \xi_i)$ for $i=1, 2$. Therefore $g(H_0 \cup H_1) = H_0 \cup H_2$. Since $g$ moreover fixes a neighbourhood of $\xi_0$ in $H_0$, it follows that $g$ fixes $H_0$ pointwise. 
\end{proof}

\begin{proof}[Proof of Lemma~\ref{lem:refl}]
Let now $H$ and $H'$ be the two closed hemispheres of $S$ bounded by $E$, and let $H''$ be the closure of $T \setminus S$. By hypothesis, both $H$ and $H'$ contain an interior point which is regular cocompact. By Lemma~\ref{lem:RootFixator}, there is $g \in G$ fixing $H$ pointwise and mapping $H'$ to $H''$. In particular $H''$ contains an interior point which is regular cocompact.

A second application of Lemma~\ref{lem:RootFixator} yields an element $h_1 \in G$ fixing $H'$ pointwise and with $h_1 H''=H$. Noting that $H \cup H' \cup g\inv(H')$ is a tricycle by Lemma~\ref{lem:criterion:tricycle}, we may apply Lemma~\ref{lem:RootFixator} a third time so as to obtain an element $h_2 \in G$ fixing $H'$ pointwise and with $h_2 H= g\inv H'$.

Now we set $r = h_1 g h_2 \in G$. Then $r$ fixes $E$ pointwise since $g, h_1  $ and $h_2$ all do. Moreover we have 
$$r H  = h_1g h_2 H  = h_1 g g\inv H' = h_1 H' = H'$$
and
$$r H' = h_1g h_2 H' =   h_1g   H'= h_1 H''= H.$$
Thus $r$ preserves $S =  H  \cup H'$ and acts on $S$ as a spherical reflection fixing the equator $E$ pointwise, as desired.
\end{proof}

\subsection{Existence of a sphere of cocompact points}

The main result of this subsection is the following where, as in Proposition~\ref{prop:CregNonempty} above, the set of regular cocompact points of $\bd X$ is denoted by $\Creg$.

\begin{prop}\label{prop:CocptSphere}
Let $(X, G)$ be a locally compact  \cat group with  $G$   totally disconnected without a fixed point at infinity.  Let $S$ be  a fully maximal sphere   containing a regular cocompact point. 

For each $s \in S$, there exists $\xi \in \Creg \cap S$ such that $s \in \Sigma(\xi)$. In particular all points of $S$ are cocompact. 
\end{prop}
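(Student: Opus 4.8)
\textit{Setup.} Write $d = \dim(\bd X)$ and let $S$ be a fully maximal sphere containing a regular cocompact point $\xi_0 \in \Creg$. We are given $s \in S$ and must produce $\xi \in \Creg \cap S$ with $s \in \Sigma(\xi)$. By Proposition~\ref{prop:regular:1}, we know $\Sigma(\xi_0)$ is a full-dimensional $\pi$-convex spherical cap inside $S$, that its regular points are exactly its geometrically inner points, and that they are dense in $\Sigma(\xi_0)$. The plan is to move $\Sigma(\xi_0)$ around by suitable isometries so as to sweep out all of $S$ with (interiors of) translated chambers, and to keep track of cocompactness along the way.

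\textit{Key steps.} First I would reduce to finding, for each $s\in S$, \emph{some} regular cocompact point $\eta\in S$ whose spherical support $\Sigma(\eta)$ contains $s$; the final sentence (that every point of $S$ is cocompact) then follows since $\Sigma(\eta)$ consists entirely of cocompact points by Proposition~\ref{prop:regular:1}(iv). Second, I would exploit the reflections constructed in Lemma~\ref{lem:refl}: using Lemma~\ref{lem:tricycle:1} (applicable because $\xi_0$ is fixed by its amenable stabiliser $A=G_{\xi_0}$, or rather the amenable group $\Fix_A(\Sigma(\xi_0))\cdot A^{\mathrm u}$) together with Lemma~\ref{lem:tricycle:2}, one obtains, for each wall $E$ of $S$ bounding a panel of the chamber $\Sigma(\xi_0)$, a tricycle $S\cup T$ with equator $E$; if both open hemispheres of $S$ cut out by $E$ contain regular cocompact points — which they do, since $\Sigma(\xi_0)$ lies in one of them and reflecting through $E$ via an element of $\Stab_G(S)$ sends it into the other — then Lemma~\ref{lem:refl} gives the orthogonal reflection $r_E\in\Stab_G(S)$. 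Third, the group $W$ generated by all such reflections $r_E$ is finite (Lemma~\ref{lem:Bieberbach}), acts on $S$ as a spherical reflection group, and — this is the crux — the union $\bigcup_{w\in W} w\,\Sigma(\xi_0)$ covers $S$: indeed $\Sigma(\xi_0)$ is (the closure of) a chamber of the Coxeter complex $(S,W)$, since its walls are exactly the $E$'s, and the chambers of a spherical Coxeter complex tile the sphere. For the point $s$: pick $w\in W$ with $s\in w\,\Sigma(\xi_0)=\Sigma(w\xi_0)$ (using Proposition~\ref{prop:regular:1}(iii), $w\Sigma(\xi_0)=\Sigma(w\xi_0)$ and $w\xi_0$ is again regular cocompact since $w\in G$). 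Set $\xi = w\xi_0 \in \Creg\cap S$; then $s\in\Sigma(\xi)$, as required.

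\textit{Filling a gap.} A point needing care: $\Sigma(\xi_0)$ is defined as an intersection of fully maximal \emph{spheres} containing $\xi_0$, not a priori as a Coxeter chamber in the single sphere $S$. To identify it with a $W$-chamber, I would argue that $\Sigma(\xi_0)\cap S$ is a convex cap whose boundary consists of walls $E$ each of which, by Lemma~\ref{lem:tricycle:2} applied to the non-regular points of $\Sigma(\xi_0)$ supplied by Proposition~\ref{prop:regular:1}(ii), supports a tricycle, hence a reflection $r_E\in\Stab_G(S)$; conversely any wall of $(S,W)$ meeting the interior of $\Sigma(\xi_0)$ would, via its reflection, produce a non-trivial element of $\Stab_G(S)$ fixing a regular cocompact point of $\Sigma(\xi_0)$ pointwise on one side, contradicting Proposition~\ref{prop:regular:1}(iv)–(v) (finiteness of the orbit and of the index). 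So $\Sigma(\xi_0)$ is a union of $W$-chambers with no interior wall, i.e.\ a single closed chamber, and $W$ acts transitively on the chambers it meets.

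\textit{Main obstacle.} The hard part will be the passage from "reflections in individual walls of $\Sigma(\xi_0)$ exist" to "the group they generate is a genuine spherical reflection group whose chambers tile $S$ and have $\Sigma(\xi_0)$ as a fundamental domain". This requires verifying the hypotheses of Lemma~\ref{lem:refl} uniformly (both hemispheres of each wall contain regular cocompact points — handled by first reflecting $\Sigma(\xi_0)$ once), checking that the tricycles produced by Lemmas~\ref{lem:tricycle:1}–\ref{lem:tricycle:2} are the ones sitting along the correct walls, and finally invoking Lemma~\ref{lem:Bieberbach} to see $W$ is finite so that the standard theory of finite reflection groups on $S$ applies. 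The no-fixed-point hypothesis on $G$ is used (via Lemma~\ref{lem:tricycle:1}, which needs $S\subsetneq\bd X$) precisely to guarantee $S$ is a proper subset so that walls and tricycles genuinely exist.
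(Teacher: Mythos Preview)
There is a genuine gap, and it sits exactly where you locate the ``main obstacle''. To invoke Lemma~\ref{lem:refl} for a wall $E$ bounding $\Sigma(\xi_0)$, you need a regular cocompact point in \emph{each} open hemisphere of $S$ determined by $E$. One hemisphere contains $\xi_0$; for the other you write ``reflecting through $E$ via an element of $\Stab_G(S)$ sends it into the other'' and later ``handled by first reflecting $\Sigma(\xi_0)$ once''. But that reflection is the \emph{output} of Lemma~\ref{lem:refl}, not something you have in hand. Lemma~\ref{lem:RootFixator} only lets you move hemispheres around once you already know the target hemisphere exists in a tricycle; it does not manufacture a $\Creg$-point in the second hemisphere of $S$. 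So the bootstrap never gets started, and the reflection group $W$ you want to build is not available.

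Relatedly, you misplace the role of the no-fixed-point hypothesis. It is \emph{not} there merely to ensure $S\subsetneq\bd X$ (that case is in any event trivial since then $\Sigma(\xi_0)=S$). It is precisely what forces $\Creg\cap S$ to spread out over $S$ rather than clustering near $\Sigma(\xi_0)$. The paper uses it as follows. First (Lemma~\ref{lem:CocptSphere:step1}), one shows that the closed convex hull $Z$ of $\Creg\cap S$ is covered by $W$-translates of $\Sigma(\xi)$; the point is that in the closest-point argument there, the wall $E$ produced by Lemma~\ref{lem:tricycle:2} is \emph{automatically} crossed by $\Creg\cap S$ (since $\xi$ lies on one side and the target point $\zeta$, coming from $Z$, forces a $\Creg$-point on the other), so Lemma~\ref{lem:refl} applies without circularity. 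Second (Lemma~\ref{lem:BdFixedPt}), the absence of a global fixed point is used to show that every cocompact $z\in S$ admits some $z'\in\Creg\cap S$ with $\tangle z{z'}>\pi/2$; hence $\rad(Z)>\pi/2$, and Lemma~\ref{lem:RadSphericalConvex} forces $Z=S$. Your proposal never produces the analogue of Lemma~\ref{lem:BdFixedPt}, and without it there is no mechanism preventing $\Creg\cap S$ from lying entirely in a single hemisphere.

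A secondary issue: Lemma~\ref{lem:tricycle:2} gives a tricycle whose equator \emph{contains} the chosen non-regular point $\eta$, not a tricycle whose equator coincides with a prescribed face of $\Sigma(\xi_0)$; your identification of $\Sigma(\xi_0)$ with a Coxeter chamber is therefore also not justified at this stage.
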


The proof is divided into several steps, each stated in a separate lemma. The first one is valid without the hypothesis of absence of fixed points at infinity. 

 \begin{lem}\label{lem:CocptSphere:step1}
Let $(X, G)$ be a locally compact  \cat group with  $G$   totally disconnected. Let $S$ be  a fully maximal sphere and let $\xi \in \Creg \cap S$. Let $Z$ denote the closed convex hull of $\Creg \cap S$ and set $W = \Stab_G(S)/\Fix_G(S)$. 

For each $z \in Z$, there is $w \in W$ such that $wz \in \Sigma(\xi)$.
In particular the closure of $\Creg \cap S$ is convex, and all points of $Z$  are cocompact. 
\end{lem}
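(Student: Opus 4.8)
The plan is to recognise the sets $\Sigma(\zeta)$, for $\zeta$ ranging over $\Creg\cap S$, as the chambers of a spherical Coxeter complex structure on $Z$ whose Weyl group is a subgroup of $W$ (the latter being finite by Lemma~\ref{lem:Bieberbach}); the statement then reduces to the standard transitivity of a Coxeter group on its chambers, together with the fact that the chambers cover the complex. The technical heart will be producing enough reflections.

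First I would collect what Proposition~\ref{prop:regular:1} provides. Since regularity and cocompactness are $G$-invariant, the set $\Creg\cap S$ is $\Stab_G(S)$-invariant, so $Z$ is a closed convex $W$-invariant subset of the round sphere $S$, and $w\Sigma(\zeta)=\Sigma(w\zeta)$ for $w\in W$ and $\zeta\in\Creg\cap S$ (note $\Sigma(\zeta)\subseteq S$, since $S$ is itself a fully maximal sphere through $\zeta$). By Proposition~\ref{prop:regular:1}, each $\Sigma(\zeta)$ is a full-dimensional $\pi$-convex subset of $S$ whose set of regular points is dense in it and equals its interior in $S$, all of whose points are cocompact, and which satisfies $\Sigma(\zeta')=\Sigma(\zeta)$ whenever $\zeta'$ is a regular point of $\Sigma(\zeta)$. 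It follows that the distinct sets among the $\Sigma(\zeta)$ have pairwise disjoint $S$-interiors whose union is exactly $\Creg\cap S$, that each $\Sigma(\zeta)$ is contained in $Z$ (its regular points are dense in it and lie in $\Creg\cap S$), and that $W$ permutes these sets. Hence it suffices to prove that $W$ acts transitively on them and that $\bigcup_{w\in W}w\Sigma(\xi)$ is convex: granting this, the latter set is closed, convex and contains $\Creg\cap S$, so it contains the closed convex hull $Z$; it is also contained in $Z$ because $Z$ is $W$-invariant and $\Sigma(\xi)\subseteq Z$; so it equals $Z$, every $z\in Z$ lies in some $w\Sigma(\xi)$, which is the displayed assertion, and moreover every point of $Z$ is cocompact and $Z=\overline{\Creg\cap S}$.

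The decisive input is the existence of reflections: for each $\zeta\in\Creg\cap S$ and each codimension-one face $P$ of $\Sigma(\zeta)$, the orthogonal reflection of $S$ through the equator $E$ carrying $P$ should lie in $W$. To see this, pick $\eta$ in the relative interior of $P$; being a boundary point of $\Sigma(\zeta)$ in $S$, it is not geometrically inner in $\Sigma(\zeta)$, hence non-regular by Proposition~\ref{prop:regular:1}(ii). Lemma~\ref{lem:tricycle:2}, applied to the regular cocompact point $\zeta$ and the non-regular point $\eta\in\Sigma(\zeta)$, yields a full-dimensional tricycle $S\cup T$ containing $\zeta$ whose equator passes through $\eta$; one then has to check that this equator is precisely the wall $E$ supporting $P$ and that $\zeta$ lies off it, which I would do by passing to the transverse space $X_\eta$, where $\Sigma(\zeta)$ projects to a half-apartment-like set bounded, near $\pi_\eta(\zeta)$, by the equator of the corresponding tricycle in $\bd X_\eta$. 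Writing $H_0,H_1$ for the two closed hemispheres of $S$ bounded by $E$ with $\Sigma(\zeta)\subseteq H_0$, the open hemisphere $H_0\setminus E$ contains the regular cocompact point $\zeta$; to apply Lemma~\ref{lem:refl} I must also exhibit a regular cocompact point in $H_1\setminus E$. If $\Sigma(\zeta)$ has a neighbouring set $\Sigma(\zeta')$ sharing the face $P$, then $\zeta'\in H_1\setminus E$ does the job; otherwise, with $H_2$ the closure of $T\setminus S$, I would transport a regular cocompact point from $H_2\setminus E$ to $H_1\setminus E$ using the element furnished by Lemma~\ref{lem:RootFixator} (which fixes $H_0$ pointwise and interchanges $H_1$ and $H_2$), after a further application of Lemma~\ref{lem:RootFixator} to see that $H_2\setminus E$ contains such a point whenever $H_0\setminus E$ or $H_1\setminus E$ does. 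Lemma~\ref{lem:refl} then produces the reflection $r_E\in W$.

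With these reflections available, let $W_0\le W$ be generated by all reflections of $S$ through walls supporting a face of some $\Sigma(\zeta)$; then $(S,W_0)$ is a spherical Coxeter complex, and each $\Sigma(\zeta)$, being a full-dimensional convex subset of $S$ all of whose codimension-one faces lie on walls of $(S,W_0)$, is a union of closed $W_0$-chambers. A standard gallery argument in $(S,W_0)$ — following a geodesic between two regular cocompact points, which meets the walls one at a time, each crossing realised by a reflection in $W_0$ — shows that no wall of $(S,W_0)$ meets the interior of any $\Sigma(\zeta)$, so that each $\Sigma(\zeta)$ is a single $W_0$-chamber, that $W_0$ acts transitively on $\{\Sigma(\zeta)\}$, and that $\bigcup_{w\in W_0}w\Sigma(\xi)=Z$ is convex. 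This supplies the two reductions above and finishes the proof. The main obstacle is the reflection claim: concretely, matching the equator of the tricycle provided by Lemma~\ref{lem:tricycle:2} with the wall supporting a prescribed face of $\Sigma(\zeta)$, and guaranteeing that regular cocompact points lie in both open hemispheres bounded by that wall, which is exactly the hypothesis needed to invoke Lemma~\ref{lem:refl}.
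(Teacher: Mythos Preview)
Your approach is more structural than the paper's, but it has a real gap and the paper's argument is worth comparing since it sidesteps that gap entirely.

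\textbf{The gap.} The crux of your proof is to produce, for each codimension-one face $P$ of some $\Sigma(\zeta)$, the reflection through the supporting wall $E$. To invoke Lemma~\ref{lem:refl} you need regular cocompact points in \emph{both} open hemispheres of $S$ bounded by $E$. You have one in $H_0$ (namely $\zeta$), and in the case where no neighbouring $\Sigma(\zeta')$ exists on the other side you propose to manufacture one in $H_1$ by transporting from $H_2$ via Lemma~\ref{lem:RootFixator}. But Lemma~\ref{lem:RootFixator} only gives you elements fixing the hemisphere that \emph{already} contains a regular cocompact point and swapping the other two. Starting only from $\zeta\in H_0$, you get elements fixing $H_0$ and interchanging $H_1,H_2$; this does not place any regular cocompact point into $H_1$ or $H_2$. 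Your parenthetical ``$H_2\setminus E$ contains such a point whenever $H_0\setminus E$ or $H_1\setminus E$ does'' is exactly the circularity: you only know about $H_0$, and sending $H_0$ to itself gains nothing. So a priori there may be walls of your putative Coxeter complex across which no reflection is available, and the gallery argument collapses. There are secondary issues too: you assume a polytopal face structure for $\Sigma(\zeta)$, and you need the equator produced by Lemma~\ref{lem:tricycle:2} to coincide with the wall supporting your chosen face $P$, which the lemma does not assert --- it only says the equator passes through $\eta$.

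\textbf{How the paper avoids this.} The paper does not attempt to build the Coxeter structure first. Instead it argues by contradiction on a single point: given $z\in Z$, choose $\zeta$ in the finite $W$-orbit of $z$ at minimal distance from $\xi$. If $\zeta\notin\Sigma(\xi)$, let $\eta$ be the exit point of the geodesic $[\xi,\zeta]$ from $\Sigma(\xi)$; then $\eta$ is non-regular, so Lemma~\ref{lem:tricycle:2} produces \emph{some} tricycle $S\cup T$ whose equator $E$ passes through $\eta$. Since $E$ lies in three fully maximal spheres it contains no point of $\Creg$, so $\xi\notin E$ and the geodesic meets $E$ only at $\eta$; hence $\zeta$ lies in the \emph{other} open hemisphere. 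Now comes the key point you are missing: because $\zeta\in Z$ is a convex combination of points of $\Creg\cap S$, and $\zeta$ is not in the closed hemisphere containing $\xi$, $\Creg\cap S$ cannot be contained in that hemisphere either --- so it meets the opposite open hemisphere automatically. This is exactly the hypothesis needed for Lemma~\ref{lem:refl}, and the resulting reflection moves $\zeta$ strictly closer to $\xi$, contradicting minimality. In short, the paper uses the very membership $\zeta\in Z$ to supply the second regular cocompact point, rather than trying to construct reflections for all walls in advance.
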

\begin{proof}
The set $\Creg  \cap \Sigma(\xi)$ is open and dense in $\Sigma(\xi)$ by Proposition~\ref{prop:regular:1}(ii). Moreover every point of $\Sigma(\xi)$ is cocompact by Proposition~\ref{prop:regular:1}(iv). In particular $\Sigma(\xi) \subset Z$.

By Lemma~\ref{lem:Bieberbach}, the  group $W = \Stab_G(S)/\Fix_G(S)$ is finite.  Therefore, we may choose $\zeta \in \{wz \mid w \in W\}$ at minimal  distance from $\xi$. 
We next consider a geodesic segment 
$\gamma$ joining $\xi$ to $\zeta$. If $\tangle \zeta \xi < \pi$, then  $\gamma$ is entirely contained in $Z$  since $Z$ is $\pi$-convex by definition. In case $\tangle \zeta \xi = \pi$, we remark that $\gamma$ contains some points of $\Sigma(\xi)$ other than $\xi$ since $\xi$ lies in the interior of $\Sigma(\xi)$. This  implies that $\gamma$ is entirely contained in $Z$ in that case as well. 

Let now $\eta$ the unique
point of $\gamma$ such that
$$\gamma \cap \Sigma(\xi) =
[\xi, \eta].$$ %
If $\eta = \zeta$, then $\zeta \in \Sigma(\xi)$
and we are done. Otherwise, Proposition~\ref{prop:regular:1}(ii) ensures that
the point $\eta$ is not regular. In particular $\eta \neq \xi$ and hence $\eta$ is an interior point of the geodesic segment $\gamma$. 

By Lemma~\ref{lem:tricycle:2}, there is a fully maximal sphere $T$ such that $S \cup T$ is a tricycle whose equator, say $E$, contains $\eta$. Remark that   $E$ can be expressed as the intersection of three different fully maximal spheres. Therefore Proposition~\ref{prop:regular:1} ensures that $E$ does not contain any regular cocompact point. In particular we have $\xi \not \in E$, so $\xi$ belongs to one of the two open hemispheres of $S$ bounded by $E$.  
This  implies that $\gamma \cap E = \{\eta\}$. In particular $\zeta$ lies in the other open hemisphere of $S$ bounded by $E$. Therefore $Z$ is note entirely contained in the closed hemisphere $H$ bounded by $E$ and containing $\xi$. It follows that $\Creg \cap S$   meets both of the open hemispheres of $S$ determined by $E$.  Hence Lemma~\ref{lem:refl} affords a reflection $r \in W$  fixing $E$ pointwise. In particular $r$ fixes $\eta$. Since neither $\xi $ nor $\zeta$ belongs to $E$, it follows that    $r\zeta$ is   strictly closer to $\xi$ than $\zeta$. This contradicts the definition of $\zeta$. 

The other claimed assertions follow from Proposition~\ref{prop:regular:1}(ii) and~(iv).
\end{proof}

In the proof of Proposition~\ref{prop:CocptSphere}, the hypothesis of absence of fixed points at infinity will be exploited through the following. 

\begin{lem}\label{lem:BdFixedPt}
Let $(X, G)$ be a locally compact  \cat group with  $G$   totally disconnected without a fixed point at infinity.  Let $S$ be  a fully maximal sphere   containing a regular cocompact point. 

For each cocompact point $z \in S$, there exists  $z' \in \Creg \cap S$ with $\tangle z {z'} > \frac \pi 2$.
\end{lem}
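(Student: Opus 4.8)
The plan is to argue by contradiction: suppose that for the given cocompact point $z\in S$ every $z'\in\Creg\cap S$ satisfies $\tangle z{z'}\leq\pi/2$, i.e.\ $\Creg\cap S$ lies in the closed hemisphere of $S$ centred at $z$. By Lemma~\ref{lem:CocptSphere:step1} the closure $Z$ of $\Creg\cap S$ is then a closed convex subset of $S$ consisting entirely of cocompact points; being contained in a closed hemisphere it is a proper subset of $S$, and since it contains a full‑dimensional chamber $\Sigma(\xi)$ (for $\xi\in\Creg\cap S$, which is nonempty by hypothesis) it is not a subsphere, so $\rad(Z)\leq\pi/2$ by Lemma~\ref{lem:RadSphericalConvex}. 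Let $c$ be the canonical circumcentre of $Z$ (Proposition~1.4 in~\cite{BalserLytchak_Centers}). Then $c$ is cocompact, and since $\Creg$ is $G$‑invariant and $S$ is $\Stab_G(S)$‑invariant, the set $Z$ is $\Stab_G(S)$‑invariant, so $\Stab_G(S)$ fixes $c$. (If $\dim\bd X=0$ the conclusion is immediate: every boundary point is then regular, so the cocompact points coincide with $\Creg$, and the hypothesis together with the transitivity of $G_z^{\mathrm u}$ on $\Op(z)$ from Corollary~\ref{cor:transitive} forces $\Creg=\{z\}$, making $z$ a global fixed point of $G$. Likewise, if $\bd X$ is a round sphere then by Proposition~\ref{prop:Leeb} and Lemma~\ref{lem:Bieberbach} every point of $\bd X=S$ is regular and cocompact, so $\Creg\cap S=S$, contradicting the standing assumption. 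Hence we may assume $\dim\bd X\geq1$ and that $\bd X$ is not a round sphere.)

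The heart of the matter is to promote ``$c$ is fixed by $\Stab_G(S)$'' to ``$c$ is fixed by $G$'', which contradicts the absence of a global fixed point at infinity. First assume $c$ regular, so $c\in\Creg\cap S$ lies in the interior of its chamber $\Sigma(c)\subseteq Z$. The finite group $W=\Stab_G(S)/\Fix_G(S)$ (Lemma~\ref{lem:Bieberbach}) permutes the chambers inside $S$ and fixes $c$, hence stabilises $\Sigma(c)$; combined with Lemma~\ref{lem:CocptSphere:step1} (applied with $\xi=c$) this gives $Z=W\cdot\Sigma(c)=\Sigma(c)$, a single chamber. Moreover $W$ contains no non‑trivial reflection: the fixed equator of such a reflection would, by Lemma~\ref{lem:refl}, be a tricycle equator, hence an intersection of three fully maximal spheres, and so contain no regular cocompact point by Proposition~\ref{prop:regular:1}, whereas it would have to contain the regular point $c$. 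Feeding this back through Lemma~\ref{lem:tricycle:2} (tricycle equators through the non‑regular, i.e.\ boundary, points of $\Sigma(c)$) and Lemma~\ref{lem:RootFixator} one checks that $\Creg\cap S'=\Sigma(c)$ for every fully maximal sphere $S'$ through $c$, using that $G_c^{\mathrm u}$ is transitive on $\sS_c X$ (Lemma~\ref{lem:bij}); one then upgrades this to $\Creg=\Sigma(c)$, whence $G$, preserving $\Creg$, fixes the circumcentre $c$ of $\Sigma(c)$ — the desired contradiction. When $c$ is not regular, one reduces to the regular case by replacing $c$ with its projection onto a chamber $\Sigma(\xi)$, $\xi\in\Creg\cap S$, and invoking Corollary~\ref{cor:proj}, in the spirit of the proof of Proposition~\ref{prop:regular:2}.

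The main obstacle is precisely this upgrade from $\Stab_G(S)$‑fixed to $G$‑fixed — in particular the passage from ``$\Creg$ meets $S$ in a single chamber'' to ``$\Creg$ is a single chamber''. The set‑up and the construction of $c$ are routine; it is the global control of the regular cocompact locus, for which the hypothesis of no fixed point at infinity is the only available lever, that requires orchestrating the reflection lemma, the tricycle constructions, Proposition~\ref{prop:regular:1}, and an induction on $\dim\bd X$ through transverse spaces as in the proofs of Lemmas~\ref{lem:tricycle:1} and~\ref{lem:tricycle:2}.
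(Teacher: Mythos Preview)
Your setup is fine through the construction of the circumcentre $c$ of $Z$ and the observation that $\Stab_G(S)$ fixes $c$. The genuine gap is exactly where you locate it: promoting $c$ from a $\Stab_G(S)$-fixed point to a $G$-fixed point. Your sketch for this does not go through. The claim that a reflection in $W$ would have a tricycle equator is not what Lemma~\ref{lem:refl} says---that lemma produces a reflection \emph{from} a tricycle, not the converse---so you cannot conclude that the fixed equator avoids $\Creg$. The subsequent steps (``one checks that $\Creg\cap S'=\Sigma(c)$ for every $S'$'', ``one then upgrades this to $\Creg=\Sigma(c)$'') are assertions, not arguments; the reduction to the case $c$ regular via Corollary~\ref{cor:proj} is likewise only gestured at and does not obviously preserve the circumcentre property. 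In short, the contradiction is never closed.

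The paper's proof avoids this difficulty entirely by working with the \emph{global} $G$-orbit of a regular cocompact point $\xi$ rather than staying inside $S$. One sets $s=\sup_{g\in G}\tangle z{g\xi}$; if $s<\pi/2$ the orbit has a $G$-fixed circumcentre (contradiction), if $s>\pi/2$ one is done immediately, and if $s=\pi/2$ a short extension argument inside $\Sigma(g_n\xi)$ pushes past $\pi/2$. This yields $\eta\in\Creg$ with $\tangle z\eta>\pi/2$, but possibly $\eta\notin S$. The key tool you are missing is Proposition~\ref{prop:2CocptPts}: choosing $\xi_0\in\Creg\cap S$ close to $z$, that proposition produces a point $\eta'\in\overline{G_{\xi_0,\xi_0'}.\eta}\cap S$ with $\tangle{\xi_0}{\eta'}=\tangle{\xi_0}\eta$, and since cocompact orbits are c\^one-closed (Lemma~\ref{lem:CocptOrbit}) one has $\eta'\in G\eta\subset\Creg$. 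Two triangle inequalities then give $\tangle z{\eta'}>\pi/2$. Thus the hypothesis of no global fixed point is spent in a single, direct three-case argument on the supremum $s$, rather than via an elaborate structural analysis of $\Creg$.
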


\begin{proof}
We first claim that there exists some  regular cocompact 
point $\eta \in \bd X$ such that  $\tangle z \eta > \pi /2$. In order to establish this, we let $\xi \in S$ be the regular   cocompact given by hypothesis and define
$$s = \sup_{g \in G} \tangle z {g \xi}.$$

If $s <  \pi/ 2$, then the $G$-orbit of $\xi$ is a $G$-stable set of radius~$< \pi/ 2$. By~\cite[II.2.7]{Bridson-Haefliger}, this orbit admits therefore a unique circumcentre which is $G$-fixed, contradicting the assumptions.

If $s  > \pi /2$, then there exists $g \in G$ such that $\tangle z {g\xi} > \pi /2$ and we may define $\eta= g\xi$.

Assume now that $s =  \pi/ 2$. Let $(g_n)$ be a sequence of elements of $G$ such that
$$\lim_n \tangle z {g_n\xi} =\frac \pi 2.$$
Let also $\vareps>0$ be such that the spherical cap $S \cap B(\xi, \vareps)$ is entirely contained in the interior of  $\Sigma(\xi)$ and thus consists of cocompact regular points, see Proposition~\ref{prop:regular:1}(ii). For $n$ large enough, we have $\tangle z {g_n\xi} >  (\pi-\vareps)/ 2$. By Lemma~\ref{lem:AntipodalPt}, the geodesic segment joining $z$ to $g_n\xi$ may be prolonged in $\Sigma(g_n \xi)$ beyond $g_n \xi$ by a piece of length $\vareps$ consisting entirely of regular cocompact points. By construction, the points lying of the second half of this geodesic extension are cocompact regular points  at distance~$> \pi /2$ of $z$, and we define $\eta$ to be one of these points. This proves the claim.

\smallskip
Set $\delta = \tangle z \eta - \pi /2$. Thus $\delta >0$. By Lemma~\ref{lem:CocptSphere:step1}, there exists $\xi_0 \in \Creg \cap S$ such that $\tangle
{\xi_0}  z < \delta /2$. Let $\xi'_0 \in S$ be the point antipodal to $\xi_0$. Since $\xi_0$ is regular, we have $S = \bd P(\xi_0, \xi'_0)$ so that Proposition~\ref{prop:2CocptPts}
provides a  point
$$\eta' \in
\overline{G_{\xi_0}\eta} \cap S$$
with  $\tangle {\xi_0} {\eta'}
= \tangle {\xi_0} \eta$. By Lemma~\ref{lem:CocptOrbit}, the
$G$-orbit of $\eta$ is closed, hence $\eta' \in G\eta$ and $\eta'$
is thus regular and cocompact. Invoking twice the triangle inequality, we obtain successively
$$\begin{array}{rcl}
\tangle z {\eta'} & \geq & \tangle {\xi_0} {\eta'} - \tangle {\xi_0} z\\
& = & \tangle {\xi_0} {\eta} - \tangle {\xi_0} z\\
& \geq & \tangle {z} {\eta} - 2 \tangle {\xi_0} z\\
& > &  \tangle {z} {\eta} -  \delta\\
& = & \frac \pi 2.
\end{array}$$
Thus we have constructed a regular cocompact point $\eta' \in \Creg \cap S$
with $\tangle z {\eta'}  > \pi /2$. 
\end{proof}

\begin{proof}[Proof of Proposition~\ref{prop:CocptSphere}]
Let $\Creg$ be the set of regular cocompact points and $Z$ denote the closed  convex hull of $\Creg \cap S$. In view Lemma~\ref{lem:CocptSphere:step1}, all points of $Z$ are cocompact, and it suffices to show that $Z = S$. By Lemma~\ref{lem:BdFixedPt} we have $\rad(Z)>\pi/2$. Hence Lemma~\ref{lem:RadSphericalConvex} ensures that $Z$ is a round sphere. Since $Z$ has non-empty interior by Proposition~\ref{prop:regular:1}, we conclude that $Z=S$, as required.
\end{proof}

\subsection{Antipodal pairs of cocompact points are opposite}

We recall that two boundary points of a \cat space are called antipodal if their Tits-distance is~$\geq \pi$, and opposite if they are the end points of a geodesic line. Opposite points are always antipodal, but the converse need not hold in general. As before, we denote by $C$ (resp. $\Creg$) the set of cocompact (resp. regular cocompact) points. The goal of this subsection is to prove the following. 

\begin{lem}\label{lem:AntipodalPair}
Let $(X, G)$ be a locally compact  \cat group with  $G$   totally disconnected. Any antipodal pair $\{\xi, \zeta\}$ with   $\xi \in \Creg$ and $\zeta \in C$  is contained in a common   fully maximal sphere. In particular    $\xi$ and $\zeta$ are opposite.
\end{lem}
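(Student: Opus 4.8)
The plan is to reduce the statement to the single assertion that $\zeta\in\Op(\xi)$. Indeed, if a common fully maximal sphere $T$ exists, then $T$ is a round sphere, so $\tangle\xi\zeta\geq\pi$ forces $\zeta$ to be the antipode of $\xi$ in $T$, and antipodes in a fully maximal sphere are opposite by Proposition~\ref{prop:Leeb}; conversely, once $\zeta\in\Op(\xi)$, the set $\bd P(\xi,\zeta)$ is a fully maximal sphere containing both $\xi$ and $\zeta$ by Lemma~\ref{lem:regular:criterion} (this is where regularity of $\xi$ enters). So from now on the goal is to produce a geodesic line from $\xi$ to $\zeta$.

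First I would fix an opposite point adapted to $\xi$: since $\xi$ is cocompact it has an opposite, and by Proposition~\ref{prop:MaxFlat} together with Proposition~\ref{prop:Leeb} I may take $\xi'$ to be the antipode of $\xi$ inside a fully maximal sphere $S\ni\xi$, so that $\xi'\in\Op(\xi)$ and, $\xi$ being regular, $S=\bd P(\xi,\xi')$ is a round sphere (Lemma~\ref{lem:regular:criterion}). Applying Proposition~\ref{prop:2CocptPts} to the cocompact point $\xi$, to $\xi'\in\Op(\xi)$ and to $\eta=\zeta$, the closure $\overline{G_{\xi,\xi'}.\zeta}$ contains a point $\zeta'\in\bd P(\xi,\xi')=S$ with $\tangle\xi{\zeta'}=\tangle\xi\zeta\geq\pi$. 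Since $S$ is a round sphere its unique point at distance $\pi$ from $\xi$ is $\xi'$, whence $\zeta'=\xi'$ and therefore $\xi'\in\overline{G_{\xi,\xi'}.\zeta}\subseteq\overline{G.\zeta}$.

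Next I would invoke the cocompactness of $\zeta$: by Lemma~\ref{lem:CocptOrbit} the orbit $G.\zeta$ is closed in the cône topology, so $\xi'=g.\zeta$ for some $g\in G$. Hence $\zeta=g^{-1}.\xi'$ lies in the fully maximal sphere $g^{-1}S$, and the antipode of $\zeta$ in $g^{-1}S$ is $g^{-1}.\xi$, which is opposite to $\zeta$ (Proposition~\ref{prop:Leeb}) and, being a $G$-translate of $\xi$, is regular and cocompact. By Corollary~\ref{cor:OppositePair} the transverse space $X_\zeta$ is isometric to $X_{g^{-1}\xi}$, whose boundary is a round sphere; thus $\bd X_\zeta$ is a round sphere, i.e. $\zeta$ is regular, and so $\zeta\in\Creg$ as well.

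The remaining step is the delicate one, and I expect it to be the main obstacle: so far we have only placed the $G$-conjugate $g^{-1}.\xi$ — not $\xi$ itself — opposite to $\zeta$, and all the purely group-theoretic ways of removing this ambiguity turn out to be circular. To conclude I would exploit the regularity more strongly. Running the argument of the second paragraph for \emph{every} $\xi'\in\Op(\xi)$ shows $\Op(\xi)\subseteq\overline{G_\xi.\zeta}$; combining this with the transitivity of $G_\xi^{\mathrm u}$ on $\Op(\xi)$ (Corollary~\ref{cor:transitive}), one should be able to deduce the symmetric inclusion $\zeta\in\overline{G_\xi.\xi'}=\overline{\Op(\xi)}$, i.e. that $\zeta$ is a cône-limit of points opposite $\xi$. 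It then remains to prove that an \emph{antipode} of $\xi$ lying in the closure of $\Op(\xi)$ is in fact \emph{opposite} to $\xi$: this I would establish by a limiting argument on geodesic lines, using that each line from $\xi$ to a nearby opposite $\eta_n$ may be placed in the canonical fully maximal flat of $P(\xi,\eta_n)\cong\RR\times X_\xi$ (whose boundary is a round sphere, by regularity of $\xi$), extracting a limiting line from $\xi$ to $\zeta$, and invoking Proposition~\ref{prop:upper:semi} to check that the limiting angle at the limit point is still $\pi$.
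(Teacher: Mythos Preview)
Your reduction in the first paragraph is correct, as are steps~2--4: Proposition~\ref{prop:2CocptPts} together with the closedness of the orbit $G.\zeta$ (Lemma~\ref{lem:CocptOrbit}) does give $\xi'\in G.\zeta$, and hence $\zeta\in\Creg$. The gap is in step~5. The assertion that from $\Op(\xi)\subseteq\overline{G_\xi.\zeta}$ one can ``deduce the symmetric inclusion $\zeta\in\overline{\Op(\xi)}$'' is not justified: there is no principle that symmetrises such an inclusion, and the elements $g_n\in G_{\xi,\xi'}$ produced by Proposition~\ref{prop:2CocptPts} \emph{fix} $\xi'$, so one cannot simply invert them to push $\xi'$ toward $\zeta$. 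Even granting $\zeta\in\overline{\Op(\xi)}$, the proposed limiting argument is also incomplete: you would need to pin down the lines $\ell_n$ so that a subsequence stays in a compact region and to verify that the negative endpoint of the limit line is $\zeta$ rather than some other accumulation point; neither is arranged.

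The paper takes a different and shorter route that avoids this difficulty. Rather than trying to produce a line from $\xi$ to $\zeta$ directly, it shows that $\xi$ is an \emph{isolated} antipode of $\zeta$ inside a full-dimensional spherical cap, and then applies the Parachute Lemma~\ref{lem:para}. Concretely: by Proposition~\ref{prop:regular:1}(ii) and~(v) there is an open cap $U\subset\Sigma(\xi)$ of dimension $\dim(\bd X)$ containing $\xi$ with $U\cap G.\xi=\{\xi\}$. Every point of $U$ is cocompact by Proposition~\ref{prop:regular:1}(iv), while Lemma~\ref{lem:7.14} says that all cocompact antipodes of $\zeta$ lie in the single orbit $G.\xi$; hence $U\cap\Ant(\zeta)=\{\xi\}$. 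The Parachute Lemma then yields a fully maximal sphere through $\xi$ and $\zeta$, and Proposition~\ref{prop:Leeb} makes them opposite. The ingredients you are missing are precisely Proposition~\ref{prop:regular:1}(v) and Lemma~\ref{lem:7.14}, which together replace your dynamical step~5 by a finiteness argument.
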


We shall use the following fact.

\begin{lem}\label{lem:7.14}
Let $(X, G)$ be a locally compact  \cat group. Given a cocompact point $\zeta \in \bd X$,  the set of all cocompact points in $\Ant(\zeta)$ is contained in a single $G$-orbit.
\end{lem}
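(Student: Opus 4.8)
The plan is to deduce everything from the transitivity of $G_\zeta$ on the set $\Op(\zeta)$ of points opposite to $\zeta$, established in Corollary~\ref{cor:transitive}. Fix once and for all a point $\zeta_0 \in \Op(\zeta)$, which exists since $\zeta$ is cocompact. I will show that every cocompact point $\xi$ antipodal to $\zeta$ lies in the single orbit $G.\zeta_0$, which is exactly the assertion.

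First I would produce a radial sequence for $\xi$ lying \emph{inside} $G_\xi$. Since $\xi$ is cocompact, $G_\xi$ acts cocompactly on $X$; choosing a geodesic ray $\ro$ pointing to $\xi$ with $\ro(0)=p$, cocompactness yields elements $g_n \in G_\xi$ with $\{g_n.\ro(n)\}$ contained in a fixed compact set, so that $(g_n)$ is radial for $\xi$ by Remark~\ref{rem:radial}. After extracting, I may assume $g_n.\ro(n)\to x\in X$ and $g_n.\zeta \to \zeta^*\in\bd X$ in the c\^one topology, using properness of $X$.

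The key step is to apply Proposition~\ref{prop:upper:semi} with $\xi$ in the role of the radial point and $\zeta$ as the auxiliary point. Since each $g_n$ fixes $\xi$, the sequence $(g_n.\xi)$ is constant, so the proposition gives $\aangle{x}{\xi}{\zeta^*} = \tangle{\xi}{\zeta}$. As $\xi$ and $\zeta$ are antipodal we have $\tangle{\xi}{\zeta}=\pi$, hence the rays $[x,\xi)$ and $[x,\zeta^*)$ concatenate into a bi-infinite geodesic line, so $\zeta^*\in\Op(\xi)$. Now $\zeta^*$ is an accumulation point of the points $g_n.\zeta$ with $g_n\in G_\xi \le G$, so $\zeta^*$ lies in the closure of the $G$-orbit of $\zeta$; since $\zeta$ is cocompact, that orbit is closed by Lemma~\ref{lem:CocptOrbit}, whence $\zeta^* = h.\zeta$ for some $h\in G$. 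Translating a geodesic line from $\xi$ to $h.\zeta$ by $h\inv$ shows $h\inv.\xi\in\Op(\zeta)$, so by Corollary~\ref{cor:transitive} there is $k\in G_\zeta$ with $h\inv.\xi = k.\zeta_0$, i.e.\ $\xi = hk.\zeta_0\in G.\zeta_0$.

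The only genuinely delicate point is the passage from ``$\zeta^*$ is an accumulation point of a sequence in the $G$-orbit of $\zeta$'' to ``$\zeta^*$ actually belongs to that orbit'': this is precisely where the cocompactness of $\zeta$ is used, via the closedness of its c\^one-orbit (Lemma~\ref{lem:CocptOrbit}). Without that hypothesis the argument breaks down. All remaining steps are routine bookkeeping with Proposition~\ref{prop:upper:semi} and the transitivity of $G_\zeta$ on $\Op(\zeta)$.
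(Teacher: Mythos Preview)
Your proof is correct and follows essentially the same approach as the paper's: both combine Proposition~\ref{prop:upper:semi}, Lemma~\ref{lem:CocptOrbit}, and Corollary~\ref{cor:transitive} in the same way. The only difference is a harmless symmetry---the paper takes its radial sequence inside $G_\zeta$ (using cocompactness of $\zeta$) and then invokes the closedness of the orbit of $\eta$ (using cocompactness of $\eta$), whereas you swap these roles.
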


\begin{proof}
The following argument is borrowed from the proof of Lemma~7.14 from~\cite{Caprace-Monod_structure} (whose hypotheses are too strong to be invoked directly); we include it here for the sake of completeness. Let $\eta \in \Ant(\zeta)$ be cocompact. Choose a radial sequence $(g_n) \subset G_\zeta$ for $\eta$ and let $\eta'$ be an accumulation point of the sequence $(g_n \eta)$. By Proposition~\ref{prop:upper:semi}, the points $\zeta$ and $\eta'$ are now opposite. Since $\eta$ is cocompact, it follows from Lemma~\ref{lem:CocptOrbit} that its $G$-orbit is closed. Thus we have shown that the $G$-orbit of $\eta$ contains a point $\eta'$ which is opposite $\zeta$. The conclusion follows since by cocompactness of $\zeta$, the stabiliser $G_\zeta$ is transitive on $\Op(\zeta)$ (see Corollary~\ref{cor:transitive}).
\end{proof}

\begin{proof}[Proof of Lemma~\ref{lem:AntipodalPair}]
By Proposition~\ref{prop:regular:1}(ii) and~(v), there is an open spherical cap $U \subset \Sigma(\xi)$ containing $\xi$ and such that $U \cap G.\xi = \{\xi\}$, where $G.\xi$ denotes  the $G$-orbit of $\xi$. Since every point of $U$ is cocompact by Proposition~\ref{prop:regular:1}(iv), it follows from Lemma~\ref{lem:7.14} that $U \cap \Ant(\zeta)= \{\xi \}$. 
Therefore, the Parachute Lemma~\ref{lem:para}  ensures that $\xi$ and $\zeta$ are contained in a common fully maximal sphere. Since any such sphere bounds a fully maximal flat by Proposition~\ref{prop:Leeb}, the points $\xi$ and $\zeta$ are indeed opposite.
\end{proof}

\begin{remark}
In case $X$ is geodesically complete, every cocompact point  from $\bd X$ has a discrete orbit for the Tits metric, see~\cite[Proposition~7.15]{Caprace-Monod_structure}. Therefore, Lemma~\ref{lem:7.14} shows in that case that the collection of cocompact antipodes of any given (possibly singular) cocompact point is Tits-discrete. Using the Parachute Lemma as above, this implies that the statement of Lemma~\ref{lem:AntipodalPair} holds for all cocompact points, and not only the regular ones. This will in fact follow \emph{a posteriori} from Proposition~\ref{prop:SphBldg}, even without the assumption that $X$ be geodesically complete. 
\end{remark}

\subsection{The set of cocompact points is a  building}

We are now ready to establish the following key step towards Theorem~\ref{thm:NotGoedComplete}.  

\begin{prop}\label{prop:SphBldg}
Let $(X, G)$ be a locally compact  \cat group. Assume that $G$ is totally disconnected, has a cocompact amenable subgroup and does not fix any point in $\bd X$. 

Then the set $C \subset \bd X$ of cocompact points, endowed with the Tits-metric, is a metric spherical building and $\dim C = \dim \bd X$.
\end{prop}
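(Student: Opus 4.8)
The strategy is to verify the hypotheses of the Balser--Lytchak criterion (Theorem~\ref{thm:BalserLytchak}) for the subspace $C \subseteq \bd X$, equipped with the induced Tits metric. First I would record that $C$ is non-empty and that it is a $\pi$-convex subset of $\bd X$: indeed, by Proposition~\ref{prop:CregNonempty} the set $\Creg$ is non-empty, and given two cocompact points at Tits-distance $<\pi$, any geodesic between them lies in a spherical cap of the form $\Sigma(\xi)$ (via Proposition~\ref{prop:regular:1}, after possibly passing through a regular point and using that $\Sigma(\xi)$ is convex with $\xi$ in its interior), hence consists of cocompact points; together with Lemma~\ref{lem:AntipodalPair} this shows $C$ is $\pi$-convex, so it is a genuine \catun space in its own right. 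Its dimension is $\dim(\bd X)$ because $C$ contains a fully maximal sphere by Proposition~\ref{prop:CocptSphere}, and of course $\dim C \leq \dim \bd X$.

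Next I would establish the crucial hypothesis: \emph{every antipodal pair in $C$ lies in a $d$-dimensional round sphere contained in $C$}, where $d = \dim \bd X$. The key is to first reduce to an antipodal pair in which one point is regular. Given an antipodal pair $\{\zeta, \zeta'\}$ in $C$, pick a regular cocompact point $\xi$ near $\zeta$ inside some $\Sigma$-cap; then, arguing as in Lemma~\ref{lem:AntipodalPair}, I would use the transitivity of $G_\zeta^{\mathrm u}$ on $\Op(\zeta)$ (Corollary~\ref{cor:transitive}) together with radial sequences and Proposition~\ref{prop:upper:semi} to move $\zeta'$ by an element of $G$ fixing (a cap around) $\xi$ to an opposite of $\xi$; more efficiently, since $\xi \in \Creg$ and $\zeta'$ is antipodal to $\zeta$, one applies Lemma~\ref{lem:AntipodalPair} directly to suitable regular perturbations. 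The cleanest route: given an antipodal pair $\{\zeta,\zeta'\}$ in $C$, by Lemma~\ref{lem:BdFixedPt}-type arguments and density of $\Creg$ in the caps, find $\xi\in\Creg$ with $\xi$ antipodal to $\zeta'$ and $\zeta\in\Sigma(\xi)$; then Lemma~\ref{lem:AntipodalPair} puts $\{\xi,\zeta'\}$ in a fully maximal sphere $S$, and Proposition~\ref{prop:CocptSphere} shows every point of $S$ is cocompact, so $S\subseteq C$; finally, since $\zeta\in\Sigma(\xi)$ and $\Sigma(\xi)$ is the intersection of all fully maximal spheres through $\xi$, and by the transitivity of $G_\xi^{\mathrm u}$ on $\sS_\xi X$ (Lemma~\ref{lem:bij}) one can rotate $S$ about $\xi$ to a fully maximal sphere through both $\zeta$ and $\xi$, then repeat to absorb $\zeta'$. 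I would need to be careful here and may instead argue symmetrically: apply the regular-point trick to both $\zeta$ and $\zeta'$ simultaneously, producing $\xi \in \Creg \cap \Sigma(\zeta)$ and $\xi' \in \Creg \cap \Sigma(\zeta')$ with $\xi$ opposite $\xi'$, and then pass through a chain of fully maximal spheres (each inside $C$ by Proposition~\ref{prop:CocptSphere}) connecting the configuration.

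The remaining hypothesis of Theorem~\ref{thm:BalserLytchak} is that $C$ contains a non-empty open subset with compact closure, i.e. that $C$ is \emph{locally compact} somewhere. This follows because $C$ contains a fully maximal sphere $S$ (Proposition~\ref{prop:CocptSphere}), and within $S$ the spherical cap $S \cap B(\xi, \varepsilon)$ around a regular cocompact $\xi$ equals $\Sigma(\xi) \cap B(\xi,\varepsilon)$ up to a neighbourhood and is relatively open in $C$; being a subset of a round sphere it has compact closure. I should check that this cap is actually open \emph{in $C$}, not merely in $S$: this is where I would invoke Proposition~\ref{prop:regular:1}(i),(ii) — $\Sigma(\xi)$ has full dimension and its regular points are its geometrically inner points, so a small cap around $\xi$ is a neighbourhood of $\xi$ in $\bd X$ restricted to $C$, hence open in $C$. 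With all three hypotheses verified, Theorem~\ref{thm:BalserLytchak} yields that $C$ is a metric spherical building, and $\dim C = \dim \bd X$ was noted above.

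\textbf{Main obstacle.} The delicate point is the antipodal-pair condition: showing that an \emph{arbitrary} antipodal pair of cocompact points (not just regular ones) lies in a fully maximal sphere inside $C$. Lemma~\ref{lem:AntipodalPair} only handles the case where one point is regular, so I expect to spend most of the effort on a bootstrapping argument that connects a general cocompact antipodal pair to a regular one via chains of fully maximal spheres, each kept inside $C$ by Proposition~\ref{prop:CocptSphere}, while controlling Tits distances using Proposition~\ref{prop:upper:semi} and the parachute/tricycle machinery. Verifying that $S \cap B(\xi,\varepsilon)$ is open in $C$ (and not just in $S$) is a second, smaller subtlety requiring care with the two topologies on $\bd X$.
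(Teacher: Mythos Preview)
Your overall plan---verify the Balser--Lytchak hypotheses for $C$---is exactly the paper's plan, and your identification of the main obstacle is correct. But your proposed attacks on that obstacle do not work as stated, and you are missing the one technical idea that resolves it.

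The paper does not attempt to handle convexity, the antipodal-pair condition, and the local-compactness condition separately. Instead it proves a single stronger statement: fixing one fully maximal sphere $S\subset C$ and a point $\xi\in\Creg\cap S$, \emph{the $G_\xi$-orbit of every cocompact point meets $S$}. This is obtained in two steps. First, for any $\zeta\in C$, Lemma~\ref{lem:AntipodalPt} gives an antipode of $\zeta$ in $S$, and Lemma~\ref{lem:7.14} then places $\zeta$ in the $G$-orbit of a point of $S$; hence $\zeta$ lies in some $G$-translate $S'$ of $S$, and in particular $S'\subset C$. Second, inside $S'$ pick $\xi'\in\Ant(\xi)$ so that the geodesic $[\xi,\xi']$ passes through $\zeta$ (Lemma~\ref{lem:AntipodalPt} again); now Lemma~\ref{lem:AntipodalPair} applied to the pair $(\xi,\xi')$ gives a fully maximal sphere $S''$ through both, and regularity of $\xi'$ forces the geodesic, hence $\zeta$, into $S''$; finally Lemma~\ref{lem:bij} lets $G_\xi$ carry $S''$ to $S$. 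Once this is in hand, all three hypotheses fall out at once: any two cocompact points $\zeta_1,\zeta_2$ can be moved into $S$ by first sending $\zeta_1$ into $\Sigma(\xi)$ and then using $G_\xi$ on $\zeta_2$ (this gives convexity and the antipodal condition simultaneously), and $B(\xi,\vareps)\cap C\subset S$ follows because any such point is $G_\xi$-moved into $\Sigma(\xi)$ and hence already lies in $\Sigma(\xi)$.

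Your suggested routes do not achieve this. The assertion ``find $\xi\in\Creg$ antipodal to $\zeta'$ with $\zeta\in\Sigma(\xi)$'' has no justification: there is no mechanism that produces a regular point simultaneously antipodal to a prescribed point and having a second prescribed point in its chamber. The ``rotate $S$ about $\xi$ to absorb $\zeta$, then repeat to absorb $\zeta'$'' idea fails because rotating to absorb $\zeta'$ will in general expel $\zeta$. And your argument that a small cap is open in $C$ presupposes exactly what needs to be proved, namely that nearby cocompact points cannot escape $S$; this is not a consequence of Proposition~\ref{prop:regular:1} alone but requires the orbit argument above.
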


\begin{proof}
As before, we let $C \subset \bd X$ denote the set of cocompact points and $\Creg \subset C$ the set of regular cocompact ones. 

By Proposition~\ref{prop:CregNonempty}, the set $\Creg$ is non-empty. Moreover every cocompact point is contained in a fully maximal sphere by Proposition~\ref{prop:MaxFlat}, and if such a sphere contains a point of $\Creg$, then it is entirely contained in $C$ by Proposition~\ref{prop:CocptSphere}. In particular, there exists at least one fully maximal sphere, say $S$, entirely  contained in $C$.

\setcounter{claim}{0}
\begin{claim}\label{claim:SphBdg1}
The $G$-orbit of every cocompact point meets $S$.
\end{claim}

Indeed, let $z \in C$. By Lemma~\ref{lem:AntipodalPt}, there exists  $\xi \in  \Ant(z) \cap S$. Since $\xi$ is cocompact, the set $\Ant(\xi) \cap C$ is contained in a single $G$-orbit by Lemma~\ref{lem:7.14}. Since $\Ant(\xi) \cap C$ clearly contains the unique element of $\Ant(\xi) \cap S$, we deduce that $G$-orbit of $z$ meets $S$, as claimed.

\begin{claim}\label{claim:SphBdg2}
Let $\xi \in \Creg \cap S$. The $G_\xi$-orbit of every cocompact point meets $S$.
\end{claim}

Let $\zeta \in C$. 
If $\zeta$ is antipodal to $\xi$, then $\zeta \in \Op(\xi)$ by Lemma~\ref{lem:AntipodalPair} and hence, there exists $g \in G_\xi$ with $g\zeta \in S$ by Corollary~\ref{cor:transitive}. We are done in this case.

Suppose now that $\tangle \xi  {\zeta}< \pi$. By Claim~\ref{claim:SphBdg1}, the point $\zeta$ belongs to some fully maximal sphere $S'$ which is a $G$-translate of $S$. In particular $S' \subset C$. By Lemma~\ref{lem:AntipodalPt}, there exist $\xi' \in S' \cap \Ant(\xi)$ and a geodesic segment $\gamma$ from $\xi$ to $\xi'$ passing through $\zeta$. By Lemma~\ref{lem:AntipodalPair}, the points  $\xi$ and $\xi'$ are contained in a common fully maximal sphere, say $S''$. Corollary~\ref{cor:OppositePair} implies that $\xi'$ is regular.  Proposition~\ref{prop:regular:1}(ii)  then ensures that  $\Sigma(\xi') \subset S' \cap S''$ contains a neighbourhood  of $\xi'$ in $S''$, so that  the geodesic segment $\gamma$ contains a sub-segment of positive length entirely contained in $S''$. Since $S''$ is convex and also contains both endpoints of $\gamma$, we infer that $\gamma$ is entirely contained in $S''$. In particular $\zeta \in S''$.  Lemma~\ref{lem:bij} affords an element $g \in G
 _\xi$ with $gS'' =S$, so that  $g\zeta \in S$, as required.   The claim stands proven. 

\begin{claim}\label{claim:SphBdg3}
Any two points of $C$ are contained in a common fully maximal sphere. In particular $C$ is convex. 
\end{claim}

Let   $\zeta_1, \zeta_2 \in C$. By Claim~\ref{claim:SphBdg1}, there is $g \in G$ with $g\zeta_1 \in S$. By Proposition~\ref{prop:CocptSphere},  there exists  $\xi \in  \Creg \cap S$ such that $g\zeta_1 \in \Sigma(\xi)$. By Claim~\ref{claim:SphBdg2} there exists $h \in G_\xi$ with $h g \zeta_2 \in S$. Since $h$ fixes $\xi$, it stabilises $\Sigma(\xi)$ so that $hg \zeta_1 \in \Sigma(\xi) \subset S$. Hence the pair $\{\zeta_1, \zeta_2\}$ is contained in the fully maximal sphere $(hg)\inv S$.

\begin{claim}\label{claim:SphBdg4}
Let $\xi \in \Creg \cap S$. Then there exists $\vareps >0$ such that $B(\xi, \vareps) \cap C \subset S$. In particular the Tits-closure of ${B(\xi, \vareps)} \cap C$ is Tits-compact. 
\end{claim}

By Proposition~\ref{prop:regular:1}(ii), there exists $\vareps >0$ such that $B(\xi, \vareps) \cap S \subset \Sigma(\xi)$. Given $z \in B(\xi, \vareps) \cap C$, there is $g \in G_\xi$ with $gz \in B(\xi, \vareps) \cap S$ by Claim~\ref{claim:SphBdg2}. Hence $z \in g\inv\Sigma(\xi) = \Sigma(\xi)$. The claim follows.

\medskip
We can now conclude the proof by applying Theorem~\ref{thm:BalserLytchak} to the set $C$. Indeed $C$ is a \catun space by Claim~\ref{claim:SphBdg3}. It contains the fully maximal sphere $S$, so that $\dim(\bd X) = \dim(S) \leq \dim(C) \leq \dim(\bd X)$. The other required hypotheses follow from Claims~\ref{claim:SphBdg3} and~\ref{claim:SphBdg4}. Hence $C$ is a spherical building.
\end{proof}

\subsection{Every boundary point is cocompact}

Here is the last step in the proof of that $\bd X$ is a building.  

\begin{prop}\label{prop:AllCocompact}
Let $(X, G)$ be a locally compact  \cat group. Assume that $G$ is totally disconnected, has a cocompact amenable subgroup and does not fix any point in $\bd X$. 
Then every $\xi \in \bd X$ is cocompact. In particular $\bd X$ is a spherical building.
\end{prop}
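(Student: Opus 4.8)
The plan is to deduce from Proposition~\ref{prop:SphBldg} that $\bd X=C$, where $C\subseteq\bd X$ denotes the set of cocompact points; once this is shown, $\bd X$ is a spherical building by that same proposition. The first step is to record one elementary consequence of the results already proved: \emph{for every regular cocompact point $\zeta\in\Creg$ one has $\Op(\zeta)\subseteq C$.} Indeed, pick $\zeta'\in\Op(\zeta)$ (possible since $\zeta$ is cocompact); by Lemma~\ref{lem:regular:criterion} the regularity of $\zeta$ forces $\bd P(\zeta,\zeta')$ to be a fully maximal sphere, and since it contains the regular cocompact point $\zeta$, Proposition~\ref{prop:CocptSphere} shows it lies entirely in $C$; in particular $\zeta'\in C$. (Similarly, Corollary~\ref{cor:OppositePair} shows that within a fully maximal sphere $S_0\subseteq C$ the antipode of a regular point is again regular cocompact, a fact I expect to need below.)

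Granting this, the proposition reduces to the claim that \emph{every $\xi\in\bd X$ is antipodal to some point of $\Creg$}. Indeed, if $\zeta\in\Creg$ satisfies $\tangle\xi\zeta\geq\pi$, then Lemma~\ref{lem:AntipodalPair} upgrades this to $\xi$ being \emph{opposite} to $\zeta$, whence $\xi\in\Op(\zeta)\subseteq C$, so $\xi$ is cocompact.

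To prove the claim I would argue as follows. Fix a fully maximal sphere $S_0\subseteq C$ (one exists by the proof of Proposition~\ref{prop:SphBldg}); recall that $\Creg\cap S_0$ is dense in $S_0$ by Propositions~\ref{prop:CocptSphere} and~\ref{prop:regular:1}(ii). If $\tangle\xi\zeta\geq\pi$ for some $\zeta\in\Creg$ there is nothing to prove, so assume $\tangle\xi\zeta<\pi$ for every $\zeta\in\Creg$. By Lemma~\ref{lem:AntipodalPt} the sphere $S_0$ contains a point $\zeta^+$ antipodal to $\xi$; approximating $\zeta^+$ by points of $\Creg\cap S_0$ and using lower semicontinuity of the Tits angle (Lemma~\ref{lem:lower:semi}) together with the standing assumption, one sees that $\tangle\xi{\zeta^+}=\pi$ exactly and that $\zeta^+$ is a \emph{non-regular} cocompact point, hence lies in a proper face of a chamber of $C$. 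From here I see two possible routes. The first is an induction on $d=\dim\bd X$: letting $\zeta^{++}$ be the antipode of $\zeta^+$ in $S_0$, the group $G_{\zeta^+,\zeta^{++}}$ acts continuously and cocompactly on $P(\zeta^+,\zeta^{++})\cong\RR\times X_{\zeta^+}$ (Proposition~\ref{prop:lifting}), with $\dim\bd X_{\zeta^+}=d-1$ and $\bd X_{\zeta^+}$ not a round sphere, and one transports the statement down to $X_{\zeta^+}$ before lifting a regular cocompact antipode back to $\bd X$; the base case $d=0$ is immediate, for then $X$ is Gromov hyperbolic, so every boundary point other than a fixed $\zeta\in\Creg$ lies in $\Op(\zeta)\subseteq C$ and $\bd X=C$. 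The second route keeps $\xi$ fixed and combines the Parachute Lemma~\ref{lem:para} with a genericity argument: as the auxiliary point in Lemma~\ref{lem:AntipodalPt} runs over the dense open set $\Creg\cap S_0$, the induced antipode of $\xi$ should sweep out a non-negligible part of $S_0$ and therefore meet $\Creg$, contradicting the standing assumption.

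The main obstacle is exactly this final step. All the reductions above dispose of any $\xi$ that happens to admit a regular cocompact antipode, but forcing the existence of such an antipode for an \emph{arbitrary} $\xi$ is the real content: it requires either a precise understanding of how the antipodes of $\xi$ meet a fixed apartment $S_0$ of $C$ relative to its chamber structure, or — in the inductive approach — verifying that the transverse space $X_{\zeta^+}$ again satisfies \emph{all} the hypotheses of the proposition, namely that its isometry group is totally disconnected (clear), cocompact (Proposition~\ref{prop:lifting}), contains a cocompact amenable subgroup, and has no fixed point at infinity. The last two conditions are the delicate ones, as they are not obviously inherited by stabilisers of boundary points, and I expect the bulk of the work to lie there.
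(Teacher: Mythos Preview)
Your reduction hinges on an illegitimate use of Lemma~\ref{lem:AntipodalPair}. That lemma requires \emph{both} members of the antipodal pair to be cocompact (one in $\Creg$, the other in $C$); but in your setting $\xi\in\bd X$ is arbitrary and its cocompactness is precisely what you are trying to establish. So even if you succeed in finding $\zeta\in\Creg$ with $\tangle{\xi}{\zeta}\geq\pi$, you cannot invoke Lemma~\ref{lem:AntipodalPair} to promote this to $\xi\in\Op(\zeta)$. This collapses the whole reduction, and neither of your proposed routes (induction on the transverse space, or a genericity argument for antipodes) repairs it: the induction fails because, as you yourself note, the ``no fixed point at infinity'' hypothesis is not inherited by $X_{\zeta^+}$, and the genericity sketch gives no control on where in $S_0$ the antipodes of $\xi$ fall.

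The paper's argument sidesteps the search for a \emph{regular} cocompact antipode entirely. The key step you are missing is a lemma valid for \emph{arbitrary} $z\in\bd X$: for every $\zeta\in\Creg$, the set $\Sigma(\zeta)\cap\Ant(z)$ contains at most one point. (The proof takes a radial sequence for $z$ inside the pointwise stabiliser $B$ of $\Sigma(\zeta)$ and uses Proposition~\ref{prop:upper:semi} to show that any two antipodes of $z$ in $\Sigma(\zeta)$ become antipodes of a common cocompact limit point, hence coincide.) Since $S_0$ is covered by finitely many chambers $\Sigma(\zeta)$, this makes $\Ant(z)\cap S_0$ finite, hence discrete. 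Now the Parachute Lemma applies around \emph{any} $z'\in\Ant(z)\cap S_0$ (regular or not), giving a fully maximal sphere through $z$ and $z'$; Proposition~\ref{prop:Leeb} makes them opposite, and the general version of your first observation (any point opposite a cocompact point is cocompact, which holds for all $\zeta\in C$ and not just $\zeta\in\Creg$, by transitivity of $G_\zeta$ on $\Op(\zeta)$) finishes the proof.
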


\begin{lem}\label{lem:OppCocompact}
Let $(X, G)$ be a locally compact  \cat group. Assume that $G$ is totally disconnected, has a cocompact amenable subgroup and does not fix any point in $\bd X$.

Any point opposite to a cocompact point is itself cocompact.
\end{lem}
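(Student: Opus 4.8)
The statement asserts that if $\xi\in\bd X$ is cocompact and $\xi'\in\Op(\xi)$, then $\xi'$ is cocompact. My plan is to first reduce to a regular point via the building structure established in Proposition~\ref{prop:SphBldg}, and then exploit the transitivity of root-type groups. Here is the outline. By Proposition~\ref{prop:SphBldg}, the set $C$ of cocompact points is a metric spherical building of dimension $\dim\bd X$; in particular $\xi\in C$ lies in some chamber-like set, and by Proposition~\ref{prop:regular:1}(ii) the regular cocompact points are dense in $\Sigma(\xi)$ (when $\xi$ is itself regular) — but $\xi$ need not be regular a priori. The cleanest route is: pick a regular cocompact point $\zeta\in\Creg$ (which exists by Proposition~\ref{prop:CregNonempty}), and use the building structure of $C$ to move $\xi$ and $\xi'$ around. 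Concretely, by Claim~\ref{claim:SphBdg3} in the proof of Proposition~\ref{prop:SphBldg}, the pair $\{\xi,\xi'\}$, both being cocompact, lies in a common fully maximal sphere $S\subset C$; thus $\xi'\in C$ already — wait, this would need $\xi'$ to be cocompact to start. So that shortcut is circular.

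Instead I would argue directly. Let $\xi$ be cocompact with $\xi'\in\Op(\xi)$; I want to show $G_{\xi'}$ acts cocompactly on $X$. By Proposition~\ref{prop:lifting} applied to the cocompact point $\xi$, we have $P(\xi,\xi')\cong\RR\times X_\xi$ and the group $G_{\xi,\xi'}$ acts cocompactly on $P(\xi,\xi')$. The key additional input is Corollary~\ref{cor:transitive}: $G_\xi^{\mathrm u}$ acts transitively on $\Op(\xi)$. Now the plan is to show that $G_\xi^{\mathrm u}$ together with $G_{\xi,\xi'}$ generates enough of $G_{\xi'}$ to force cocompactness. More precisely: take a fully maximal sphere $S$ with $\xi\in S$ (Proposition~\ref{prop:MaxFlat}); let $\xi''\in S$ be the antipode of $\xi$, which is opposite $\xi$ by Proposition~\ref{prop:Leeb}. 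One can try to compare $\xi'$ and $\xi''$: since both lie in $\Op(\xi)$, there is $u\in G_\xi^{\mathrm u}$ with $u\xi''=\xi'$. Then $G_{\xi'}=uG_{\xi''}u\inv$ once we also know $u\xi=\xi$, which holds since $u\in G_\xi$. Hence it suffices to show $\xi''$ is cocompact. But $\xi''\in S$ with $\xi\in S$ as well, and if $S$ happens to contain a regular cocompact point then $S\subset C$ by Proposition~\ref{prop:CocptSphere}, giving $\xi''\in C$.

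So the real task reduces to: \emph{given that $\xi$ is cocompact, produce a fully maximal sphere through $\xi$ that contains a regular cocompact point.} This is exactly where I expect the main obstacle to lie, since Proposition~\ref{prop:CocptSphere} requires the sphere to already contain a regular cocompact point, and Proposition~\ref{prop:regular:1} only gives regular cocompact points in $\Sigma(\zeta)$ for $\zeta$ already in $\Creg$. The way around this is to use the building $C$: by Proposition~\ref{prop:SphBldg}, $\xi\in C$ and — crucially — in a spherical building every point lies in an apartment together with any prescribed point; combined with Claim~\ref{claim:SphBdg1} (the $G$-orbit of every cocompact point meets the fixed sphere $S_0\subset C$ containing a regular cocompact point) we get $g\in G$ with $g\xi\in S_0$. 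Then $g\xi$ lies in a fully maximal sphere $S_0$ containing points of $\Creg$, so $g\xi''\in gS$ — more directly, applying $g\inv$: the sphere $g\inv S_0$ is a fully maximal sphere through $\xi$ which, being a $G$-translate of $S_0\subset C$, is entirely contained in $C$ and contains regular cocompact points (the $G$-translates of those in $S_0$). Call this sphere $S$. Let $\xi''\in S$ be the antipode of $\xi$; then $\xi''\in C$, i.e.\ $\xi''$ is cocompact, and $\xi''\in\Op(\xi)$ by Proposition~\ref{prop:Leeb}. By Corollary~\ref{cor:transitive} there is $u\in G_\xi^{\mathrm u}\le G_\xi$ with $u\xi''=\xi'$; since $u$ fixes $\xi$ we get $G_{\xi'}=uG_{\xi''}u\inv$, and since $G_{\xi''}$ acts cocompactly (as $\xi''$ is cocompact) so does $G_{\xi'}$. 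Therefore $\xi'$ is cocompact. I would double-check the one delicate point: that $g\inv S_0$ really does contain regular cocompact points — this is clear because regularity and cocompactness are $G$-invariant notions, so $g\inv(\Creg\cap S_0)\subset\Creg\cap g\inv S_0$ is nonempty. That completes the plan.
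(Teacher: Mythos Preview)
Your proposal is correct and follows essentially the same approach as the paper: find a fully maximal sphere $S\subset C$ through $\xi$, take the antipode $\xi''\in S$ of $\xi$ (which is then cocompact and opposite $\xi$ by Proposition~\ref{prop:Leeb}), and conjugate by an element of $G_\xi$ using transitivity on $\Op(\xi)$ (Corollary~\ref{cor:transitive}). The paper's proof is simply more direct in producing $S$: since Proposition~\ref{prop:SphBldg} asserts that $C$ is a metric spherical building with $\dim C=\dim\bd X$, every point of $C$ (in particular $\xi$) lies in an apartment, which is automatically a fully maximal sphere contained in $C$; your detour through Claim~\ref{claim:SphBdg1} and $G$-translating $S_0$ is unnecessary once you invoke the building axioms.
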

\begin{proof}
Let $\xi$ be cocompact. By Proposition~\ref{prop:SphBldg}, the point $\xi$ is contained in a fully maximal sphere $S$ entirely consisting of cocompact points. The antipode of $\xi$ in $S$ is opposite $\xi$ by Proposition~\ref{prop:Leeb}. The conclusion follows since the stabiliser $G_\xi$ acts cocompactly and $X$ and is thus  transitive on $\Op(\xi)$ by Corollary~\ref{cor:transitive}.
\end{proof}

\begin{lem}\label{lem:Chamber}
Let $(X, G)$ be a locally compact  \cat group. Assume that $G$ is totally disconnected, has a cocompact amenable subgroup and does not fix any point in $\bd X$. 

Let $\xi \in \Creg$ and $z \in \bd X$. Then $\Sigma(\xi) \cap \Ant(z)$ contains at most one point. 
\end{lem}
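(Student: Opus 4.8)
The plan is to argue by contradiction and to exploit the fact that the pointwise stabiliser of the chamber $\Sigma(\xi)$ is large. Suppose $\mu_1\neq\mu_2$ are two points of $\Sigma(\xi)\cap\Ant(z)$. By Proposition~\ref{prop:regular:1}(iv) both $\mu_1$ and $\mu_2$ are cocompact, and the pointwise stabiliser $B=\Fix_G(\Sigma(\xi))$ has finite index in $G_\xi$; since $\xi$ is cocompact, $B$ therefore acts cocompactly on $X$, and it fixes $\mu_1$ and $\mu_2$.

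First I would fold $z$ towards a common opposite of the $\mu_i$ by means of a radial sequence chosen \emph{inside $B$}. Pick a ray $\rho$ pointing to $z$; by cocompactness of $B$ there are $g_n\in B$ with $g_n\rho(n)$ bounded, and after extracting we may assume $g_n\rho(n)\to x\in X$ and $g_nz\to z'\in\bd X$. By Remark~\ref{rem:radial} the sequence $(g_n)$ is radial for $z$, and $g_n\mu_i=\mu_i$ for all $n$ because $g_n\in B$. Applying Proposition~\ref{prop:upper:semi} with the radial point $z$ and the point $\mu_i$ (whose $g_n$-translates are constant) yields $\aangle x{z'}{\mu_i}=\tangle z{\mu_i}\geq\pi$. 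Since an Alexandrov angle never exceeds $\pi$, this forces $\aangle x{z'}{\mu_i}=\pi$, so $x$ lies on a geodesic line from $z'$ to $\mu_i$; that is, $z'\in\Op(\mu_i)$ for $i=1,2$.

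It then remains to invoke results already in hand. As $z'$ is opposite the cocompact point $\mu_1$, Lemma~\ref{lem:OppCocompact} shows that $z'$ is itself cocompact, so $z'\in C$. By Proposition~\ref{prop:SphBldg} (and the convexity of $C$ established in its proof) any two points of $C$ lie in a common fully maximal sphere; applying this to $z'$ and $\xi$ produces a fully maximal sphere $A$ with $z',\xi\in A$. Then $A\in\sS_\xi X$, whence $\Sigma(\xi)\subseteq A$ and in particular $\mu_1,\mu_2\in A$. Since $z'\in\Op(\mu_i)\subseteq\Ant(z')$ we have $\Td(z',\mu_i)\geq\pi$, while $\Td(z',\mu_i)\leq\pi$ because $A$ is a round sphere containing both points; hence $\mu_i$ is the unique antipode of $z'$ in $A$, and $\mu_1=\mu_2$, a contradiction.

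The only step that requires some care is making the radial sequence live inside the comparatively small group $B=\Fix_G(\Sigma(\xi))$ — which is precisely where Proposition~\ref{prop:regular:1}(iv) enters — together with the bookkeeping in Proposition~\ref{prop:upper:semi} that places $z'$ opposite $\mu_1$ and $\mu_2$ simultaneously. Once $z'$ is known to be a cocompact point, the contradiction is immediate from the building structure of $C$, so I do not anticipate a serious obstacle beyond this.
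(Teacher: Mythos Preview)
Your argument is correct and follows essentially the same route as the paper's: take a radial sequence for $z$ inside the cocompact group $B=\Fix_G(\Sigma(\xi))$, use Proposition~\ref{prop:upper:semi} to make the limit $z'$ opposite to every point of $\Sigma(\xi)\cap\Ant(z)$, invoke Lemma~\ref{lem:OppCocompact} to see that $z'$ is cocompact, and then place $\xi$, $z'$ (hence $\Sigma(\xi)$ and $z'$) in a common fully maximal sphere via Proposition~\ref{prop:SphBldg} to force uniqueness of the antipode. The only cosmetic slip is the clause ``$z'\in\Op(\mu_i)\subseteq\Ant(z')$'', which should read $\mu_i\in\Ant(z')$ (or simply be omitted, since $z'\in\Op(\mu_i)$ already gives $\Td(z',\mu_i)=\pi$).
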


\begin{proof}
We may assume that  $\Sigma(\xi) \cap \Ant(z)$ contains at least one point, say $\zeta$.
Let $B$ denote the pointwise stabiliser of $\Sigma(\xi)$ in $G$. By Proposition~\ref{prop:regular:1}(iv), the group $B$ acts cocompactly on $X$ and thus contains a sequence $(g_n)$ which is radial for $z$. Upon extracting we may assume that $(g_n z)$ converges to some $z' \in \bd X$.

Since $g_n \zeta = \zeta$ for all $n$, we deduce from Proposition~\ref{prop:upper:semi} that  $\zeta$ and $z'$ are opposite. Hence $z'$ is cocompact by Lemma~\ref{lem:OppCocompact}. By Proposition~\ref{prop:SphBldg},  there exists a fully maximal sphere $S$ containing $\xi$ and $z'$. Hence $S$ contains $\Sigma(\xi) \cup \{z'\}$. 

Let now $\eta \in \Sigma(\xi) \cap \Ant(z)$. Proposition~\ref{prop:upper:semi} again implies that $\eta$ and $z'$ are opposite. Hence $\eta = \zeta$ since $\eta \in \Sigma(\xi) \subset S$ and since $\zeta$ is the only point of $S$ antipodal to $z'$.
\end{proof}

\begin{proof}[Proof of Proposition~\ref{prop:AllCocompact}]
Let $z \in \bd X$. By Proposition~\ref{prop:SphBldg} the set $C$ of cocompact points contains a fully maximal sphere, say $S$, which is covered by a finite union of sets of the form $\Sigma(\xi)$ with $\xi \in \Creg$ (see Lemma~\ref{lem:Bieberbach} and Proposition~\ref{prop:CocptSphere}).  Each $\Sigma(\xi)$ contains at most one antipode of $z$ by Lemma~\ref{lem:Chamber}, so that the set $\Ant(z) \cap S$ is finite, hence discrete. Since the set $\Ant(z) \cap S$ is moreover non-empty by Lemma~\ref{lem:AntipodalPt}, we may pick $z' \in \Ant(z) \cap S$ and a neighbourhood $U$ of $z'$ in $S$ containing no other antipode of $z$. By the Parachute Lemma~\ref{lem:para}, the pair $\{z, z'\}$ is contained in a fully maximal sphere, and is thus opposite by Proposition~\ref{prop:Leeb}. We conclude via Lemma~\ref{lem:OppCocompact} that $z \in C$.
\end{proof}

\subsection{The spherical building is Moufang}

\begin{proof}[Proof of Theorem~\ref{thm:Moufang}]
We set $G= \Isom(X)$. By hypothesis $(X, G)$ is a locally compact \cat group.

Any two points of $\bd X$ are contained in a common fully maximal sphere. In particular any two antipodal points are opposite by Proposition~\ref{prop:Leeb}. 
Given $\eta \in \bd X$, there exists $\xi \in \Ant(\eta)$. By Corollary~\ref{cor:transitive}, the  group $G_\xi$ is transitive on $\Op(\xi) = \Ant(\xi)$, and the latter set contains at least two points since the building $\bd X$ is thick. In particular $G$ does not fix any point in $\bd X$. 

We may therefore invoke Theorem~\ref{thm:structure}. Since the spherical building of a Lie group of rank~$\geq 2$ is Moufang (see \S\ref{sec:Moufang}), we are reduced to the case where $X$ is an irreducible \cat space and $G= \Isom(X)$ is totally disconnected. 
Moreover, by Lemma~\ref{lem:MetricVsCombinatorial}, the metric building $\bd X$ may also be viewed as an irreducible combinatorial spherical building. Therefore, the Moufang condition is automatic if $\dim(\bd X) \geq 2$  by~\cite[Satz~1]{Tits77} or~\cite[Theorem~11.6]{Weiss03}. We   focus henceforth on the case $\dim(\bd X) = 1$. 

\setcounter{claim}{0}
\begin{claim}\label{cl:1}
Every interior point $\xi$ of a chamber $c$ of $\bd X$ is regular.  In particular $c = \Sigma(\xi)$.
\end{claim}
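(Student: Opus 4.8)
The plan is to show that an interior point $\xi$ of a chamber $c$ cannot lie in the equator of a tricycle, since equators of tricycles contain no regular cocompact points (by Proposition~\ref{prop:regular:1}, because such an equator is an intersection of three distinct fully maximal spheres). First I would note that since $\dim(\bd X) = 1$, chambers are arcs of the circle apartments, and every point of $\bd X$ is cocompact by Proposition~\ref{prop:AllCocompact}. In particular $\xi$ itself is cocompact. I now want to argue that $\xi$ is also regular.

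The key step is to suppose $\xi$ is \emph{not} regular and derive a contradiction. Since $\xi$ is cocompact but not regular, Lemma~\ref{lem:tricycle:2} cannot be applied to $\xi$ directly (it takes as input a regular point together with a non-regular point in its spherical support), so instead I would first produce some regular cocompact point $\zeta$ with $\xi \in \Sigma(\zeta)$: this follows from Proposition~\ref{prop:CocptSphere} applied to a fully maximal sphere $S$ through $\xi$ (which exists by Proposition~\ref{prop:MaxFlat}, and which is entirely cocompact and contains a regular point by the preceding results of the section). Given such a $\zeta\in\Creg$ with non-regular point $\xi\in\Sigma(\zeta)$, Lemma~\ref{lem:tricycle:2} furnishes a fully maximal sphere $T$ with $S\cup T$ a tricycle whose equator $E$ contains $\xi$. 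But $E$ is the intersection of three distinct fully maximal spheres, so by Proposition~\ref{prop:regular:1} it contains no regular cocompact point; in particular $\xi\notin\Creg$ is consistent so far — what I really want is a contradiction with $\xi$ lying in the \emph{interior of a chamber}. The contradiction comes from the building structure: $E$ is a codimension-one subsphere of the apartment $S$ that is a wall (its reflection lies in $\Stab_G(S)$ by Lemma~\ref{lem:refl}, once one checks both open hemispheres contain regular cocompact points, which they do by Lemma~\ref{lem:CocptSphere:step1} / Proposition~\ref{prop:CocptSphere}). A wall passing through an interior point of a chamber contradicts the definition of chamber (chambers are connected components of the complement of all walls). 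Hence $\xi$ is regular.

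For the ``in particular'' clause: once $\xi$ is regular and cocompact, $\Sigma(\xi)$ is defined and by Proposition~\ref{prop:regular:1}(i) it has full dimension $=\dim\bd X = 1$, with $\xi$ in its interior and every point cocompact. I would then identify $\Sigma(\xi)$ with the chamber $c$: since $\xi$ is an interior point of the chamber $c$ and $\Sigma(\xi)$ is a $1$-dimensional $\pi$-convex set containing $\xi$ in its interior all of whose boundary points are (by Proposition~\ref{prop:regular:1}(ii)) non-regular, while interior points are regular, the endpoints of $\Sigma(\xi)$ are exactly the non-regular points nearest $\xi$ on either side — which, by the wall argument above, are precisely the panels bounding $c$. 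Thus $\Sigma(\xi)$ and $c$ are arcs with the same endpoints and the same interior point $\xi$, hence $\Sigma(\xi) = c$.

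The main obstacle I anticipate is the bookkeeping in the last step: matching the ``combinatorial'' chamber $c$ (a connected component of the complement of walls in an apartment, in the Kleiner--Leeb metric sense) with the ``geometric'' set $\Sigma(\xi)$, and confirming that the non-regular points of $S$ adjacent to $\xi$ are exactly walls of the building, i.e. fixed equators of reflections in the Weyl group of $S$. This requires invoking Lemma~\ref{lem:refl} (hence checking both hemispheres carry regular cocompact points, via Proposition~\ref{prop:CocptSphere}) together with Lemma~\ref{lem:criterion:tricycle} to see that the relevant equators arise as tricycle equators, and then appealing to the fact (Lemma~\ref{lem:Chamber=Intersection} together with the structure of spherical Coxeter complexes in dimension $1$) that in a $1$-dimensional spherical building, chambers are exactly the minimal arcs between consecutive walls. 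Everything else is a direct citation of results already in the excerpt.
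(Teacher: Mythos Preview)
Your argument has a genuine gap rooted in a misreading of the context. Claim~\ref{cl:1} occurs inside the proof of Theorem~\ref{thm:Moufang}, whose standing hypotheses are only that $\bd X$ is a thick irreducible spherical building and that every boundary point has cocompact stabiliser; there is \emph{no} amenable cocompact subgroup assumed. Your appeal to Proposition~\ref{prop:AllCocompact} is therefore misplaced (cocompactness of all points is a hypothesis here, not a conclusion), and more seriously, your chain ``find a regular cocompact $\zeta$ with $\xi\in\Sigma(\zeta)$ via Proposition~\ref{prop:CocptSphere}, then invoke Lemma~\ref{lem:tricycle:2}'' is circular: Proposition~\ref{prop:CocptSphere} requires the sphere $S$ to already contain a regular cocompact point, and the machinery producing such points (Propositions~\ref{prop:CregNonempty}, \ref{prop:regular:2}) needs an amenable cocompact group as input. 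In the setting of Theorem~\ref{thm:Moufang}, the only route to regular cocompact points is precisely Claim~\ref{cl:1} itself. A secondary issue: even granting the tricycle, your assertion that its equator $E$ is a \emph{wall of the building} because Lemma~\ref{lem:refl} produces a reflection in $\Stab_G(S)$ conflates $G$ with the building's Weyl group $W$; walls are defined via $(S,W)$, not via $G$. (This particular step could be repaired by a direct building-theoretic argument that a tricycle equator in a thick building is a wall, but you do not give one.)

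The paper's proof is completely different and much shorter: it exploits the \emph{given} building structure directly. Picking $\xi'\in\Op(\xi)$, Kleiner--Leeb's Lemma~3.10.1 implies that $\Link(\xi,\xi')$ is a round sphere whenever $\xi$ lies in the interior of a chamber; by Lemma~\ref{lem:regular:criterion}(iv)$\Leftrightarrow$(i) this is exactly the regularity of $\xi$. The identification $c=\Sigma(\xi)$ is then immediate from Lemma~\ref{lem:Chamber=Intersection}: in a thick building the chamber through an interior point is the intersection of all apartments containing it, and apartments coincide with fully maximal spheres, so this intersection is $\Sigma(\xi)$ by definition. No tricycles, no reflections, no amenability are needed.
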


Let $\xi' \in \Op(\xi)$. Since belongs to the interior of $c$, it follows from~\cite[Lemma~3.10.1]{Kleiner-Leeb} that $\Link(\xi, \xi')$ is a round sphere. Therefore $\xi$
is regular by Lemma~\ref{lem:regular:criterion}. Hence $c = \Sigma(\xi)$ by Lemma~\ref{lem:Chamber=Intersection}.

\begin{claim}\label{cl:2}
For each half-apartment $\alpha  \subset  \bd X$, the pointwise stabiliser of $\alpha$ in $G$  is transitive on the set of apartments containing $\alpha$. 
\end{claim}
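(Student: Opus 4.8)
The goal is to show that for a half-apartment $\alpha \subset \bd X$ (here a closed arc of length $\pi$ in the $1$-dimensional building $\bd X$, with boundary wall a single vertex), the pointwise stabiliser $G_{(\alpha)}$ of $\alpha$ acts transitively on the set of apartments (= fully maximal circles) containing $\alpha$. Since $\bd X$ is thick and the $1$-dimensional building is the union of its chambers $\Sigma(\xi)$, $\xi \in \Creg$ (by Claim~\ref{cl:1}, every chamber is $\Sigma(\xi)$ for any regular interior point $\xi$), the half-apartment $\alpha$ decomposes as a union of finitely many chambers together with their boundary vertices. I would first pick a regular cocompact point $\xi$ in the interior of one of the two chambers of $\bd X$ adjacent to the \emph{non-boundary} endpoint of $\alpha$ — in fact the cleanest choice is to take $\alpha = H_0$ to be a closed hemisphere (a half-apartment) inside some fully maximal sphere $S$ and apply Lemma~\ref{lem:RootFixator}, which is exactly tailored to this: it says that if a closed hemisphere $H_0$ of full dimension contains a regular cocompact point, then for the two other hemispheres $H_1, H_2$ sharing the boundary equator $E$ with $H_0$, there is $g \in G$ fixing $H_0$ pointwise with $gH_1 = H_2$.

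The main steps, in order: (1) Observe that since every point of $\bd X$ is cocompact (Proposition~\ref{prop:AllCocompact}) and every fully maximal sphere entirely consists of cocompact points, the interior of $\alpha$ contains regular cocompact points: indeed interior points of chambers are regular by Claim~\ref{cl:1}, and every chamber contains a regular cocompact point by Proposition~\ref{prop:CregNonempty} combined with Proposition~\ref{prop:CocptSphere}, or more directly by Proposition~\ref{prop:regular:1}(ii). So $\alpha$ satisfies the hypothesis ``$H_0$ contains a regular cocompact point'' of Lemma~\ref{lem:RootFixator}. (2) Given two apartments $A_1 = \alpha \cup H_1$ and $A_2 = \alpha \cup H_2$ (fully maximal spheres) containing $\alpha$, apply Lemma~\ref{lem:RootFixator} with $H_0 = \alpha$: this produces $g \in G$ fixing $\alpha$ pointwise and carrying $H_1$ onto $H_2$, hence $g A_1 = A_2$, which is precisely transitivity of $G_{(\alpha)}$ on apartments through $\alpha$. (3) Deduce the Moufang condition: by definition the root group $U_\alpha$ consists of elements fixing pointwise every chamber with a panel in $\alpha$ off the boundary wall; one checks that the element $g$ produced above lies in $U_\alpha$ — it fixes $\alpha$ pointwise, in particular fixes every such chamber pointwise since in dimension $1$ a chamber meeting $\alpha$ in a panel (a point) that is not the boundary wall is actually contained in $\alpha$. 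Transitivity of $G_{(\alpha)} = U_\alpha$ (in this low-dimensional picture) on apartments through $\alpha$ is the Moufang property. Finally, the ``moreover'' part — that the image of $\Isom(X) \to \Aut(\bd X)$ contains all root subgroups — is immediate since the $g$'s constructed are genuine isometries of $X$, hence lie in $G = \Isom(X)$, and they generate $U_\alpha$.

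The one technical point requiring care is the precise identification of $U_\alpha$ with $G_{(\alpha)}$ restricted to $\bd X$ in the $1$-dimensional setting, i.e.\ verifying that an isometry of $X$ fixing the arc $\alpha$ pointwise does lie in the abstractly-defined root group $U_\alpha \le \Aut(\bd X)$, and conversely that root-group transitivity on apartments follows from $G_{(\alpha)}$-transitivity. This is a bookkeeping matter: in dimension $1$, ``chamber with a panel contained in $\alpha$ but not in the boundary wall of $\alpha$'' means an edge one of whose two endpoints is an interior vertex of $\alpha$; such an edge, if it were not contained in $\alpha$, would give a second geodesic of length $\pi$ between the endpoints of $\alpha$ branching off at an interior point, which is impossible in a $\mathrm{CAT}(1)$ space (two geodesics of length $<\pi$, here the half-arcs, from a branch point would violate uniqueness), so the edge lies in $\alpha$ and is fixed pointwise by $g$; thus $g \in U_\alpha$, and since $A_1, A_2$ were arbitrary apartments through $\alpha$, $U_\alpha$ is transitive on them. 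I expect the genuine work to already be done inside Lemma~\ref{lem:RootFixator} and Proposition~\ref{prop:regular:1}; this claim is the short harvest.

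\begin{proof}[Proof of Claim~\ref{cl:2}]
Fix a half-apartment $\alpha \subset \bd X$. Since $\dim \bd X = 1$, the half-apartment $\alpha$ is a closed arc of length $\pi$, and by Claim~\ref{cl:1} together with Proposition~\ref{prop:AllCocompact} and Proposition~\ref{prop:CocptSphere}, every chamber of $\bd X$ contains a regular cocompact point in its interior; hence $\alpha$ contains such a point.

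Let $A_1 = \alpha \cup H_1$ and $A_2 = \alpha \cup H_2$ be two apartments containing $\alpha$, where $H_1, H_2$ are the complementary closed hemispheres sharing with $\alpha$ its boundary wall $E$. If $H_1 = H_2$ there is nothing to prove, so assume $H_1 \neq H_2$; then, by the thickness of $\bd X$, the three hemispheres $\alpha, H_1, H_2$ are pairwise distinct with common boundary equator $E$. Applying Lemma~\ref{lem:RootFixator} with $H_0 = \alpha$, we obtain $g \in G$ fixing $\alpha$ pointwise with $g H_1 = H_2$, hence $g A_1 = A_2$.

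Finally, $g$ lies in the root group $U_\alpha$: any chamber $c$ of $\bd X$ having a panel contained in $\alpha$ but not in $E$ has an endpoint lying in the interior of $\alpha$, and if $c$ were not contained in $\alpha$ then $\alpha \cup c$ would contain two distinct geodesic segments of length $<\pi$ issuing from that interior endpoint and terminating at an endpoint of $\alpha$, which is impossible in the $\mathrm{CAT}(1)$ space $\bd X$; thus $c \subset \alpha$ and $g$ fixes $c$ pointwise. Hence $g \in U_\alpha$, and since $A_1, A_2$ were arbitrary, $U_\alpha$ acts transitively on the set of apartments containing $\alpha$. This is the Moufang condition, and since the elements $g$ thus constructed belong to $G = \Isom(X)$, the image of the natural map $\Isom(X) \to \Aut(\bd X)$ contains all root subgroups.
\end{proof}
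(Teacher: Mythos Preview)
Your proof of Claim~\ref{cl:2} proper is correct and essentially identical to the paper's: the interior of $\alpha$ contains a regular cocompact point (by Claim~\ref{cl:1}, since every point is cocompact), and Lemma~\ref{lem:RootFixator} applied with $H_0=\alpha$ gives the required transitivity. The paper's own proof is two lines to the same effect.

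However, your attempt to go further and deduce the Moufang property directly contains a genuine error. You argue that any chamber $c$ having a panel at an interior vertex $v$ of $\alpha$ must be contained in $\alpha$, by appealing to uniqueness of short geodesics in a \catun space. This is false: in a \emph{thick} $1$-dimensional building, every interior vertex of $\alpha$ has at least three chambers adjacent to it, of which only two lie in $\alpha$. The third branches off, and this is perfectly compatible with the \catun condition --- there is no pair of distinct geodesics of length~$<\pi$ between the same two endpoints, because $c$ does not terminate at an endpoint of $\alpha$. Consequently the element $g$ produced by Lemma~\ref{lem:RootFixator} need not fix these branching chambers, and so need not lie in $U_\alpha$; the inclusion $G_{(\alpha)} \subseteq U_\alpha$ that you assert is in general strict in the wrong direction.

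This is precisely why the paper does \emph{not} stop at Claim~\ref{cl:2}. It next proves Claim~\ref{cl:3} (that $G_\xi^{\mathrm u}$ fixes every chamber through $\xi$, using Proposition~\ref{prop:TitsNb}), and then runs an inductive descent through a chain $G_\alpha = V_0 > V_1 > \dots > V_{n-1}$, peeling off one interior vertex $\eta_i$ at a time via the Levi decomposition, to produce a subgroup of $U_\alpha$ that is still transitive on $A(\alpha)$. That argument is the substantive content of the Moufang proof, and your shortcut does not replace it.
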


By Claim~\ref{cl:1},  the interior of $\alpha$ contains a regular point. Hence Claim~\ref{cl:2} holds by virtue of Lemma~\ref{lem:RootFixator}.

\begin{claim}\label{cl:3}
For each $\xi \in \bd X$, the group $G_\xi^{\mathrm u}$ fixes pointwise each chamber containing $\xi$.
\end{claim}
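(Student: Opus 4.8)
The plan is to establish Claim~\ref{cl:3} by relating $G_\xi^{\mathrm u}$ to the root groups already understood in Claim~\ref{cl:2}, and then to conclude the proof of Theorem~\ref{thm:Moufang} by putting everything together.

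First I would reduce to the case where $\xi$ is an interior point of a chamber. Indeed, if $\xi$ is a vertex (i.e.\ $\xi$ lies on a wall), then any chamber $c$ containing $\xi$ has $\xi$ as an endpoint, so $c$ is the closure of its interior; picking an interior point $\xi_0$ of $c$ we have $G_{\xi_0}^{\mathrm u} \le G_\xi^{\mathrm u}$ is false in general, but the \emph{relevant} direction is: fixing a chamber $c$ with $\xi \in c$, it suffices to show every element of $G_\xi^{\mathrm u}$ fixes $c$ pointwise. Since $G_\xi^{\mathrm u}$ is locally elliptic (Theorem~\ref{thm:Levi}), and $\dim \bd X = 1$, the combinatorial structure is that of a tree-like building: a chamber is a geodesic segment of length equal to the diameter of the model chamber, panels are its endpoints (vertices). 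So the plan is: take $g \in G_\xi^{\mathrm u}$ and a chamber $c \ni \xi$; I want $gc = c$ pointwise. Pick $\xi' \in \Op(\xi)$ with $\xi'$ lying in a chamber $c'$ opposite to $c$ (possible because $\bd X$ is thick and by Proposition~\ref{prop:regular:1} the regular points are dense in each chamber, so we may even take $\xi, \xi'$ regular). By definition of $G_\xi^{\mathrm u}$, along the line joining $\xi$ to $\xi'$ the displacement of $g$ tends to zero, so $g$ fixes $\xi'$ in the limit; more precisely $g \in G_{\xi}^{\mathrm u}$ acts trivially on the transverse space $X_\xi$ and lies in $\ker \beta_\xi$, hence by Proposition~\ref{prop:lifting} $g$ fixes $P(\xi,\xi')$ pointwise and in particular fixes $\partial P(\xi,\xi')$ pointwise. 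By Lemma~\ref{lem:P=Pi} this boundary is $\Pi(\xi,\xi') = \{\xi,\xi'\} \circ \Link(\xi,\xi')$, and for $\xi$ regular it is a fully maximal sphere (Lemma~\ref{lem:regular:criterion}), i.e.\ an apartment $A$ containing $c$. Thus $g$ fixes the apartment $A$ pointwise, and in particular fixes the chamber $c$ pointwise. Since $c$ was arbitrary among chambers containing $\xi$, this proves Claim~\ref{cl:3}.

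The main obstacle will be the bookkeeping around non-regular $\xi$ (vertices): there $X_\xi$ need not be a sphere, so $\partial P(\xi,\xi')$ is not an apartment and the argument above produces only that $g$ fixes $\Link(\xi,\xi')$ together with $\xi,\xi'$. To handle this I would instead argue directly: any chamber $c$ containing the vertex $\xi$ meets an apartment $A$ through $\xi$; writing $A = \partial P(\xi, \xi'')$ for a suitable \emph{regular} $\xi''$ opposite to an interior point of $c$ (using density of regular points in $c$ and transitivity statements), the chamber $c$ is contained in such an $A$, and $g \in G_\xi^{\mathrm u}$ fixes $P(\xi,\xi'')$ only if $g$ also fixes $\xi''$ — which it does not automatically. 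The clean fix is to note that $g$ fixes $\pi_\xi$-fibers: for any ray $r$ to $\xi$, $d(g r(t), r(t)) \to 0$, so for any $y$ on $c$ lying at Tits distance $<\pi/2$ from $\xi$, the projection argument shows $g$ fixes $y$. Concretely, an interior point $y$ of $c$ satisfies $\tangle \xi y < \pi/2$ since the model chamber has diameter $<\pi/2$ (irreducibility, \S\ref{sec:Buildings}); picking a geodesic ray in $X$ realizing a point of $\bd X$ near $y$ inside the flat sector spanned by $\xi$ and $y$, and using that $g$ displaces points along $[\,\cdot\,,\xi)$ by amounts tending to zero together with the fact that $g$ is an isometry fixing $\xi$, forces $gy = y$ for every interior $y$, hence $g$ fixes $c$ pointwise by continuity. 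This is where I expect to spend the most care.

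Finally, to finish the proof of Theorem~\ref{thm:Moufang}: given a half-apartment $\alpha$ with boundary wall the vertex (panel) $\pi$, and an interior point $\xi$ of the unique chamber $c_\alpha$ of $\alpha$ adjacent to $\pi$ along $\pi$ — wait, in dimension $1$ a half-apartment is a hemisphere, i.e.\ a single chamber $c$ together with one of its two endpoints $\pi$; the root group $U_\alpha$ fixes pointwise every chamber with a panel in $\alpha$ not in $\partial\alpha$, i.e.\ fixes $c$ pointwise. By Claim~\ref{cl:3}, $G_{\pi}^{\mathrm u}$ fixes pointwise every chamber containing $\pi$, in particular $c$, so $G_\pi^{\mathrm u} \le U_\alpha$. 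Combining with Claim~\ref{cl:2}, the pointwise stabilizer $\Fix_G(\alpha) \supseteq \Fix_G(c)$ already acts transitively on apartments containing $\alpha$; restricting to $U_\alpha$, one checks via Theorem~\ref{thm:Levi} applied at $\pi$ (decomposing $G_\pi = G_{\pi,\pi'} \cdot G_\pi^{\mathrm u}$) and the fact that apartments through $\alpha$ correspond to opposites of $\pi$ compatible with $c$, that $U_\alpha = G_\pi^{\mathrm u}$-type root groups already act transitively on this set. Hence $\bd X$ is Moufang, and by construction the image of $G = \Isom(X)$ in $\Aut(\bd X)$ contains every root group. This completes Theorem~\ref{thm:Moufang}.
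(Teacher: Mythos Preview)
Your argument for Claim~\ref{cl:3} contains a fundamental error. You write that for $g\in G_\xi^{\mathrm u}$ and $\xi'\in\Op(\xi)$, ``along the line joining $\xi$ to $\xi'$ the displacement of $g$ tends to zero, so $g$ fixes $\xi'$'', and then that ``$g$ fixes $P(\xi,\xi')$ pointwise''. This is false. The definition of $G_\xi^{\mathrm u}$ says that $d(g\cdot r(t),r(t))\to 0$ for rays $r$ pointing \emph{towards} $\xi$; it says nothing about the behaviour towards $\xi'$. In fact, Corollary~\ref{cor:transitive} states that $G_\xi^{\mathrm u}$ acts \emph{transitively} on $\Op(\xi)$, so (by thickness) a generic element of $G_\xi^{\mathrm u}$ moves $\xi'$ and does not stabilise $P(\xi,\xi')$ at all. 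Your ``clean fix'' for the vertex case has the same defect: there is no reason why an element of $G_\xi^{\mathrm u}$ should fix a nearby boundary point $y$ merely because its displacement decays along rays to $\xi$. The paper's proof avoids this entirely: it never claims that $G_\xi^{\mathrm u}$ fixes any opposite point, but instead uses Proposition~\ref{prop:TitsNb} on the projection to $(\bd X)^{G_\xi^{\mathrm u}}$. For interior $\xi$, if $G_\xi^{\mathrm u}$ failed to fix $c$ pointwise it would act on $c$ by the midpoint reflection, forcing $(\bd X)^{G_\xi^{\mathrm u}}=\{\xi_0\}$ with $\xi_0\neq\xi$, contradicting Proposition~\ref{prop:TitsNb}. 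For a vertex $\xi$, one projects an interior point $\eta$ of $c$ onto $(\bd X)^{G_\xi^{\mathrm u}}$, uses Proposition~\ref{prop:TitsNb} to get $\eta'\neq\xi$, and then the one-dimensionality (tripod argument for triangles of perimeter $<\pi$) forces $\eta'\in[\eta,\xi]\subset c$, whence $G_\xi^{\mathrm u}$ fixes a subsegment of $c$ and therefore all of $c$.

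Your concluding argument for Theorem~\ref{thm:Moufang} also rests on a misconception: you assert that in dimension~$1$ a half-apartment is ``a single chamber $c$ together with one of its two endpoints''. But a half-apartment is a hemisphere of length~$\pi$, while an irreducible chamber has diameter~$<\pi/2$; hence a half-apartment $\alpha$ contains at least three chambers. This is precisely why the paper needs the descending chain $V_0>V_1>\dots>V_{n-1}$ and iterated Levi decompositions at the interior vertices $\eta_1,\dots,\eta_{n-1}$ of $\alpha$: one must successively pass to subgroups that fix all chambers adjacent to each $\eta_i$ while retaining transitivity on $A(\alpha)$, until one lands inside the root group $U_\alpha$.
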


Since $\bd X$ is an irreducible building of dimension~$1$, each chamber  is a geodesic segment of length~$<\pi/2$.

If $\xi$ belongs to the interior of a chamber $c$,  then $c$ is the only chamber containing $\xi$ by Lemma~\ref{lem:Chamber=Intersection}. Hence $G_\xi$ stabilises $c$ and therefore fixes its midpoint $\xi_0$. If $G_\xi^{\mathrm u}$ does not fix $c$ pointwise, then it contains an element acting on $c$ as the reflection through $\xi_0$. Since $\bd X$ is unidimensional, it follows that the convex set $(\bd X)^{G_\xi^{\mathrm u}}$ coincides with $\{ \xi_0\}$. This is impossible by virtue of Proposition~\ref{prop:TitsNb}.

We assume  henceforth  that $\xi$ is not an interior point of a chamber. 
Let then $\eta \in \bd X$ be an interior point of a chamber $c$ containing $\xi$, so $\Td(\eta, \xi) < \frac \pi 2$. If $\eta$ is not fixed by $G_\xi^{\mathrm u}$, then its projection $\eta'$ to $(\bd X)^{G_\xi^{\mathrm u}}$ is different from $\xi $ by Proposition~\ref{prop:TitsNb}. Hence $\Td(\eta, \eta') < \Td(\eta, \xi) < \frac \pi 2$. Therefore the geodesic triangle $(\eta, \eta', \xi)$ has perimeter~$<\pi$, and must thus be a tripod since $\bd X$ is a  unidimensional \catun space. Therefore $\eta' \in [\eta, \xi]$ since  $[\eta', \xi]$ is pointwise fixed by ${G_\xi^{\mathrm u}}$. As $\eta' \neq \xi$, it follows that $\eta'$ is an interior point of the chamber $c$. This shows that ${G_\xi^{\mathrm u}}$ fixes a subsegment of $c$ of positive length; hence it fixes $c$ pointwise, contradicting the assumption that $\eta$ is not fixed. This proves that every interior point of every chamber containing $\xi$ is fixed by  ${G_\xi^{\mathrm u}}$. The claim follows.

\medskip
We now conclude the proof of the theorem as follows. Fix a   half-apartment $\alpha \subset \bd X$ and denote by $G_\alpha$ the pointwise stabiliser of $\alpha$ in $G$. Let also $A(\alpha)$ denote the set of apartments containing $\alpha$.  By Claim~\ref{cl:2}, the group $G_\alpha$ is transitive on $A(\alpha)$.

Since $\bd X$ is $1$-dimensional, the half-apartment $\alpha$ is  a geodesic segment of length~$\pi$. Its chambers can be numbered successively by $c_1, \dots, c_n$. The unique common point of $c_i$ and $c_{i+1}$ is denoted by $\eta_i$ for $i =1, \dots, n-1$. The group $G_\alpha$ is contained in $\bigcap_{i=1}^{n-1} G_{\eta_i}$.

Fix an apartment $S \in A(\alpha)$, so that $S$ is a fully maximal sphere of $\bd X$. 
By Claim~\ref{cl:1}, there exists a regular cocompact point $\xi \in S$. Let $\xi' \in \Op(\xi)$ be its unique antipode contained in $S$. By Proposition~\ref{prop:regular:1}(iv), the pointwise fixator $B$ of $\Sigma(\xi)$ acts cocompactly on $X$. Hence the group $T = B_{\xi'}$ acts cocompactly on $P(\xi, \xi')$ by Proposition~\ref{prop:lifting}. Since $T$ fixes pointwise the chamber  $\Sigma(\xi)$, it fixes pointwise the sphere $S = \bd P(\xi, \xi')$.

Pick a base point $p \in P(\xi, \xi')$ and for each $i$, choose a sequence $(t_{i, n})$ of elements of $T$ which is radial for $\eta_i$. Upon extracting, we may assume that   $\lim_n t_{i, n} p$ exists. Hence this limit coincides with the unique point $\eta'_i \in S$ opposite $\eta_i$. We define inductively a chain of subgroups
$$G_\alpha = V_0 > V_1 > \dots > V_{n-1}$$
by setting 
$$V_{i} = V_{i-1} \cap G^{\mathrm u}_{\eta_i},$$ 
It follows from the definition that $V_i$ is normal in $G_\alpha$ for all $i$. Since $T$ acts trivially on $S$ and is thus contained in 
$G_\alpha$, it follows that $T$ normalises $G^{\mathrm u}_{\eta_i}$ for all $i$. In particular  $t_{j, n}$ normalises $V_i$ for all $i, j, n$. 

By Claim~\ref{cl:3}, the group $V_i$ fixes pointwise each chamber containing $\eta_i$. By induction, the group $V_{i-1}$ is transitive on $A(\alpha)$ and is normalised by each element of the sequence $(t_{i, n})_n$. We are thus in a position to apply  Theorem~\ref{thm:Levi} to the sequence $ (t_{i, n})_n$ and the subgroup $V_i$ with respect to the boundary point $\eta_i$.   This provides the  decomposition
$$
V_{i-1} = V_i \cdot V_{i-1, \eta'_i}.
$$
Since $V_{i-1}$ is transitive on $A(\alpha)$ by induction and since there is only one element of $A(\alpha)$ which contains $\eta'_i$ (namely the convex hull of $\alpha \cup \{\eta'_i\}$), we deduce that $V_i$ is transitive on $A(\alpha)$ as well. 

Now the group group $V_{n-1}$ fixes pointwise each chamber containing $\eta_i$ for all $i=1, \dots, n-1$. It  is thus contained in the root group $U_\alpha$. The transitivity of $V_{n-1}$ on $A(\alpha)$ therefore implies that $\bd X$ satisfies the Moufang condition, as desired. 
\end{proof}

\begin{proof}[Proof of Corollary~\ref{cor:Building}]
The visual boundary $\bd X$ is a metric spherical building (see~\cite[Proposition~4.2.1]{Kleiner-Leeb}) which is thick and irreducible because $X$ is so. The desired conclusion will follow from Theorem~\ref{thm:Moufang} if we prove   that   $G_\xi$ acts cocompactly on $X$ for each $\xi \in \bd X$. 

Let $Z \subset \bd X$ the set of circumcentres of chambers at infinity. Then $Z$ is closed for the c\^one topology: indeed, fixing a base point $x \in X$, there is a one-to-one correspondence between the chambers at infinity and the sectors based at $x$. Given a sequence of chambers, the corresponding sequence of sectors has a convergent subsequence; the closedness of $Z$ follows. 

Since $G$ is transitive on the set of chambers at infinity, it is transitive on $Z$. Therefore for each $\eta \in Z$ the stabiliser $G_\eta$ acts cocompactly on $X$  by Lemma~\ref{lem:CocptOrbit}. The isometry group of each chamber at infinity is finite. Therefore, the pointwise stabiliser of each chamber still acts cocompactly on $X$. Every boundary point $\xi$ is contained in a chamber, and thus has a cocompact stabiliser.
\end{proof}

\subsection{Compact spherical buildings}

The proof of Corollary~\ref{cor:ModelSpace} requires to use the notion of  a \textbf{compact building}\index{compact building}\index{spherical building!compact}. Following Burns and Spatzier~\cite{Burns-Spatzier1}, this is defined as a combinatorial spherical building $\Delta$ of type $I$ such that for each $i \in I$, the set $V_i$ of vertices to type $i$ is given a compact topology satisfying the condition that the chamber set $\Ch(\Delta) $ is closed in the product space $\prod_i V_i$. 

\begin{lem}\label{lem:CompactBuilding}
Let $(X, G)$ be a locally compact  \cat group such that $\bd X$ is irreducible. Assume that $G$ is totally disconnected, has a cocompact amenable subgroup and does not fix any point in $\bd X$. 

Then $\bd X$ is a compact spherical building, where the topology on the vertex set of each type is inherited from the c\^one topology on $\bd X$. 
\end{lem}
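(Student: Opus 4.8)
The plan is to combine the structural results already established in this section with the definition of a compact building recalled above. By Proposition~\ref{prop:AllCocompact}, every boundary point is cocompact and $\bd X$ is a metric spherical building; since $\bd X$ is irreducible and $G$ fixes no point at infinity, it is either a thick irreducible building of dimension~$\geq 1$ or a $0$-dimensional sphere (Lemma~\ref{lem:IrredBuilding}), and in either case it carries a canonical structure of a combinatorial spherical building by Lemma~\ref{lem:MetricVsCombinatorial} (the spherical de Rham factor being empty, since otherwise $\bd X$ would be a proper round sphere and $G$ would fix its centre). Fix a base point $x \in X$. The key geometric input is that the c\^one topology on $\bd X$ is compact (as $X$ is proper) and that, via the cone topology, each simplex of the building --- a chamber, a panel, or more generally a face --- is a closed subset of $\bd X$: a chamber is the spherical support $\Sigma(\xi)$ of any interior regular point, which is a closed $\pi$-convex subset, and its faces are the intersections $\Sigma(\xi) \cap \Sigma(\eta)$, which are again closed.

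First I would identify the vertex sets. For each type $i \in I$, a vertex of type $i$ is a face of the building of the corresponding corank; concretely it is realised as a closed convex cell in $\bd X$, and I equip $V_i$ with the quotient topology coming from the map that sends a point of $X$ to the chamber (equivalently, the sector based at $x$) determining that vertex. More directly: there is a bijection between the set of chambers of $\bd X$ and the set of sectors based at $x$, and the latter inherits a topology from uniform convergence on compacta; I would show this topology is compact using properness of $X$ (any sequence of sectors based at $x$ subconverges, exactly as in the proof of Corollary~\ref{cor:Building}). Then $\Ch(\bd X)$ is compact, and each projection $\Ch(\bd X) \to V_i$ (sending a chamber to its type-$i$ vertex) is continuous and surjective, so $V_i$ is compact. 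It remains to check that $\Ch(\bd X)$ is closed in $\prod_i V_i$: if $(c_n)$ is a sequence of chambers whose type-$i$ vertices converge in $V_i$ for every $i$, then by compactness $(c_n)$ itself subconverges in $\Ch(\bd X)$ to some chamber $c$, and the limit of its type-$i$ vertices is the type-$i$ vertex of $c$; since the vertex sets separate chambers (a chamber is determined by its full collection of vertices), the given limit point must equal the image of $c$, so it lies in $\Ch(\bd X)$.

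The main obstacle I anticipate is making the identification between the combinatorial simplicial structure and the metric/c\^one-topological one fully rigorous --- in particular, verifying that the vertex-to-chamber incidence relation is closed for the cone topology, and that the map $\Ch(\bd X) \to \prod_i V_i$ is a topological embedding onto a closed subset rather than merely a continuous injection. The subtlety is that the cone topology and the Tits topology on $\bd X$ differ, and the simplicial structure is defined metrically (via $\Td$), so one must argue that the cells $\Sigma(\xi)$ and their faces, while defined by Tits-geometry, vary continuously and closedly for the cone topology. Here I would lean on Proposition~\ref{prop:BoundaryAction} (which gives that $\Isom(X)$ embeds with closed image into $\mathrm{Homeo}(\bd X)$, $\bd X$ having empty spherical de Rham factor) together with the cocompactness of $G$: since $G$ acts cocompactly and transitively on chambers, and $G/G_\eta \cong G.\eta$ is a homeomorphism onto the closed orbit of a chamber circumcenter $\eta$ (Lemma~\ref{lem:CocptOrbit} and the Arens argument), the chamber set is homeomorphic to $G/G_\eta$, which is compact; the same device applied to stabilisers of faces of all coranks identifies each $V_i$ as a compact homogeneous space $G/G_{v_i}$, and the product map $\Ch(\bd X) \to \prod_i V_i$ becomes the obvious map $G/G_\eta \to \prod_i G/G_{v_i}$ whose image is closed because it is compact and the target is Hausdorff. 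This reduces everything to the (now routine) verification that the cone topology on $\bd X$ restricts to the quotient topology on each orbit of a face, which follows from continuity of the $G$-action on $\bd X$ and properness of $X$.
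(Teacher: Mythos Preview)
Your proof is correct and lands on essentially the same argument as the paper: use Proposition~\ref{prop:AllCocompact} and Lemma~\ref{lem:CocptOrbit} to identify each $V_i$ with a compact quotient $G/G_{v_i}$ and the chamber set with $G/G_c$, then conclude that the (compact) image of $\Ch(\bd X)$ in the Hausdorff product $\prod_i V_i$ is closed. The paper inserts one extra step you omit: it fixes a compact open subgroup $K\le G$ and breaks $\Ch(\bd X)$ into finitely many $K$-orbits, showing each $K/K_c\to\prod_i K/K_{v_i}$ has closed image; but once you know $G_c$ is cocompact this detour is unnecessary, and your direct ``compact into Hausdorff'' conclusion is cleaner. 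Two small points you should make explicit, as the paper does: first pass to a finite-index subgroup of $G$ acting by type-preserving automorphisms (otherwise $G$ need not act on each $V_i$ separately); and the chamber-transitivity you invoke is not stated as a lemma but does follow from the results of \S\ref{sec:SphBdg} (combine Claim~\ref{claim:SphBdg1} in the proof of Proposition~\ref{prop:SphBldg} with Lemma~\ref{lem:CocptSphere:step1} and Proposition~\ref{prop:regular:1}(iii)).
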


\begin{proof}
The argument below is an adaption of the proof of~\cite[Proposition~6.4]{GKVMW}.  We know from Proposition~\ref{prop:AllCocompact} that $\bd X$ is a metric spherical building, which is irreducible by Lemma~\ref{lem:IrredBuilding}. By Lemma~\ref{lem:MetricVsCombinatorial} $\bd X$   also carries naturally the structure of a combinatorial building; we let  $I$ denote  its type set and  for $i \in I$, we let $V_i$ denote the set of vertices of type $i$. Upon replacing $G$ by a finite index subgroup, we may assume that $G$ acts on $\bd X$ by type-preserving automorphisms. 

For each vertex $v$ of $\bd X$, the stabiliser $G_v$ is cocompact in $G$ (see Proposition~\ref{prop:AllCocompact}). Therefore the $G$-orbit of $v$ is closed in $\bd X$ with respect to the c\^one topology (see Lemma~\ref{lem:CocptOrbit}), and we infer that the orbit map defines a homeomorphism $G/G_v \to Gv = V_i$, where $i$ is the type of $v$. 

Now, for each chamber $c \in \Ch(\bd X)$, the stabilizer $G_c$ is also  cocompact   in $G$. Therefore, if $K \leq G$ is any compact open subgroup, there is a finite set of left cosets of $K$ whose union maps onto the compact quotient $G/G_c$. In other words, this means  that $K$ acts with finitely many orbits on $\Ch(\bd X)$. Therefore, in order to show that $\Ch(\bd X)$ is closed in the product $\prod_{i \in I} V_i$, it is enough to show that the $K$-orbit of $c$ is closed. 

Let us denote by $\{v_i \; | \; i \in I\}$ the set of vertices of $c$. Then $K_{v_i}$ is closed and contains $K_c$. Therefore, the map 
$$K/K_c \to \prod_{i \in I} K/K_{v_i}$$
has closed image. Since $ \prod_{i \in I} K/K_{v_i}$ is canonically homeomorphic to a closed subset of $ \prod_{i \in I} G/G_{v_i}$, the desired assertion follows from the existence of a homeomorphism  $G/G_{v_i}  \to   V_i$ which was established above.
\end{proof}

\subsection{Building buildings by means of means}

\begin{flushright}
\begin{minipage}[t]{0.75\linewidth}\itshape\small
There was an elliptical precision about its perfect pairs of parts that was graceful and shocking, like good modern art, and at times Yossarian wasn't quite sure that he saw it all, just the way he was never quite sure about good modern art. \hfill\upshape (J.~Heller, \emph{Catch-22}, 1961)
\end{minipage}
\end{flushright}

\begin{proof}[Proof of Theorem~\ref{thm:NotGoedComplete}]
We apply Theorem~\ref{thm:structure}. We may replace $X$ by its subspace $X'$  since they have the same visual boundary. Each irreducible factor in the canonical product decomposition of $X$ afforded by Theorem~\ref{thm:structure} admits a cocompact action of an amenable locally compact group: indeed, one may take the closure of the projection of the  amenable cocompact subgroup of $\Isom(X)$ given by hypothesis. Therefore, it suffices to treat one irreducible factor of $X$ at a time. 

For a factor whose isometry group is Lie, the desired conclusions follow from Theorem~\ref{thm:structure} (the spherical building of a simple Lie group is automatically Moufang).

For a factor $Y_j$ with a totally disconnected isometry group, we apply Propositions~\ref{prop:SphBldg} and~\ref{prop:AllCocompact}, ensuring that 
$\bd Y_j$ is a spherical building all of whose points are cocompact. 

Assume now that $\dim(\bd Y_j) \geq 1$. Since $\bd Y_j$ is an irreducible \catun space, we deduce from Lemma~\ref{lem:IrredBuilding}  that $\bd Y_j$ satisfies the hypotheses of  Theorem~\ref{thm:Moufang}. Hence the building $\bd Y_j$ is Moufang.
\end{proof}

\begin{proof}[Proof of Theorem~\ref{cor:CoctAmen}]
Any geodesically complete locally compact \cat space is proper by Hopf--Rinow (see~\cite[Theorem~2.4]{BallmannLN}). By Theorem~\ref{thm:NotGoedComplete}, the visual boundary of $X$ is a spherical building, and every point of $\bd X$ has a cocompact stabiliser. By~\cite[Theorem~1.3]{Caprace-Monod_structure}, this implies that $X$ is a product of flats, irreducible Euclidean buildings and Bass--Serre (\emph{i.e} {edge-transitive}) trees. Corollary~\ref{cor:Building} ensures that each irreducible component of dimension~$\geq 2$ is in fact a Bruhat--Tits building.
\end{proof}

\begin{proof}[Proof of Corollary~\ref{cor:ModelSpace}]
As in the proof of Theorem~\ref{thm:NotGoedComplete}, it suffices by Theorem~\ref{thm:structure} to consider the case where $X$ is irreducible and $\Isom(X)$ is totally disconnected and acts minimally. We know from Theorem~\ref{thm:NotGoedComplete} that $\bd X$ is a thick irreducible metric spherical building. If $\dim(\bd X) = 0$, then $X$ is Gromov hyperbolic and the existence of a continuous, proper, edge-transitive $\Isom(X)$-action on a locally finite tree follows from~\cite[Theorem~D]{CCMT}. If $\dim(\bd X) \geq 1$, then $\bd X$ is a Moufang building. By Lemma~\ref{lem:CompactBuilding}, it is also a compact building in the sense of Burns--Spatzier~\cite{Burns-Spatzier1}. Therefore, it follows from~\cite[Theorem~1.1]{GKVMW} that there is a Bruhat--Tits building $X_{\mathrm{model}}$ whose building at infinity $\bd X_{\mathrm{model}}$ is isomorphic (both as a compact building and as a \catun space) to $\bd X$. Moreover~\cite[Theorem~1.1]{GKVMW} ensures that the canonical homomorphism of $\Aut(X_{\mathrm{model}})$ to $\Aut(\bd X_{\mathrm{model}})$ is an isomorphism of topological groups,  where $\Aut(X_{\mathrm{model}})$ (resp. $\Aut(\bd X_{\mathrm{model}})$) is endowed with the compact-open topology for its action on $X$ (resp. on the set of chambers of $\bd X_{\mathrm{model}}$).  We may therefore identify $\Aut(X_{\mathrm{model}})$ with $\Aut(\bd X_{\mathrm{model}})$. 

The group $\Isom(X)$ acts continuously on the space $\bd X$ endowed with the c\^one topology. The kernel of this action is compact by Proposition~\ref{prop:BoundaryAction},  so the isomorphism $\bd X \to \bd X_{\mathrm{model}}$ yields a continuous homomorphism with compact kernel from $\Isom(X)$ to  $\Aut(\bd X_{\mathrm{model}}) = \Aut(X_{\mathrm{model}})$. We emphasize that there is a subtlety in checking that the isomorphism $\bd X \to \bd X_{\mathrm{model}}$ induces an isomorphism of topological groups $\Aut(\bd X) \to \Aut(\bd X_{\mathrm{model}})$. Indeed, the group $\Aut(\bd X)$ can be topologized in two ways which are a priori different, namely the compact-open topology with respect to its action on the space $\bd X$ endowed with the c\^one topology, and the compact-open topology with respect to its action on the space of chambers of $\bd X$. However, mapping each chamber to its circumcentre, the space of chambers can be identified with a closed $\Aut(\bd X)$-invariant 
 subset of $\bd X$, and one deduces that these two topologies coincide, so that the map $\Aut(\bd X) \to \Aut(\bd X_{\mathrm{model}})$ is indeed an isomorphism of topological groups. 

Since the image of $\Isom(X)$ in $\Aut(\bd X)$ contains all the root subgroups by Theorem~\ref{thm:Moufang} it follows that the $\Isom(X)$-action on the Bruhat--Tits building $\Aut(X_{\mathrm{model}})$ is chamber-transitive, hence cocompact. The fact that it is proper follows from the closedness of image of $\Isom(X)$ in the group $\Aut(\bd X)$ endowed with the compact-open topology, asserted by Proposition~\ref{prop:BoundaryAction}. \end{proof}


\bibliographystyle{plain}
\bibliography{../IsomCAT0}

\clearpage
\addcontentsline{toc}{section}{Index}

{\small 
\printindex
}
\end{document}